 \newtheorem{theo}{Theorem}[section]
 \newtheorem{prop}[theo]{Proposition}
 \newtheorem{lem}[theo]{Lemma}
 \newtheorem{cor}[theo]{Corollary}
\theoremstyle{definition}
 \newtheorem{defi}[theo]{Definition}
 \newtheorem{rmq}[theo]{Remark}
 \numberwithin{equation}{section}
\title[Weighted estimates for operators associated to the Bergman-Besov kernels]{Weighted estimates for operators associated to the Bergman-Besov kernels}
\subjclass[2010]{32A10; 32A36; 47B32}
\keywords{Bergman-Besov space, weighted inequalities, Bergman-Besov projection, }
\author[D. B\'ekoll\'e ]{ David B\'ekoll\'e} 
\address{ Department of Mathematics, Faculty of Science, University of Yaounde I; PO. Box 812 Yaounde-Cameroon}
\email{dbekolle@gmail.com}
\author[A. R. Keumo]{Adriel R. Keumo}
\address{ Department of Mathematics, Faculty of Science, University of Yaounde I; PO. Box 812 Yaounde-Cameroon}
\email{keumo.adriel@gmail.com}
\author[E. L. Tchoundja]{Edgar L. Tchoundja }
\address{ Department of Mathematics, Faculty of Science, University of Yaounde I; PO. Box 812 Yaounde-Cameroon}
\email{tchoundjaedgar@yahoo.fr}
\author[B. D. Wick]{Brett D. Wick}
\address{ Department of Mathematics, Washington University - St. Louis. One Brookings Drive, St. Louis, MO 63130-4899 USA}
\email{wick@math.wustl.edu}
\begin{document}

\begin{abstract}
We characterize the weights for which we have the boundedness of standard weighted integral operators induced by the Bergman-Besov kernels acting between two general weighted Lebesgue classes on the unit ball of $\mathbb{C}^N$ in terms of B\'ekoll\'e - Bonami type condition on the weights.  To accomplish this we employ the proof strategy originated by B\'ekoll\'e. 
\end{abstract}


\maketitle

\section{Introduction}\label{introduction}
Weighted inequalities appeared almost simultaneously with the birth of singular integrals that stimulated their development, in particular the problem of characterisation of positive function $\omega$ for which singular integral maps $L^p(\omega d\mu)$ to itself.  A famous example of a singular integral is the Bergman projection, whose boundedness problem, solved elsewhere by B\'ekoll\'e and Bonami,  is historically linked to the duality problem for Bergman spaces.\\

For $a>-1,$ it is a well-known result of B\'ekoll\'e and Bonami that the Bergman projection $T_a,$ defined by:
$$T_af(z):=\displaystyle\int_{\mathbb{B}}\frac{f(x)}{(1-\langle z,x \rangle)^{N+1+a}}d\mu (x)$$ 
is bounded on $L^p(\omega d\mu_a)$ if and only if the weight $\omega$ belongs to the so-called B\'ekoll\'e - Bonami class of weights. The Bergman projection can be extended to all $a$ less than or equal to $-1$. Therefore a natural question is whether the B\'ekoll\'e - Bonami result can be generalized. In this paper we work with more general operators than the extended Bergman projection, and more generally we characterize weights for which we have the boundedness  between two general weighted Lebesgue classes on the unit ball of $\mathbb{C}^N.$\\

The inner product and the norm in $\mathbb{C}^N$ are $\langle z,w\rangle=z_1\overline{w_1}+\cdots+z_N\overline{w_N}$ and $|z|=\sqrt{\langle z,z \rangle}.$ We let $d\mu_q(z)=(1-|z|^2)^qd\mu(z)$ where $q>-1$ and $\mu$ be the Lebesgue (volume) measure on the unit ball $\mathbb{B}=\lbrace z\in\mathbb{C}^N: |z|<1\rbrace$ of $\mathbb{C}^N=\mathbb{R}^{2N}$ normalized with $\mu (\mathbb{B})=1$. We set $L^p_q:=L^p(d\mu_q)$ the Lebesgue space on $\mathbb{B}$ relative to $\mu_q$ with $1\leq p\leq+\infty.$ 
Let $H(\mathbb{B})$ to denote the space of holomorphic functions in the unit ball $\mathbb{B}.$ For $q>-1,$ a function $f\in H(\mathbb{B})$ belongs to the weighted Bergman space $A^p_q$ whenever $f\in L^p(d\mu_q).$ The norm $\Vert f \Vert_{A_q^p}$ is simply the $L^p_q$ norm of $f$.\\

Besov spaces extend weighted Bergman spaces to all $q$. To define them, we first take a radial differential operator $D_s^t$ of order $t$ for any $s,t\in\mathbb{R}$ defined on $H(\mathbb{B}).$ Let $f\in H(\mathbb{B})$ be given on $\mathbb{B}$ by its convergent homogeneous expansion $f=\displaystyle\sum^{\infty}_{k=0}f_k$ in  which $f_k$ is a homogeneous polynomial in $z_1,\ldots,z_N$ of degree $k.$ We define, for $s,t \in \mathbb R$
$$D^t_sf:=\displaystyle\sum^{\infty}_{k=0}d_k(s,t)f_k=\displaystyle\sum^{\infty}_{k=0}\frac{c_k(s+t)}{c_k(s)}f_k$$
where 
$$
c_k(a)=\left\{
\begin{array}{clc}
  \frac{(N+1+a)_k}{k!} & \textnormal{if} & a>-(N+1)\\
  \frac{k!}{(1-N-a)_{k}} &\textnormal{if} & a\leq -(N+1)
\end{array}
\right. 
$$

Consider the linear transformation $I^t_s$ defined for $f\in H(\mathbb{B})$ by:
$$I^t_sf(z)=(1-|z|^2)^tD_s^tf(z).$$
We say that a function $f\in H(\mathbb{B})$ belongs to the Besov space $B^p_q$ whenever $I^t_sf\in L^p_q$ for some $s,t$ satisfying:
$$
\left\{
\begin{array}{clc}
q+pt>-1~& \textnormal{ if }~1\leq p<\infty\\
t>0~ & \textnormal{ if }~p=\infty.
\end{array}
\right.
 $$
It is well known \cite{Kap} that the $L^p_q$-norm, $\Vert I^t_sf \Vert_{L^p_q}$, of any one of the functions $I^t_sf$ is an equivalent norm for $\Vert f \Vert_{B^p_q},$ the norm of $f$ in $B^p_q$. When $q>-1$ we have $A^p_q=B^p_q.$ The space $B^2_q$ is a Hilbert space with reproducing kernel $K_q$  (see \cite{Kap} or \cite[Theorem 1.9]{bb} or \cite{zz}) defined by \\
$$
K_q(z,w)=\left\{
\begin{array}{rlc}
\frac{1}{(1-\langle z,w\rangle)^{N+1+q}} & = & \displaystyle\sum^{\infty}_{k=0}\frac{(N+1+q)_k}{k!}\langle z,w \rangle^k,~~\textnormal{if}~~q>-(N+1)\\
_2F_1(1,1;1-(N+q);\langle z,w\rangle) & = & \displaystyle\sum^{\infty}_{k=0}\frac{k!}{(1-N-q)_{k}}\langle z,w \rangle^k,~~\textnormal{if}~~q\leq -(N+1),
\end{array}
\right.
$$\\
where $_2F_1\in H(\mathbb{D})$ is the Gauss hypergeometric function and $(u)_v$ is the Pochhammer symbol defined by $(u)_v = \frac{\Gamma(u+v)}{\Gamma(u)},$ where $\Gamma$ is the Gamma function. Namely, for a number  $s$ satisfying $q+1< p(s+1)$, if $ t $ satisfies $q+pt>-1$ then for $f\in B^2_q$ (see \cite[Theorem 1.2]{Kap})
$$(P_s\circ I_s^t)f=\dfrac{N!}{(1+s+t)_N}f,$$
where 
$$P_{s}f(z)=\displaystyle\int_{\mathbb{B}}K_{s}(z,w)f(w)(1-|w|^2)^{s}d\mu(w),$$
is the extended Bergman projection ($s$ may be smaller than or equal to $-1$).\\
 \\
\hspace*{0.5cm} For $a,b,s,t\in\mathbb{R}$ the operators that we are interested in are defined by (reproducing) Bergman-Besov kernels.  For $f \in L^p( d\mu_q)$ we define
\begin{eqnarray*}
T_{a,b}^{q}f(z) & := & T_{a,b}f(z)=\displaystyle\int_{\mathbb{B}}K_a(z,w)f(w)(1-|w|^2)^{b-q}d\mu_q(w),\\
S_{a,b}^{q}f(z) & := & S_{a,b}f(z)=\displaystyle\int_{\mathbb{B}}|K_a(z,w)||f(w)|(1-|w|^2)^{b-q}d\mu_q(w),\\
P_{s,t}^{q}f(z) & := & P_{s,t}f(z)=(1-|z|^2)^t\displaystyle\int_{\mathbb{B}}K_{s+t}(z,w)f(w)(1-|w|^2)^{s-q}d\mu_q(w).
\end{eqnarray*}
Throughout the paper $b>-1$ and $s>-1$ because we want  our operator to be well defined (see for example Lemma \ref{5keumo1} and Lemma \ref{5keumo2}). Note that 
 \begin{equation}\label{keumoequa}
  P_{s,t}f(z)= (1-|z|^2)^tT_{s+t,s}f(z).
 \end{equation}
 Our main motivation comes from the operators $P_{s,0},$ and $P^+_{s,N+1+s},$ which are the Bergman projection and Berezin transform respectively, where $P^+_{s,N+1+s}f(z)= (1-|z|^2)^tS_{s+N+1+s,s}f(z).$
 The operators $P_{s,t},$ $T_{a,b}$ and $S_{a,b}$ are important in the study of function-theoretic operator theory, see for example \cite{zhu} when $q=-N-1$. \\
 
  The boundedness of the operators $T^q_{a,b}$ was already studied by Kaptanoglu and Ureyen \cite{ku} in the cases where the operators $T^q_{a,b}$ act from $L^p_q$ to $L^P_Q,$ with $q\in \mathbb{R}, 1\leq p,P\leq\infty,Q>-1.$
 
 \begin{theo}{\cite[Theorem 1.2]{ku}}\label{keumo3} 
Let $a,b,q,Q\in\mathbb{R},~1\leq p\leq P\leq \infty,$ and assume $Q>-1$ when $P<\infty.$ Then the following three conditions are equivalent
\begin{enumerate}
\item $T_{a,b}:L^p_q\rightarrow L^P_Q$;
\item $S_{a,b}:L^p_q\rightarrow L^P_Q$;
\item 
\begin{enumerate}
\item $\frac{1+q}{p}<1+b$~and~$a\leq b+\frac{1+N+Q}{P}-\frac{1+N+q}{p}~for~1<p\leq P< \infty;$\\
\item $\frac{1+q}{p}\leq 1+b$~and~$a\leq b+\frac{1+N+Q}{P}-\frac{1+N+q}{p}~for~1=p\leq P\leq \infty,$ but at least one inequality must be strict;\\
\item $\frac{1+q}{p}<1+b$~and~$a< b+\frac{1+N+Q}{P}-\frac{1+N+q}{p}~for~1<p\leq P=\infty.$
\end{enumerate}
\end{enumerate}
\end{theo}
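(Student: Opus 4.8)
The plan is to establish the cycle $(2)\Rightarrow(1)\Rightarrow(3)\Rightarrow(2)$. The implication $(2)\Rightarrow(1)$ is immediate from the pointwise domination $|T_{a,b}f(z)|\le S_{a,b}|f|(z)$, so every operator bound for $S_{a,b}$ transfers to $T_{a,b}$. Throughout I absorb the weight by writing $(1-|w|^2)^{b-q}\,d\mu_q(w)=(1-|w|^2)^{b}\,d\mu(w)$, so that $S_{a,b}f(z)=\int_{\mathbb{B}}|K_a(z,w)|\,|f(w)|\,(1-|w|^2)^{b}\,d\mu(w)$ is a positive integral operator with explicit kernel. The two analytic inputs I rely on are, first, the pointwise size of the kernel, $|K_a(z,w)|\asymp|1-\langle z,w\rangle|^{-(N+1+a)}$ when $a>-(N+1)$, with the logarithmic ($a=-(N+1)$) and bounded ($a<-(N+1)$) substitutes otherwise; and second, the Forelli--Rudin asymptotics for $\int_{\mathbb{B}}(1-|w|^2)^{t}|1-\langle z,w\rangle|^{-(N+1+t+c)}\,d\mu(w)$, which as $|z|\to1$ is bounded if $c<0$, grows like $\log\frac{1}{1-|z|^2}$ if $c=0$, and like $(1-|z|^2)^{-c}$ if $c>0$ (valid for $t>-1$). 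Every estimate below reduces to these two facts.

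\textbf{Sufficiency $(3)\Rightarrow(2)$.} For $1<p\le P<\infty$ I run a Schur-type test with the power weight $h(z)=(1-|z|^2)^{\lambda}$. After normalizing by the source and target weights (replacing $f$ by $f\,(1-|\cdot|^2)^{q/p}$ and the output by $(1-|\cdot|^2)^{Q/P}S_{a,b}f$) the effective kernel becomes $\tilde H(z,w)=(1-|z|^2)^{Q/P}|K_a(z,w)|(1-|w|^2)^{b-q/p}$, and the two Schur inequalities $\int\tilde H(z,w)h(w)^{p'}\,d\mu(w)\lesssim h(z)^{p'}$ and $\int\tilde H(z,w)h(z)^{p}\,d\mu(z)\lesssim h(w)^{p}$ are evaluated by Forelli--Rudin. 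A direct exponent count shows that an admissible $\lambda$, namely one for which both auxiliary integrals converge and the resulting powers dominate, exists precisely under the inequalities of (3)(a); the borderline of the scaling inequality produces the exponent $c=0$, whose logarithmic growth still lies in $L^P_Q$ because $Q>-1$, which is exactly why the scaling inequality may be taken non-strict when $P<\infty$. The endpoints are handled by hand and explain cases (b) and (c): for $P=\infty$ one estimates $\sup_z S_{a,b}f(z)$ by H\"older against $\|(1-|\cdot|^2)^{b-q/p}K_a(z,\cdot)\|_{L^{p'}}$ and computes the supremum by Forelli--Rudin, where the logarithm is no longer admissible and forces the strict scaling inequality of (c); for $p=1$ one bounds the operator by the uniform column size $\sup_w(1-|w|^2)^{b-q}\|K_a(\cdot,w)\|_{L^P_Q}$ via Minkowski's integral inequality, yielding (b) together with its ``at least one strict'' provision.

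\textbf{Necessity $(1)\Rightarrow(3)$.} Assuming $T_{a,b}:L^p_q\to L^P_Q$, I first extract the integrability condition: for $T_{a,b}f$ to be finite for every $f\in L^p_q$, H\"older forces $(1-|\cdot|^2)^{b-q/p}$ to be locally $L^{p'}$ up to the boundary, which by Forelli--Rudin is exactly $\frac{1+q}{p}<1+b$ (respectively $\le$, with the endpoint strictness, when $p=1$ and $p'=\infty$). To obtain the scaling inequality I test on the family $f_u(w)=\dfrac{(1-|u|^2)^{\tau}}{(1-\langle u,w\rangle)^{\sigma}}$ with $u\to\partial\mathbb{B}$, choosing the exponents so that $\|f_u\|_{L^p_q}$ remains comparable to a fixed power of $1-|u|^2$; restricting the output to the region where $z$ is close to $u$, the kernel $K_a(z,w)$ is essentially positive there, so no cancellation occurs and $|T_{a,b}f_u(z)|$ is bounded below by the corresponding Forelli--Rudin growth. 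Comparing the growth of $\|T_{a,b}f_u\|_{L^P_Q}$ against $\|f_u\|_{L^p_q}$ as $u\to\partial\mathbb{B}$, and letting the exponents vary, forces $a\le b+\frac{1+N+Q}{P}-\frac{1+N+q}{p}$, with the strict/non-strict dichotomy again governed by whether a logarithmic factor is tolerated (it is when $P<\infty$, since $Q>-1$; it is not when $P=\infty$).

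\textbf{Main obstacle.} I expect the genuinely delicate point to be the strict case $p<P$ of the sufficiency, which is not covered by the classical equal-exponent Schur test; here one needs the generalized Schur test adapted to $L^p\to L^P$ with $p<P$, and must verify that a \emph{single} choice of $\lambda$ simultaneously satisfies both auxiliary estimates, the bookkeeping of exponents being exactly what converts into the arithmetic of (3). Two further technical matters require care: in the necessity direction, using $T_{a,b}$ rather than $S_{a,b}$ means I must localize $z$ and $w$ near a common boundary point to guarantee $|K_a|\approx K_a$ and rule out cancellation; and throughout, the range $a\le-(N+1)$ must be treated with the non-model (logarithmic or bounded) kernel estimates, where one checks that the scaling inequality becomes automatic once $\frac{1+q}{p}<1+b$ and $Q>-1$ hold, so that the statement remains uniform in $a$.
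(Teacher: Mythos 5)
The first thing to note is that the paper contains no proof of this statement at all: it is quoted verbatim from Kaptanoglu--Ureyen (see the citation \cite{ku} attached to Theorem \ref{keumo3}), and the authors use it strictly as a black box (e.g.\ in Step 2 of the proof of Theorem \ref{keumo4} and in Remark \ref{3keumo5}). So there is no internal argument to compare your attempt against; the only meaningful benchmark is the source \cite{ku}, whose proof does run along the lines you outline --- kernel size estimates as in Lemma \ref{2keumo4}, Forelli--Rudin asymptotics as in Proposition \ref{2keumo5}, Schur-type tests for sufficiency, and boundary test functions for necessity. Structurally, your plan is the standard and correct one.

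Judged as a proof, though, your sketch has real gaps. The one you flag yourself --- sufficiency for $p<P$, which needs a Schur test adapted to unequal exponents --- is not a peripheral technicality but the core of the implication $(3)\Rightarrow(2)$, and you do not carry it out, in particular not in the equality case $a=b+\frac{1+N+Q}{P}-\frac{1+N+q}{p}$. Moreover, your explanation of why equality is admissible when $P<\infty$ is wrong as stated: a Schur test requires \emph{pointwise} bounds on the auxiliary integrals, so there is no stage at which a logarithmic factor can be ``absorbed into $L^P_Q$.'' In the equal-exponent case $p=P$ one checks instead that, with $h(z)=(1-|z|^2)^{\lambda}$, the two Forelli--Rudin exponent windows for $\lambda$ intersect nontrivially exactly when $Q>-1$ and $\frac{1+q}{p}<1+b$, and the test then closes \emph{at} equality with no logarithm ever appearing (the windows also force $a>-1$, which is harmless: equality together with $a\le -1$ would force $Q<-1$, so that case is vacuous). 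The logarithm heuristic is the correct intuition only for the \emph{necessity} at $P=\infty$, where equality produces unbounded log-type output. Finally, the endpoint case $p=1$ with its ``at least one strict'' proviso, and the lower-bound computation for $|T_{a,b}f_u|$ in the necessity direction (where you must localize to make the kernel effectively positive --- compare how Lemma \ref{2keumo4}(2)(b) and Lemma \ref{5keumo1} handle this in the paper), are described but not executed. Until those are written out, you have a correct strategy, not a proof.
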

This result is useful for our work, especially for the case $p=P$ and $q=Q,$  to investigate the case where these operators $P_{s,t}$ and $T_{a,b}$ are bounded from $L_{q}^{1}$ to $L_{q}^{1,\infty}$. Our main result in this direction is the following. 
\begin{theo}\label{keumo4}
In the case  $q=s$, $s+2t>-1$ and $s+t>-1$ with $s>-1$ the operators $P_{s,t}$ are bounded from $L_{q}^{1}$ to $L_{q}^{1,\infty}$ and not from $L_{q}^{1}$ to $L_{q}^1.$
\end{theo}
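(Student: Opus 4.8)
The plan is to establish the two halves of the statement by completely different routes: the failure of strong $(1,1)$ boundedness is read off directly from Theorem \ref{keumo3}, while the weak-type bound is obtained by realizing $P_{s,t}$ as a Calder\'on--Zygmund operator on the space of homogeneous type $(\mathbb{B},d,\mu_q)$, where $q=s$ and $d(z,w)=|1-\langle z,w\rangle|^{1/2}$ is the standard quasi-metric (for which $\mu_q(B_d(\zeta,r))\asymp r^{2(N+1+s)}$). This is the concrete incarnation of B\'ekoll\'e's strategy in the present setting.

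For the negative assertion, I would first use \eqref{keumoequa} together with $q=s$ to write $|P_{s,t}f(z)|=(1-|z|^2)^t|T_{s+t,s}f(z)|$, so that $\|P_{s,t}f\|_{L^1_q}=\|T_{s+t,s}f\|_{L^1_{q+t}}$; hence $P_{s,t}$ is bounded from $L^1_q$ to $L^1_q$ if and only if $T_{s+t,s}$ is bounded from $L^1_q$ to $L^1_{q+t}$. Applying Theorem \ref{keumo3} with $a=s+t$, $b=s$, $p=P=1$, $q=s$ and $Q=q+t=s+t$ (admissible since $s+t>-1$), the relevant condition (3)(b) requires $\frac{1+q}{p}\le 1+b$ and $a\le b+\frac{1+N+Q}{P}-\frac{1+N+q}{p}$ with at least one inequality strict. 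A substitution collapses both to the equalities $1+s=1+s$ and $s+t=s+t$, so the strictness fails; Theorem \ref{keumo3} then denies the boundedness of $T_{s+t,s}$, and therefore of $P_{s,t}$, on $L^1_q$.

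For the weak-type bound I would invoke the Calder\'on--Zygmund theorem on spaces of homogeneous type, which yields weak type $(1,1)$ from just two ingredients: $L^2$-boundedness and a H\"ormander condition on the kernel (no pointwise size bound on the kernel is needed, which is essential here since that bound fails when $t<0$). The $L^2$ bound again follows from Theorem \ref{keumo3}: since $\|P_{s,t}f\|_{L^2_q}=\|T_{s+t,s}f\|_{L^2_{q+2t}}$, it is equivalent to $T_{s+t,s}\colon L^2_q\to L^2_{s+2t}$, and condition (3)(a) holds because $\frac{1+s}{2}<1+s$ while $a=s+t=b+\frac{1+N+Q}{P}-\frac{1+N+q}{p}$ with $Q=s+2t$. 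This is exactly the step that consumes the hypothesis $s+2t>-1$ (so that $Q>-1$). With respect to $d\mu_q$ the operator has kernel $K^q_{s,t}(z,w)=(1-|z|^2)^tK_{s+t}(z,w)$, and since $s+t>-1>-(N+1)$ one has $K_{s+t}(z,w)=(1-\langle z,w\rangle)^{-(N+1+s+t)}$, which is holomorphic, hence smooth, off the diagonal.

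The step I expect to be the main obstacle is verifying the H\"ormander condition, namely that $\sup_{w,w'}\int_{|1-\langle z,w\rangle|\ge C|1-\langle w,w'\rangle|}|K^q_{s,t}(z,w)-K^q_{s,t}(z,w')|\,d\mu_q(z)$ is finite. Writing $\delta=|1-\langle w,w'\rangle|$, I would use the sharp off-diagonal H\"older-$\tfrac12$ estimate for the Bergman kernel, $|K_{s+t}(z,w)-K_{s+t}(z,w')|\lesssim \delta^{1/2}|1-\langle z,w\rangle|^{-(N+1+s+t)-1/2}$ valid for $|1-\langle z,w\rangle|\ge C\delta$ (this is where the complex geometry of $d$, rather than the crude Euclidean mean value theorem, must be used), so that the integral is dominated by $\delta^{1/2}\int_{|1-\langle z,w\rangle|\ge C\delta}(1-|z|^2)^{s+t}|1-\langle z,w\rangle|^{-(N+1+s+t)-1/2}\,d\mu(z)$. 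The delicate point is the factor $(1-|z|^2)^t$, whose sign is not fixed under the hypotheses: absorbing it into $d\mu_q=(1-|z|^2)^s\,d\mu$ produces the weight exponent $s+t$, which exceeds $-1$ precisely by the hypothesis $s+t>-1$, so that the standard Forelli--Rudin integral estimates apply and give a bound of order $\delta^{1/2}\cdot\delta^{-1/2}=O(1)$, uniform in $w,w'$. Once this is in hand, the Calder\'on--Zygmund machinery delivers $P_{s,t}\colon L^1_q\to L^{1,\infty}_q$, completing the proof.
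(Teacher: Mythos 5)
Your proposal is correct and follows essentially the same route as the paper's proof: the negative assertion via the equality case of Theorem \ref{keumo3} (with $a=s+t$, $b=q=s$, $Q=s+t$), and the weak type $(1,1)$ via Calder\'on--Zygmund theory on the homogeneous space $(\mathbb{B},d,\mu_q)$, with the $L^2$ bound obtained from Theorem \ref{keumo3} (this is exactly where $s+2t>-1$ enters) and a H\"ormander condition on the kernel $H_{s,t}(z,w)=(1-|z|^2)^t K_{s+t}(z,w)$. The only cosmetic differences are that the paper verifies the H\"ormander condition by citing B\'ekoll\'e's Lipschitz-type kernel estimate (Lemma \ref{3keumo2}, via \cite{bek}) rather than your H\"older-$1/2$ off-diagonal estimate (both yield the same $O(1)$ bound after dyadic integration), and the paper additionally checks that $H_{s,t}(z,\cdot)\in L^2_q$, a hypothesis of its statement of the Coifman--Weiss theorem (Theorem \ref{2keumo7}) that you omit.
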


In this paper we also investigate the more general cases with weights $\omega$ for the boundedness of $T_{a,b}$ and $P_{s,t}$ from $L^p(\omega d\mu_q)$ to $L^p(\omega d\mu_Q)$. In the special case of the Bergman projection $T_{a,0}$, B\'ekoll\'e \cite{bek}  obtained the characterisation of the weights $\omega$ in terms of the B\'ekoll\'e -Bonami condition.\\

 Let $d$  be the pseudo-distance in $\mathbb{B}$ defined by $d(z,w)= ||z|-|w||+ \left|1-\frac{\langle z,w \rangle}{|z||w|}\right|$. 

\begin{defi}[B\'ekoll\'e -Bonami class]\label{keumo1}
Let $\omega $ be a locally integrable non negative function on $\mathbb{B}$ (a weight). We say that $\omega d\mu_a$  belongs to $(B_p),$ $1<p<\infty,$ if there is a constant $C_p(\omega)$ such that for every ball B (with respect to the pseudo-distance $d$) of $\mathbb{B}$ that intersects the closure of $\mathbb{B}$, we have
$$\frac{\omega d\mu_a(B)}{\mu_a(B)}\left( \frac{1}{\mu_a(B)}\displaystyle\int_{\mathbb{B}}\omega^{\frac{-1}{p-1}}d\mu_a\right) ^{p-1}\leq C_p(\omega).$$
\end{defi}
For $a>-1$, let 
$$T_af(z):=T_{a,0}f(z):=\displaystyle\int_{\mathbb{B}}\frac{f(x)}{(1-\langle z,x \rangle)^{N+1+a}}d\mu_a (x)$$
be the Bergman projection. B\'ekoll\'e showed in \cite{bek} that

\begin{theo}\label{keumo2}
Let $\omega $ be a locally integrable non negative function on $\mathbb{B}.$ The operator $T_a$, $a>-1$, is well defined and continuous on $L^p(\omega d\mu_a)$, $1<p<\infty$, if and only if $\omega d\mu_a \in (B_p)$.  
\end{theo}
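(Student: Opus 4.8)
The plan is to treat the two implications separately. For the sufficiency it is enough to bound the positive companion operator $S_af(z)=\int_{\mathbb{B}}|K_a(z,w)|f(w)\,d\mu_a(w)$, since $|T_af|\le S_a(|f|)$ pointwise; this also delivers the absolute convergence that makes $T_a$ well defined on $L^p(\omega d\mu_a)$, and it mirrors the unweighted equivalence of Theorem \ref{keumo3}. For the necessity I work with $T_a$ directly. The geometric backbone throughout is that for a pseudo-ball $B$ of radius $\delta$ near $\partial\mathbb{B}$ one has $\mu_a(B)\approx\delta^{N+1+a}$ while $|K_a(z,w)|=|1-\langle z,w\rangle|^{-(N+1+a)}\approx\delta^{-(N+1+a)}\approx\mu_a(B)^{-1}$ for all $z,w\in B$; together with the quasi-triangle inequality for $d$ and the doubling of $\mu_a$, this drives both directions.

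\textbf{Necessity.} Assuming $T_a$ bounded on $L^p(\omega d\mu_a)$, I would fix a pseudo-ball $B$ and test on $f=\chi_B\sigma$, where $\sigma=\omega^{-1/(p-1)}$ is the dual weight. Choosing $B$ attached to a boundary point so that $1-\langle z,w\rangle$ stays in a fixed narrow sector about the positive real axis, the real part of $K_a(z,w)$ is comparable to $|K_a(z,w)|$, so for $z\in B$ one obtains the lower bound $\mathrm{Re}\,T_af(z)\gtrsim\mu_a(B)^{-1}\int_B\sigma\,d\mu_a$. Raising to the $p$-th power, integrating $\omega\,d\mu_a$ over $B$, and using $\sigma^p\omega=\sigma$ to evaluate the right-hand norm $\|f\|_{L^p(\omega d\mu_a)}^p=\int_B\sigma\,d\mu_a$, the boundedness inequality rearranges exactly into $\frac{\omega d\mu_a(B)}{\mu_a(B)}\big(\frac{1}{\mu_a(B)}\int_B\sigma\,d\mu_a\big)^{p-1}\le C$, which is $\omega d\mu_a\in(B_p)$. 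One first argues under the qualitative assumption $0<\int_B\sigma\,d\mu_a<\infty$ and removes it by truncation.

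\textbf{Sufficiency.} This is the main work, where I would follow B\'ekoll\'e's strategy through the maximal operator $Mf(z)=\sup_{B\ni z}\mu_a(B)^{-1}\int_B|f|\,d\mu_a$ taken over pseudo-balls. First, $(B_p)$ implies $M$ is bounded on $L^p(\omega d\mu_a)$: this is the Muckenhoupt-type maximal theorem, proved by a Vitali covering argument for $d$ combined with H\"older's inequality and the $(B_p)$ inequality applied ball by ball. Second, I would dominate $S_a$ by $M$ through a good-$\lambda$ inequality of the form $(\omega d\mu_a)\{S_af>2\lambda,\,Mf\le\gamma\lambda\}\le C\Phi(\gamma)\,(\omega d\mu_a)\{S_af>\lambda\}$ with $\Phi(\gamma)\to0$ as $\gamma\to0$. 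Partitioning $\mathbb{B}$ into pseudo-annuli around each point of the level set, on each of which the kernel is essentially constant so that the local contribution to $S_af$ is an average controlled by $Mf$ with geometrically summable weights, produces the $\mu_a$-version of this inequality; transferring the smallness from $\mu_a$ to $\omega\,d\mu_a$ requires the $A_\infty$/reverse-doubling property of $\omega\,d\mu_a$, which I would extract from $(B_p)$ via a reverse H\"older inequality. Integrating the good-$\lambda$ inequality against $p\lambda^{p-1}\,d\lambda$ then yields $\|S_af\|_{L^p(\omega d\mu_a)}\lesssim\|Mf\|_{L^p(\omega d\mu_a)}\lesssim\|f\|_{L^p(\omega d\mu_a)}$.

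\textbf{Main obstacle.} The crux is the sufficiency, specifically the good-$\lambda$ inequality for $S_a$ together with the passage from the $\mu_a$-measure estimate to the weighted measure $\omega\,d\mu_a$. Because the Bergman kernel is \emph{not} dominated pointwise by the maximal function, a distributional argument is unavoidable, and the reverse-H\"older self-improvement of $(B_p)$ underlying the $A_\infty$ step is the technically delicate point; controlling the oscillation of $1-\langle z,w\rangle$ so that the real part of the complex kernel can be used in the necessity direction is a secondary but genuine technical issue.
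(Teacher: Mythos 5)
Your overall architecture (testing functions for necessity, maximal function plus good-$\lambda$ for sufficiency) is the right family of ideas, and it is indeed the strategy of B\'ekoll\'e that this paper follows in Sections 5--7 (Theorem \ref{keumo2} itself is quoted from \cite{bek}, but it is recovered as the case $t=0$, $b=s=q=Q=a$ of Theorem \ref{keumo9} and Corollary \ref{keumo14}). However, both halves of your outline contain a step that genuinely fails, and in each case the failure is precisely what the paper's machinery is built to circumvent. In the necessity direction, your lower bound rests on the claim that for $z,w$ in a single pseudo-ball attached to the boundary the quantity $1-\langle z,w\rangle$ stays in a narrow sector about the positive real axis, so that $\Re K_a(z,w)\simeq |K_a(z,w)|$. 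This is false: already in the disk take $w=r$ and $z=re^{i\delta}$ with $1-r=\delta^2$; both points lie in a pseudo-ball of radius $\simeq\delta$ touching the boundary, yet $1-z\bar{w}\approx 2\delta^{2}-i\delta$, whose argument tends to $-\pi/2$ as $\delta\to 0$. Raising to the power $N+1+a>1$, the argument of the kernel can exceed $\pi/2$, so $\Re K_a(z,w)$ changes sign inside the ball and the single-ball average suffers genuine cancellation; no choice of ball attached to the boundary repairs this. The paper avoids the issue by testing on \emph{two} separated pseudo-balls $B_i,B_j$ of the same radius touching the boundary: by the H\"ormander-type estimate (Lemma \ref{3keumo2} with Proposition \ref{2keumo2}) the kernel is essentially constant for $z\in B_j$, $w\in B_i$, giving the modulus lower bound of Lemma \ref{5keumo1} with no cancellation, and the $(B_p)$ inequality on $B_i$ then follows by two choices of test function and by interchanging $B_i$ and $B_j$ (Theorem \ref{5keumo3}).

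In the sufficiency direction there are two gaps. First, your maximal operator $M$ runs over \emph{all} pseudo-balls, but $(B_p)$ constrains only balls touching the boundary, so $(B_p)$ does not imply boundedness of $M$ on $L^p(\omega d\mu_a)$: a weight equal to $1$ plus tall bumps $A_n\chi_{B_n}$ on interior balls, with $A_n\to\infty$ and $A_n\mu_a(B_n)\to 0$, satisfies $(B_p)$ (every boundary ball reaching the $B_n$ has $\mu_a$-measure bounded below), while testing on characteristic functions of neighbouring interior balls shows the norm of $M$ cannot be controlled. This is why the paper uses the restricted maximal function $m_{a,b}$ of (\ref{5keumoequa1}), whose supremum is only over balls with $R>1-|\zeta|$. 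Second, and more seriously, your transfer of the good-$\lambda$ smallness from $\mu_a$ to $\omega\,d\mu_a$ invokes a reverse H\"older/$A_\infty$ property of $\omega\,d\mu_a$ ``extracted from $(B_p)$''. B\'ekoll\'e--Bonami weights do \emph{not} satisfy a reverse H\"older inequality in general (again because interior balls are unconstrained), so this step collapses. The substitute, which is the technical heart of B\'ekoll\'e's proof and of this paper, is regularization of the weight: one shows $m_{a,b}f\lesssim m_{a,b}(R^b_kf)$ (Lemma \ref{5keumo7}), moves the weight onto its regularization $R^{b,Q}_{k'}\omega$ by the duality Lemma \ref{5keumo9}, proves that the regularized weight (times the appropriate power of $1-|z|^2$) is a genuine Muckenhoupt $(A_p,\alpha)$ weight (Lemma \ref{5keumo19}), to which the $A_\infty$ property of Lemma \ref{2keumo12} does apply, and only then runs the Whitney/good-$\lambda$ argument (Proposition \ref{5keumo25}, Theorem \ref{5keumo26}) and the maximal theorem (Theorem \ref{5keumo20}). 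Read literally, your outline has no substitute for this regularization step, and without it neither your maximal-function bound nor your good-$\lambda$ transfer is available.
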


  The results we obtain depend upon the values of $a$, $s+t$, $q$ and $Q$.  In the case $a<-(N+1)$, $s+t<-(N+1)$ we have the  two following main results:
\begin{theo}\label{keumo6}
In the case $a<-(N+1),$  $T_{a,b}$ is well defined and continuous from $L^p(\omega d\mu_q)$ to $L^p(\omega d\mu_Q)$  if and only if 
$$\left( \displaystyle\int_{\mathbb{B}}\omega(z)d\mu_Q(z)\right) \left( \displaystyle\int_{\mathbb{B}}(\omega(z))^{\frac{-1}{p-1}}d\mu_{q+p'(b-q)}(z)\right) ^{p-1}<\infty.$$
Moreover 
$$\Vert T_{a,b}\Vert^p \simeq \left( \displaystyle\int_{\mathbb{B}}\omega(z)d\mu_Q(z)\right) \left( \displaystyle\int_{\mathbb{B}}(\omega(z))^{\frac{-1}{p-1}}d\mu_{q+p'(b-q)}(z)\right) ^{p-1}.$$ 
\end{theo}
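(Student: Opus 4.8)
The plan is to exploit the regularity of the kernel in this regime. Writing $c:=1-N-a$, the hypothesis $a<-(N+1)$ forces $c>2$, and since $K_a(z,w)={}_2F_1(1,1;c;\langle z,w\rangle)=\sum_{k\ge 0}\frac{k!}{(c)_k}\langle z,w\rangle^k$ has coefficients $\frac{k!}{(c)_k}\sim\Gamma(c)\,k^{1-c}$, the series converges absolutely on $\overline{\mathbb B}\times\overline{\mathbb B}$. Thus $K_a$ is continuous up to the boundary, and I would record two facts that drive the whole argument: (i) $K_a$ is bounded, $|K_a(z,w)|\le M$ on $\mathbb B\times\mathbb B$; and (ii) $\operatorname{Re}K_a(z,w)\ge c_0>0$ on $\mathbb B\times\mathbb B$. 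Property (ii) I would obtain from the Euler representation ${}_2F_1(1,1;c;\zeta)=(c-1)\int_0^1(1-t)^{c-2}(1-\zeta t)^{-1}\,dt$: for $|\zeta|\le1$ one has $\operatorname{Re}\frac{1}{1-\zeta t}=\frac{1-t\operatorname{Re}\zeta}{|1-\zeta t|^2}\ge0$ with strict positivity on a set of positive measure, so $\operatorname{Re}K_a>0$ everywhere on the compact set $\overline{\mathbb B}\times\overline{\mathbb B}$, whence the uniform bound $c_0$ by compactness. (These boundedness facts are in the spirit of Lemmas \ref{5keumo1}--\ref{5keumo2}.) Finally I rewrite $T_{a,b}f(z)=\int_{\mathbb B}K_a(z,w)f(w)\,d\mu_b(w)$ using $(1-|w|^2)^{b-q}d\mu_q(w)=d\mu_b(w)$, and set $A:=\int_{\mathbb B}\omega\,d\mu_Q$ and $B:=\int_{\mathbb B}\omega^{-1/(p-1)}\,d\mu_{q+p'(b-q)}$, so the asserted condition reads $AB^{p-1}<\infty$.

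For sufficiency and the upper estimate for the norm I would apply (i) and Hölder's inequality to the factorization $|f|\,\omega^{1/p}\cdot(1-|w|^2)^{b-q}\omega^{-1/p}$, obtaining the pointwise bound $|T_{a,b}f(z)|\le M\,\|f\|_{L^p(\omega d\mu_q)}\,B^{1/p'}$, uniform in $z$; here the conjugate exponent converts $(1-|w|^2)^{(b-q)p'}\omega^{-p'/p}d\mu_q$ into $\omega^{-1/(p-1)}d\mu_{q+p'(b-q)}$. Raising this to the power $p$ and integrating against $\omega\,d\mu_Q$ gives $\|T_{a,b}f\|_{L^p(\omega d\mu_Q)}^p\le M^p\,A\,B^{p-1}\,\|f\|_{L^p(\omega d\mu_q)}^p$. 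This shows that when $AB^{p-1}<\infty$ the integral defining $T_{a,b}f$ converges absolutely (well-definedness) and that $T_{a,b}$ is continuous with $\|T_{a,b}\|^p\le M^pAB^{p-1}$.

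For necessity and the matching lower bound I would test against the Hölder extremizer $f_0(w)=\omega(w)^{-1/(p-1)}(1-|w|^2)^{(b-q)/(p-1)}$, truncated to $E_n=\{|w|\le 1-1/n\}\cap\{1/n\le\omega\le n\}$ to guarantee integrability. A direct computation yields both $\|f_0\mathbf 1_{E_n}\|_{L^p(\omega d\mu_q)}^p=B_n$ and $\int_{E_n}f_0\,d\mu_b=B_n$, where $B_n:=\int_{E_n}\omega^{-1/(p-1)}d\mu_{q+p'(b-q)}\uparrow B$. Since $f_0\ge 0$, property (ii) gives, for every $z\in\mathbb B$, $|T_{a,b}(f_0\mathbf 1_{E_n})(z)|\ge\operatorname{Re}T_{a,b}(f_0\mathbf1_{E_n})(z)\ge c_0\int_{E_n}f_0\,d\mu_b=c_0B_n$, hence $\|T_{a,b}(f_0\mathbf1_{E_n})\|_{L^p(\omega d\mu_Q)}^p\ge c_0^p B_n^p A$. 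If $T_{a,b}$ is continuous, comparing with $\|T_{a,b}(f_0\mathbf1_{E_n})\|^p\le\|T_{a,b}\|^p B_n$ gives $\|T_{a,b}\|^p\ge c_0^p A\,B_n^{p-1}$; letting $n\to\infty$ forces $AB^{p-1}<\infty$ and $\|T_{a,b}\|^p\ge c_0^pAB^{p-1}$, which together with the upper bound yields $\|T_{a,b}\|^p\simeq AB^{p-1}$. The one genuinely delicate point is property (ii): the uniform positivity of $\operatorname{Re}K_a$ is precisely what lets the nonnegative test function detect the entire mass $A=\int_{\mathbb B}\omega\,d\mu_Q$, rather than only a neighbourhood of the origin where $K_a\approx1$ trivially, and it is special to the non-singular regime $a<-(N+1)$.
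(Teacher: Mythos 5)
Your proof is correct, and while your sufficiency half is essentially the paper's (boundedness of $K_a$ for $a<-(N+1)$ plus H\"older on the factorization $|f|\omega^{1/p}\cdot\omega^{-1/p}(1-|w|^2)^{b-q}$, giving $\Vert T_{a,b}\Vert^p\le M^pAB^{p-1}$), your necessity half takes a genuinely different route. The paper tests with the same extremizer $f_0(w)=\omega^{-1/(p-1)}(w)(1-|w|^2)^{(p'-1)(b-q)}$, but it justifies $f_0\in L^p(\omega d\mu_q)$ by a separate duality argument (Lemma \ref{4keumo3}), obtains $A<\infty$ from another testing lemma (Lemma \ref{4keumo1}, via the mean-value property of $K_a(\cdot,w)$), and uses only the \emph{local} positivity $\Re K_a\ge \tfrac12$ on $\{|z|\le\rho_0\}$ from Lemma \ref{2keumo4}; as a consequence its lower bound reads $\Vert T_{a,b}\Vert^p\gtrsim \left(\int_{|z|\le\rho_0}\omega\,d\mu_Q\right)B^{p-1}$, which proves the equivalence of the finiteness conditions but does not, as literally written, deliver the stated two-sided estimate $\Vert T_{a,b}\Vert^p\simeq AB^{p-1}$ (the local mass can be much smaller than $A$ when $\omega$ concentrates near $\partial\mathbb{B}$). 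Your key lemma is stronger: the Euler representation ${}_2F_1(1,1;c;\zeta)=(c-1)\int_0^1(1-t)^{c-2}(1-\zeta t)^{-1}dt$ with $c=1-N-a>2$, together with $\Re(1-\zeta t)^{-1}\ge (1-t)/4$ for $|\zeta|\le1$, gives the \emph{uniform global} bound $\Re K_a\ge c_0>0$ on $\mathbb{B}\times\mathbb{B}$ (in fact $c_0=\frac{c-1}{4c}$ explicitly, so the compactness step is not even needed), and this is exactly what lets the full mass $A$ enter the lower bound and makes the norm equivalence genuinely follow; your truncation to $E_n$ likewise replaces Lemma \ref{4keumo3}. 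What the paper's route buys in exchange is reusability: Lemmas \ref{4keumo1} and \ref{4keumo3} and the local positivity work verbatim in the singular regime $a>-(N+1)$, where no global positivity of $\Re K_a$ is available. Two small points you should tidy: monotone convergence gives $B_n\uparrow B$ only if $0<\omega<\infty$ a.e.\ (if $\omega$ vanished on a set of positive measure then $B=\infty$ while $\sup_n B_n$ could be finite), but this degenerate case is excluded by the same positivity — a nonzero nonnegative $f$ supported in $\{\omega=0\}\cap r\mathbb{B}$ has zero norm in $L^p(\omega d\mu_q)$ while $\Vert T_{a,b}f\Vert_{L^p(\omega d\mu_Q)}\ge c_0\left(\int f\,d\mu_b\right)A^{1/p}>0$, contradicting continuity; and the division by $B_n$ requires $B_n>0$ and $A>0$, i.e.\ $\omega\not\equiv 0$, which is implicit in calling $\omega$ a weight.
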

\begin{theo}\label{keumo7}
In the case $s+t<-(N+1),$ there are no weights $\omega$ such that $P_{s,t}$ is well defined and continuous from $L^p(\omega d\mu_q)$ to $L^p(\omega d\mu_Q)$ for $Q\leq q$.
\end{theo}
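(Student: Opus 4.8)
The plan is to argue by contradiction. Suppose $\omega$ (taken, as is standard, strictly positive a.e.) makes $P_{s,t}$ bounded from $L^p(\omega\,d\mu_q)$ to $L^p(\omega\,d\mu_Q)$, with $1<p<\infty$. I use \eqref{keumoequa} to write $P_{s,t}f=(1-|z|^2)^t T_{s+t,s}f$, and I record two features of the regime $s+t<-(N+1)$: the kernel is $K_{s+t}(z,w)=\sum_{k\ge0}c_k(s+t)\langle z,w\rangle^k$ with $c_0(s+t)=1$ and bounded coefficients, while $t<-(N+1)-s<-N<0$, so the factor $(1-|z|^2)^t$ blows up at the boundary. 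From boundedness I will extract two necessary conditions and show they cannot hold simultaneously; the blow-up of $(1-|z|^2)^t$ combined with $Q\le q$ is what drives the incompatibility.

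For the first condition I test near the origin. Since $\sum_{k\ge1}c_k(s+t)r^k\to0$ as $r\to0^+$, I fix $r_0\in(0,1)$ with $\sum_{k\ge1}c_k(s+t)r_0^k<\tfrac12$, so that $\mathrm{Re}\,K_{s+t}(z,w)\ge\tfrac12$ for all $|z|\le r_0$ and $w\in\mathbb{B}$. For $f\ge0$ and $|z|\le r_0$ this gives $|P_{s,t}f(z)|\ge(1-|z|^2)^t\,\mathrm{Re}\,T_{s+t,s}f(z)\ge\tfrac12\int_{\mathbb{B}}f(w)(1-|w|^2)^{s-q}\,d\mu_q(w)$, using $(1-|z|^2)^t\ge1$. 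Integrating over $\{|z|\le r_0\}$ (a set on which $\int\omega\,d\mu_Q>0$) and applying boundedness produces $\big(\int_{\mathbb{B}}f(w)(1-|w|^2)^{s-q}\,d\mu_q(w)\big)^p\le C\,\|f\|_{L^p(\omega d\mu_q)}^p$ for all $f\ge0$; by the duality of the pairing $f\mapsto\int f\,(1-|w|^2)^{s-q}\,d\mu_q$ on $L^p(\omega\,d\mu_q)$ this is equivalent to
\[ M_2:=\int_{\mathbb{B}}\omega(w)^{-1/(p-1)}(1-|w|^2)^{(s-q)p'+q}\,d\mu(w)<\infty. \]

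For the second condition I test on the truncations $f_R=\mathbf 1_{\{|w|\le R\}}$, which lie in $L^p(\omega\,d\mu_q)$ by local integrability. As $\{|w|\le R\}$ is rotation invariant, the substitution $w\mapsto e^{i\theta}w$ annihilates every term with $k\ge1$ in the expansion of $K_{s+t}$, so $T_{s+t,s}f_R\equiv c_{s,R}:=\int_{|w|\le R}(1-|w|^2)^s\,d\mu(w)>0$, and hence $P_{s,t}f_R(z)=c_{s,R}(1-|z|^2)^t$ exactly. Boundedness then forces
\[ M_1:=\int_{\mathbb{B}}\omega(z)(1-|z|^2)^{tp+Q}\,d\mu(z)<\infty, \]
with no constraint on the total mass $\int_{\mathbb{B}}\omega\,d\mu_q$.

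Finally I combine the conditions. Writing $(1-|z|^2)^{\gamma}=\big[\omega^{1/p}(1-|z|^2)^{(tp+Q)/p}\big]\big[\omega^{-1/p}(1-|z|^2)^{\gamma-(tp+Q)/p}\big]$ and applying Hölder with exponents $p,p'$, the exponent for which the two resulting factors are exactly $M_1$ and $M_2$ is $\gamma=s+t+\frac{Q-q}{p}$, giving
\[ \int_{\mathbb{B}}(1-|z|^2)^{\gamma}\,d\mu\le M_1^{1/p}\,M_2^{1/p'}<\infty. \]
But $s+t<-(N+1)$ and $Q\le q$ force $\gamma<-(N+1)\le-1$, so the left-hand integral diverges — a contradiction, proving that no such $\omega$ exists. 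I expect the delicate step to be the first necessary condition: recognizing that at the origin the kernel collapses to the constant $c_0(s+t)=1$, so that boundedness can be transferred to the dual integrability statement $M_2<\infty$. Once $M_1$ and $M_2$ are in hand, the verification that the cross term in Hölder cancels the weight and yields the divergent exponent $\gamma$ is routine.
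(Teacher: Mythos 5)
Your proposal is correct and follows essentially the same route as the paper: you extract the two necessary integrability conditions $\omega\in L^1(d\mu_{Q+pt})$ (the paper's Lemma \ref{4keumo2}, which you obtain by testing on $\mathbf{1}_{\{|w|\le R\}}$ and rotation invariance where the paper tests on $(1-|w|^2)^{-s}\chi_{B(0,R)}$ and uses the mean-value property) and $\omega^{-1/(p-1)}\in L^1(d\mu_{q+p'(s-q)})$ (the paper's Lemma \ref{4keumo4}, which you get via the kernel lower bound near the origin plus $L^p$--$L^{p'}$ duality instead of the paper's Riesz-representation contradiction), and then show by H\"older's inequality that both cannot hold since the exponent $s+t+\frac{Q-q}{p}<-(N+1)\le -1$ makes the resulting integral diverge, exactly as in the paper's Proposition \ref{4keumo4'} (you integrate over the whole ball rather than over pseudo-balls touching the boundary, which is immaterial). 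All steps check out, so this is a valid proof with only cosmetic differences from the paper's argument.
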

\begin{theo}\label{keumo7'}
In the case $s+t<-(N+1),$ if $Q>q,$ then  $P_{s,t}$ is well defined and continuous from $L^p(\omega d\mu_q)$ to $L^p(\omega d\mu_Q)$ if and only if
$$\left( \displaystyle\int_{\mathbb{B}}\omega(z)d\mu_{Q+pt}(z)\right) \left( \displaystyle\int_{\mathbb{B}}(\omega(z))^{\frac{-1}{p-1}}d\mu_{q+p'(s-q)}(z)\right) ^{p-1}<\infty.$$
Moreover $$\Vert P_{s,t} \Vert^p \simeq \left( \displaystyle\int_{\mathbb{B}}\omega(z)d\mu_{Q+pt}(z)\right) \left( \displaystyle\int_{\mathbb{B}}(\omega(z))^{\frac{-1}{p-1}}d\mu_{q+p'(s-q)}(z)\right) ^{p-1}.$$
\end{theo}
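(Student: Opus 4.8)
The plan is to reduce Theorem \ref{keumo7'} to the already-established Theorem \ref{keumo6} by exploiting the factorization \eqref{keumoequa}. First I would record that, by \eqref{keumoequa}, $P_{s,t}f(z)=(1-|z|^2)^tT_{s+t,s}f(z)$, so for every $f$ and every weight $\omega$ one has
\[
\|P_{s,t}f\|_{L^p(\omega\,d\mu_Q)}^p=\int_{\mathbb B}(1-|z|^2)^{pt}\,|T_{s+t,s}f(z)|^p\,\omega(z)\,d\mu_Q(z)=\|T_{s+t,s}f\|_{L^p(\omega\,d\mu_{Q+pt})}^p .
\]
Since the domain norm $L^p(\omega\,d\mu_q)$ is unchanged, this identity says that $P_{s,t}\colon L^p(\omega\,d\mu_q)\to L^p(\omega\,d\mu_Q)$ is well defined and continuous if and only if $T_{s+t,s}\colon L^p(\omega\,d\mu_q)\to L^p(\omega\,d\mu_{Q+pt})$ is, with equal operator norms. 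Here well-definedness transfers in both directions because the two operators differ only by the strictly positive, finite factor $(1-|z|^2)^t$ on $\mathbb B$.

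Next I would apply Theorem \ref{keumo6} to $T_{a,b}$ with the choice $a=s+t$, $b=s$, source parameter $q$, and target parameter $Q+pt$. The hypothesis $s+t<-(N+1)$ is exactly $a<-(N+1)$, so Theorem \ref{keumo6} applies verbatim and gives that $T_{s+t,s}\colon L^p(\omega\,d\mu_q)\to L^p(\omega\,d\mu_{Q+pt})$ is well defined and continuous if and only if
\[
\Big(\int_{\mathbb B}\omega\,d\mu_{Q+pt}\Big)\Big(\int_{\mathbb B}\omega^{-1/(p-1)}\,d\mu_{q+p'(s-q)}\Big)^{p-1}<\infty ,
\]
together with the norm equivalence $\|T_{s+t,s}\|^p\simeq$ this same product (one simply substitutes $b=s$ and the target exponent $Q+pt$ into the condition of Theorem \ref{keumo6}). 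Combining this with the first step yields both the stated criterion and $\|P_{s,t}\|^p\simeq$ the same product, which is the assertion of Theorem \ref{keumo7'}.

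It remains to explain the role of the hypothesis $Q>q$, which is where the only genuine subtlety lies. Unlike in Theorem \ref{keumo6}, both integral factors are here tied to the \emph{same} weight $\omega$, so one must check that the characterizing condition is not vacuous. Testing on power weights $\omega=(1-|z|^2)^\alpha$, the first integral is finite exactly when $\alpha>-1-Q-pt$ and the second exactly when $\alpha<(p-1)\big(1+q+p'(s-q)\big)=p(1+s)-1-q$, so an admissible $\alpha$ exists if and only if $q-Q<p(1+s+t)$. Because $s+t<-(N+1)$ forces $1+s+t<0$, the right-hand side is negative, and the inequality can hold only when $Q>q$; in the complementary regime $Q\le q$ the product is infinite for every weight, which recovers the non-existence statement of Theorem \ref{keumo7}. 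The main work of the proof is therefore the careful bookkeeping of the weight exponents in the reduction and the verification that the equivalence of operator norms is exact, whereas the analytic heart—the boundedness of the hypergeometric kernel $K_{s+t}$ when $s+t<-(N+1)$ and the resulting two-weight testing argument—is entirely contained in Theorem \ref{keumo6}.
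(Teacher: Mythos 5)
Your proof is correct, but it is organized differently from the paper's. The paper (Theorem \ref{4keumo6}) proves the statement directly, running in parallel the argument it just gave for $T_{a,b}$ in Theorem \ref{4keumo5}: sufficiency from the boundedness of the kernel $K_{s+t}$ when $s+t<-(N+1)$ together with H\"older's inequality, and necessity from the lower bound $\Re K_{s+t}(z,w)\geq \frac{1}{2}$ on $|z|\leq\rho_0$ (Lemma \ref{2keumo4}) applied to the test function $f(z)=\omega(z)^{-1/(p-1)}(1-|z|^2)^{(p'-1)(s-q)}$, with Lemmas \ref{4keumo2} and \ref{4keumo4} supplying the needed integrability. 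You instead promote the pointwise identity (\ref{keumoequa}) to the exact norm identity
$$\Vert P_{s,t}f\Vert^p_{L^p(\omega d\mu_Q)}=\Vert T_{s+t,s}f\Vert^p_{L^p(\omega d\mu_{Q+pt})},$$
and then quote Theorem \ref{keumo6} with $a=s+t<-(N+1)$, $b=s$ and target exponent $Q+pt$ (legitimate, since that theorem places no restriction on the target exponent), which yields the characterization and the two-sided norm estimate at once. The analytic core is identical---your reduction simply avoids duplicating it---and it is consistent with the paper's own practice, since the paper derives Lemma \ref{5keumo2} from Lemma \ref{5keumo1} by exactly this factorization; what the paper's direct proof buys is only the self-containedness of Section \ref{case-a-less}.

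One caveat on your final paragraph: the power-weight computation correctly explains why the hypothesis $Q>q$ is imposed (and you rightly note that it is never used in the equivalence itself), but it does not by itself ``recover'' Theorem \ref{keumo7}; it shows the product is infinite for every standard weight $(1-|z|^2)^{\alpha}$ when $Q\leq q$, not for every weight. The paper's Proposition \ref{4keumo4'} proves that stronger statement for arbitrary $\omega$ via H\"older's inequality. Since this remark is ancillary to the if-and-only-if you were asked to prove, it does not affect the correctness of your argument.
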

In order to give our necessary condition for the boundedness of $T_{a,b}$ when $a>-(1+N)$ we introduce a B\'ekoll\'e -Bonami type class of weights denoted by $(B_p^{a,b,q,Q})$.
\begin{defi}\label{keumo8}
 For $Q\leq q$ and $a>-1$, we say that $\omega \in (B_p^{a,b,q,Q})$ ($b>-1$) if\\
 $$
 \displaystyle\sup_{B:B\cap\partial\mathbb{B}\neq\varnothing}\left( \frac{\mu_b(B)}{\mu^2_a(B)}\displaystyle\int_{B}\omega(z)d\mu_Q(z)\right) \left( \frac{\mu_b(B)}{\mu^2_a(B)}\displaystyle\int_{B}(\omega(z))^{\frac{-1}{p-1}}d\mu_{q+p'(b-q)}(z)\right) ^{p-1}< \infty 
 $$
 where the supremum is taken over the pseudoballs $B$.\\

 For $Q\leq q$ and $a>-N-1$, we say that $\omega \in (B_p^{a,b,q,Q})$ ($b>-1$) if\\
 $$
 \displaystyle\sup_{B:B\cap\partial\mathbb{B}\neq\varnothing}\left( \frac{\mu_b(B)}{R_B^{2(N+1+a)}}\displaystyle\int_{B}\omega(z)d\mu_Q(z)\right) \left( \frac{\mu_b(B)}{R_B^{2(N+1+a)}}\displaystyle\int_{B}(\omega(z))^{\frac{-1}{p-1}}d\mu_{q+p'(b-q)}(z)\right) ^{p-1}< \infty
 $$
 where the supremum is taken over the pseudoballs $B$ with radius $R_B.$\\

 For $Q> q$ and $a>-1$, we say that $\omega \in (B_p^{a,b,q,Q})$ ($b>-1$) if\\
 $$
 \displaystyle\sup_{B:B\cap\partial\mathbb{B}\neq\varnothing}\left( \frac{\mu_{b+\frac{Q-q}{p}}(B)}{\mu^2_a(B)}\displaystyle\int_{B}\omega(z)d\mu_Q(z)\right) \left( \frac{\mu_{b+\frac{Q-q}{p}}(B)}{\mu^2_a(B)}\displaystyle\int_{B}(\omega(z))^{\frac{-1}{p-1}}d\mu_{q+p'(b-q)}(z)\right) ^{p-1}< \infty 
 $$
 where the supremum is taken over the pseudoballs $B$.\\
  
 For $Q> q$ and $a>-N-1$, we say that $\omega \in (B_p^{a,b,q,Q})$ ($b>-1$) if\\
 $$
 \displaystyle\sup_{B:B\cap\partial\mathbb{B}\neq\varnothing}\left( \frac{\mu_{b+\frac{Q-q}{p}}(B)}{R_B^{2(N+1+a)}}\displaystyle\int_{B}\omega(z)d\mu_Q(z)\right) \left( \frac{\mu_{b+\frac{Q-q}{p}}(B)}{R_B^{2(N+1+a)}}\displaystyle\int_{B}(\omega(z))^{\frac{-1}{p-1}}d\mu_{q+p'(b-q)}(z)\right) ^{p-1}\\< \infty
 $$\\
where the supremum is taken over the pseudoballs $B$ with radius $R_B.$ 
 \end{defi}

The necessary condition for the boundedness of $T_{a,b}$ when $a>-(1+N)$ is the following.
\begin{theo}\label{keumo9}
Suppose that $-(1+N)<a$ and $b>-1$. If $T_{a,b}$ is well defined and continuous from $L^p(\omega d\mu_q)$ to $L^p(\omega d\mu_Q),$ then we have $\omega \in (B_p^{a,b,q,Q})$. 
\end{theo}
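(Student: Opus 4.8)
The plan is to run B\'ekoll\'e's testing scheme. Continuity of $T_{a,b}$ gives a constant $C$ with $\|T_{a,b}f\|_{L^p(\omega d\mu_Q)}\le C\|f\|_{L^p(\omega d\mu_q)}$ for all $f$, and the idea is to feed in, for each pseudoball $B$ meeting $\partial\mathbb{B}$, a function adapted to $B$ and then read off the supremand defining $(B_p^{a,b,q,Q})$. First I would absorb the density: since $(1-|w|^2)^{b-q}d\mu_q(w)=d\mu_b(w)$, we have $T_{a,b}f(z)=\int_{\mathbb{B}}K_a(z,w)f(w)\,d\mu_b(w)$, and because $a>-(N+1)$ the kernel is the genuine power $K_a(z,w)=(1-\langle z,w\rangle)^{-(N+1+a)}$ with positive exponent $N+1+a$.

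The geometric core is a one-sided kernel bound. Writing $R_B$ for the radius of $B$ and $T(B)$ for its top half (where $1-|w|^2\approx R_B$), I would check that for $w\in T(B)$ and $z\in B$ one has $\operatorname{Re}(1-\langle z,w\rangle)\ge 1-|w|\approx R_B$ while $|1-\langle z,w\rangle|\lesssim R_B$; hence $\operatorname{Re}(1-\langle z,w\rangle)\approx|1-\langle z,w\rangle|\approx R_B$, the argument of $1-\langle z,w\rangle$ stays bounded away from $\pm\pi/2$, and therefore $\operatorname{Re}K_a(z,w)\gtrsim R_B^{-(N+1+a)}$. Passing to $T(B)$ is essential: on all of $B\times B$ the kernel blows up along the boundary diagonal, whereas pinning $w$ to $T(B)$ fixes $|1-\langle z,w\rangle|\approx R_B$ and controls its argument, so the complex kernel behaves like a positive one. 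With $R_B^{N+1+a}\approx\mu_a(B)$ when $a>-1$ (and $R_B^{N+1+a}$ kept as is otherwise), this realises $T_{a,b}$, restricted to $B$, as bounded below by a positive averaging of size $\approx R_B^{-(N+1+a)}$ against $d\mu_b$.

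Then I would insert the H\"older-extremal test function $f(w)=\chi_{T(B)}(w)\,\omega(w)^{-1/(p-1)}(1-|w|^2)^{(b-q)/(p-1)}$, the near-maximiser of $\int f\,d\mu_b$ against $\|f\|_{L^p(\omega d\mu_q)}$. Using the identity $q+p'(b-q)=b+\frac{b-q}{p-1}$ one computes
$$\|f\|_{L^p(\omega d\mu_q)}^p=\int_{T(B)}\omega^{-1/(p-1)}d\mu_{q+p'(b-q)}\qquad\text{and}\qquad\int_{T(B)}f\,d\mu_b=\int_{T(B)}\omega^{-1/(p-1)}d\mu_{q+p'(b-q)}.$$
Since $f\ge 0$, bounding $|T_{a,b}f(z)|\ge\operatorname{Re}T_{a,b}f(z)\gtrsim R_B^{-(N+1+a)}\int_{T(B)}f\,d\mu_b$ for $z\in B$, integrating $z$ over $B$, substituting the two equal quantities and cancelling one power yields, for every $B$,
$$R_B^{-p(N+1+a)}\Big(\int_{T(B)}\omega^{-1/(p-1)}d\mu_{q+p'(b-q)}\Big)^{p-1}\int_{B}\omega\,d\mu_Q\lesssim C^p.$$
Rewriting the prefactor through $\mu_a(B)$ and $\mu_b(B)$ and replacing the top-half integral by the full-ball integral should reproduce the supremand in $(B_p^{a,b,q,Q})$.

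I expect the main obstacle to be this last step rather than the kernel estimate: upgrading $\int_{T(B)}\omega^{-1/(p-1)}d\mu_{q+p'(b-q)}$ to the full $\int_B$ and extracting the exact normalising factor of $(B_p^{a,b,q,Q})$ calls for a Whitney decomposition of $B$ into top halves of subballs together with the doubling of the measures $d\mu_b$ and $d\mu_{q+p'(b-q)}$, and this is where the delicate constant tracking lives. The dichotomy $Q\le q$ versus $Q>q$ must also be handled separately: when $Q>q$ the average $\int\omega\,d\mu_Q$ can be controlled only where $(1-|z|^2)^{Q-q}\approx R_B^{Q-q}$, so an additional power $R_B^{(Q-q)/p}$ enters and must be absorbed into each factor, and reconciling which weight ($d\mu_b$ or $d\mu_{b+(Q-q)/p}$) survives in the final normalisation is part of this bookkeeping. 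Finally, one must verify that the argument of $K_a$ is controlled uniformly on $T(B)$ across the whole range $a>-(N+1)$, so that $\operatorname{Re}K_a$ genuinely dominates $|K_a|$ there.
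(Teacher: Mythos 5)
Your H\"older-extremal test function is exactly the paper's second test function, and the overall testing philosophy is the right one; but the geometric core of your argument --- positivity of $\Re K_a$ on $B\times T(B)$ --- is not a pending verification, it is false on most of the range of $a$ covered by the theorem. On $B\times T(B)$ you correctly get $\Re(1-\langle z,w\rangle)\approx|1-\langle z,w\rangle|\approx R_B$, i.e. $|\arg(1-\langle z,w\rangle)|\le\theta_0$ for some fixed $\theta_0<\pi/2$; but $\theta_0$ cannot be made small: taking $w$ near the top of $B$ and $z\in B$ with $1-|z|\ll R_B$ and tangential displacement comparable to $R_B$, both $\Re(1-\langle z,w\rangle)$ and $|{\rm Im}(1-\langle z,w\rangle)|$ are comparable to $R_B$, so $|\arg(1-\langle z,w\rangle)|$ is a fixed positive constant no matter how $T(B)$ is chosen. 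Since $\arg K_a(z,w)=-(N+1+a)\arg(1-\langle z,w\rangle)$, the argument is \emph{amplified} by the exponent, and as soon as $(N+1+a)\theta_0>\pi/2$ the real part of $K_a$ changes sign on $B\times T(B)$; moreover, for fixed $z$ the values $K_a(z,w)$, $w\in T(B)$, then span directions covering more than a half-plane, so there exist $f\ge0$ supported in $T(B)$ with $|T_{a,b}f(z)|\ll R_B^{-(N+1+a)}\int_{T(B)}f\,d\mu_b$ by cancellation --- and since your $f$ is dictated by the arbitrary weight $\omega$, you cannot exclude this. This is precisely why the paper never evaluates $T_{a,b}f$ on the ball carrying the test function: in Lemma \ref{5keumo1} the lower bound is proved for $z$ in a \emph{second} ball $B_j$ far from the support ball $B_i$, where by Lemma \ref{3keumo2} and Proposition \ref{2keumo2} the kernel differs from the constant $K_a(z,w_0)$ ($w_0$ the center of $B_i$) by at most half its modulus; only $|K_a(z,w_0)|$ is used, no positivity, hence all $a>-(N+1)$ are covered. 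The price is that the weight mass lands on $B_j$, and the proof of Theorem \ref{5keumo3} (resp. Theorem \ref{5keumo3'}) then needs a step your scheme does not have: a first test function, $\frac{\mu_a(B_i)}{\mu_b(B_i)}\chi_{B_i}$ when $Q\le q$ and $(1-|z|^2)^{(Q-q)/p}\chi_{B_i}$ when $Q>q$, together with the symmetric inequality obtained by interchanging $B_i$ and $B_j$, which converts $\int_{B_j}\omega\,d\mu_Q$ back into $\int_{B_i}\omega\,d\mu_Q$ before the extremal function is inserted.

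Even on the restricted range where your kernel bound does hold (roughly $(N+1+a)\theta_0<\pi/2$), the final step does not close. Your inequality controls $\bigl(\int_{T(B)}\omega^{-1/(p-1)}d\mu_{q+p'(b-q)}\bigr)^{p-1}\int_B\omega\,d\mu_Q$ with normalizer $R_B^{-p(N+1+a)}$, whereas Definition \ref{keumo8} requires the full-ball product with normalizer $\mu_b(B)^p R_B^{-2p(N+1+a)}\approx R_B^{p(b-a)}R_B^{-p(N+1+a)}$; so you are missing both the upgrade $T(B)\to B$ in the second factor and the factor $R_B^{p(b-a)}$. The upgrade cannot be obtained from ``Whitney decomposition plus doubling of $d\mu_b$ and $d\mu_{q+p'(b-q)}$'': the measure that must be doubled is $\omega^{-1/(p-1)}d\mu_{q+p'(b-q)}$, for which no doubling is available a priori (that is essentially what one is trying to prove), and summing your inequality over subballs $B_k$ with $B=\bigcup_k T(B_k)$ fails because of the nonlinear exponent $p-1$ and the $k$-dependent normalizers $R_{B_k}^{-p(N+1+a)}$. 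In the paper these issues never arise: both test functions are supported on the full ball $B_i$, so both integrals come out over $B_i$ at once and the normalization $\frac{\mu_b(B_i)}{\mu_a^2(B_i)}$, resp. $\frac{\mu_b(B_i)}{R_{B_i}^{2(N+1+a)}}$, appears automatically.
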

For $P_{s,t}$ we demonstrate
\begin{theo}\label{keumo9'}
In the case both $-(N+1)<s+t<-1$ and $s+t+\frac{Q-q}{p}\leq -1 $ hold, and in the case both $s+t>-1$ and $Q<q$ hold, there are no weights $\omega$ such that $P_{s,t}$ is well defined and continuous from $L^p(\omega d\mu_q)$ to $L^p(\omega d\mu_Q)$.\\
\end{theo}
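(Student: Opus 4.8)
The plan is to argue by contradiction, reducing everything to a weighted testing inequality for $T_{s+t,s}$. Using the identity $P_{s,t}f(z)=(1-|z|^2)^tT_{s+t,s}f(z)$ from \eqref{keumoequa} and absorbing the factor $(1-|z|^2)^{pt}$ into the target measure, one sees that $P_{s,t}$ is well defined and continuous from $L^p(\omega d\mu_q)$ to $L^p(\omega d\mu_Q)$ if and only if $T_{s+t,s}$ is well defined and continuous from $L^p(\omega d\mu_q)$ to $L^p(\omega d\mu_{Q'})$, with identical norms, where $Q':=Q+pt$. Since $s+t>-(N+1)$ in both regimes, the kernel is the genuinely singular $K_{s+t}(z,w)=(1-\langle z,w\rangle)^{-(N+1+s+t)}$; for a pseudoball $B=B(\zeta,\delta)$ with $\zeta\in\partial\mathbb{B}$ and its top Whitney piece $\tilde B$ (where $1-|z|\simeq\delta$) one has $|K_{s+t}(z,w)|\simeq\delta^{-(N+1+s+t)}$, and after restricting to a sub-region on which $\arg(1-\langle z,w\rangle)$ stays in a fixed small sector, $\operatorname{Re}K_{s+t}(z,w)\gtrsim\delta^{-(N+1+s+t)}$.

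The backbone is the arithmetic identity $\tfrac{Q'}{p}+\tfrac{q+p'(s-q)}{p'}=\gamma$, where $\gamma:=s+t+\tfrac{Q-q}{p}$, which yields, by H\"older's inequality on any measurable set $E$ (the $\omega$-powers cancel),
\[\mu_\gamma(E)\le\Big(\int_E\omega\,d\mu_{Q'}\Big)^{1/p}\Big(\int_E\omega^{-1/(p-1)}\,d\mu_{q+p'(s-q)}\Big)^{1/p'},\qquad \mu_\gamma(E):=\int_E(1-|z|^2)^\gamma\,d\mu.\]
I would then extract three necessary conditions by testing the assumed boundedness. Testing on $f=\chi_{\tilde B}$ but measuring $T_{s+t,s}f$ on the whole tent $B$ (where $\operatorname{Re}T_{s+t,s}f\gtrsim\delta^{-(N+1+s+t)}\mu_s(\tilde B)$) gives $\int_B\omega\,d\mu_{Q'}<\infty$. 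Dually, testing on the extremizer $f=\big(\omega^{-1}(1-|w|^2)^{s-q}\big)^{1/(p-1)}$ supported on $B$ (truncated away from $\partial\mathbb{B}$ and passed to the limit by monotone convergence) and measuring on $\tilde B$ gives $\int_B\omega^{-1/(p-1)}\,d\mu_{q+p'(s-q)}<\infty$. Finally, testing and measuring on the same Whitney piece $\tilde B$ and optimizing over $f$ by duality gives
\[\delta^{-p(N+1+s+t)}\Big(\int_{\tilde B}\omega^{-1/(p-1)}\,d\mu_{q+p'(s-q)}\Big)^{p-1}\int_{\tilde B}\omega\,d\mu_{Q'}\lesssim 1.\]

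With these in hand the two cases close quickly. In the case $s+t>-1$ and $Q<q$, I would combine the last display with the H\"older bound on $E=\tilde B$, where $\mu_\gamma(\tilde B)\simeq\delta^{N+1+\gamma}$ for \emph{every} real $\gamma$ because $1-|z|\simeq\delta$ on $\tilde B$; this collapses to $\delta^{\,Q-q}\lesssim 1$, which fails as $\delta\to0$ precisely when $Q<q$. In the case $-(N+1)<s+t<-1$ with $\gamma=s+t+\tfrac{Q-q}{p}\le-1$, I would instead apply the H\"older bound on the full tent $E=B$: the two finiteness conclusions force $\mu_\gamma(B)<\infty$, whereas $\gamma\le-1$ makes $\mu_\gamma(B)=\int_B(1-|z|^2)^\gamma\,d\mu=\infty$ since the tent abuts the boundary, a contradiction.

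The hard part will be the operator (as opposed to the positive operator $S_{s+t,s}$) lower bound: one must select the Whitney sub-regions so that $\arg(1-\langle z,w\rangle)^{-(N+1+s+t)}$ remains in a fixed small sector, guaranteeing $\operatorname{Re}T_{s+t,s}f\gtrsim|T_{s+t,s}f|$ on the region being integrated, together with the bookkeeping that $Q'$ may have exponent $\le-1$, so the full-tent integrals must be treated directly rather than through the volume estimate $\mu_c(B)\simeq\delta^{N+1+c}$, which is valid only for $c>-1$. The truncation and monotone-convergence step needed to license the extremal test function when $\int_B\omega^{-1/(p-1)}\,d\mu_{q+p'(s-q)}$ is a priori possibly infinite is the remaining technical point to handle with care.
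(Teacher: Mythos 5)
Your strategy is genuinely different from the paper's, and its skeleton is sound: the reduction to $T_{s+t,s}$ via \eqref{keumoequa}, the exponent identity $\frac{Q+pt}{p}+\frac{q+p'(s-q)}{p'}=s+t+\frac{Q-q}{p}$ with its H\"older consequence, and both endgame contradictions are exactly the mechanisms the paper uses (Proposition \ref{4keumo4'} and Step 2 of Theorem \ref{5keumo4}). The difference lies in how the testing lower bounds are produced. The paper never controls the argument of the oscillating kernel on nearby pairs: for $s+t>-1$, $Q<q$ (Theorem \ref{5keumo4}) it tests on one boundary pseudo-ball $B_i$ and measures on a second, far-away boundary pseudo-ball $B_j$, where by the H\"older-continuity estimate of Lemma \ref{3keumo2} the kernel is essentially constant, so a lower bound on the \emph{modulus} (Lemma \ref{5keumo2}) comes for free; a ball-interchange argument then yields the one-ball condition to which H\"older is applied. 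For $-(N+1)<s+t<-1$, $s+t+\frac{Q-q}{p}\le-1$ (Theorem \ref{5keumo3''}) it does not localize at all: $\omega\in L^1(d\mu_{Q+pt})$ comes from the mean-value property at the origin (Lemma \ref{4keumo2}), and $\omega^{-1/(p-1)}\in L^1(d\mu_{q+p'(s-q)})$ from a Riesz-representation duality argument (Lemma \ref{4keumo4}). Your scheme localizes support and measuring set inside a single tent, which unifies the two cases and avoids the interchange trick --- but it is the only place where the oscillation of $K_{s+t}$ must be confronted, and that is where your write-up has a real problem.

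Concretely, condition (a) fails as stated. For $z$ ranging over the whole tent $B$ and $w\in\tilde B$ one only has $\Re(1-\langle z,w\rangle)\ge 1-|w|\gtrsim\delta$ and $|1-\langle z,w\rangle|\lesssim\delta$, so $\arg(1-\langle z,w\rangle)$ is bounded away from $\pm\pi/2$ but is in general of size comparable to $1$ (take $z$ near $\partial\mathbb{B}$ at tangential distance $\simeq\delta$ from $w$: the argument is $\simeq\pi/4$). Raising to the power $-(N+1+s+t)$ multiplies this argument by $N+1+s+t$, so as soon as $N+1+s+t$ exceeds an absolute constant (and in your second case it can be arbitrarily large), $\Re K_{s+t}(z,w)$ changes sign inside $B\times\tilde B$; hence the claimed bound $\Re T_{s+t,s}\chi_{\tilde B}\gtrsim\delta^{-(N+1+s+t)}\mu_s(\tilde B)$ on all of $B$, and with it the conclusion $\int_B\omega\,d\mu_{Q+pt}<\infty$, does not follow. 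Condition (b) has the same defect with the roles of the two regions swapped. The repair is the sector restriction you flag, but it must be imposed on the measuring region and not only on the support: replace the tent by the tangentially narrow spike $B''=\lbrace z\in\mathbb{B}:\,1-|z|<\delta,\ |1-\langle z/|z|,\zeta\rangle|<\varepsilon\delta\rbrace$ and take $\tilde B=\lbrace w\in B'':\,\delta/2<1-|w|<\delta\rbrace$, with $\varepsilon$ small depending on $N+1+s+t$. Then on $B''\times\tilde B$ and on $\tilde B\times B''$ one has $|\Im(1-\langle z,w\rangle)|\lesssim\varepsilon\delta$ while $\Re(1-\langle z,w\rangle)\gtrsim\delta$, so the kernel's argument is $O\big((N+1+s+t)\varepsilon\big)$ and the positivity you need holds. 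Your conclusions (a), (b), (c) then hold with $B$ replaced by $B''$, and the first-case contradiction survives because the radial integral still diverges: $\mu_\gamma(B'')\simeq(\varepsilon\delta)^N\int_{1-\delta}^{1}(1-\rho)^\gamma\,d\rho=\infty$ for $\gamma\le-1$; the second-case computation on $\tilde B$ is unchanged since $\mu_\gamma(\tilde B)\simeq_\varepsilon\delta^{N+1+\gamma}$ for every real $\gamma$. With this modification, together with the truncation and monotone-convergence step you already identified for the extremal test function, your argument closes.
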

 
For the remaining cases, we introduce $(K_p^{s,t,q,Q})$ another B\'ekoll\'e -Bonami type class of weights in order to give our necessary condition for the boundedness of $P_{s,t}$ when $s+t>-(1+N)$.
\begin{defi}\label{keumo10}
For $s+t+\frac{Q-q}{p}> -1 $ and $-1>s+t>-N-1$, we say that $\omega \in (K_p^{s,t,q,Q})$ ($s>-1$) if\\
$$
\displaystyle\sup_{B:B\cap\partial\mathbb{B}\neq\varnothing}\left( \frac{R_B^{\frac{Q-q}{p}}}{R_B^{N+1+s+t}}\displaystyle\int_{B}\omega(z)d\mu_{Q+pt}(z)\right) \left( \frac{R_B^{\frac{Q-q}{p}}}{R_B^{N+1+s+t}}\displaystyle\int_{B}(\omega(z))^{\frac{-1}{p-1}}d\mu_{q+p'(s-q)}(z)\right) ^{p-1} <
  \infty
  $$\\
 where the supremum is taken over the pseudoballs $B$ with radius $R_B.$\\
 
For $Q\geq q$ and $s+t>-1$, we say that $\omega \in (K_p^{s,t,q,S})$ ($s>-1$) if\\
 $$
 \displaystyle\sup_{B:B\cap\partial\mathbb{B}\neq\varnothing}\left( \frac{R_B^{\frac{Q-q}{p}}}{\mu_{s+t}(B)}\displaystyle\int_{B}\omega(z)d\mu_{Q+pt}(z)\right) \left( \frac{R_B^{\frac{Q-q}{p}}}{\mu_{s+t}(B)}\displaystyle\int_{B}(\omega(z))^{\frac{-1}{p-1}}d\mu_{q+p'(s-q)}(z)\right) ^{p-1}
  < \infty $$\\
  where the supremum is taken over the pseudoballs $B$ with radius $R_B.$
\end{defi}
The necessary condition for the boundedness of $P_{s,t}$ when $s+t>-(1+N)$ is the following.
\begin{theo}\label{keumo11}
In the case both $s+t+\frac{Q-q}{p}> -1 $ and $-1>s+t>-N-1$ hold,  and in the case both $s+t>-1$ and $Q\geq q$ hold,  if $P_{s,t}$ is well defined and continuous from $L^p(\omega d\mu_q)$ to $L^p(\omega d\mu_Q),$ then $\omega \in (K_p^{s,t,q,Q})$.
\end{theo}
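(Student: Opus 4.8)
The plan is to adapt B\'ekoll\'e's testing method: assume $P_{s,t}\colon L^p(\omega\,d\mu_q)\to L^p(\omega\,d\mu_Q)$ is bounded with norm $\|P_{s,t}\|$, fix a pseudoball $B$ with $B\cap\partial\mathbb{B}\neq\varnothing$ and radius $R_B$, feed the operator a carefully chosen nonnegative test function supported in $B$, and read off the $(K_p^{s,t,q,Q})$ inequality from the resulting estimate. Throughout I write $\sigma:=\omega^{-1/(p-1)}$ for the dual weight and use that in both regimes $s+t>-(N+1)$, so $N+1+s+t>0$ and $K_{s+t}(z,w)=(1-\langle z,w\rangle)^{-(N+1+s+t)}$ is the pure power kernel; recall also $(1-|w|^2)^{s-q}\,d\mu_q=d\mu_s$ and $(1-|z|^2)^{pt}\,d\mu_Q=d\mu_{Q+pt}$. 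The target is to produce, for every such $B$, the inequality
\[
\Big(\int_{B}\omega\,d\mu_{Q+pt}\Big)\Big(\int_{B}\sigma\,d\mu_{q+p'(s-q)}\Big)^{p-1}\lesssim \|P_{s,t}\|^{p}\,R_B^{\,p(N+1+s+t)},
\]
since dividing by the normalising factor $\mu_{s+t}(B)^{p}$ (when $s+t>-1$, using $\mu_{s+t}(B)\approx R_B^{N+1+s+t}$) or by $R_B^{\,p(N+1+s+t)}$ (when $-1>s+t>-N-1$) leaves the residual power $R_B^{\,Q-q}$, which is bounded over all $B$ because $Q\geq q$ in the second regime and because $s+t+\tfrac{Q-q}{p}>-1$ forces $Q>q$ in the first.

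The two analytic ingredients are standard estimates on pseudoballs and a self-dual choice of test function. For the geometry I use $\mu_a(B)\approx R_B^{N+1+a}$ (for $a>-1$) together with $|1-\langle z,w\rangle|\lesssim R_B$ for all $z,w\in B$, whence $|K_{s+t}(z,w)|\gtrsim R_B^{-(N+1+s+t)}$. To turn this magnitude bound into a lower bound for $\operatorname{Re}K_{s+t}$ (so that no cancellation occurs against a nonnegative $f$), I restrict the support of $f$ to a fixed proportion $\hat B\subset B$ — the part of $B$ that is simultaneously radially away from the sphere, $1-|w|^2\gtrsim R_B$, and tangentially concentrated about the centre — chosen so that $\mu_a(\hat B)\approx\mu_a(B)$. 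A short computation then shows $\operatorname{Re}(1-\langle z,w\rangle)\gtrsim R_B$ for \emph{every} $z\in B$ and every $w\in\hat B$; shrinking the proportion defining $\hat B$ makes $|\arg(1-\langle z,w\rangle)|$ small enough that $(N+1+s+t)|\arg(1-\langle z,w\rangle)|<\pi/2$, giving $\operatorname{Re}K_{s+t}(z,w)\gtrsim R_B^{-(N+1+s+t)}$. For the test function I take $f=\sigma\,(1-|\cdot|^2)^{(s-q)/(p-1)}\chi_{\hat B}\geq0$; the exponent is calibrated so that $\|f\|_{L^p(\omega\,d\mu_q)}^{p}=\int_{\hat B}f\,d\mu_s=\int_{\hat B}\sigma\,d\mu_{q+p'(s-q)}=:J_{\hat B}$, a single quantity appearing on both sides.

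With these in hand the computation is short. For every $z\in B$,
\[
|P_{s,t}f(z)|\geq (1-|z|^2)^{t}\operatorname{Re}\!\int_{\hat B}K_{s+t}(z,w)f(w)\,d\mu_s(w)\gtrsim (1-|z|^2)^{t}R_B^{-(N+1+s+t)}J_{\hat B},
\]
so integrating over $B$ gives $\|P_{s,t}f\|_{L^p(\omega\,d\mu_Q)}^{p}\gtrsim R_B^{-p(N+1+s+t)}J_{\hat B}^{p}\int_{B}\omega\,d\mu_{Q+pt}$. Boundedness bounds the left side by $\|P_{s,t}\|^{p}J_{\hat B}$; cancelling one factor of $J_{\hat B}$ (after the usual truncation of $\omega$ to guarantee $0<J_{\hat B}<\infty$) yields
\[
\Big(\int_{B}\omega\,d\mu_{Q+pt}\Big)\Big(\int_{\hat B}\sigma\,d\mu_{q+p'(s-q)}\Big)^{p-1}\lesssim \|P_{s,t}\|^{p}\,R_B^{\,p(N+1+s+t)}.
\]

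The main obstacle is the remaining gap: this inequality carries the full box $B$ in the $\omega$-factor but only the sub-region $\hat B$ in the $\sigma$-factor, whereas $(K_p^{s,t,q,Q})$ demands the full box in both. The weight $\sigma$ may concentrate tangentially or in the lower radial shells of $B$ that $\hat B$ omits, so one cannot simply replace $\int_{\hat B}\sigma$ by $\int_{B}\sigma$. Following B\'ekoll\'e, I would close this by decomposing $B$ into a Whitney family of comparable sub-boxes at all radial scales, whose top-parts tile $B$, applying the displayed estimate on each, and summing; the convergence of this sum over scales is exactly where the parameter restrictions $s+t+\tfrac{Q-q}{p}>-1$ (first regime) and $Q\geq q$ (second regime) are consumed, and it is the step requiring the most care. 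The complementary mismatch (full $\sigma$-factor, sub-region $\omega$-factor) can be produced symmetrically by testing instead with a complex $f$ whose phase is aligned to the kernel at a reference point, and the two families of estimates feed the summation.
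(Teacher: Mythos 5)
Your testing strategy is in the same spirit as the paper's, but it has two genuine gaps, and the first one is fatal as written. The kernel-positivity step fails: for $z\in B$ and $w\in\hat B$ you correctly get $\Re(1-\langle z,w\rangle)\geq 1-|z||w|\geq 1-|w|\gtrsim R_B$, but shrinking the proportion defining $\hat B$ does \emph{not} make $|\arg(1-\langle z,w\rangle)|$ small, because the imaginary part of $1-\langle z,w\rangle$ is governed by the position of $z$, which you allow to range over all of $B$: if $z$ sits at tangential pseudo-distance $\sim R_B$ from the centre, then $|\mathrm{Im}(1-\langle z,w\rangle)|$ is of size $\sim R_B$ no matter how small $\hat B$ is, while $\Re(1-\langle z,w\rangle)\lesssim R_B$ as well. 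Hence $|\arg(1-\langle z,w\rangle)|$ is only bounded by a fixed angle $\theta_0<\pi/2$ (depending on the quasi-triangle constant), not small, and since $N+1+s+t$ can be arbitrarily large, $(N+1+s+t)\,\theta_0$ can exceed $\pi/2$; then $\Re K_{s+t}(z,w)$ changes sign on part of $B$ and your pointwise lower bound for $|P_{s,t}f(z)|$ on all of $B$ collapses. This is precisely the difficulty the paper's Lemma \ref{5keumo2} (built on Lemma \ref{3keumo2} and Proposition \ref{2keumo2}) is designed to avoid: it evaluates the operator not on the supporting ball $B_i$ but on a \emph{second, far-away} ball $B_j$ of the same radius, where the kernel oscillates over $B_i$ by at most half of $|K_{s+t}(z,w_0)|$, so that $|T_{s+t,s}f(z)|\geq \frac12 |K_{s+t}(z,w_0)|\int_{B_i}f\,d\mu_s$ holds with no positivity considerations at all.

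Second, the mismatch you acknowledge ($\omega$-factor over $B$, $\sigma$-factor over $\hat B$) is not a deferable technicality, because the target you set yourself is strictly stronger than $(K_p^{s,t,q,Q})$: your displayed goal omits the normalizing factors $R_B^{\frac{Q-q}{p}}$ appearing in Definition \ref{keumo10}, i.e.\ it is off from the actual condition by $R_B^{-(Q-q)}$, and there is no reason such a strengthened inequality follows from boundedness. Indeed the paper's closing discussion in Section \ref{final-remark} stresses that for $Q>q$ only $(K_p^{s,t,q,Q})$ is known to be necessary while the stronger $(D_p^{s,t,q,Q})$ is sufficient, so conditions intermediate between them are exactly the open territory. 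The paper's proof (Theorems \ref{5keumo3'''} and \ref{5keumo4'}) produces the correct $R_B^{Q-q}$ slack, and puts both factors on the \emph{same} ball, through the interchange step your plan lacks: first test with $f=(1-|\cdot|^2)^{\frac{Q-q}{p}+t}\chi_{B_i}$ to get $R^{Q-q}\,\omega d\mu_{Q+pt}(B_j)\lesssim \omega d\mu_{Q+pt}(B_i)$ and, by swapping the roles of $B_i$ and $B_j$, the converse inequality --- this is exactly where the hypotheses $s+t+\frac{Q-q}{p}>-1$ (resp.\ $s+t>-1$, $Q\geq q$) are consumed, via $\int_{B_i}(1-|z|^2)^{\frac{Q-q}{p}+t}d\mu_s\simeq R^{N+1+s+t+\frac{Q-q}{p}}$ --- and only then test with $\omega^{-1/(p-1)}(1-|\cdot|^2)^{(p'-1)(s-q)}\chi_{B_i}$. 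Your Whitney-summation sketch and the ``phase-aligned complex $f$'' are not carried out, and in view of the above they would be attempts to prove a statement stronger than the theorem, which the known counter-balance between $(D_p)$ and $(K_p)$ gives no reason to believe.
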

We introduce a maximal and a fractional maximal operator that will be used to establish a good lambda inequality in order to give sufficient conditions for the boundedness of $P_{s,t}$.

If $a>-1$ we set:
$$m_{a,b}f(z)=\displaystyle\sup_{\zeta\in\mathbb{B},R>1-|\zeta|:z\in B(\zeta,R)}\frac{1}{\mu_a(B(\zeta,R))}\displaystyle\int_{B(\zeta,R)}|f(w)|d\mu_b(w),$$
more generally if $a>-1-N$ we set:
$$m'_{a,b}f(z)=\displaystyle\sup_{\zeta\in\mathbb{B},R>1-|\zeta|:z\in B(\zeta,R)}\frac{1}{R^{N+1+a}}\displaystyle\int_{B(\zeta,R)}|f(w)|d\mu_b(w).$$
Before giving our good lambda inequality, we introduce here $(D_p^{s,t,q,Q})$ a B\'ekoll\'e-Bonami type class of weights.
\begin{defi}\label{keumo11'}
For $s+t+\frac{Q-q}{p}> -1 $ and $-1>s+t>-N-1$, we say that $\omega \in (D_p^{s,t,q,Q})$ ($s>-1$) if \\
$$
\displaystyle\sup_{B:B\cap\partial\mathbb{B}\neq\varnothing}\left( \frac{1}{R_B^{N+1+s+t+\frac{Q-q}{p}}}\displaystyle\int_{B}\omega(z)d\mu_{Q+pt}(z)\right) \left( \frac{1}{R_B^{N+1+s+t+\frac{Q-q}{p}}}\displaystyle\int_{B}(\omega(z))^{\frac{-1}{p-1}}d\mu_{q+p'(s-q)}(z)\right) ^{p-1} <
  \infty$$\\
  where the supremum is taken over the pseudoballs $B$ with radius $R_B.$\\
 
For $Q\geq q$ and $s+t>-1$, we say that $\omega \in (D_p^{s,t,q,S})$ ($s>-1$) if\\
 $$\displaystyle\sup_{B:B\cap\partial\mathbb{B}\neq\varnothing}\left( \frac{1}{\mu_{s+t+\frac{Q-q}{p}}(B)}\displaystyle\int_{B}\omega(z)d\mu_{Q+pt}(z)\right) \left( \frac{1}{\mu_{s+t+\frac{Q-q}{p}}(B)}\displaystyle\int_{B}(\omega(z))^{\frac{-1}{p-1}}d\mu_{q+p'(s-q)}(z)\right) ^{p-1}
  < \infty $$\\
  where the supremum is taken over the pseudoballs $B$ with radius $R_B.$ In each case we denote by $D_p^{s,t,q,Q}(\omega)$ the expression in the left hand side.
  
\end{defi}
\begin{rmq}\label{keumo11''}
Constants and standard weights ($\omega(z)=(1-|z|^2)^{\eta}$) are in $(D_p^{s,t,q,Q})$. We also have  $ (D_p^{s,t,q,q}) \subseteq (D_p^{s,t,q,Q})\subseteq (K_p^{s,t,q,Q}).$  For $Q=q$ we have $(K_p^{s,t,q,Q}) = (D_p^{s,t,q,Q}).$  
\end{rmq}

  Here is our good lambda inequality.
\begin{theo}\label{keumo12} 
Suppose that $1<p<\infty$.  Let $\omega \in (D_p^{s,t,q,Q})$ where both $s+t+\frac{Q-q}{p}> -1 $ and $-1>s+t>-N-1$  hold,  or  both $s+t>-1$ and $Q\geq q$  hold. There are two positive constants $C$ and $\beta$ such that for all $\gamma$ sufficiently small, $\lambda>0$ and
for all positive locally integrable functions $f$ if $-N-1<s+t$ and $s+t+\frac{Q-q}{p}>-1,$ then
\begin{multline}
\omega d\mu_{Q+pt}(\lbrace z\in \mathbb{B}: S_{s+t,s}f(z)>2\lambda, m'_{s+t,s}f(z)\leq \gamma \lambda\rbrace) \leq  \\
CD_p^{s,t,q,Q}(\omega)\gamma^\beta \omega d\mu_{Q+pt}(\lbrace z\in\mathbb{B}: S_{s+t,s}f(z)>\lambda\rbrace).
\end{multline}
\end{theo}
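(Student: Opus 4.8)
The plan is to adapt the Calder\'on--Zygmund/Coifman--Fefferman good-$\lambda$ scheme to the pseudometric $d$ on $\mathbb{B}$, following the strategy of B\'ekoll\'e. First I would perform a Whitney-type decomposition of the open set $\Omega_\lambda=\{z\in\mathbb{B}:S_{s+t,s}f(z)>\lambda\}$ into a family of pseudoballs $\{B_j\}$, $B_j=B(\zeta_j,R_j)$, of bounded overlap, whose radii are comparable to the $d$-distance from $B_j$ to $\mathbb{B}\setminus\Omega_\lambda$ and such that a fixed dilate $2B_j$ meets $\Omega_\lambda^c$; in particular each $B_j$ meets $\partial\mathbb{B}$, so the supremum defining $(D_p^{s,t,q,Q})$ applies to it. Since $\bigcup_j B_j=\Omega_\lambda$ with controlled overlap, we have $\sum_j\omega\,d\mu_{Q+pt}(B_j)\leq C\,\omega\,d\mu_{Q+pt}(\Omega_\lambda)=C\,\omega\,d\mu_{Q+pt}(\{S_{s+t,s}f>\lambda\})$, so it suffices to prove the per-ball bound $\omega\,d\mu_{Q+pt}(E_j)\leq C\,D_p^{s,t,q,Q}(\omega)\,\gamma^\beta\,\omega\,d\mu_{Q+pt}(B_j)$, where $E_j=\{z\in B_j:S_{s+t,s}f(z)>2\lambda,\ m'_{s+t,s}f(z)\leq\gamma\lambda\}$.

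Second, on a fixed $B_j$ I would split $f=f\chi_{2B_j}+f\chi_{(2B_j)^c}$, whence $S_{s+t,s}f=S_{s+t,s}(f\chi_{2B_j})+S_{s+t,s}(f\chi_{(2B_j)^c})$. Choosing a point $\hat z_j\in 2B_j\cap\Omega_\lambda^c$ (so that $S_{s+t,s}f(\hat z_j)\leq\lambda$) and using the comparability $|K_{s+t}(z,w)|\approx|K_{s+t}(\hat z_j,w)|$ for $z\in B_j$ and $w\in(2B_j)^c$ (legitimate because $-(N+1)<s+t$, so $K_{s+t}(z,w)=(1-\langle z,w\rangle)^{-(N+1+s+t)}$ and $|1-\langle z,w\rangle|$ is essentially constant as $z$ ranges over $B_j$ while $w$ stays far), the \emph{far part} is $\leq C\lambda$ on $B_j$; after normalizing constants this forces $S_{s+t,s}(f\chi_{2B_j})(z)>c\lambda$ for every $z\in E_j$. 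Moreover, $E_j\neq\varnothing$ produces a point $z^\ast\in B_j$ with $m'_{s+t,s}f(z^\ast)\leq\gamma\lambda$, and $2B_j$ is admissible in the supremum defining $m'_{s+t,s}$ (its radius exceeds $1-|\zeta_j|$ since $B_j$ meets $\partial\mathbb{B}$), which yields the mass bound $\int_{2B_j}|f|\,d\mu_s\leq C\,\gamma\lambda\,R_j^{\,N+1+s+t}$.

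The crux is the third step: estimating $\omega\,d\mu_{Q+pt}(E_j)$ for the localized operator. Here I would bound $\omega\,d\mu_{Q+pt}(E_j)\leq\frac1{c\lambda}\int_{B_j}S_{s+t,s}(f\chi_{2B_j})\,\omega\,d\mu_{Q+pt}$ and expand by Tonelli into $\int_{2B_j}|f(w)|\,\Phi_j(w)\,d\mu_s(w)$, where $\Phi_j(w)=\int_{B_j}|K_{s+t}(z,w)|\,\omega(z)\,d\mu_{Q+pt}(z)$. The class $(D_p^{s,t,q,Q})$, pairing $\omega\,d\mu_{Q+pt}$ with $\omega^{-1/(p-1)}\,d\mu_{q+p'(s-q)}$, is exactly what is needed to dominate $\Phi_j$ by $C\,D_p^{s,t,q,Q}(\omega)\,R_j^{-(N+1+s+t)}\,\omega\,d\mu_{Q+pt}(B_j)$, and combining this with the mass bound of the previous step gives the local estimate \emph{linear} in $\gamma$. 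The fractional power $\gamma^\beta$ in the statement is then produced by the self-improving (reverse-H\"older / $A_\infty$) property that the two-measure B\'ekoll\'e--Bonami condition $(D_p^{s,t,q,Q})$ forces on $\omega\,d\mu_{Q+pt}$ relative to $\mu_{Q+pt}$, with $C$ and $\beta$ depending only on $D_p^{s,t,q,Q}(\omega)$.

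The main obstacle I anticipate is precisely this third step. Because $K_{s+t}(z,w)$ is genuinely singular on the diagonal and $1-|z|^2$ degenerates as $z$ approaches $\partial\mathbb{B}$ inside $B_j$, the quantity $\Phi_j(w)$ cannot be controlled by a crude $L^\infty$ bound on the kernel; one must decompose $B_j$ into dyadic annuli around $w$ (or else invoke a weak-type bound for $S_{s+t,s}$ between the pure $\mu$-measures and then perform the $A_\infty$ upgrade) in order to absorb the singularity while keeping the $D_p^{s,t,q,Q}(\omega)$ constant out front. Verifying that $(D_p^{s,t,q,Q})$ indeed yields the reverse-H\"older inequality behind the fractional gain is the other delicate point: the two distinct reference measures mean this does not follow verbatim from classical one-weight $A_\infty$ theory and must be established directly, and it is here that the separation into the regimes ``$-(N+1)<s+t<-1$ with $s+t+\frac{Q-q}{p}>-1$'' and ``$s+t>-1$ with $Q\geq q$'' (fixing the correct normalization $R_B^{N+1+s+t+(Q-q)/p}$ versus $\mu_{s+t+(Q-q)/p}(B)$) becomes essential.
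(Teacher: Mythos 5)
Your first two steps (the Whitney decomposition of $\{S_{s+t,s}f>\lambda\}$, the near/far splitting, control of the far part by the point where $S_{s+t,s}f\le\lambda$ together with kernel regularity as in Lemma \ref{3keumo2} and Proposition \ref{5keumo24}, and the mass bound $\int f\,d\mu_s\lesssim\gamma\lambda R^{N+1+s+t}$ coming from a point where $m'_{s+t,s}f\le\gamma\lambda$) coincide with the paper's proof. The gap is in your third step. The pointwise domination $\Phi_j(w)\le C\,D_p^{s,t,q,Q}(\omega)\,R_j^{-(N+1+s+t)}\,\omega d\mu_{Q+pt}(B_j)$ is false in general: the class $(D_p^{s,t,q,Q})$ tests only pseudoballs touching $\partial\mathbb{B}$, so it does not prevent $\omega\,d\mu_{Q+pt}$ from concentrating, say, half of its mass over $B_j$ in a tiny ball around a point $w^*$ with $1-|w^*|\sim\epsilon R_j$; such a modification only decreases the dual factor in the $(D_p^{s,t,q,Q})$ condition, while $\Phi_j(w^*)\gtrsim(\epsilon R_j)^{-(N+1+s+t)}\,\omega d\mu_{Q+pt}(B_j)$ blows up as $\epsilon\to 0$. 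Dyadic annuli around $w$ do not repair this: one would need $\omega d\mu_{Q+pt}(B(w,2^{-k}R_j))\lesssim 2^{-k(N+1+s+t)}\epsilon_k\,\omega d\mu_{Q+pt}(B_j)$ with summable $\epsilon_k$, a non-concentration property with the full exponent $N+1+s+t$ that no $A_p$-type condition supplies. This is precisely why good-$\lambda$ arguments never estimate $\omega(E_j)$ by Chebyshev against the weighted measure: that step throws away the constraint $m'_{s+t,s}f\le\gamma\lambda$, which is exactly what excludes from $E_j$ the points sitting near concentrated $f$-mass.

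What the paper does at this point is elementary but different. Since $|1-\langle z,\xi\rangle|\ge 1-|z|$, the localized operator obeys $S_{s+t,s}f_1(z)\le(1-|z|)^{-(N+1+s+t)}\int_{\widetilde B}f\,d\mu_s\le (C_0r)^{N+1+s+t}(1-|z|)^{-(N+1+s+t)}\gamma\lambda$, so the exceptional set $\{z\in B:S_{s+t,s}f_1(z)>b\lambda\}$ is contained in the thin collar $L=\{z\in B:1-|z|<C_0'\gamma^{1/(N+1+s+t)}r\}$ (and in the complementary case $C_0r\le 1-|z_0|$ the localized part is $\le C''\gamma\lambda$, so this set is empty once $\gamma$ is small). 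The fractional gain $\gamma^\beta$ then comes not from a reverse-H\"older inequality for $\omega d\mu_{Q+pt}$ relative to $\mu_{Q+pt}$ --- which is neither proved nor used, and is exactly the two-measure difficulty you flagged without resolving --- but from the regularization device: Lemma \ref{5keumo19} shows that $\sigma=R_{k'}^{s,Q+pt}\omega\,(1-|z|^2)^{-t-\frac{Q-q}{p}}$ belongs to $(A_{p,s+t+\frac{Q-q}{p}})\subseteq(A_{\infty,s+t+\frac{Q-q}{p}})$; Proposition \ref{5keumo25} transfers $\omega d\mu_{Q+pt}(L)\lesssim\sigma d\mu_{s+t+\frac{Q-q}{p}}(L')$ and shows $\mu_{s+t+\frac{Q-q}{p}}(L')\lesssim\gamma^{(s+t+\frac{Q-q}{p}+1)/(N+1+s+t)}\mu_{s+t+\frac{Q-q}{p}}(\bar B)$; then Lemma \ref{2keumo12} applied to $\sigma$, a Fubini argument, and the doubling statement of Corollary \ref{5keumo17} return everything to $\omega d\mu_{Q+pt}(B)$ with the factor $D_p^{s,t,q,Q}(\omega)\gamma^\beta$. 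Without the collar containment and this regularized-weight $A_\infty$ machinery, your outline cannot be closed.
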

To show that $(D_p^{s,t,q,Q})$ is sufficient for the boundedness of $P_{s,t}$ when $s+t>-1$, we introduce the following maximal and fractional maximal operator.  If $s+t>-1$ we set:
$$O_{s,t}f(z)=(1-|z|^2)^tm_{s+t,s}f(z);$$
more generally if $s+t>-1-N$ we set:
$$O'_{s,t}f(z)=(1-|z|^2)^tm'_{s+t,s}f(z).$$
The following theorem shows together with the good lambda inequality that $(D_p^{s,t,q,Q})$ is sufficient for the boundedness of $P_{s,t}$ from $L^p(\omega d\mu_q)$ to $L^p(\omega d\mu_Q)$ when $-N-1<s+t$ and $s+t+\frac{Q-q}{p}>-1$:
\begin{theo}\label{keumo13}
 For $-N-1<s+t$ and $s+t+\frac{Q-q}{p}>-1,$ if $\omega d\mu_q\in (D_p^{s,t,q,Q}),$ there is a constant $C_{s,t,p,q,Q}>0$ such that $\forall f\in L^p(\omega d\mu_q),$
$$\displaystyle\int_{\mathbb{B}}(O_{s,t}f(z))^p\omega(z)d\mu_q(z)\leq C_{s,t,p,q,Q}\displaystyle\int_{\mathbb{B}}|f(z)|^p\omega(z)d\mu_q(z).$$
\end{theo}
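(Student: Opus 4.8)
\emph{Proof strategy.} My plan is to recast the estimate as a weighted maximal inequality and to run the Muckenhoupt--B\'ekoll\'e argument through an exact change of weight. First I absorb the exterior factor: since $(O_{s,t}f)^{p}\,d\mu_{Q}=(m_{s+t,s}f)^{p}\,d\mu_{Q+pt}$ (and $(O'_{s,t}f)^{p}d\mu_{Q}=(m'_{s+t,s}f)^{p}d\mu_{Q+pt}$ in the range $-N-1<s+t\le-1$, where the normalization $R_{B}^{N+1+s+t}$ replaces $\mu_{s+t}(B)$), the boundedness of $O_{s,t}$ from $L^{p}(\omega\,d\mu_{q})$ into $L^{p}(\omega\,d\mu_{Q})$ is equivalent to $\int_{\mathbb B}(m_{s+t,s}f)^{p}\,\omega\,d\mu_{Q+pt}\le C\int_{\mathbb B}|f|^{p}\,\omega\,d\mu_{q}$, whose target density $\omega\,d\mu_{Q+pt}$ is exactly the first factor of $D_{p}^{s,t,q,Q}(\omega)$. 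I will use repeatedly that a pseudoball $B=B(\zeta,R)$ meeting $\partial\mathbb B$ satisfies $\mu_{\beta}(B)\simeq R^{N+1+\beta}$ for $\beta>-1$ with $R$ bounded above, and that both cases of the hypothesis force $Q\ge q$ (when $-1>s+t$, the condition $s+t+\frac{Q-q}{p}>-1$ gives $Q-q>p(-1-s-t)>0$). The key algebraic device is the dual weight $\sigma=\omega^{-1/(p-1)}\,d\mu_{q+p'(s-q)}$ and the function $g=|f|\,\omega^{1/(p-1)}(1-|w|^{2})^{(q-s)/(p-1)}$: a direct exponent count gives the identities $g\,d\sigma=|f|\,d\mu_{s}$ and $\int_{\mathbb B}g^{p}\,d\sigma=\int_{\mathbb B}|f|^{p}\,\omega\,d\mu_{q}$, so the right-hand side above is precisely $\|g\|_{L^{p}(\sigma)}^{p}$.

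Next I would discretize $m_{s+t,s}f$ by a stopping-time (sparse) decomposition adapted to the pseudodistance $d$. Invoking a finite-overlap Whitney--Vitali covering lemma for pseudoballs touching $\partial\mathbb B$, I select for each $k\in\mathbb Z$ the maximal balls $B$ on which $\mathrm{avg}_{B}:=\frac{1}{\mu_{s+t}(B)}\int_{B}|f|\,d\mu_{s}>2^{k}$; the resulting family $\mathcal S$ carries pairwise disjoint sets $E_{B}\subset B$ on which $m_{s+t,s}f\simeq\mathrm{avg}_{B}$, and the standard stopping argument yields $\int_{\mathbb B}(m_{s+t,s}f)^{p}\,d\nu\lesssim\sum_{B\in\mathcal S}(\mathrm{avg}_{B})^{p}\,\nu(B)$, where $\nu=\omega\,d\mu_{Q+pt}$.

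Now the exact identity $g\,d\sigma=|f|\,d\mu_{s}$ rewrites the average as $\mathrm{avg}_{B}=\frac{\sigma(B)}{\mu_{s+t}(B)}\,\langle g\rangle_{B}^{\sigma}$ with $\langle g\rangle_{B}^{\sigma}=\sigma(B)^{-1}\int_{B}g\,d\sigma$, whence $(\mathrm{avg}_{B})^{p}\nu(B)=\sigma(B)\,(\langle g\rangle_{B}^{\sigma})^{p}\cdot\frac{\sigma(B)^{p-1}\nu(B)}{\mu_{s+t}(B)^{p}}$. Here the weight class enters: by $\mu_{s+t}(B)^{p}\simeq R_{B}^{-(Q-q)}\,\mu_{s+t+\frac{Q-q}{p}}(B)^{p}$ and $Q\ge q$, $R_{B}\lesssim1$, the last quotient is $\le C\,D_{p}^{s,t,q,Q}(\omega)$. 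Therefore $\int(m_{s+t,s}f)^{p}d\nu\lesssim D_{p}^{s,t,q,Q}(\omega)\sum_{B\in\mathcal S}\sigma(B)\,(\langle g\rangle_{B}^{\sigma})^{p}$, and it remains to bound this sum by $\|g\|_{L^{p}(\sigma)}^{p}$. Refining $\mathcal S$ to a $\sigma$-sparse family and using $\langle g\rangle_{B}^{\sigma}\le\inf_{E_{B}}M^{\sigma}g$ over disjoint pieces with $\sigma(E_{B})\gtrsim\sigma(B)$, the sum is dominated by $\int(M^{\sigma}g)^{p}\,d\sigma$, where $M^{\sigma}$ is the Hardy--Littlewood maximal operator for $\sigma$ over pseudoballs; its boundedness on $L^{p}(\sigma)$, $p>1$, together with $\int g^{p}d\sigma=\int|f|^{p}\omega\,d\mu_{q}$, closes the estimate.

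I expect the main obstacle to be supplying the structural input that makes the last paragraph legitimate, namely that the weighted measures $\nu=\omega\,d\mu_{Q+pt}$ and $\sigma=\omega^{-1/(p-1)}d\mu_{q+p'(s-q)}$ are doubling on pseudoballs, so that $(\mathbb B,d,\sigma)$ is a space of homogeneous type: this is what gives both the $L^{p}(\sigma)$ boundedness of $M^{\sigma}$ and the existence of a $\sigma$-sparse refinement of $\mathcal S$. This is the analogue of the classical fact that a Muckenhoupt weight is doubling, and I would derive it from $\omega\in(D_{p}^{s,t,q,Q})$ together with the doubling of the unweighted measures $\mu_{\beta}$, $\beta>-1$; the delicate point is that the three exponents $Q+pt$, $q$, and $q+p'(s-q)$ differ, so one must pass through the boundary normalization $\mu_{\beta}(B)\simeq R_{B}^{N+1+\beta}$ to convert the pointwise $(D_{p})$ balance into genuine doubling estimates. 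A secondary technical matter is the Whitney-type covering of the boundary Carleson boxes used to produce $\mathcal S$, for which one relies on the geometry of $d$ already exploited in the earlier results.
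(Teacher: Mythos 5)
Your algebraic skeleton is sound, and it is a genuinely different route from the paper's. The reduction to $\int_{\mathbb B}(m_{s+t,s}f)^p\,\omega\,d\mu_{Q+pt}\lesssim\int_{\mathbb B}|f|^p\,\omega\,d\mu_q$, the dual-weight identities $g\,d\sigma=|f|\,d\mu_s$ and $\int_{\mathbb B}g^p\,d\sigma=\int_{\mathbb B}|f|^p\,\omega\,d\mu_q$ for $\sigma=\omega^{-1/(p-1)}d\mu_{q+p'(s-q)}$, the observation that both hypotheses force $Q\ge q$, and the use of $(D_p^{s,t,q,Q})$ to get $(\mathrm{avg}_B)^p\,\nu(B)\le C\,D_p^{s,t,q,Q}(\omega)\,\sigma(B)\,(\langle g\rangle_B^{\sigma})^p$ on boundary-touching pseudoballs (via $\mu_{\beta}(B)\simeq R_B^{N+1+\beta}$ and $R_B^{Q-q}\lesssim 1$) all check out. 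The paper instead never discretizes: it regularizes, replacing $f$ by $R_k^sf$ and $\omega$ by $R_{k'}^{s,Q+pt}\omega$ (Lemmas \ref{5keumo8}, \ref{5keumo10}, \ref{5keumo12}), proves that $R_{k'}^{s,Q+pt}\omega(z)(1-|z|^2)^{-t-\frac{Q-q}{p}}$ lies in the full Muckenhoupt class $(A_{p,s+t+\frac{Q-q}{p}})$ over \emph{all} pseudoballs (Lemma \ref{5keumo19}), invokes the classical weighted maximal theorem (Theorem \ref{2keumo13}), and then undoes the regularization pointwise through the testing form of $(D_p^{s,t,q,Q})$ (Lemma \ref{5keumo15}) and Fubini.

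The genuine gap sits exactly where the paper's regularization does its work. You close your argument by asserting that $\omega\in(D_p^{s,t,q,Q})$ makes $\nu=\omega\,d\mu_{Q+pt}$ and $\sigma$ doubling, ``so that $(\mathbb B,d,\sigma)$ is a space of homogeneous type,'' and you defer to this both the $L^p(\sigma)$-boundedness of $M^\sigma$ and the $\sigma$-sparse refinement of $\mathcal S$. That claim is false: $(D_p^{s,t,q,Q})$ constrains only pseudoballs meeting $\partial\mathbb B$ and says nothing about the local behaviour of $\omega$ in the interior. For instance, take $\omega\equiv1$ except on a sequence of tiny annuli shrinking to an interior point, where $\omega=\epsilon_j\to0$, with the annuli so thin that $\sum_j\epsilon_j^{-1/(p-1)}\mu(A_j)<\infty$; every boundary-touching ball that sees this region has radius $\gtrsim1$, so the $(D_p)$ supremum changes only by a constant, yet $\nu$ fails to be doubling at those annuli. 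Hence $(\mathbb B,d,\sigma)$ is not a space of homogeneous type under your hypothesis, and the two steps that finish your proof cannot be obtained the way you plan. What \emph{is} true — by H\"older, $\mu_{s+t+\frac{Q-q}{p}}(B)\le\nu(B)^{1/p}\sigma(B)^{1/p'}$, combined with the $(D_p)$ upper bound and $\mu_{\beta}(B)\simeq R_B^{N+1+\beta}$ — is that $\nu$ and $\sigma$ are doubling on boundary-touching pseudoballs; since $m_{s+t,s}$, your stopping family, and the maximal operator $M^\sigma$ you need can all be restricted to that family, the proof is likely repairable. But then you must actually (i) prove this restricted doubling, (ii) prove the weak $(1,1)$ bound for the restricted $M^\sigma$ with respect to $\sigma$ by a Vitali argument inside that family, and (iii) carry out the sparse (principal-balls) refinement across generations, which relies on (ii) and is a real argument, not a routine remark. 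As written, the decisive steps rest on a structural property that the hypothesis does not provide.
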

Then  for the case $s+t>-1$ and $q=Q$ we have:
\begin{cor}\label{keumo14} Let $\omega$ be a weight on $\mathbb{B}$. Then for $s+t>-1, s>-1 $ the following assertions are equivalent:
\begin{enumerate} 
  \item $P_{s,t}$ is well defined and continuous from $L^p(\omega d\mu_q)$ to $L^p(\omega d\mu_q)$;
  \item $T_{s+t,s}$ is well defined and continuous from $L^p(\omega d\mu_q)$ to $L^p(\omega d\mu_{q+pt})$;
  \item $S_{s+t,s}$ is well defined and continuous from $L^p(\omega d\mu_q)$ to $L^p(\omega d\mu_{q+pt})$;
  \item $\omega \in (K_p^{s,t,q,Q})$.
\end{enumerate}
\end{cor}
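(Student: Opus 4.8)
The plan is to prove the equivalences by closing the cycle
$$(4)\Rightarrow(3)\Rightarrow(2)\Leftrightarrow(1)\Rightarrow(4).$$
Throughout I use that here $Q=q$, so the hypothesis ``$s+t>-1$ and $Q\ge q$'' is satisfied and every earlier result quoted for that regime is available. Two of the links are purely algebraic. For $(1)\Leftrightarrow(2)$, formula \eqref{keumoequa} gives $P_{s,t}f(z)=(1-|z|^2)^tT_{s+t,s}f(z)$, so that, using $(1-|z|^2)^{pt}\,d\mu_q=d\mu_{q+pt}$,
$$\int_{\mathbb{B}}|P_{s,t}f|^p\,\omega\,d\mu_q=\int_{\mathbb{B}}|T_{s+t,s}f|^p(1-|z|^2)^{pt}\,\omega\,d\mu_q=\int_{\mathbb{B}}|T_{s+t,s}f|^p\,\omega\,d\mu_{q+pt},$$
whence $\|P_{s,t}f\|_{L^p(\omega d\mu_q)}=\|T_{s+t,s}f\|_{L^p(\omega d\mu_{q+pt})}$ for every $f$; thus $(1)$ and $(2)$ hold or fail together, both for continuity and for well-definedness. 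For $(3)\Rightarrow(2)$, the pointwise bound $|T_{s+t,s}f(z)|\le S_{s+t,s}f(z)$ immediately gives $\|T_{s+t,s}f\|_{L^p(\omega d\mu_{q+pt})}\le\|S_{s+t,s}f\|_{L^p(\omega d\mu_{q+pt})}$.

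For $(1)\Rightarrow(4)$ I would simply invoke the necessity result already in hand: since $s+t>-1$ and $Q=q\ge q$, Theorem~\ref{keumo11} says that boundedness of $P_{s,t}$ from $L^p(\omega d\mu_q)$ to itself forces $\omega\in(K_p^{s,t,q,Q})$.

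The substantial step is $(4)\Rightarrow(3)$. Since $Q=q$, Remark~\ref{keumo11''} identifies $(K_p^{s,t,q,Q})=(D_p^{s,t,q,Q})$, so $\omega\in(D_p^{s,t,q,Q})$ and the good lambda inequality of Theorem~\ref{keumo12} applies in the regime $s+t>-1$, $Q\ge q$. Writing $\nu=\omega\,d\mu_{q+pt}$, I would multiply that inequality by $p\lambda^{p-1}$ and integrate in $\lambda$; combining $\{S_{s+t,s}f>2\lambda\}\subseteq\{S_{s+t,s}f>2\lambda,\ m'_{s+t,s}f\le\gamma\lambda\}\cup\{m'_{s+t,s}f>\gamma\lambda\}$ with the substitutions $u=2\lambda$ and $u=\gamma\lambda$ produces an inequality of the form
$$2^{-p}\int_{\mathbb{B}}(S_{s+t,s}f)^p\,d\nu\le C\,D_p^{s,t,q,Q}(\omega)\,\gamma^\beta\int_{\mathbb{B}}(S_{s+t,s}f)^p\,d\nu+\gamma^{-p}\int_{\mathbb{B}}(m'_{s+t,s}f)^p\,d\nu.$$
Choosing $\gamma$ so small that $C\,D_p^{s,t,q,Q}(\omega)\,\gamma^\beta\le2^{-p-1}$ and absorbing the first term on the right, I get $\int(S_{s+t,s}f)^p\,d\nu\lesssim\int(m'_{s+t,s}f)^p\,d\nu$. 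Finally, since $s+t>-1$ one has $\mu_{s+t}(B(\zeta,R))\simeq R^{N+1+s+t}$ for every boundary pseudoball, hence $m'_{s+t,s}f\simeq m_{s+t,s}f$; because $O_{s,t}f=(1-|\cdot|^2)^tm_{s+t,s}f$ and $(1-|z|^2)^{-pt}\,d\mu_{q+pt}=d\mu_q$, this turns the right-hand side into a constant multiple of $\int_{\mathbb{B}}(O_{s,t}f)^p\,\omega\,d\mu_q$, which is $\lesssim\int_{\mathbb{B}}|f|^p\,\omega\,d\mu_q$ by Theorem~\ref{keumo13}. Chaining the estimates yields $\|S_{s+t,s}f\|_{L^p(\omega d\mu_{q+pt})}\lesssim\|f\|_{L^p(\omega d\mu_q)}$, i.e. $(3)$.

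The hard part is the absorption step in $(4)\Rightarrow(3)$: it is only valid once one knows a priori that $\int(S_{s+t,s}f)^p\,d\nu<\infty$, and that $\gamma$ can be fixed uniformly in $f$. I would secure finiteness first for bounded, compactly supported $f$ (for which $S_{s+t,s}f$ is bounded and $\nu(\mathbb{B})<\infty$ follows from the $(K_p^{s,t,q,Q})$ condition applied to the largest pseudoball), and then pass to general $f\ge0$ by monotone convergence, using positivity of $S_{s+t,s}$; replacing $f$ by $|f|$ handles complex $f$. The remaining manipulations are routine once one records the change of measure $(1-|z|^2)^{pt}\,d\mu_q=d\mu_{q+pt}$ and the comparability $\mu_{s+t}(B)\simeq R_B^{N+1+s+t}$ for boundary pseudoballs.
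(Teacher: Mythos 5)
Your proof is correct and follows essentially the same route as the paper: the cycle $(4)\Rightarrow(3)\Rightarrow(2)\Leftrightarrow(1)\Rightarrow(4)$, using the identity $P_{s,t}f=(1-|z|^2)^tT_{s+t,s}f$ with the change of measure $(1-|z|^2)^{pt}d\mu_q=d\mu_{q+pt}$, the necessity Theorem~\ref{keumo11}, Remark~\ref{keumo11''} to identify $(K_p^{s,t,q,q})=(D_p^{s,t,q,q})$, and then the good lambda inequality of Theorem~\ref{keumo12} combined with the maximal function estimate of Theorem~\ref{keumo13} for sufficiency. The only cosmetic difference is that you rederive the passage from the good lambda inequality to the $L^p$ bound (including the a priori finiteness needed for absorption) by hand, whereas the paper delegates this step to Lemma~\ref{lem777}.
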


Rahm, the third and the fourth author in \cite{rm} settled the particular case of the operators $P_{s,t}$ for $s+t>-1, s>-1, Q=q=s.$  To this aim, they used dyadic methods that have been initiated by Aleman, Pott and Reguera in the unit disk \cite{APR}. \\

The outline of the paper is as follows. In Section 2 we briefly give requisite background information. We prove Theorem \ref{keumo4} in Section 3. From  Section 4 we look at weighted estimates; there we show Theorem \ref{keumo6}, Theorem \ref{keumo7} and Theorem \ref{keumo7'}. The proof of Theorem \ref{keumo9}, Theorem \ref{keumo9'}, Theorem \ref{keumo11}, Theorem \ref{keumo13} are in Section 5. The proof of Theorem \ref{keumo12} is in Section 6. Corollary \ref{keumo14} appears in Section 7.    

\section{Main tools}\label{main-tools}
\subsection{Complex Analysis Tools}
Throughout this paper $d$ is the pseudo-distance defined by
$$d(z,w)= | |z|-|w| |+ \left\vert 1-\frac{\langle z,w \rangle}{|z||w|}\right\vert.$$ 
 Throughout this paper $K$ will be a constant such that 
 $$d(x,y)\leqslant K(d(x,z)+d(z,y))$$
  for all $x$, $y$ and $z$ in $\mathbb{B}.$
One can find the following two results in \cite{bek}.
\begin{lem}\label{2keumo1}
For each $z\in\mathbb{B}$  and $r_0$, $0<r_0<1$, if we set $z^0=(r_0,0,\cdots,0)$, then we have:
\begin{enumerate}
\item $|1-z_1r_0|\geq \frac{1}{4}d(z,z^0);$
\item $|z_1-r_0|\leq d(z,z^0);$
\item $|z-z^0|\leq d(z,z^0);$
\item $\displaystyle\sum_{k=2}^{N}|z_k|^2\leq 2d(z,z^0).$
\end{enumerate}
\end{lem}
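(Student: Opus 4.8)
The plan is to reduce everything to the one-complex-variable geometry along the $z_1$-axis. Writing $\rho=|z|$ and $z=(z_1,z')$ with $z'=(z_2,\dots,z_N)$, the choice $z^0=(r_0,0,\dots,0)$ with $r_0$ real gives $\langle z,z^0\rangle=z_1r_0$ and $|z^0|=r_0$, so the pseudo-distance is fully explicit:
$$
d(z,z^0)=|\rho-r_0|+\Bigl|1-\tfrac{z_1}{\rho}\Bigr|=|\rho-r_0|+\frac{|\rho-z_1|}{\rho}.
$$
I would then record two elementary facts to be used throughout. From $(1-\rho r_0)^2-(\rho-r_0)^2=(1-\rho^2)(1-r_0^2)\ge 0$ and $1-\rho r_0>0$ one gets $1-\rho r_0\ge|\rho-r_0|$; and since $|z_1|\le\rho$, $|1-z_1r_0|\ge 1-|z_1|r_0\ge 1-\rho r_0$. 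Together these give the chain $|\rho-r_0|\le 1-\rho r_0\le|1-z_1r_0|$, which already dominates the radial part of $d$ by the kernel factor.

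For (1) I would decompose $\rho-z_1=\rho(1-z_1r_0)-z_1(1-\rho r_0)$ and bound, using $|z_1|\le\rho$ and $1-\rho r_0\le|1-z_1r_0|$,
$$
|\rho-z_1|\le\rho\,|1-z_1r_0|+|z_1|(1-\rho r_0)\le 2\rho\,|1-z_1r_0|,
$$
so that $\frac{|\rho-z_1|}{\rho}\le 2|1-z_1r_0|$; adding the radial bound yields $d(z,z^0)\le 3|1-z_1r_0|$, which is stronger than the stated $|1-z_1r_0|\ge\frac14 d(z,z^0)$.

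Parts (2) and (4) are then immediate from the same explicit formula together with $\rho<1$ (so $|\rho-z_1|\le\frac{|\rho-z_1|}{\rho}$). For (2), $|z_1-r_0|\le|z_1-\rho|+|\rho-r_0|\le\frac{|\rho-z_1|}{\rho}+|\rho-r_0|=d(z,z^0)$. For (4), $\sum_{k\ge2}|z_k|^2=\rho^2-|z_1|^2=(\rho-|z_1|)(\rho+|z_1|)\le 2(\rho-|z_1|)$ since $\rho+|z_1|<2$, and $\rho-|z_1|\le|\rho-z_1|\le\frac{|\rho-z_1|}{\rho}\le d(z,z^0)$, giving $\sum_{k\ge2}|z_k|^2\le 2d(z,z^0)$.

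The part I expect to require the most care is (3), because of the nonisotropy of the ball. Combining (2) and (4) gives
$$
|z-z^0|^2=|z_1-r_0|^2+\sum_{k=2}^N|z_k|^2\le d(z,z^0)^2+2\,d(z,z^0),
$$
so the transverse coordinates only contribute at scale $d^{1/2}$ and the Euclidean distance is controlled by $d(z,z^0)^{1/2}$ rather than by $d(z,z^0)$ itself (indeed, for $z=(r_0,\varepsilon,0,\dots,0)$ one computes $d(z,z^0)\asymp\varepsilon^2$ while $|z-z^0|=\varepsilon$, so a literal first-power bound cannot hold near $z^0$). I would therefore establish (3) in the form $|z-z^0|^2\lesssim d(z,z^0)$ (equivalently $|z-z^0|\lesssim d(z,z^0)^{1/2}$), which is precisely what (2) and (4) deliver and the form needed later to match pseudoballs with the nonisotropic Euclidean regions $\{|z_1-r_0|\lesssim R,\ |z'|^2\lesssim R\}$; reconciling this genuine square-root scaling with the stated inequality is the one subtle point.
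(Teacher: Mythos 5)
The paper offers no proof of this lemma at all --- it is quoted from B\'ekoll\'e's paper \cite{bek} with the remark ``one can find the following two results in \cite{bek}'' --- so your argument can only be measured against the statement itself, and there your work is essentially complete and correct. The reduction to the explicit formula $d(z,z^0)=|\rho-r_0|+|\rho-z_1|/\rho$ with $\rho=|z|$ is the natural starting point; your two elementary facts $|\rho-r_0|\le 1-\rho r_0$ (from $(1-\rho r_0)^2-(\rho-r_0)^2=(1-\rho^2)(1-r_0^2)\ge 0$) and $1-\rho r_0\le|1-z_1r_0|$ are correct, and the decomposition $\rho-z_1=\rho(1-z_1r_0)-z_1(1-\rho r_0)$ then yields $d(z,z^0)\le 3|1-z_1r_0|$, which indeed improves the stated constant $\tfrac14$ to $\tfrac13$. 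Parts (2) and (4) follow exactly as you say, from $\rho<1$ and the reverse triangle inequality $\rho-|z_1|\le|\rho-z_1|$; every step checks out.

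Your discussion of (3) is the most valuable part of the proposal: the inequality $|z-z^0|\le d(z,z^0)$ is false as printed, and your counterexample is decisive. For $z=(r_0,\varepsilon,0,\dots,0)$ one computes $||z|-r_0|\approx\varepsilon^2/(2r_0)$ and $|1-\tfrac{\langle z,z^0\rangle}{|z||z^0|}|\approx\varepsilon^2/(2r_0^2)$, so $d(z,z^0)=O_{r_0}(\varepsilon^2)$ while $|z-z^0|=\varepsilon$; hence no first-power bound can hold in the complex-tangential directions. What is true, and what you prove directly from (2) and (4), is the quadratic substitute $|z-z^0|^2\le d(z,z^0)^2+2\,d(z,z^0)$, i.e.\ $|z-z^0|\lesssim d(z,z^0)^{1/2}$ near $z^0$. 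That square-root tangential scaling is exactly what the pseudoball volume estimate of Lemma \ref{2keumo3} encodes (Euclidean extent $R$ radially but $R^{1/2}$ tangentially), and it is the form in which item (3) is actually usable. So the only discrepancy between your proposal and the lemma is a defect of the printed statement --- presumably a misstatement of the quadratic inequality --- and not a gap in your argument.
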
 
\begin{prop}\label{2keumo2}
There is a constant $C_1>0$ such that for all $z, w, w_0\in\mathbb{B}$ such that  $d(z,w_0)>C_1 d(w,w_0)$ we have
$$|\langle z,w_0\rangle-\langle z,w\rangle|\leq \frac{1}{2}|1-\langle z,w_0\rangle|.$$
Then
$$|1-\langle z,w\rangle|\geq \frac{1}{2}|1-\langle z,w_0\rangle|.$$
\end{prop}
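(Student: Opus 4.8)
The plan is to reduce, by unitary invariance, to the configuration in which Lemma \ref{2keumo1} applies, and then to chain the resulting elementary estimates. First observe that the second assertion follows immediately from the first: by the triangle inequality,
$$|1-\langle z,w\rangle|\geq |1-\langle z,w_0\rangle|-|\langle z,w_0\rangle-\langle z,w\rangle|\geq |1-\langle z,w_0\rangle|-\tfrac12|1-\langle z,w_0\rangle|=\tfrac12|1-\langle z,w_0\rangle|,$$
so the whole task reduces to the first inequality.

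To prove the first inequality I would first normalise. Pick a unitary map $U$ of $\mathbb{C}^N$ with $Uw_0=(|w_0|,0,\ldots,0)$. Since $\langle Ux,Uy\rangle=\langle x,y\rangle$ and $|Ux|=|x|$ for all $x,y$, both the pseudo-distance $d$ and the inner-product quantities $\langle z,w_0\rangle$, $\langle z,w\rangle$, $\langle z,w_0-w\rangle$ are unchanged upon replacing $z,w,w_0$ by $Uz,Uw,Uw_0$. Hence I may assume $w_0=z^0=(r_0,0,\ldots,0)$ with $r_0=|w_0|$, which is exactly the setting of Lemma \ref{2keumo1} (to be invoked once for the point $z$ and once for the point $w$, with the same $r_0$).

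Then I would combine three bounds. By Cauchy--Schwarz together with $|z|<1$ and Lemma \ref{2keumo1}(3) applied to $w$,
$$|\langle z,w_0\rangle-\langle z,w\rangle|=|\langle z,w_0-w\rangle|\leq |z|\,|w_0-w|\leq |w_0-w|\leq d(w,w_0),$$
so the hypothesis $d(z,w_0)>C_1d(w,w_0)$ yields $|\langle z,w_0\rangle-\langle z,w\rangle|<\tfrac{1}{C_1}d(z,w_0)$. On the other hand, $(w_0)_1=r_0$ is real, so $\langle z,w_0\rangle=z_1r_0$, and Lemma \ref{2keumo1}(1) gives $|1-\langle z,w_0\rangle|=|1-z_1r_0|\geq \tfrac14 d(z,w_0)$, i.e.\ $d(z,w_0)\leq 4|1-\langle z,w_0\rangle|$. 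Combining these, $|\langle z,w_0\rangle-\langle z,w\rangle|<\tfrac{4}{C_1}|1-\langle z,w_0\rangle|$, and choosing $C_1=8$ makes the right-hand factor $\tfrac12$, which is the desired bound.

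I expect no genuinely hard step: the argument is a reduction to Lemma \ref{2keumo1} followed by a short chain of inequalities. The only point needing care is the normalisation --- checking that a single unitary change of variables simultaneously preserves $d(z,w_0)$, $d(w,w_0)$ and all the inner products, so that items (1) and (3) of Lemma \ref{2keumo1} become available. Once $w_0$ lies on the first coordinate axis, item (1) supplies the lower bound on $|1-\langle z,w_0\rangle|$ and item (3) the upper bound on $|w_0-w|$, and the value $C_1=8$ is forced by matching the factor $\tfrac14$ from item (1) against the target factor $\tfrac12$.
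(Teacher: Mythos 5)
Your proof is correct, and it is essentially the argument the paper relies on: the paper gives no proof of Proposition \ref{2keumo2} itself (it cites B\'ekoll\'e's paper, alongside Lemma \ref{2keumo1}), and the intended argument there is exactly your reduction by unitary invariance to $w_0=(r_0,0,\dots,0)$, followed by the chain $|\langle z,w_0-w\rangle|\leq|w_0-w|\leq d(w,w_0)<\tfrac{1}{C_1}d(z,w_0)\leq\tfrac{4}{C_1}|1-\langle z,w_0\rangle|$ using items (3) and (1) of Lemma \ref{2keumo1}. The triangle-inequality deduction of the second display from the first is also as intended, so nothing is missing.
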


The following result will be heavily used throughout the paper.
\begin{lem}\label{2keumo3}
For each $w\in\mathbb{B}$, $0<|w|=r<1$ and $0<R<2$:
$$\mu_q(B(w,R))\simeq R^{N+1}[max(R,1-r)]^{q}~if~q>-1.$$
 Then for $q>-1,$  $(\mathbb{B},d,\mu_q)$ is an homogeneous space in the sense of \cite{coif}.\\
 
However, if $B(w,R)$ is away from the boundary ($R<\frac{(1-|w|)}{2}$), the equivalence remains true if $q< -1,$ and 
 $$\mu_{-1}(B(w,R))\simeq R^N.$$ 
\end{lem}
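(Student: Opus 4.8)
The plan is to reduce to an explicit model configuration by invariance, separate the radial and angular variables, and estimate each factor; the term $[\max(R,1-|w|)]^{q}$ will be produced entirely by the radial integral, and its behaviour at $\partial\mathbb{B}$ is exactly what distinguishes $q>-1$ from $q\le-1$. Both the pseudo-distance $d$ and every measure $\mu_q$ are invariant under the unitary group of $\mathbb{C}^N$, so after a rotation I may take $w=z^0=(r,0,\dots,0)$ with $r=|w|$, the configuration of Lemma \ref{2keumo1}. Writing $z=\rho\,\omega$ with $\rho=|z|$ and $\omega\in S^{2N-1}$, one has $\langle z,z^0\rangle/(|z||z^0|)=\omega_1$, so $d(z,z^0)<R$ becomes the \emph{separated} condition $|\rho-r|+|1-\omega_1|<R$. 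Since $d\mu_q\simeq(1-\rho^2)^q\rho^{2N-1}\,d\rho\,d\sigma(\omega)$ with $d\sigma$ the surface measure, and $1-\rho^2\simeq1-\rho$ on $(0,1)$, the integral factors over the spherical cap $\{|1-\omega_1|<R\}$ and, for each such $\omega$, over the radial interval $|\rho-r|<R-|1-\omega_1|$. Parametrizing $\omega_1=\sqrt{1-|\omega'|^2}\,e^{i\theta}$ with $\omega'=(\omega_2,\dots,\omega_N)$, the cap condition is comparable to $\{|\theta|\lesssim R,\ |\omega'|\lesssim\sqrt R\}$ (the bound $|\omega'|\lesssim\sqrt R$ being the scaled form of $\sum_{k\ge2}|z_k|^2\lesssim d(z,z^0)$ in Lemma \ref{2keumo1}(4), and $|\theta|\lesssim R$ coming from (1)--(3)), whence $\sigma(\{|1-\omega_1|<R\})\simeq R\cdot(\sqrt R)^{2(N-1)}=R^{N}$.

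The crux is the radial integral $\int(1-\rho)^q\rho^{2N-1}\,d\rho$ over an interval of length $\simeq R$ about $r$ (truncated at $\rho=1$), on which $\rho^{2N-1}$ is comparable to a constant since $\rho\simeq r$. Here $1-\rho$ runs over an interval about $1-r$ of radius $\simeq R$. If $R\le1-r$ the interval never reaches $\rho=1$, so $1-\rho\simeq1-r=\max(R,1-r)$ throughout and, \emph{for every} $q$, the integral is $\simeq R\,(1-r)^q$. If $R>1-r$ the interval reaches the boundary and $1-\rho$ ranges over $(0,\simeq R)$; for $q>-1$ the integrand is integrable and $\int_0^{\simeq R}(1-\rho)^q\,d\rho\simeq R^{q+1}=R\,[\max(R,1-r)]^q$, while for $q\le-1$ this diverges, which is precisely why the boundary case must be excluded for such $q$. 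Multiplying the radial interval length $\simeq R$ (valid on the bulk $\{|1-\omega_1|<R/2\}$ of the cap) by the cap measure $R^N$ yields
\[ \mu_q(B(w,R))\simeq R^{N}\cdot R\,[\max(R,1-r)]^{q}=R^{N+1}\,[\max(R,1-r)]^{q} \]
for $q>-1$. The doubling inequality $\mu_q(B(w,2R))\lesssim\mu_q(B(w,R))$ is immediate from this formula, since $\max(2R,1-r)/\max(R,1-r)\in[1,2]$; together with the quasi-triangle inequality (constant $K$) this makes $(\mathbb{B},d,\mu_q)$ a space of homogeneous type for $q>-1$.

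When $B(w,R)$ is away from the boundary, $R<(1-r)/2$, the first alternative above always occurs: $1-\rho\simeq1-r$ uniformly on $B(w,R)$, so the weight is comparable to the constant $(1-r)^q$ and factors out, giving $\mu_q(B(w,R))\simeq(1-r)^q\,\mu(B(w,R))\simeq(1-r)^qR^{N+1}$ for \emph{every} $q$. This covers $q<-1$, and the borderline exponent $q=-1$ is handled by the same constant-weight reduction (the weight being $\simeq(1-|w|)^{-1}$ on $B(w,R)$), yielding the stated estimate at that exponent.

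The step I expect to be the main obstacle is the two-sided comparison underlying the factorization above. The inclusion $B(w,R)\subseteq\{|\rho-r|<R\}\times\{|1-\omega_1|<R\}$ is the definition, but the matching lower bound — that a fixed fraction of the cap, together with a radial interval of length $\simeq R$, actually lies in $B(w,R)$ — must be extracted from Lemma \ref{2keumo1} and Proposition \ref{2keumo2}. The delicate point is to respect the anisotropy ($|\omega'|$ of size $\sqrt R$ against a radial size $R$), so that in the boundary-touching regime the singular but integrable weight $(1-\rho)^q$ is neither over- nor under-counted and the exponent $q+1$ emerges with the correct constants.
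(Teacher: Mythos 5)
Your method is essentially the paper's (and B\'ekoll\'e's original argument, which the paper simply cites for the $q>-1$ half): reduce by a unitary to $w=z^0=(r,0,\dots,0)$, pass to spherical coordinates where $d(z,z^0)=|\rho-r|+|1-\omega_1|$ holds exactly, estimate the angular cap $\{|1-\omega_1|<R\}$ by $R^N$, and let the radial integral $\int(1-\rho)^q\,d\rho$ produce the factor $[\max(R,1-r)]^q$. The differences are in your favor: you prove the cap estimate by the $(\theta,\omega')$ parametrization instead of quoting \cite{zhu}, you treat $q>-1$ directly instead of deferring to \cite{bek}, and your exact Fubini decomposition (radial interval of half-length $R-|1-\omega_1|$ for each $\omega$) is cleaner than the paper's sandwich between the two product sets with parameters $R/2$ and $R$. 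The obstacle you flag in your last paragraph is in fact already resolved by your own reduction: once $d(z,z^0)=|\rho-r|+|1-\omega_1|$ is observed, the lower bound follows by restricting to the half-cap $\{|1-\omega_1|<R/2\}$, with no appeal to Lemma \ref{2keumo1} or Proposition \ref{2keumo2}. (One caveat you share with the paper: treating $\rho^{2N-1}$ as a constant requires $|w|\gtrsim R$; this degeneracy near the origin is invisible in the paper's applications, where the balls touch the boundary.)

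There are, however, two concrete errors. First, you split cases at $R\le 1-r$ and claim that there $1-\rho\simeq 1-r$ throughout the radial interval, so that the integral is $\simeq R(1-r)^q$ \emph{for every} $q$. This is false in the zone $(1-r)/2<R\le 1-r$: the quantity $1-\rho$ then ranges down to $1-r-R$, which can be arbitrarily small relative to $1-r$; for instance with $q=-2$ the integral is $\simeq(1-r-R)^{-1}$, not $\simeq R(1-r)^{-2}$. For $q>-1$ the conclusion survives by evaluating the antiderivative (the integral is $\simeq(1-r)^{q+1}\simeq R(1-r)^q$ there), but your reasoning does not give it; the clean fix is to split at $R\le(1-r)/2$, which is exactly the lemma's hypothesis for $q\le-1$. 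Second, and more seriously, your $q=-1$ sentence is self-contradictory: the constant-weight reduction you invoke yields $\mu_{-1}(B(w,R))\simeq(1-r)^{-1}\mu(B(w,R))\simeq R^{N+1}(1-r)^{-1}$, which is \emph{not} the stated $R^N$ — the ratio is $R/(1-r)\le 1/2$ and can be arbitrarily small — so it does not "yield the stated estimate." You are in good company here: the paper's own proof computes the radial integral as $\ln\frac{1-r+R}{1-r-R}$, notes only the upper bound $\le\ln 3$, and then asserts equivalence, even though $\ln\frac{1-r+R}{1-r-R}\simeq R/(1-r)$ when $R\ll 1-r$. The two-sided estimate that actually follows, from your computation or from theirs, is $\mu_{-1}(B(w,R))\simeq R^{N+1}(1-r)^{-1}$ (the general formula persists at $q=-1$), and the displayed $\simeq R^N$ holds only when $R\simeq 1-r$. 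An honest write-up should record this discrepancy with the statement rather than assert agreement.
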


\begin{proof}
We are going to do the case $q< -1$ and $q=-1,$ one can find the other case in \cite{bek}.\\

\textit{First case:} Assume $q<-1$.\\

We show that for all $R\in (0,\frac{1-|w|}{2})$ We have
$$\displaystyle\int_{B(w,R)}(1-|z|)^q d\mu (z)\simeq R^{N+1}(1-|w|)^{q}.$$
We have\\
$$\lbrace z\in\mathbb{B}:\vert |z|-|w|\vert \leq \frac{R}{2}~\textnormal{and}~\vert 1-\langle z',w' \rangle \vert\leq \frac{R}{2} \rbrace \subset B(w,R) \subset \lbrace z\in\mathbb{B}:\vert |z|-|w|\vert \leq R~\textnormal{and}~\vert 1-\langle z',w' \rangle \vert\leq R \rbrace,$$
where $z'=\frac{z}{|z|},w'=\frac{w}{|w|}.$ We first of all recall that $\int\limits_{\lbrace|1-\langle z',w' \rangle|\leq R ,z'\in \partial \mathbb{B}\rbrace }\mathrm{d}\sigma (z')\simeq R^N$ (see \cite{zhu}).\\
$\mu_q(B(w,R))=\displaystyle\int_{B(w,R)}(1-|z|)^q d\mu (z)\leq \displaystyle\int_{\lbrace z\in\mathbb{B}:\vert |z|-|w|\vert \leq R~\textnormal{and}~\vert 1-\langle z',w' \rangle \vert\leq R \rbrace}(1-|z|)^q d\mu (z).$\\
We set 
$$Y=\displaystyle\int_{\lbrace z\in\mathbb{B}:\vert |z|-|w|\vert \leq R~\textnormal{and}~\vert 1-\langle z',w' \rangle \vert\leq R \rbrace}(1-|z|)^q d\mu (z).$$
 Then,
\begin{eqnarray*}\label{2keumoequa}
  Y & \lesssim & \int\limits_{|w|-R<\rho <|w|+R}(1-\rho^2)^{q}\rho^{2N-1}\mathrm{d}\rho
  \int\limits_{\lbrace|1-\langle z',w' \rangle|\leq R ,z'\in \partial \mathbb{B}\rbrace }\mathrm{d}\sigma (z') \\
   & \lesssim & R^{N}\int\limits_{|w|-R<\rho< |w|+R}(1-\rho)^{q}\mathrm{d}\rho \\
   & = & -\frac{R^N}{q+1}\lbrace (1-|w|-R)^{q+1}- (1-|w|+R)^{q+1} \rbrace\\ 
   & = & -\frac{R^N}{q+1} (q+1)(-2R)(1-|w|-\theta R)^{q}\\ 
   & \simeq & R^{N+1}(1-|w|)^{q},
\end{eqnarray*}\\
where the last equivalence is due to the fact that $\theta R\leq R < \frac{1-|w|}{2}.$\\

In the same way we get $\mu_q(B(w,R))\gtrsim R^{N+1}(1-|w|)^{q},$ using this time the fact that 
$$\left\lbrace z\in\mathbb{B}:\vert |z|-|w|\vert \leq \frac{R}{2}~\textnormal{and}~\vert 1-\langle z',w' \rangle \vert\leq \frac{R}{2} \right\rbrace \subset B(w,R).$$

\textit{Second case:} Assume $q=-1$.\\

For this case, notice that\\
$$\int\limits_{|w|-R<\rho< |w|+R}(1-\rho)^{-1}\mathrm{d}\rho \simeq \left[ \ln(\frac{1}{1-\rho})\right] _{|w|-R}^{|w|+R}=\ln(\frac{1-|w|+R}{1-|w|-R})\leq \ln 3.$$
Then $\mu_{-1}(B(w,R))\simeq R^N.$
\end{proof}

 The following result can be found in \cite{ku}.\\
\begin{lem}\label{2keumo4}
\begin{enumerate}
\item For $q<-(N+1),$ each $|K_q(z,w)|$ is bounded above as $z,w$ vary in $\mathbb{B}.$
\item For each $q\in \mathbb{R},$
\begin{enumerate}
\item $|K_q(z,w)|$ is bounded below by a positive constant as $z,w$ vary in $\mathbb{B}.$ In particular, $K_q(z,w)$ is zero free in $\mathbb{B}\times\mathbb{B}$.
\item there is a $\rho_0<1$ such that for $|z|\leq\rho_0$ and all $w\in \mathbb{B},$ we have $\Re K_q(z,w)\geq \frac{1}{2}.$
\end{enumerate}
\end{enumerate}
\end{lem}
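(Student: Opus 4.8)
The plan is to read off the two behaviours from the two closed forms of the kernel and to split the argument according to the regimes $q>-(N+1)$ and $q\le-(N+1)$ that define $K_q$. Write $\zeta=\langle z,w\rangle$; since $z,w\in\mathbb{B}$ we always have $|\zeta|\le|z||w|<1$ and $|1-\zeta|\le 2$, and these are the only geometric inputs. In the first regime $K_q(z,w)=(1-\zeta)^{-(N+1+q)}$, while in the second $K_q(z,w)={}_2F_1(1,1;c;\zeta)=\sum_{k\ge0}a_k\zeta^k$ with $c:=1-N-q$ and $a_k:=k!/(c)_k$; note $c\ge2$ exactly when $q\le-(N+1)$, with $c>2$ when $q<-(N+1)$.

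For the upper bound (1), valid for $q<-(N+1)$ so that $c>2$, I would use the ratio asymptotic $\Gamma(k+1)/\Gamma(c+k)\sim k^{1-c}$ to get $a_k\sim\Gamma(c)\,k^{1-c}$ with exponent $1-c<-1$. Hence $\sum_k a_k<\infty$, and since every $a_k\ge0$ and $|\zeta|^k<1$ this gives the uniform bound $|K_q(z,w)|\le\sum_k a_k<\infty$. The borderline $c=2$, i.e. $q=-(N+1)$, is precisely where this sum diverges, which is why part (1) excludes it.

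For the lower bound (2a), the regime $q>-(N+1)$ is immediate from $|K_q(z,w)|=|1-\zeta|^{-(N+1+q)}\ge 2^{-(N+1+q)}>0$. In the regime $q\le-(N+1)$ I would instead pass to Euler's integral representation, legitimate since $c\ge2>1$, namely $K_q(z,w)=(c-1)\int_0^1(1-t)^{c-2}(1-\zeta t)^{-1}\,dt$. The decisive observation is that $\Re\frac{1}{1-\zeta t}=\frac{1-t\Re\zeta}{|1-\zeta t|^2}$ is strictly positive on $[0,1]$ because $t|\zeta|<1$; bounding $1-t\Re\zeta\ge1-t$ and $|1-\zeta t|^2\le4$ then yields $\Re K_q(z,w)\ge\frac{c-1}{4}\int_0^1(1-t)^{c-1}\,dt=\frac{c-1}{4c}\ge\tfrac18$. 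Since $|K_q|\ge\Re K_q$, this is the desired positive lower bound, and zero-freeness follows at once.

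For (2b) I would argue by proximity to $\zeta=0$. When $q>-(N+1)$, continuity of $(1-\zeta)^{-(N+1+q)}$ at $\zeta=0$ lets me pick $\rho_0$ so small that $|z|\le\rho_0$ forces $|K_q-1|<\tfrac12$ and hence $\Re K_q\ge\tfrac12$. When $q\le-(N+1)$, the coefficients satisfy $0<a_k<1$ for $k\ge1$ (they decrease from $a_0=1$), so $|K_q-1|\le\sum_{k\ge1}|\zeta|^k=\frac{|\zeta|}{1-|\zeta|}\le\frac{\rho_0}{1-\rho_0}\le\tfrac12$ once $\rho_0\le\tfrac13$, again giving $\Re K_q\ge\tfrac12$. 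The main obstacle is the uniform lower bound (2a) in the hypergeometric regime: as $\zeta$ runs over the whole disc the power series has complex, sign-changing terms and no termwise estimate controls $\Re K_q$ near the boundary, so the Euler representation combined with the positivity of $\Re(1-\zeta t)^{-1}$ is really what does the work; every remaining estimate is routine.
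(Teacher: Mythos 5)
Your proof is correct, and for the crucial part (2a) it takes a genuinely different route from the paper. Part (1) is essentially the paper's argument: both of you use the coefficient asymptotics $c_k(q)\sim k^{N+q}$ (equivalently $a_k\sim\Gamma(c)k^{1-c}$) to get absolute, uniform convergence of the series when $q<-(N+1)$. For (2a) in the hypergeometric regime, however, the paper extends $k_q$ continuously to $\overline{\mathbb{D}}$ (using part (1)), asserts that $k_q$ is zero free on $\overline{\mathbb{D}}$ -- treating $q=-(N+1)$ separately via the explicit logarithm formula -- and then gets the lower bound by compactness; the justification of zero-freeness there is quite terse (positivity of the constant term alone does not preclude zeros). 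You instead invoke Euler's integral representation
\[
{}_2F_1(1,1;c;\zeta)=(c-1)\int_0^1(1-t)^{c-2}\,\frac{dt}{1-\zeta t},\qquad c\ge 2,
\]
and exploit the pointwise positivity of $\Re\,(1-\zeta t)^{-1}$ to obtain the explicit uniform bound $\Re K_q\ge\frac{c-1}{4c}\ge\frac18$ on all of $\mathbb{B}\times\mathbb{B}$. This handles $q=-(N+1)$ and $q<-(N+1)$ in one stroke, requires no boundary extension or compactness, yields explicit constants, and is in fact a stronger conclusion (positivity of the real part, not merely of the modulus), so it also repairs the soft spot in the paper's zero-freeness claim. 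For (2b) the two arguments are in the same spirit (perturbation of the constant term $1$ for small $|\zeta|$): the paper runs a single estimate $|K_q-1|\le CW|z|$ valid for all $q$ with $\rho_0=\min\{\frac12,\frac{1}{2CW}\}$, whereas you split the regimes, using continuity at $\zeta=0$ for $q>-(N+1)$ and the monotone coefficient bound $a_k\le 1$ with a geometric series for $q\le-(N+1)$, which gives the concrete value $\rho_0=\frac13$ there; this is a minor stylistic difference, with the paper's version slightly more unified and yours more elementary.
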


\begin{proof}
We recall that
$$
K_q(z,w)=\left\{
\begin{array}{rlc}
\frac{1}{(1-\langle z,w\rangle)^{N+1+q}} & = & \displaystyle\sum^{\infty}_{k=0}\frac{(N+1+q)_k}{k!}\langle z,w \rangle^k,~~\textnormal{if}~~q>-(N+1)\\
_2F_1(1,1;1-(N+q);\langle z,w\rangle) & = & \displaystyle\sum^{\infty}_{k=0}\frac{k!}{(1-N-q)_{k}}\langle z,w \rangle^k,~~\textnormal{if}~~q\leq -(N+1),
\end{array}
\right.
$$

\noindent so $K_q(z,w)=\displaystyle\sum^{\infty}_{k=0}c_k(q)\langle z,w \rangle^k=\displaystyle\sum^{\infty}_{k=0}c_k(q)v^k=k_q(v)$ where $v=\langle z,w \rangle$. By Stirling's formula $c_k(q)\sim k^{N+q}~~(k\rightarrow \infty),$ so that when $q<-(N+1),$ the power series of $k_q(v)$ converges uniformly for $v\in \overline{\mathbb{D}}.$ This shows boundedness. When $q<-(N+1),$ see that $k_q$ is not zero on a set containing $\overline{\mathbb{D}}-\lbrace 1\rbrace.$ The reason for this is that the first term 1 (for $k=0$) of the hypergeometric function $k_q$ is positive. But also $k_q(1)\neq 0 .$ Thus $|k_q|$ for $q<-(1+N)$ is bounded below on $\overline{\mathbb{D}}$.

If $q=-(1+N),$ then $k_{-(1+N)}(v)=v \log(1-v)^{-1}.$ On $\overline{\mathbb{D}}-\lbrace 1\rbrace,$ $k_{-(1+N)}$ is not zero and $|k_{-(1+N)}(v)|$ blows up as $v\rightarrow 1$ within $\overline{\mathbb{D}}.$ So $|k_{-(1+N)}|$ is bounded below on $\overline{\mathbb{D}}.$

The claim about $|k_q|$ for $q>-(1+N)$ is obvious and the lower bound can be taken as $2^{-(1+N+q)}.$ Then (1) follows.

Finally,  $|K_q(z,w)|\leq 1+ C\displaystyle\sum^{\infty}_{k=1}k^{N+q}|\langle z,w \rangle|^k$ for some constant $C$ and 
$$C\displaystyle\sum^{\infty}_{k=1}k^{N+q}|\langle z,w \rangle|^k\leq C\displaystyle\sum^{\infty}_{k=1}k^{N+q}|z|^k|w|^k\leq C|z|\displaystyle\sum^{\infty}_{k=1}k^{N+q}|z|^{k-1}$$
for all $z,w\in \mathbb{B}.$ The last series converges, say, for $|z|=\frac{1}{2};$ call its sum W and set $\rho_0=\min\lbrace \frac{1}{2},\frac{1}{2CW}\rbrace.$ If $|z|\leq \rho_0,$ then
$$\vert C\displaystyle\sum^{\infty}_{k=1}k^{N+q}\langle z,w \rangle^k \vert\leq CW|z|\leq \frac{1}{2}~~(z\in \mathbb{B}).$$
This is, $|K_q(z,w)-1|\leq \frac{1}{2}$ for $|z|\leq \rho_0$ and all $z\in \mathbb{B}.$ This implies the desired result (2).
\end{proof}

 One can find this in \cite{zhu}.
 \begin{prop}\label{2keumo5}
Let
$$I=\displaystyle\int_{\mathbb{B}}\frac{(1-|w|^2)^d}{|1-\langle z,w \rangle|^{1+N+c}}d\mu(w),$$ 
 for $d>-1$ and $c\in \mathbb{R}.$ We have:
 \begin{enumerate}
 \item[(i)] $I\sim 1$ if $c<d$;
 \item[(ii)] $I\sim \frac{1}{|z|^2}\log\frac{1}{1-|z|^2}$ if $c=d$;
 \item[(iii)] $I\sim (1-|z|^2)^{-(c-d)}$ if $c>d$.
 \end{enumerate}

 \end{prop}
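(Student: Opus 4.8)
The plan is to pass to polar coordinates and reduce the $N$-dimensional integral to the product of a spherical integral and a one-dimensional radial integral, and then to analyze the radial integral by hand. Writing $w=r\zeta$ with $r=|w|\in[0,1)$ and $\zeta\in\partial\mathbb{B}$, one has $d\mu(w)=2Nr^{2N-1}\,dr\,d\sigma(\zeta)$ (with $\sigma$ the normalized surface measure, so that $\mu(\mathbb{B})=1$) and $\langle z,w\rangle=\langle rz,\zeta\rangle$, whence
$$I=2N\int_0^1 r^{2N-1}(1-r^2)^d\left(\int_{\partial\mathbb{B}}\frac{d\sigma(\zeta)}{|1-\langle rz,\zeta\rangle|^{1+N+c}}\right)dr.$$

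The first ingredient I would invoke is the classical boundary integral estimate (see \cite{zhu}): for $\xi\in\mathbb{B}$ and real $\tau$,
$$\int_{\partial\mathbb{B}}\frac{d\sigma(\zeta)}{|1-\langle\xi,\zeta\rangle|^{N+\tau}}\sim\begin{cases}1 & \tau<0,\\ \log\frac{1}{1-|\xi|^2} & \tau=0,\\ (1-|\xi|^2)^{-\tau} & \tau>0,\end{cases}$$
applied with $\xi=rz$ and $\tau=1+c$. If one wanted to be self-contained, this can be proved by slicing the sphere into the caps $\{\zeta:|1-\langle\xi,\zeta\rangle|\sim 2^{-j}\}$, whose surface measures are comparable to $2^{-jN}$, in exactly the spirit of Lemma \ref{2keumo3}. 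Since $d>-1$, in cases (ii) and (iii) one has $c\ge d>-1$, hence $\tau=1+c>0$ and the spherical factor equals $\sim(1-r^2|z|^2)^{-(1+c)}$. Discarding $r^{2N-1}\sim 1$ near $r=1$ (where all the mass concentrates, and which suffices for the lower bound after restricting to $r\in[\tfrac12,1)$), one is reduced to the model integral
$$\int_0^1\frac{(1-r^2)^d}{(1-r^2|z|^2)^{1+c}}\,dr.$$

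I would then substitute $s=1-r$ and set $\delta=1-|z|^2$, using the comparisons $1-r^2\sim s$ and $1-r^2|z|^2\sim s+\delta$ valid near $r=1$, which turn the model integral into $\int_0^{\varepsilon}s^d(s+\delta)^{-(1+c)}\,ds$. Splitting the integration at $s=\delta$ yields the three regimes cleanly: when $c<d$ the exponent $d-1-c>-1$ makes the integral bounded as $\delta\to0$, giving (i); when $c=d$ the pieces $s<\delta$ and $s>\delta$ contribute a constant and a $\log(1/\delta)$ respectively, giving (ii) (the prefactor $1/|z|^2$ merely reconciles the estimate with the value $I\sim1$ at $z=0$); and when $c>d$ the piece $s<\delta$ dominates and produces $\delta^{\,d-c}$, giving (iii). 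The matching lower bounds are obtained by restricting the $s$-integral to the single dominant region. The remaining possibility $c\le-1$ occurs only inside case (i): then $\tau=1+c\le0$, the spherical factor is bounded (for $c<-1$) or equal to $\log\frac{1}{1-r^2|z|^2}$ (for $c=-1$), and since $\int_0^1(1-r^2)^d\,dr<\infty$ with $s^d\log(1/s)$ integrable for $d>-1$, one gets $I\sim1$.

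The step I expect to be the main obstacle—beyond the bookkeeping—is establishing the spherical estimate and, more delicately, justifying that the logarithm in case (ii) and the power in case (iii) are genuinely \emph{produced by the radial integration} rather than lost in the reduction. This requires keeping the comparison $1-r^2|z|^2\sim(1-r^2)+(1-|z|^2)$ uniform in both $r$ and $z$ near the boundary, and verifying that the contribution from $r$ bounded away from $1$ (together with the neglected factor $r^{2N-1}$) is genuinely of lower order.
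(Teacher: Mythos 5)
Your argument is correct, but there is nothing in the paper itself to compare it against: Proposition \ref{2keumo5} is stated without proof and simply attributed to \cite{zhu}, so any complete argument is necessarily ``different from the paper's''. Your route is the standard real-variable one: polar coordinates, the two-sided sphere estimate with exponent $N+\tau$, $\tau=1+c$, and a splitting of the radial integral at $s=\delta=1-|z|^2$. The uniformity issue you flag at the end is settled in one line by the identity $1-r^2|z|^2=(1-r^2)+r^2(1-|z|^2)$, which gives $1-r^2|z|^2\sim s+\delta$ uniformly for $r\ge\frac{1}{2}$ and $z\in\mathbb{B}$; the contribution of $r\le\frac{1}{2}$ is bounded above and below by positive constants, hence harmless in all three regimes, and your separate treatment of $c\le-1$ (which indeed can occur only in case (i)) works because $s^d$ and $s^d\log(1/s)$ are integrable near $0$ for $d>-1$. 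The cap-slicing you propose to make the sphere estimate self-contained is moreover consistent with the paper's own toolkit, since the proof of Lemma \ref{2keumo3} already invokes $\sigma(\lbrace|1-\langle z',w'\rangle|\le R\rbrace)\simeq R^N$. By contrast, the proof in the cited source is complex-analytic: one factors $|1-\langle z,w\rangle|^{-(N+1+c)}$ into two conjugate holomorphic powers, expands each in a binomial series, integrates term by term using orthogonality of the monomials $\langle z,\zeta\rangle^k$ on the sphere, and reads off the three regimes from Stirling asymptotics of the resulting coefficient sums. Your approach is more elementary and makes the source of the blow-up geometrically visible (mass concentrating where $1-r$ and $|1-\langle z,\zeta\rangle|$ are small); the series proof trades that geometry for a single clean one-variable asymptotic computation and avoids the near/far boundary case analysis altogether.
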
 

 \begin{theo}\label{2keumo6}
 Equipped with the following equivalent scalar product
 $$_q\langle f,g\rangle^t_s=\displaystyle\int_{\mathbb{B}}I^t_sf(z)\overline{I^t_sg(z)}d\mu_q(z),~~q+2t>-1,$$
   $B^2_q$ is a Hilbert space with reproducing kernel given by:
$$
K_q(z,w)=\left\{
\begin{array}{rlc}
\frac{1}{(1-\langle z,w\rangle)^{N+1+q}} & = & \displaystyle\sum^{\infty}_{k=0}\frac{(N+1+q)_k}{k!}\langle z,w \rangle^k,~~\textnormal{if}~~q>-(N+1)\\
_2F_1(1,1;1-(N+q);\langle z,w\rangle) & = & \displaystyle\sum^{\infty}_{k=0}\frac{k!}{(1-N-q)_{k}}\langle z,w \rangle^k,~~\textnormal{if}~~q\leq -(N+1).
\end{array}
\right.
$$
 \end{theo}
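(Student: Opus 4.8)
The plan is to realize $B^2_q$ as a reproducing kernel Hilbert space whose structure is diagonal in the homogeneous expansion, and then to read the kernel off one degree at a time. The first observation is that the defining form diagonalizes: writing $f=\sum_k f_k$ and $g=\sum_k g_k$ for the homogeneous expansions, we have $I^t_sf=(1-|z|^2)^t\sum_k d_k(s,t)f_k$ with $d_k(s,t)=c_k(s+t)/c_k(s)$, and homogeneous polynomials of distinct degrees are orthogonal against every radial measure; hence
$$
{}_q\langle f,g\rangle^t_s=\sum_{k=0}^{\infty}d_k(s,t)^2\int_{\mathbb{B}}f_k(z)\overline{g_k(z)}\,(1-|z|^2)^{q+2t}\,d\mu(z).
$$
The hypothesis $q+2t>-1$ is precisely what turns each summand into a bona fide convergent weighted $L^2_{q+2t}$ pairing, and since $d_k(s,t)\neq0$ the form is positive definite. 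Completeness I would not prove from scratch: the cited equivalence $\Vert I^t_sf\Vert_{L^2_q}\simeq\Vert f\Vert_{B^2_q}$ identifies this norm with the Besov norm, under which $B^2_q$ is already a Banach space, so $(B^2_q,{}_q\langle\cdot,\cdot\rangle^t_s)$ is complete and hence Hilbert.

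Next I would check that the candidate kernel belongs to the space. Because both branches of the definition of $K_q$ share the single power-series description $K_q(z,w)=\sum_k c_k(q)\langle z,w\rangle^k$, the cases $q>-(N+1)$ and $q\leq -(N+1)$ can be treated uniformly at the level of coefficients. Fixing $w\in\mathbb{B}$ and applying the diagonalization above to $K_q(\cdot,w)$ together with the single-degree norm identity $\int_{\mathbb{B}}|\langle z,w\rangle^k|^2(1-|z|^2)^{q+2t}d\mu(z)=V_{q+2t}\,c_k(q+2t)^{-1}|w|^{2k}$, where $V_{q+2t}=\int_{\mathbb{B}}(1-|z|^2)^{q+2t}d\mu(z)$, I would estimate the general term of $\Vert K_q(\cdot,w)\Vert^2_{B^2_q}$. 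Using the Stirling asymptotics $c_k(a)\sim k^{N+a}$ already exploited in the proof of Lemma \ref{2keumo4} (so $d_k(s,t)\sim k^t$), this term is of order $k^{N+q}|w|^{2k}$, and the series converges for $|w|<1$; thus $K_q(\cdot,w)\in B^2_q$.

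The heart of the argument is the reproducing identity $f(w)={}_q\langle f,K_q(\cdot,w)\rangle^t_s$. I would expand the right-hand side by the diagonalization, reducing it to $\sum_k d_k(s,t)^2 c_k(q)\int_{\mathbb{B}}f_k(z)\langle w,z\rangle^k(1-|z|^2)^{q+2t}d\mu(z)$, and then collapse each inner integral by the single-degree weighted reproducing formula (the diagonal companion of Proposition \ref{2keumo5}, available since $q+2t>-1$), turning the degree-$k$ term into a scalar multiple of $f_k(w)$. It then remains to verify that these scalars reduce to $1$ for every $k$, so that the sum telescopes to $\sum_k f_k(w)=f(w)$. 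This reduction is an identity purely among the Pochhammer coefficients $c_k(s),c_k(s+t)$ and $c_k(q+2t)$, and the natural way to produce it is to feed in the composition relation $P_s\circ I^t_s=\frac{N!}{(1+s+t)_N}\mathrm{Id}$ recorded in the introduction, which encodes exactly this interplay of raising and integration. Finally, since the reproducing kernel of a Hilbert space is uniquely determined by its inner product, the resulting kernel cannot depend on the admissible pair $(s,t)$; this is what legitimizes calling the scalar products \emph{equivalent} and lets one read the common kernel off as $K_q$.

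I expect the main obstacle to be the constant and normalization bookkeeping in this last step: one must track the factor $V_{q+2t}$ alongside the three families of Pochhammer coefficients and confirm that the degree-$k$ scalar is genuinely $1$, and not merely bounded above and below uniformly in $k$, which is all that the norm equivalence by itself guarantees. Only exact cancellation delivers $K_q$ on the nose rather than a comparable kernel, so this is the delicate point where the precise form of the composition identity, and not just the topology it induces, is indispensable. The remaining technical issue, routine by comparison, is the justification for interchanging summation and integration throughout, which follows from the absolute convergence established in the membership step via Tonelli's theorem.
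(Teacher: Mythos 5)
Your argument must be judged on its own terms, because the paper never proves Theorem \ref{2keumo6}: it is quoted as background, with references to \cite{Kap}, \cite{bb} and \cite{zz}. Its early stages are fine — the diagonalization of ${}_q\langle\cdot,\cdot\rangle^t_s$ over homogeneous degrees, positive definiteness, completeness via the cited norm equivalence, and the membership $K_q(\cdot,w)\in B^2_q$ from $c_k(a)\sim k^{N+a}$ are all correct. The fatal problem is precisely the step you flag as delicate: the degree-$k$ scalars are \emph{not} equal to $1$, and no bookkeeping can make them so for a general admissible pair $(s,t)$. Your reduction shows that the reproducing identity forces
$$d_k(s,t)^2\,c_k(q)\,V_{q+2t}\,c_k(q+2t)^{-1}=1\quad\text{for all }k\geq 0,\qquad V_{q+2t}:=\mu_{q+2t}(\mathbb{B}),$$
an identity quadratic in $d_k$ and involving $q$. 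The composition relation $P_s\circ I^t_s=\frac{N!}{(1+s+t)_N}\,\mathrm{Id}$ cannot deliver it: diagonalized by the same orthogonality, it says exactly that $d_k(s,t)\,c_k(s)\,c_k(s+t)^{-1}=1$ (together with $V_{s+t}=\frac{N!}{(1+s+t)_N}$), which is linear in $d_k$, free of $q$, and is nothing but the definition of $d_k(s,t)$. And the quadratic identity is genuinely false. Take $N=1$, $q=0$, $s=0$, $t=1$, so that $B^2_0$ is the unweighted Bergman space of the disk and the claimed kernel is $K_0(z,w)=\sum_k(k+1)z^k\overline{w}^k$. Since $D^1_0z^k=\frac{k+2}{2}z^k$, a direct computation gives
$${}_0\langle z^k,z^k\rangle^1_0=\frac{(k+2)^2}{4}\displaystyle\int_0^1u^k(1-u)^2\,du=\frac{k+2}{2(k+1)(k+3)},$$
so the reproducing kernel of this inner product is $\sum_k\frac{2(k+1)(k+3)}{k+2}z^k\overline{w}^k$; the ratio $\frac{2(k+3)}{k+2}$ against the coefficients $k+1$ of $K_0$ is non-constant in $k$, hence this kernel is neither $K_0$ nor any constant multiple of it.

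The conceptual slip sits in your closing uniqueness argument, which runs backwards. A reproducing kernel determines its inner product uniquely (and conversely), so two inner products can share the kernel $K_q$ only if they are \emph{equal}; the forms ${}_q\langle\cdot,\cdot\rangle^t_s$ for different admissible $(s,t)$ are equivalent but visibly not equal (their diagonal weights differ), so their kernels necessarily depend on $(s,t)$ — the opposite of what you inferred. What the norm equivalence genuinely gives, for generic $(s,t)$, is a kernel with coefficients $\frac{c_k(q+2t)\,c_k(s)^2}{V_{q+2t}\,c_k(s+t)^2}$, comparable to $c_k(q)$ but in general different from it. To land on $K_q$ exactly — which is the form in which the cited sources actually state and prove the result — one must use a compatibly normalized pairing whose two slots carry \emph{different} radial derivatives, for instance
$$\langle f,g\rangle=\frac{1}{V_{q+t_1+t_2}}\displaystyle\int_{\mathbb{B}}D^{t_1}_qf(z)\,\overline{D^{t_2}_{q+t_1}g(z)}\,d\mu_{q+t_1+t_2}(z),\qquad q+t_1+t_2>-1,$$
for which the degree-$k$ scalar $d_k(q,t_1)\,d_k(q+t_1,t_2)\,c_k(q)\,c_k(q+t_1+t_2)^{-1}$ telescopes to $1$ identically. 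So a correct proof must either work with such pairings (observing afterwards, if desired, that the paper's displayed form induces the same topology), or else weaken the conclusion to ``the reproducing kernel is comparable to $K_q$''; as written, your plan with the same $I^t_s$ in both slots cannot be completed.
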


\subsection{Harmonic Analysis Tools}
The following result can be found in \cite{coif} and will be helpful in the proof of Theorem \ref{3keumo6}.
\begin{theo}[Coifman and Weiss, \cite{coif}]\label{2keumo7}
Let $(X,d,\mu )$ be an homogeneous space and let $K(x,y)$ be a function such that $K(x,.):y\rightarrow K(x,y)\in L^2(X).$ If the operator T defined by 
$$Tf(x)=\int_XK(x,y)f(y)d\mu(y),$$
satisfies the following two conditions:
\begin{enumerate}
\item there is a constant $C_1$ such that $\Vert Tf\Vert_2\leq C_1\Vert f \Vert_2$;
\item there are two constants $C_2$ and  $C_3$ such that for all $y,y_0$ we have:
$$\displaystyle\int_{d(x,y_0)>C_2 d(y,y_0)}|K(x,y)-K(x,y_0)|d\mu(x)<C_3,~~(\textnormal{H\"ormander Condition})$$
\end{enumerate}
 then for all p, $1\leq p\leq 2$, there is a constant $A_p$ depending only on $C_i, i=1,2,3$, such that for all $f\in L^2\bigcap L^p$ we have: $\Vert Tf\Vert_p\leq A_p\Vert f \Vert_p$ if $p>1$ and $\forall \lambda>0:$
 $$\mu(\lbrace x\in X: |Tf(x)|>\lambda \rbrace)\leq A_1\frac{\Vert f \Vert_1}{\lambda}.$$
\end{theo}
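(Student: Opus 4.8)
The plan is to establish the weak-type $(1,1)$ bound directly through a Calderón-Zygmund decomposition adapted to the homogeneous space $(X,d,\mu)$, and then to recover the strong $(p,p)$ estimates for $1<p<2$ by interpolating this weak bound against the $L^2$ boundedness supplied by hypothesis (1); the endpoint $p=2$ is hypothesis (1) itself.

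First I would fix $f\in L^1\cap L^2$ and $\lambda>0$ and perform a Calderón-Zygmund decomposition at height $\lambda$. Using the Hardy-Littlewood maximal operator attached to the doubling measure $\mu$ (whose weak-$(1,1)$ bound relies only on the homogeneous-space structure, i.e.\ doubling plus the quasi-triangle inequality with constant $K$), one writes $f=g+b$ where $g$ is the good part with $\|g\|_\infty\lesssim\lambda$ and $\|g\|_2^2\lesssim\lambda\|f\|_1$, and $b=\sum_j b_j$ is the bad part, each $b_j$ supported in a ball $B_j=B(y_j,r_j)$ arising from a Whitney-type covering of the level set $\{Mf>\lambda\}$, having mean zero $\int b_j\,d\mu=0$, and satisfying $\sum_j\|b_j\|_1\lesssim\|f\|_1$ together with $\sum_j\mu(B_j)\lesssim\lambda^{-1}\|f\|_1$. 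The last bound is where the doubling property and the weak-$(1,1)$ estimate for the maximal function enter.

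Next I split $\mu(\{|Tf|>\lambda\})\le\mu(\{|Tg|>\lambda/2\})+\mu(\{|Tb|>\lambda/2\})$. For the good part, Chebyshev and hypothesis (1) give $\mu(\{|Tg|>\lambda/2\})\le 4\lambda^{-2}\|Tg\|_2^2\le 4C_1^2\lambda^{-2}\|g\|_2^2\lesssim\lambda^{-1}\|f\|_1$. For the bad part, set $B_j^*$ to be a fixed dilate of $B_j$ (enlarged by a factor governed by $C_2$ and $K$) and $\Omega^*=\bigcup_j B_j^*$; doubling gives $\mu(\Omega^*)\lesssim\sum_j\mu(B_j)\lesssim\lambda^{-1}\|f\|_1$. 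On the complement $(\Omega^*)^c$ I exploit the mean-zero property to write $Tb_j(x)=\int_{B_j}(K(x,y)-K(x,y_j))b_j(y)\,d\mu(y)$; integrating in $x$ over $(\Omega^*)^c$, applying Fubini, and invoking the Hörmander condition (2) (legitimate because $x\in(\Omega^*)^c$ forces $d(x,y_j)>C_2\,d(y,y_j)$ for every $y\in B_j$) yields $\int_{(\Omega^*)^c}|Tb_j|\,d\mu\lesssim C_3\|b_j\|_1$. Summing over $j$ and applying Chebyshev controls $\mu(\{x\in(\Omega^*)^c:|Tb(x)|>\lambda/2\})\lesssim\lambda^{-1}\|f\|_1$, and adding $\mu(\Omega^*)$ finishes the weak-$(1,1)$ bound.

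The main obstacle is the Calderón-Zygmund decomposition itself: on a general homogeneous space there are no dyadic cubes, so the stopping-time selection of the balls $B_j$ and the good behaviour of the dilates $B_j^*$ must be run entirely through the maximal function and the doubling/quasi-triangle geometry, with the enlargement factor chosen precisely so that $x\notin B_j^*$ implies $d(x,y_j)>C_2\,d(y,y_j)$, matching the hypothesis of condition (2). Once the weak-$(1,1)$ bound is secured, the strong $(p,p)$ estimates for $1<p<2$ follow from the Marcinkiewicz interpolation theorem applied between the weak-$(1,1)$ and strong-$(2,2)$ endpoints, producing a constant $A_p$ depending only on $C_1,C_2,C_3$ and the structural constants of the space.
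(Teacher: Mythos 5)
Your proposal is correct: the paper does not prove this theorem at all but quotes it from Coifman--Weiss, and your argument (Calder\'on--Zygmund decomposition adapted to the doubling/quasi-metric structure, mean-zero bad parts handled via the H\"ormander condition on the complement of dilated balls, Chebyshev plus the $L^2$ hypothesis for the good part, then Marcinkiewicz interpolation for $1<p<2$) is precisely the classical proof in that reference. No gap to report.
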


 One can find the following result in \cite{gr}.
\begin{theo}[Marcinkiewicz Interpolation Theorem]\label{2keumo8}
 Let $p_0,p_1$ be such that $1\leq p_0< p_1\leq \infty.$ Let $T$ be a sublinear operator defined from $L^{p_0}+L^{p_1}$ to the space of measurable functions. Assume that $T$ is simultaneously of weak type $(p_0,p_0)$ with operator norm $A_{p_0,p_0}$ and of weak type $(p_1,p_1)$ with operator norm $A_{p_1,p_1}$. Then for every $0<t<1,$ $T$ is of (strong) type $(p_t,p_t)$ where
 $$\frac{1}{p_t}= \frac{t}{p_0}+ \frac{1-t}{p_1}.$$
Moreover, if $p_1<\infty,$ then $\Vert Tf \Vert_{p_t}\leq A_{p_t,p_t}\Vert f \Vert_{p_t}$ with 
 $$A_{p_t,p_t}=2\left[ p_t(\frac{A^{p_0}_{p_0,p_0}}{p_t-p_0}-\frac{A^{p_1}_{p_1,p_1}}{p_1-p_t})\right] ^{\frac{1}{p_t}}.$$
 If $p_1=\infty,$ we can take 
 $$A_{p_t,p_t}=2\left[ p_t\frac{A^{p_0}_{p_0,p_0}}{p_t-p_0}\right] ^{\frac{1}{p_t}}.$$
 
\end{theo}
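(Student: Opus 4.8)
The plan is to reduce everything to the distribution function of $Tf$ together with the layer-cake identity
$$\Vert Tf\Vert_{p_t}^{p_t}=p_t\int_0^\infty \lambda^{p_t-1}\,\mu(\{x:|Tf(x)|>\lambda\})\,d\lambda,$$
and to control the inner distribution function by splitting $f$ at each height in a manner adapted to the two endpoints. For a fixed $\lambda>0$ and a cut parameter $c>0$ to be optimized at the end, I would write $f=f^\lambda+f_\lambda$ with $f^\lambda=f\,\chi_{\{|f|>c\lambda\}}$ the tall part and $f_\lambda=f\,\chi_{\{|f|\le c\lambda\}}$ the short part; because $p_0<p_t<p_1$ one checks that $f^\lambda\in L^{p_0}$ and $f_\lambda\in L^{p_1}$, so both lie in the domain $L^{p_0}+L^{p_1}$ of $T$. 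Sublinearity gives $|Tf|\le|Tf^\lambda|+|Tf_\lambda|$, whence
$$\{|Tf|>\lambda\}\subseteq\{|Tf^\lambda|>\tfrac{\lambda}{2}\}\cup\{|Tf_\lambda|>\tfrac{\lambda}{2}\},$$
so the distribution function of $Tf$ splits into two pieces, each handled by one of the weak-type hypotheses.

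For the first piece the weak type $(p_0,p_0)$ bound yields $\mu(\{|Tf^\lambda|>\lambda/2\})\le(2A_{p_0,p_0}/\lambda)^{p_0}\int_{\{|f|>c\lambda\}}|f|^{p_0}\,d\mu$. Inserting this into the layer-cake integral and applying Fubini to exchange the $\lambda$-integral with the space integral turns the $\lambda$-integral into $\int_0^{|f(x)|/c}\lambda^{p_t-1-p_0}\,d\lambda$, which converges \emph{precisely} because $p_t>p_0$ and contributes a factor $c^{p_0-p_t}|f(x)|^{p_t-p_0}/(p_t-p_0)$; multiplying by $|f|^{p_0}$ reconstitutes $|f|^{p_t}$ and gives the bound $\frac{p_t(2A_{p_0,p_0})^{p_0}}{p_t-p_0}\,c^{p_0-p_t}\,\Vert f\Vert_{p_t}^{p_t}$ for the first piece. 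When $p_1<\infty$ the second piece is symmetric: weak type $(p_1,p_1)$ gives $\mu(\{|Tf_\lambda|>\lambda/2\})\le(2A_{p_1,p_1}/\lambda)^{p_1}\int_{\{|f|\le c\lambda\}}|f|^{p_1}\,d\mu$, and after Fubini the tail integral $\int_{|f(x)|/c}^\infty\lambda^{p_t-1-p_1}\,d\lambda$ converges because $p_t<p_1$, producing $\frac{p_t(2A_{p_1,p_1})^{p_1}}{p_1-p_t}\,c^{p_1-p_t}\,\Vert f\Vert_{p_t}^{p_t}$. Adding the two contributions and minimizing the resulting expression over the single variable $c$ yields the displayed constant $A_{p_t,p_t}$.

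The only genuinely separate case is $p_1=\infty$, where weak type $(\infty,\infty)$ is simply the bound $\Vert Tg\Vert_\infty\le A_{p_1,p_1}\Vert g\Vert_\infty$. Here I would exploit the freedom in $c$ to annihilate the second piece altogether: since $\Vert f_\lambda\Vert_\infty\le c\lambda$, choosing $c=1/(2A_{p_1,p_1})$ forces $\Vert Tf_\lambda\Vert_\infty\le\lambda/2$, so that $\{|Tf_\lambda|>\lambda/2\}=\varnothing$ and only the tall part survives; the same layer-cake computation then gives the stated formula, now with no term from the second endpoint. I expect the main obstacle to be bookkeeping rather than conceptual: one must verify the integrability that licenses Fubini and justifies convergence of the two $\lambda$-integrals (which is exactly where the strict inequalities $p_0<p_t<p_1$ enter), and then carry out the one-variable minimization in $c$ carefully enough that the constant matches $A_{p_t,p_t}$ as displayed. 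The subcase $p_1=\infty$ has to be isolated because there the second $\lambda$-integral would otherwise diverge, and is instead eliminated by the choice of $c$ above.
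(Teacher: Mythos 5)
The paper itself does not prove this theorem: it is quoted as background from Grafakos \cite{gr}, and your argument is precisely the standard proof found there — split $f$ at height $c\lambda$ into a tall part in $L^{p_0}$ and a short part in $L^{p_1}$, apply the two weak-type hypotheses after the sublinearity inclusion $\{|Tf|>\lambda\}\subseteq\{|Tf^\lambda|>\lambda/2\}\cup\{|Tf_\lambda|>\lambda/2\}$, convert with the layer-cake formula and Tonelli/Fubini, and then choose $c$. Your proof is correct; the only inaccuracies are in matching the paper's displayed constants, and they are really defects of the paper's statement rather than of your method. First, genuine minimization over $c$ does \emph{not} produce the displayed $A_{p_t,p_t}$: the displayed constant (with the minus sign read as a plus — as written the bracket could be negative, so it is a typo) is exactly the value of your bound at the particular choice $c=\tfrac{1}{2}$, since $\bigl(2A_{p_0,p_0}\bigr)^{p_0}(1/2)^{p_0-p_t}=2^{p_t}A_{p_0,p_0}^{p_0}$ and similarly for the other term; the true minimizer $c^*=\bigl[(2A_{p_0,p_0})^{p_0}/(2A_{p_1,p_1})^{p_1}\bigr]^{1/(p_1-p_0)}$ gives a geometric-mean constant that is at least as good, so the stated inequality still follows, but you should either take $c=\tfrac12$ outright or invoke the comparison $\min_c \le$ value at $c=\tfrac12$. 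Second, in the case $p_1=\infty$ your (correct) choice $c=1/(2A_{p_1,p_1})$ yields $\Vert Tf\Vert_{p_t}\le 2\bigl[p_t\,A_{p_0,p_0}^{p_0}A_{p_1,p_1}^{p_t-p_0}/(p_t-p_0)\bigr]^{1/p_t}\Vert f\Vert_{p_t}$, which carries a factor $A_{p_1,p_1}^{p_t-p_0}$ that the paper's display omits; the paper's version is valid only when $A_{p_1,p_1}\le 1$ (test $T=M\,\mathrm{Id}$ with $M$ large), so on this point your computation gives the correct constant and the paper's display should be read accordingly.
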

The fractional maximal function is defined as follows:
$$M_{\gamma}f(z)=\displaystyle\sup_{B:z\in B}\frac{1}{\nu^{1-\gamma}(B)}\displaystyle\int_{B}|f(w)|d\nu(w),~~\gamma\in [0,1).$$
When $\gamma=0$ it is the Hardy-Littlewood maximal operator. The following result will be used in Section \ref{weighted-estimates-case-a-sup-n-1} in the study of our maximal and fractional maximal function. One can find their proof in \cite{us} or in \cite{s}.  
 \begin{theo}\label{2keumo9}
 Let $X$ an homogeneous space, $0\leq \gamma <1,$ $1<p\leq r<\infty$ and a pair of weights $(u,v)$, then the following  are equivalent:
 \begin{enumerate}
 \item[(i)] there exists a constant $C_1>0$ such that
 $$\left( \int_X[M_{\gamma}f(x)]^rv(x)d\nu(x)\right) ^{\frac{1}{r}}\leq C_1\left( \int_X|f(x)|^pu(x)d\nu(x)\right) ^{\frac{1}{p}}$$
 for any $f\in L^p(X,ud\nu);$\\
 \item[(ii)] there exists a constant $C_2>0$ such that
  $$\left( \int_B[M_{\gamma}(\chi_Bu^{1-p'})(x)]^rv(x)d\nu(x)\right) ^{\frac{1}{r}}\leq C_2\left( \int_Bu^{1-p'}(x)d\nu(x)\right) ^{\frac{1}{p}}$$
 for any ball $B\subset X.$
 \end{enumerate}
 \end{theo}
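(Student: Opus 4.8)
The statement is the Sawyer testing characterization of the fractional maximal operator on a space of homogeneous type, so the plan is to prove the two implications separately: the forward one is immediate, and the converse carries all of the content. Throughout write $\sigma=u^{1-p'}$ for the dual weight, and note the pointwise identity $\sigma^{p}u=u^{(1-p')p+1}=u^{1-p'}=\sigma$, which follows from $(1-p')p+1=1-p'$ and the relation $p'=p/(p-1)$.

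\textbf{The easy direction (i)$\Rightarrow$(ii).} Here I would simply test the strong-type inequality on $f=\chi_{B}u^{1-p'}$ for an arbitrary ball $B$, and throw away everything but $B$ on the left-hand side. The right-hand side becomes $\big(\int_{B}(\chi_{B}u^{1-p'})^{p}u\,d\nu\big)^{1/p}=\big(\int_{B}u^{(1-p')p+1}\,d\nu\big)^{1/p}$, and the identity above collapses this to $\big(\int_{B}u^{1-p'}\,d\nu\big)^{1/p}$. This is exactly the quantity in (ii), so (ii) holds with $C_{2}=C_{1}$.

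\textbf{The hard direction (ii)$\Rightarrow$(i).} After reducing to $f\ge 0$, I substitute $f\mapsto f\sigma$; the same identity $\sigma^{p}u=\sigma$ rewrites the goal as the self-dual inequality $\|M_{\gamma}(f\sigma)\|_{L^{r}(v)}\lesssim\|f\|_{L^{p}(\sigma)}$. The plan is then a Calderón--Zygmund / stopping-time decomposition adapted to the homogeneous structure. Passing to a dyadic substitute $M_{\gamma}^{\mathcal D}$ of $M_{\gamma}$ (built from Christ or Hyt\"onen--Kairema cubes, which exist on any space of homogeneous type) and fixing a large $a>1$, I would consider the level sets $\Omega_{k}=\{M_{\gamma}^{\mathcal D}(f\sigma)>a^{k}\}$ and, for each $k$, select the maximal dyadic cubes $\{Q_{j}^{k}\}_{j}$ inside $\Omega_{k}$; these are pairwise disjoint and, by maximality and doubling, satisfy $a^{k}\lesssim \nu(Q_{j}^{k})^{-(1-\gamma)}\int_{Q_{j}^{k}}f\sigma\,d\nu\lesssim a^{k}$. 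Choosing $a$ large forces the part of each $Q_{j}^{k}$ lying in $\Omega_{k+1}$ to be a controlled fraction, so the sets $E_{j}^{k}=Q_{j}^{k}\setminus\Omega_{k+1}$ are pairwise disjoint over all $(k,j)$ and the family is sparse. The testing condition then enters pointwise: since $M_{\gamma}(\chi_{Q_{j}^{k}}\sigma)(x)\ge \sigma(Q_{j}^{k})/\nu(Q_{j}^{k})^{1-\gamma}$ for $x\in Q_{j}^{k}$, condition (ii) yields
$$v(Q_{j}^{k})\le C_{2}^{\,r}\,\nu(Q_{j}^{k})^{(1-\gamma)r}\,\sigma(Q_{j}^{k})^{\,r/p-r}.$$
Feeding in the lower bound $a^{k}\nu(Q_{j}^{k})^{1-\gamma}\lesssim\int_{Q_{j}^{k}}f\sigma\,d\nu$, the powers of $\sigma(Q_{j}^{k})$ cancel exactly — the crucial algebraic point, since $\tfrac{r}{p}+\tfrac{r}{p'}=r$ — leaving
$$a^{kr}v(Q_{j}^{k})\lesssim C_{2}^{\,r}\,\big(\langle f\rangle_{\sigma,Q_{j}^{k}}\big)^{r}\,\sigma(Q_{j}^{k})^{r/p},\qquad \langle f\rangle_{\sigma,Q_{j}^{k}}:=\frac{1}{\sigma(Q_{j}^{k})}\int_{Q_{j}^{k}}f\sigma\,d\nu.$$

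It then remains to sum. From $\int_{X}(M_{\gamma}(f\sigma))^{r}v\,d\nu\lesssim\sum_{k,j}a^{kr}v(Q_{j}^{k})$ and the bound above, I use $p\le r$: the embedding $\ell^{1}\hookrightarrow\ell^{r/p}$ gives $\sum_{k,j}\big(\langle f\rangle_{\sigma,Q_{j}^{k}}^{p}\sigma(Q_{j}^{k})\big)^{r/p}\le\big(\sum_{k,j}\langle f\rangle_{\sigma,Q_{j}^{k}}^{p}\sigma(Q_{j}^{k})\big)^{r/p}$, and sparseness together with the automatic $L^{p}(\sigma)$-boundedness of the $\sigma$-weighted dyadic maximal operator closes the estimate as $\sum_{k,j}\langle f\rangle_{\sigma,Q_{j}^{k}}^{p}\sigma(Q_{j}^{k})\lesssim\int_{X}f^{p}\sigma\,d\nu$. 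I expect the \emph{main obstacle} to be the decomposition step: producing on an abstract homogeneous space a dyadic grid and a stopping family for which the fractional factor $\nu(Q)^{-(1-\gamma)}$ interacts correctly with doubling, and — most delicately — ensuring the sparseness is measured against the right weight $\sigma$ (rather than $\nu$) so that the final summation genuinely telescopes. The hypotheses $1<p\le r<\infty$ and $0\le\gamma<1$ are used precisely at the selection and $\ell^{r/p}$ steps, while the entire forward argument is governed by the single exponent identity $\tfrac{r}{p}+\tfrac{r}{p'}-r=0$, which is the structural reason the testing condition on balls is exactly the sharp quantity.
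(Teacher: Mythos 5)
First, a point of reference: the paper does not prove Theorem \ref{2keumo9} at all — it is quoted with a pointer to Sawyer \cite{s} and Cruz-Uribe \cite{us} — so your argument has to be measured against those proofs. Your easy direction (i)$\Rightarrow$(ii) is correct (the identity $\sigma^p u=\sigma$ and testing on $f=\chi_B\sigma$ is exactly right), and in the converse direction the reduction to $\|M_\gamma(f\sigma)\|_{L^r(v)}\lesssim\|f\|_{L^p(\sigma)}$, the passage to dyadic grids \`a la Christ/Hyt\"onen--Kairema, the stopping cubes $Q_j^k$ with $a^k\simeq\nu(Q_j^k)^{\gamma-1}\int_{Q_j^k}f\sigma\,d\nu$, and the exponent bookkeeping $\tfrac rp+\tfrac r{p'}=r$ are all correctly executed up to the last step.

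The genuine gap is the final summation, and it is fatal as written. Note that the only way you ever use the hypothesis (ii) is through its pointwise corollary $v(Q)\le C_2^{\,r}\,\nu(Q)^{(1-\gamma)r}\sigma(Q)^{r/p-r}$, which is the two-weight $A_{p,r}$-type condition. In the diagonal case $p=r$ — allowed by the statement, and precisely the case the paper invokes in Proposition \ref{5keumo18}, with $\gamma=1-\frac{N+1+a}{N+1+b}$ — this condition is strictly weaker than the Sawyer testing condition and is known \emph{not} to suffice for the norm inequality (Muckenhoupt--Wheeden-type examples). So no argument that discards the testing integral in favour of this pointwise bound can possibly close the proof; the error must surface somewhere, and it surfaces exactly where you flagged "the main obstacle": the claim that
\[
\sum_{k,j}\langle f\rangle_{\sigma,Q_j^k}^{p}\,\sigma(Q_j^k)\;\lesssim\;\int_X f^p\sigma\,d\nu
\]
follows from "sparseness together with the $L^p(\sigma)$-boundedness of the $\sigma$-weighted dyadic maximal operator." That deduction needs the family $\{Q_j^k\}$ to be sparse (equivalently, Carleson) \emph{with respect to $\sigma$}, i.e. disjoint sets $F_j^k\subset Q_j^k$ with $\sigma(F_j^k)\gtrsim\sigma(Q_j^k)$. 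Your construction only yields $\nu(E_j^k)\gtrsim\nu(Q_j^k)$, and since $\sigma=u^{1-p'}$ is an arbitrary weight — neither doubling nor $A_\infty$ — $\nu$-sparseness says nothing about $\sigma(E_j^k)$. Sawyer's proof avoids this trap by never reducing testing to a pointwise $A_{p,r}$ bound: it keeps the full integral $\int_{Q_j^k}\bigl[M_\gamma(\chi_{Q_j^k}\sigma)\bigr]^r v\,d\nu\le C_2^{\,r}\sigma(Q_j^k)^{r/p}$ on each stopping cube, places the pairwise disjoint sets $E_j^k$ inside that $v$-integral, and controls the sum over generations $k$ by a second stopping-time construction (principal cubes for $f$ relative to $\sigma$, i.e. a corona decomposition) rather than by a $\sigma$-Carleson embedding. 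Adding that second stopping family is what a correct repair of your argument requires — at which point it becomes Sawyer's proof.
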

 We will also make use of the following class, in Section \ref{weighted-estimates-case-a-sup-n-1}, in the study of our maximal and fractional maximal function and to establish the good lambda inequality.
\begin{defi}\label{2keumo10}
A measure $\omega d\mu_{\alpha}$ is in the $(A_p,\alpha)$ ($1<p<\infty$) class if there is a constant $C_p(\omega)$ such that for all pseudo-ball $B:=B(\zeta,R)$ we have:
$$\left( \frac{1}{\mu_{\alpha}(B)}\displaystyle\int_{B}\omega(z)d\mu_{\alpha}(z)\right) \left( \frac{1}{\mu_{\alpha}(B)}\displaystyle\int_{B}(\omega(z))^{\frac{-1}{p-1}}d\mu_{\alpha}(z)\right) ^{p-1}\leq C_p(\omega).$$
\end{defi}
\begin{defi}\label{2keumo11}
A measure $\omega d\mu_{\alpha}$is a Muckenhoupt weight or is in the $(A_{\infty},\alpha)$ class if for all $\delta$ such that $0<\delta<1$,  there is $\beta$, $0<\beta<1$, such that for all pseudo balls $B$ of $\mathbb{B}$ and for all measurable subset $E$ of $\mathbb{B}$ we have:
$$\mu_{\alpha}(E)\geq \delta \mu_{\alpha}(B)\Rightarrow \omega d\mu_{\alpha}(E)\geq \beta \omega d\mu_{\alpha}(B).$$  
\end{defi}
We give now two properties of Muckenhoupt weight that we will need later (see \cite{gr}).
\begin{lem}\label{2keumo12}  If $\sigma\in(A_{\infty,\alpha})$ then there are two positive constants, $A$ and $\beta_0$ such that for all ball  $B$ and a measurable subset $E$ of $B$ we have:
\[\sigma d\mu_{\alpha}(E)\leq A\left(\frac{d\mu_{\alpha}(E)}{d\mu_{\alpha}(B)}\right)^{\beta_0}\sigma d\mu_{\alpha}(B).\]
\end{lem}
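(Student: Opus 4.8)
The plan is to deduce the claim from the $(A_{\infty,\alpha})$ hypothesis in two stages: first recast Definition~\ref{2keumo11} as a one-step decay estimate, then iterate it by a Calder\'on--Zygmund stopping-time decomposition. Throughout, fix $\alpha$, write $\mu=\mu_\alpha$, and abbreviate $\sigma(E)=\sigma d\mu_\alpha(E)$. Applying the defining implication of $(A_{\infty,\alpha})$ (for one admissible pair $(\delta,\beta)$) to the complementary set $F=B\setminus E$ and passing to complements within $B$, I would first produce constants $\delta_0,\eta\in(0,1)$, independent of $B$, such that for every pseudoball $B$ and every measurable $E\subseteq B$,
\begin{equation*}
\mu(E)\le \delta_0\,\mu(B)\quad\Longrightarrow\quad \sigma(E)\le \eta\,\sigma(B).\tag{$\star$}
\end{equation*}
Indeed $\mu(F)\ge\delta\mu(B)$ forces $\sigma(F)\ge\beta\sigma(B)$, and subtracting from the totals over $B$ gives $(\star)$ with $\delta_0=1-\delta$ and $\eta=1-\beta$.

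Since $\alpha>-1$, the triple $(\mathbb{B},d,\mu_\alpha)$ is a space of homogeneous type by Lemma~\ref{2keumo3}, so it admits a Calder\'on--Zygmund decomposition adapted to $\mu$ into pseudocubes comparable to pseudoballs, with a structural constant $C\ge1$. Using it, I would promote $(\star)$ to the geometric decay
\begin{equation*}
\mu(E)\le \rho^{\,k}\,\mu(B)\quad\Longrightarrow\quad \sigma(E)\le \eta^{\,k}\,\sigma(B),\qquad \rho:=\delta_0/C,\tag{$P_k$}
\end{equation*}
for every integer $k\ge0$, by induction on $k$. The case $k=0$ is trivial and $k=1$ is $(\star)$, since $\rho\le\delta_0$. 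For the step, given $E\subseteq B$ with $\mu(E)\le\rho^{k+1}\mu(B)$, I would decompose $\chi_E$ at height $\lambda=\rho$ to obtain disjoint pseudocubes $\{Q_j\}\subseteq B$ with $\lambda<\mu(E\cap Q_j)/\mu(Q_j)\le C\lambda=\delta_0$, with $E\subseteq\Omega:=\bigcup_jQ_j$ up to a $\mu$-null set, and with the packing bound $\mu(\Omega)=\sum_j\mu(Q_j)\le\lambda^{-1}\mu(E)\le\rho^{\,k}\mu(B)$. Applying $(\star)$ on each $Q_j$ gives $\sigma(E\cap Q_j)\le\eta\,\sigma(Q_j)$, hence $\sigma(E)\le\eta\sum_j\sigma(Q_j)=\eta\,\sigma(\Omega)$; the inductive hypothesis $(P_k)$ applied to the measurable set $\Omega\subseteq B$ then yields $\sigma(\Omega)\le\eta^{\,k}\sigma(B)$, and combining gives $\sigma(E)\le\eta^{\,k+1}\sigma(B)$.

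It then remains to convert $(P_k)$ into the stated power estimate. For $E\subseteq B$ with $\theta:=\mu(E)/\mu(B)\in(0,1]$ (the case $\mu(E)=0$ being immediate from the absolute continuity of $\sigma d\mu_\alpha$), I would pick the integer $k\ge0$ with $\rho^{k+1}<\theta\le\rho^{\,k}$ and apply $(P_k)$. A direct computation from $\theta>\rho^{k+1}$ shows $\eta^{\,k}\le\eta^{-1}\theta^{\beta_0}$ with $\beta_0:=\ln\eta/\ln\rho>0$, whence
\begin{equation*}
\sigma d\mu_\alpha(E)\le \eta^{-1}\left(\frac{\mu_\alpha(E)}{\mu_\alpha(B)}\right)^{\beta_0}\sigma d\mu_\alpha(B),
\end{equation*}
which is the assertion with $A=\eta^{-1}$.

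The main obstacle is that $(\star)$ controls $\sigma$ only when the enveloping set is a genuine pseudoball, so $(\star)$ cannot be iterated directly on an arbitrary measurable set; this is precisely why the Calder\'on--Zygmund sets $Q_j$ must be interposed between $E$ and $B$ at each stage. Carrying this out rigorously requires a stopping-time decomposition on $(\mathbb{B},d,\mu_\alpha)$ delivering both the comparability $\mu(E\cap Q_j)\simeq\lambda\,\mu(Q_j)$ and the packing bound $\sum_j\mu(Q_j)\le\lambda^{-1}\mu(E)$, the transfer of $(\star)$ from pseudoballs to the pseudocubes $Q_j$ (at the cost of adjusting $\delta_0,\eta$), and the bookkeeping that the single choice $\lambda=\rho=\delta_0/C$ simultaneously triggers $(\star)$ on every $Q_j$ and keeps $\mu(\Omega)$ under the threshold $\rho^{\,k}\mu(B)$ needed to feed the induction.
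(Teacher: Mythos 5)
The paper offers no proof of Lemma \ref{2keumo12}: it is quoted from \cite{gr}, where the corresponding statement is obtained via the reverse H\"older inequality. Your plan --- complement trick, direct iteration of the one-step decay $(\star)$ along a Calder\'on--Zygmund stopping family, extraction of the exponent $\beta_0=\ln\eta/\ln\rho$ --- is therefore a genuinely different, more self-contained route, and the arithmetic of the induction and of the final conversion is correct. However, the step you describe as ``the transfer of $(\star)$ from pseudoballs to the pseudocubes $Q_j$ (at the cost of adjusting $\delta_0,\eta$)'' hides a real gap, and as formulated it fails. To apply the pseudoball statement $(\star)$ to $E\cap Q_j$ you must envelop $Q_j$ in a pseudoball $B_j\supseteq Q_j$ with $\mu_{\alpha}(B_j)\leq M\mu_{\alpha}(Q_j)$; this gives $\sigma d\mu_{\alpha}(E\cap Q_j)\leq \eta\,\sigma d\mu_{\alpha}(B_j)$, and to return to $\sigma d\mu_{\alpha}(Q_j)$ you need $\sigma d\mu_{\alpha}(B_j)\lesssim \sigma d\mu_{\alpha}(Q_j)$, which is itself an instance of Definition \ref{2keumo11} (at threshold $1/M$) and costs a factor $\beta_{1/M}^{-1}>1$. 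The per-cube decay constant becomes $(1-\beta_{\delta})\beta_{1/M}^{-1}$, and nothing in Definition \ref{2keumo11} forces this to be $<1$: the definition supplies, for each $\delta$, only \emph{some} $\beta_{\delta}\in(0,1)$, with no quantitative link between $\delta$ and $\beta_{\delta}$, so you cannot arrange $1-\beta_{\delta}<\beta_{1/M}$ by pushing $\delta\to 1$ (that quantitative link is, in effect, what the lemma itself asserts, so assuming it would be circular). Summing over the enveloping balls $B_j$ instead is no way out, since dilates of disjoint cubes of different generations need not have bounded overlap. Because the induction $(P_k)$ collapses unless the per-step factor is strictly below $1$, this is the point where your proof, as written, breaks.

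The repair is to reverse the order of your two reductions. First transfer the \emph{lower-bound} condition of Definition \ref{2keumo11} from pseudoballs to cubes: if $E\subseteq Q_j$ and $\mu_{\alpha}(E)\geq\delta\mu_{\alpha}(Q_j)$, then $\mu_{\alpha}(E)\geq(\delta/M)\mu_{\alpha}(B_j)$, hence $\sigma d\mu_{\alpha}(E)\geq \beta_{\delta/M}\,\sigma d\mu_{\alpha}(B_j)\geq \beta_{\delta/M}\,\sigma d\mu_{\alpha}(Q_j)$; this direction is free because $\sigma d\mu_{\alpha}(Q_j)\leq\sigma d\mu_{\alpha}(B_j)$ by inclusion. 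Only then run the complement trick \emph{inside the cube}: it yields a cube version of $(\star)$ with constant $1-\beta_{\delta/M}$, genuinely $<1$, and your induction goes through verbatim with these cube constants. Note this repair uses the full ``for all $\delta$'' quantifier of Definition \ref{2keumo11}, not the single admissible pair you invoke. A second, smaller wrinkle: when you localize the stopping-time decomposition to $B$, the selected cubes of coarsest generation, whose dyadic parents are not contained in $B$, do not inherit the upper bound $\mu_{\alpha}(E\cap Q_j)\leq C\lambda\,\mu_{\alpha}(Q_j)$ from maximality. The clean fix is to run the entire induction over a dyadic (Christ) system, where the stopping tree is rooted in a fixed cube, and to pass from cubes to pseudoballs only once, at the level of the final power inequality, where covering $B$ by boundedly many comparable cubes, together with one further application of Definition \ref{2keumo11}, merely inflates the constant $A$.
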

\begin{theo}\label{2keumo13}
The Hardy-Littlewood maximal operator is bounded on $L^p(\omega d\mu_{\alpha}),1<p<\infty,$ if and only if $\omega d\mu_{\alpha}\in (A_p,\alpha).$ 
\end{theo}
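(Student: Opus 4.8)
The plan is to treat this as the Muckenhoupt theorem on the space of homogeneous type $(\mathbb{B}, d, \mu_\alpha)$, which is legitimate for $\alpha > -1$ by Lemma \ref{2keumo3}, and to argue the two implications separately. Throughout I write $\sigma = \omega^{1-p'}$ for the dual weight, where $p' = p/(p-1)$, and I use the elementary identities $\frac{1}{p-1} = p'-1$ together with $(1-p')p + 1 = 1 - p'$, which is precisely what makes the two factors in the $(A_p,\alpha)$ condition dual to one another.

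For necessity, I would test the assumed boundedness on $f = \sigma \chi_B$ for a fixed pseudoball $B$. For every $z \in B$ one has $Mf(z) \geq \frac{1}{\mu_\alpha(B)}\int_B \sigma \, d\mu_\alpha$, so restricting the $L^p(\omega d\mu_\alpha)$ estimate to $B$ gives
$$\left(\frac{1}{\mu_\alpha(B)}\int_B \sigma \, d\mu_\alpha\right)^p \int_B \omega \, d\mu_\alpha \leq C \int_B \sigma^p \omega \, d\mu_\alpha = C \int_B \sigma \, d\mu_\alpha,$$
the last equality being the exponent identity above. Dividing by $\int_B \sigma \, d\mu_\alpha$ and rearranging yields exactly the $(A_p,\alpha)$ condition. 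The only care needed is the integrability of $\sigma$: I would first run the argument with $\sigma$ replaced by the truncation $\min(\sigma, n)$ and let $n \to \infty$ by monotone convergence, which also establishes a posteriori that $\sigma$ is locally integrable.

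For sufficiency, the heart is the self-improvement of the $A_p$ class. I would first establish a reverse H\"older inequality for $(A_p,\alpha)$ weights on $(\mathbb{B}, d, \mu_\alpha)$ by means of a Calder\'on--Zygmund decomposition adapted to the pseudo-distance $d$; the doubling property from Lemma \ref{2keumo3} and a Vitali-type covering lemma, valid in any homogeneous space (\cite{coif}), are what make this decomposition available. The reverse H\"older inequality gives the openness of the class: $\omega \in (A_p,\alpha)$ implies $\omega \in (A_{p-\varepsilon},\alpha)$ for some $\varepsilon > 0$. From the $(A_{p-\varepsilon},\alpha)$ condition and the covering lemma one obtains the weak type $(p-\varepsilon, p-\varepsilon)$ bound for $M$ with respect to $\omega d\mu_\alpha$ via a single application of H\"older's inequality on each selected ball. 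Since $M$ is trivially bounded on $L^\infty(\omega d\mu_\alpha)$, Marcinkiewicz interpolation (Theorem \ref{2keumo8}) between the weak $(p-\varepsilon, p-\varepsilon)$ estimate and the $L^\infty$ estimate delivers the strong type $(p,p)$ bound, which is the asserted boundedness.

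I expect the reverse H\"older and openness step to be the main obstacle: it is where the homogeneous-space geometry enters in an essential way, requiring the doubling estimate of Lemma \ref{2keumo3} together with the covering lemma rather than the exponent bookkeeping that suffices in the necessity direction. Everything downstream, namely the weak-type bound and the interpolation, is then routine given the tools already assembled in this section.
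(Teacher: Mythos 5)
Your proposal is mathematically sound, but there is nothing in the paper to compare it against: Theorem \ref{2keumo13} is stated in Section 2 purely as a background tool, with no proof given --- the authors refer to the literature (\cite{gr}, together with the homogeneous-space framework of \cite{coif}, which applies to $(\mathbb{B},d,\mu_\alpha)$ for $\alpha>-1$ by Lemma \ref{2keumo3}). What you have written is in effect a proof sketch of that cited result, namely Muckenhoupt's theorem transplanted to a space of homogeneous type. Your necessity argument (testing on $f=\sigma\chi_B$ with $\sigma=\omega^{1-p'}$, using the identity $\sigma^p\omega=\sigma$, and truncating by $\min(\sigma,n)$ to justify integrability) is complete and correct. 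Your sufficiency argument follows the classical route --- Calder\'on--Zygmund decomposition adapted to $d$, reverse H\"older inequality, openness $A_p\Rightarrow A_{p-\varepsilon}$, weak type $(p-\varepsilon,p-\varepsilon)$ from the covering lemma, then Marcinkiewicz interpolation (Theorem \ref{2keumo8}) against the trivial $L^\infty$ bound --- and each step is indeed available on a doubling quasi-metric measure space such as $(\mathbb{B},d,\mu_\alpha)$; you are right to single out the reverse H\"older/openness step as the heart of the matter, since in this generality it is a substantial theorem in its own right rather than a routine verification. So rather than taking a different route from the paper, you are reproving the result the paper imports from the literature; as a self-contained justification of Theorem \ref{2keumo13} your outline is the standard one and would be acceptable, provided the reverse H\"older inequality on the homogeneous space is either worked out in full or itself cited.
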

The following known lemma will be use in Section \ref{good-lambda-inequality}.
\begin{lem}\label{lem777}
Let $(X,\mathcal{A},\mu)$ a measure space. Let $f$ and $g$ two positive measurable functions  such that for  all $t>0$
\[\mu(\{x\in X:f(x)>t,g(x)\leq ct\})\leq a\mu(\{x\in X:f(x)>bt\}),\]
where $a,b$ and $c$ are positive constants such that $a<b^p\ (1<p<\infty).$ Then
\[\|f\|^p_p\leq \frac{c^{-p}}{1-ab^{-p}}\|g\|^p_p.\]
\end{lem}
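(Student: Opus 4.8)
The plan is to reduce everything to the distribution functions of $f$ and $g$ through the layer-cake formula and then exploit the hypothesis together with the assumption $a<b^p$ in an absorption argument. Since $f$ is positive, I would begin from
\[\|f\|_p^p = p\int_0^\infty t^{p-1}\mu(\{x\in X : f(x) > t\})\,dt,\]
and the analogous identity for $g$. The elementary inclusion $\{f>t\}\subseteq\{f>t,\ g\leq ct\}\cup\{g>ct\}$ yields, for every $t>0$,
\[\mu(\{f>t\}) \leq \mu(\{f>t,\ g\leq ct\}) + \mu(\{g>ct\}) \leq a\,\mu(\{f>bt\}) + \mu(\{g>ct\}),\]
where the second inequality is exactly the standing hypothesis.

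Next I would multiply this pointwise-in-$t$ estimate by $pt^{p-1}$ and integrate over $(0,\infty)$. The two resulting integrals on the right are evaluated by the changes of variable $u=bt$ and $v=ct$, which produce precisely the scaling factors $b^{-p}$ and $c^{-p}$:
\[p\int_0^\infty t^{p-1}\mu(\{f>bt\})\,dt = b^{-p}\|f\|_p^p, \qquad p\int_0^\infty t^{p-1}\mu(\{g>ct\})\,dt = c^{-p}\|g\|_p^p.\]
Combining these gives $\|f\|_p^p \leq ab^{-p}\|f\|_p^p + c^{-p}\|g\|_p^p$. Because $a<b^p$ forces $ab^{-p}<1$, the term $ab^{-p}\|f\|_p^p$ can be transferred to the left-hand side, and dividing by $1-ab^{-p}>0$ produces the claimed bound with constant $c^{-p}/(1-ab^{-p})$.

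I expect the main obstacle to be purely a matter of finiteness: the absorption step is legitimate only when $\|f\|_p<\infty$, since otherwise subtracting $ab^{-p}\|f\|_p^p$ is meaningless. To handle this I would run the entire computation with $f$ replaced by the truncation $f_N=\min(f,N)$, for which $\mu(\{f_N>t\})=\mu(\{f>t\})$ when $t<N$ and vanishes for $t\geq N$, so that $\|f_N\|_p^p$ is finite and the absorption is unambiguous. One then lets $N\to\infty$ and applies monotone convergence to the left-hand side; assuming $\|g\|_p<\infty$ (there is nothing to prove otherwise), and using the a priori integrability of $f$ furnished by the intended application to control the tail region that the dilation $u=bt$ creates when $b>1$, the passage to the limit recovers the stated inequality for $f$ itself. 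This finiteness bookkeeping, rather than the elementary change-of-variable computation, is the only delicate point.
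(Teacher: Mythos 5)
The paper never proves Lemma \ref{lem777}: it is quoted as a known result, so your argument has to be judged on its own terms. Its core --- the layer-cake formula, the inclusion $\{f>t\}\subseteq\{f>t,\,g\leq ct\}\cup\{g>ct\}$, the hypothesis, the substitutions $u=bt$ and $v=ct$ producing the factors $b^{-p}$ and $c^{-p}$, and absorption using $ab^{-p}<1$ --- is the standard proof and is correct whenever $\|f\|_p<\infty$. You are also right that finiteness is the only delicate point; it is your treatment of that point which does not hold up.

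The difficulty is genuine, because the lemma read literally on an arbitrary measure space is false: on $X=\mathbb{R}$ with Lebesgue measure take $f(x)=e^{|x|}$ and $g(x)=e^{-|x|}$. Then $\mu(\{f>bt\})=\infty$ for every $t>0$, so the hypothesis holds vacuously for any $a,b,c$, while $\|g\|_p<\infty$ and $\|f\|_p=\infty$; no manipulation of $f$ alone can close this gap, some finiteness must be imported. Your truncation does not import it. First, since $\|f_N\|_p^p=p\int_0^N t^{p-1}\mu(\{f>t\})\,dt$, the function $f_N=\min(f,N)$ fails to lie in $L^p$ in exactly this example: truncating the height of $f$ does not truncate the measure of its level sets, so the claim that $\|f_N\|_p^p$ is finite is unjustified. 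Second, for $b>1$ the hypothesis does not pass to $f_N$: for $N/b\leq t<N$ one has $\mu(\{f_N>bt\})=0$ while $\mu(\{f_N>t,\,g\leq ct\})=\mu(\{f>t,\,g\leq ct\})$ may be positive; and your escape --- invoking ``the a priori integrability of $f$ furnished by the intended application'' --- is circular as a proof of the lemma, since if $\|f\|_p<\infty$ may be assumed then no truncation is needed at all. The clean repair, and the one the paper's application actually needs, is to truncate the integral rather than the function: set $I_N=p\int_0^N t^{p-1}\mu(\{f>t\})\,dt$. When Theorem \ref{5keumo26} is fed into Theorems \ref{6keumo4} and \ref{6keumo5}, the underlying measure is $\omega\,d\mu_{Q+pt}$, which is finite on $\mathbb{B}$ for $\omega\in(D_p^{s,t,q,Q})$ (apply the defining condition to a single pseudo-ball containing $\mathbb{B}$), so $I_N\leq N^p\,\omega d\mu_{Q+pt}(\mathbb{B})<\infty$; moreover there one has $b=\tfrac{1}{2}$ (match $t=2\lambda$, $c=\gamma/2$, $a=CD_p^{s,t,q,Q}(\omega)\gamma^{\beta}$), so the substitution $u=bt$ maps $(0,N)$ into $(0,N)$ and gives $I_N\leq ab^{-p}I_N+c^{-p}\|g\|_p^p$. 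Absorption is then legitimate, and monotone convergence as $N\to\infty$ yields the lemma under the implicit hypotheses ($b\leq 1$ and finiteness of the truncated norms) that the application satisfies but that your truncation-in-height argument does not supply.
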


We will use the following lemma in Section \ref{weighted-estimates-case-a-sup-n-1} to show Lemma \ref{5keumo1} and Lemma \ref{5keumo2}. One can find it in \cite{bek,t08}.
 \begin{lem}\label{3keumo2}
Let $a>-1-N$, there are two constants $C_1, C_2(C_1>0)$ such that $\forall z,w,w_0\in \mathbb{B}$ such that $|1-\langle z,w_0\rangle|>C_1d(w,w_0),$ then:
$$\left|\frac{1}{(1-\langle z,w\rangle)^{N+1+a}}-\frac{1}{(1-\langle z,w_0\rangle)^{N+1+a}}\right|\leq C_2\frac{d(w,w_0)}{|1-\langle z,w_0\rangle|^{N+a+2}}.$$
\end{lem}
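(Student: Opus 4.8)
The plan is to reduce the estimate to a single-variable (or even real-variable) mean value inequality. Set $u = 1 - \langle z, w\rangle$ and $u_0 = 1 - \langle z, w_0\rangle$, so that the quantity to control is
\[
\left| u^{-(N+1+a)} - u_0^{-(N+1+a)} \right|.
\]
First I would invoke Proposition \ref{2keumo2}: its hypothesis is exactly a condition of the form $d(z,w_0) > C_1 d(w,w_0)$, and it yields both $|u - u_0| = |\langle z,w_0\rangle - \langle z,w\rangle| \le \tfrac12 |u_0|$ and the comparability $|u| \ge \tfrac12 |u_0|$. The present hypothesis is phrased as $|1-\langle z,w_0\rangle| = |u_0| > C_1 d(w,w_0)$, so I would either apply Proposition \ref{2keumo2} directly (choosing $C_1$ to match) or re-derive $|u-u_0|\le \tfrac12|u_0|$ by hand, using the Cauchy--Schwarz-type bound $|\langle z, w_0 - w\rangle| \lesssim d(w,w_0)$. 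The upshot is that the whole line segment $[u, u_0] \subset \mathbb{C}$ stays in the annular region $\tfrac12|u_0| \le |\zeta| \le \tfrac32|u_0|$, bounded away from $0$.

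Next I would apply the mean value estimate for the holomorphic function $g(\zeta) = \zeta^{-(N+1+a)}$ along the segment joining $u$ to $u_0$. Since
\[
u^{-(N+1+a)} - u_0^{-(N+1+a)} = -(N+1+a)\int_{u_0}^{u} \zeta^{-(N+2+a)}\, d\zeta,
\]
we get
\[
\left| u^{-(N+1+a)} - u_0^{-(N+1+a)} \right| \le |N+1+a|\, |u - u_0| \, \sup_{\zeta \in [u,u_0]} |\zeta|^{-(N+2+a)}.
\]
On the segment $|\zeta| \ge \tfrac12 |u_0|$, so $|\zeta|^{-(N+2+a)} \le 2^{N+2+a} |u_0|^{-(N+2+a)}$ (the exponent $N+2+a > 1 > 0$ since $a > -1-N$, so the bound goes the right way). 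Combining this with $|u - u_0| \le C\, d(w,w_0)$ produces exactly
\[
\left| u^{-(N+1+a)} - u_0^{-(N+1+a)} \right| \le C_2 \frac{d(w,w_0)}{|u_0|^{N+a+2}},
\]
which is the claimed inequality. The constant $C_2$ absorbs $|N+1+a|$, the factor $2^{N+2+a}$, and the implied constant in $|u-u_0| \lesssim d(w,w_0)$.

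The only genuinely delicate point is ensuring the segment $[u, u_0]$ does not pass through $0$, i.e.\ the lower bound $|\zeta| \gtrsim |u_0|$ uniformly along the path; everything else is the elementary fundamental theorem of calculus for $\zeta^{-(N+1+a)}$. That lower bound is precisely what the hypothesis $|u_0| > C_1 d(w,w_0)$ buys us through Proposition \ref{2keumo2}, since $|\zeta| \ge |u_0| - |u - u_0| \ge |u_0| - \tfrac12|u_0| = \tfrac12 |u_0|$ for every $\zeta$ on the segment. Thus I expect the main (and essentially the only) obstacle to be verifying that the constant $C_1$ in the hypothesis can be taken large enough to guarantee $|u - u_0| \le \tfrac12 |u_0|$ uniformly; once that is in place, the differentiation bound is routine.
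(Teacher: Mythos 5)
There is no proof of Lemma \ref{3keumo2} in the paper to compare against: the authors quote it from \cite{bek,t08}. Measured against what is actually true, your proposal has a genuine gap, and it sits exactly at the step you call routine: the bound $|\langle z,w_0-w\rangle|\lesssim d(w,w_0)$ is false when $N\geq 2$. The pseudo-distance $d$ is comparable to the \emph{square} of the Euclidean distance in complex-tangential directions; this is visible in the paper's own Lemma \ref{2keumo1}, whose item (4) bounds the square $\sum_{k\geq 2}|z_k|^2$ by $d(z,z^0)$, so a tangential Euclidean displacement of size $\epsilon$ only costs $d\simeq\epsilon^2$. Concretely, take $N\geq 2$, $r=\tfrac12$, $\epsilon>0$ small, and
\[
w_0=(r,0,\dots,0),\qquad w=\left(\sqrt{r^2-\epsilon^2},\,\epsilon,0,\dots,0\right),\qquad z=\left(0,\tfrac12,0,\dots,0\right).
\]
Then $d(w,w_0)=1-\sqrt{1-\epsilon^2/r^2}\leq 4\epsilon^2$, while $\langle z,w_0\rangle=0$ and $\langle z,w\rangle=\epsilon/2$, so $|\langle z,w-w_0\rangle|=\epsilon/2$, which is not $O(d(w,w_0))$. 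Worse, the same example defeats the statement itself: the hypothesis $|1-\langle z,w_0\rangle|=1>C_1d(w,w_0)$ holds for small $\epsilon$, the left-hand side of the conclusion is $(1-\epsilon/2)^{-(N+1+a)}-1\geq\tfrac12(N+1+a)\epsilon$, yet the right-hand side is $C_2\,d(w,w_0)\leq 4C_2\epsilon^2$. So the inequality as printed (first power of $d(w,w_0)$, exponent $N+a+2$) is false for $N\geq 2$ whatever $C_1,C_2$ are, and no argument can establish it; what is proved in the cited sources is the H\"older-$\tfrac12$ version, with $d(w,w_0)^{1/2}$ in the numerator and $|1-\langle z,w_0\rangle|^{N+1+a+\frac12}$ in the denominator, which is all the paper ever needs (e.g.\ in Lemma \ref{5keumo1} and Theorem \ref{5keumo26}).

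Your segment-plus-fundamental-theorem mechanism is sound and does yield that corrected estimate once the inner-product difference is bounded properly: split $w-w_0$ into components parallel and orthogonal to $w_0$. The parallel part has length at most $d(w,w_0)$ (a short computation with $|w_0|^2-\langle w,w_0\rangle$), while the orthogonal part has length $O(d(w,w_0)^{1/2})$ by Lemma \ref{2keumo1}(4) and pairs only with the component of $z$ orthogonal to $w_0$, whose length is $O(|1-\langle z,w_0\rangle|^{1/2})$. Hence
\[
|\langle z,w-w_0\rangle|\lesssim d(w,w_0)+d(w,w_0)^{1/2}|1-\langle z,w_0\rangle|^{1/2}\lesssim d(w,w_0)^{1/2}|1-\langle z,w_0\rangle|^{1/2}
\]
under the hypothesis; this still keeps the segment $[u,u_0]$ inside $\{|\zeta|\geq\tfrac12|u_0|\}$ (so your integration step survives verbatim) and feeds into your bound to give the exponent $N+1+a+\tfrac12$. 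Two smaller points: you cannot invoke Proposition \ref{2keumo2} ``directly,'' since its hypothesis $d(z,w_0)>C_1d(w,w_0)$ is strictly stronger than the lemma's hypothesis $|1-\langle z,w_0\rangle|>C_1d(w,w_0)$ (take $z=w_0$ an interior point: the latter holds, the former fails), so the segment estimate must come from the corrected difference bound, as in your fallback; and your exponent-one claim is salvageable only for $N=1$, where $|z-w|\leq d(z,w)$ does hold, whereas the lemma is asserted for all $N$.
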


\section{Weak Type $L^1$ Inequality for $P_{s,t}$ and $T_{a,b}$.}\label{weak-type}

In this section, we will prove Theorem \ref{keumo4} that we first recall here.
\begin{theo}\label{3keumo6}
In the case  $q=s$, $s+2t>-1$ and $s+t>-1$ with $s>-1$ the operators $P_{s,t}$ are bounded from $L_{q}^1$ to $L_{q}^{1,\infty}$ and not from $L_{q}^{1}$ to $L_{q}^1.$
\end{theo}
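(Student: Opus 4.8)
The plan is to derive the weak-type bound from the Coifman--Weiss theorem (Theorem \ref{2keumo7}) on the homogeneous space $(\mathbb{B},d,\mu_s)$, and to obtain the failure of the strong $(1,1)$ estimate directly from Kaptanoglu--Ureyen (Theorem \ref{keumo3}). \emph{Setup.} Since $q=s$, the factor $(1-|w|^2)^{s-q}$ is trivial and $d\mu_q=d\mu_s$, so $P_{s,t}f(z)=(1-|z|^2)^t\int_{\mathbb{B}}K_{s+t}(z,w)f(w)\,d\mu_s(w)$; that is, $P_{s,t}$ is the integral operator with kernel $\mathcal{K}(z,w)=(1-|z|^2)^tK_{s+t}(z,w)$ against the measure $\mu_s$. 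Because $s>-1$, Lemma \ref{2keumo3} says $(\mathbb{B},d,\mu_s)$ is a homogeneous space, and since $s+t>-1>-(N+1)$ we have $K_{s+t}(z,w)=(1-\langle z,w\rangle)^{-(N+1+s+t)}$. For fixed $z\in\mathbb{B}$ the kernel $\mathcal{K}(z,\cdot)$ is bounded and $\mu_s(\mathbb{B})<\infty$, so $\mathcal{K}(z,\cdot)\in L^2(\mu_s)$, as Theorem \ref{2keumo7} requires. It then remains to check the two hypotheses of Theorem \ref{2keumo7}: $L^2$-boundedness and the H\"ormander condition.

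\emph{The $L^2$ bound.} Using $(1-|z|^2)^t\,d\mu_s=d\mu_{s+2t}$ and the identity \eqref{keumoequa}, one has $\|P_{s,t}f\|_{L^2_s}=\|T_{s+t,s}f\|_{L^2_{s+2t}}$, so the $L^2_s$-boundedness of $P_{s,t}$ is equivalent to the boundedness of $T_{s+t,s}:L^2_s\to L^2_{s+2t}$. I would apply Theorem \ref{keumo3} with $a=s+t$, $b=s$, $q=s$, $Q=s+2t$, $p=P=2$: the hypothesis $Q=s+2t>-1$ is exactly our standing assumption, and condition (3)(a) reads $\frac{1+s}{2}<1+s$ (equivalent to $s>-1$, strict) together with $s+t\le s+\frac{1+N+s+2t}{2}-\frac{1+N+s}{2}=s+t$ (equality, which (3)(a) permits). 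Hence $T_{s+t,s}:L^2_s\to L^2_{s+2t}$ is bounded and so is $P_{s,t}$ on $L^2_s$.

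\emph{The H\"ormander condition} is the main technical step. I must bound $\int_{d(z,w_0)>C_2 d(w,w_0)}|\mathcal{K}(z,w)-\mathcal{K}(z,w_0)|\,d\mu_s(z)$ uniformly in $w,w_0$. Applying Lemma \ref{3keumo2} with $a=s+t$ gives, once $|1-\langle z,w_0\rangle|>C_1 d(w,w_0)$, the pointwise estimate $|\mathcal{K}(z,w)-\mathcal{K}(z,w_0)|\lesssim (1-|z|^2)^t\,d(w,w_0)\,|1-\langle z,w_0\rangle|^{-(N+s+t+2)}$. The hypothesis of Lemma \ref{3keumo2} is secured on the integration region by choosing $C_2\ge 4C_1$ and invoking the lower bound $|1-\langle z,w_0\rangle|\ge\tfrac14 d(z,w_0)$ (Lemma \ref{2keumo1} together with unitary invariance). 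Writing $\delta=d(w,w_0)$ and $r=1-|w_0|^2$, I would then decompose the region $d(z,w_0)>C_2\delta$ into dyadic annuli $A_k=\{2^kC_2\delta\le d(z,w_0)<2^{k+1}C_2\delta\}$ and estimate each piece with Lemma \ref{2keumo3} ($\mu_{s+t}(B(w_0,R))\simeq R^{N+1}[\max(R,1-|w_0|)]^{s+t}$, valid since $s+t>-1$) and the two pointwise lower bounds $|1-\langle z,w_0\rangle|\gtrsim d(z,w_0)$ and $|1-\langle z,w_0\rangle|\ge 1-|z||w_0|\gtrsim r$, i.e. $|1-\langle z,w_0\rangle|\gtrsim\max(d(z,w_0),r)$. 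The annuli with radius $\gtrsim r$ contribute a convergent geometric series summing to $\lesssim C_2^{-1}$, while the annuli with radius $<r$ contribute $\lesssim \delta/r$, which is itself $<1$ in precisely that regime; the second lower bound involving $r$ is exactly what prevents a spurious $(r/\delta)^{s+t}$ blow-up. This yields a uniform constant $C_3$, verifying the H\"ormander condition. Theorem \ref{2keumo7} then gives $\mu_s(\{|P_{s,t}f|>\lambda\})\le A_1\|f\|_{L^1_s}/\lambda$ for $f\in L^1_s\cap L^2_s$, and a standard density argument extends this to all of $L^1_s$, i.e. $P_{s,t}:L^1_s\to L^{1,\infty}_s$.

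\emph{Failure of $L^1_s\to L^1_s$.} Here I again use $\|P_{s,t}f\|_{L^1_s}=\|T_{s+t,s}f\|_{L^1_{s+t}}$ (from \eqref{keumoequa} and $(1-|z|^2)^t\,d\mu_s=d\mu_{s+t}$), reducing the question to the boundedness of $T_{s+t,s}:L^1_s\to L^1_{s+t}$. Applying Theorem \ref{keumo3} with $a=s+t$, $b=s$, $q=s$, $Q=s+t$, $p=P=1$, condition (3)(b) becomes $\frac{1+s}{1}\le 1+s$ and $s+t\le s+(1+N+s+t)-(1+N+s)=s+t$, both of which are \emph{equalities}. Since (3)(b) requires at least one of the two inequalities to be strict, the criterion fails, so $T_{s+t,s}:L^1_s\to L^1_{s+t}$ is not bounded, and consequently $P_{s,t}$ is not bounded from $L^1_s$ to $L^1_s$. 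I expect the H\"ormander verification in the third paragraph, specifically the careful use of both lower bounds on $|1-\langle z,w_0\rangle|$ to handle the two radial regimes, to be the only genuinely delicate point.
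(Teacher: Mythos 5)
Your proposal is correct and follows essentially the same route as the paper: you verify the same three Coifman--Weiss hypotheses (square-integrability of the kernel in $L^2_s$, the $L^2$ bound via Theorem \ref{keumo3} with $a=s+t$, $b=s$, $Q=s+2t$, and the H\"ormander condition via Lemma \ref{3keumo2}), and you obtain the failure of the strong $(1,1)$ bound by exactly the paper's reduction to $T_{s+t,s}:L^1_s\to L^1_{s+t}$ and Theorem \ref{keumo3} with $p=P=1$, where both inequalities in (3)(b) become equalities; the only difference is that you carry out the H\"ormander estimate directly by dyadic annuli (correctly using $|1-\langle z,w_0\rangle|\gtrsim\max(d(z,w_0),1-|w_0|)$ to handle both radial regimes), whereas the paper cites B\'ekoll\'e's proof for this step. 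One cosmetic slip: the justification in your $L^2$ paragraph should read $(1-|z|^2)^{2t}\,d\mu_s=d\mu_{s+2t}$ (the exponent doubles inside the squared norm), though the stated identity $\Vert P_{s,t}f\Vert_{L^2_s}=\Vert T_{s+t,s}f\Vert_{L^2_{s+2t}}$ is correct.
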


\begin{proof}
The kernel of $P_{s,t}$ is $H_{s,t}(z,w)=\frac{(1-|z|^2)^t}{(1-\langle z,w\rangle)^{N+1+s+t}}.$  We are going to proceed in three steps.\\

\textit{Step 1:} Show that $H_{s,t}(z,.)\in L_{q}^2$, $\forall z\in \mathbb{B}$.\\

Indeed, we have 
\begin{align*}
\displaystyle\int_{\mathbb{B}}|H_{s,t}(z,w)|^{2}d\mu_{q}(w) & =  \displaystyle\int_{\mathbb{B}}\frac{(1-|z|^2)^{2t}}{|1-\langle z,w\rangle|^{2(N+1+s+t)}}d\mu_{q}(w)\\
& \leq  \frac{(1-|z|^2)^{2t}}{(1-|z|)^{2(N+1+s+t)}}\displaystyle\int_{\mathbb{B}}(1-|w|^2)^{q}d\mu(w)
\end{align*}
where in the second inequality, the member of the right hand side is finite because $q=s>-1$.\\

\textit{Step 2:} Show that $P_{s,t}$ is bounded from $L_{q}^2$ to $L_{q}^2$.\\

 We have to  show  the boundedness of $T_{s+t,s}$ from $L_{q}^{2}$ to $L_{q+2t}^{2}.$ By Kaptanoglu and Ureyen, for $a=s+t,$ $b=s,$ $p=P=2$ and $Q=q+2t$ (this is the reason  $q+2t>-1$ is needed), this holds.\\

\textit{Step 3:} Show that there are two constants $C_1$ and $C_2$ such that $\forall w, w_0\in \mathbb{B}$ we have:
$$\displaystyle\int_{d(z,w_0)>C_1 d(w,w_0)}|H_{s,t}(z,w)-H_{s,t}(z,w_0)|d\mu_{q}(z)<C_2.$$
This was already done in \cite{bek} (see the proof of \cite[Proposition 1]{bek} choose $a=q+t+1$). 
\newline

Because of \textit{Step 1}, \textit{Step 2} and \textit{Step 3} we have by using Theorem \ref{2keumo7} that the operators $P_{s,t}$ are bounded from $L_{q}^{1}$ to $L_{q}^{1,\infty}$.  Observe that $P_{s,t}$ is bounded from $L_{q}^{1}$ to $L_{q}^1$ if and only if $T_{s+t,s}$ is bounded from $L_{q}^{1}$ to $L_{q+t}^1$; and by Theorem \ref{keumo3}, $T_{s+t,s}$ is not bounded from $L_{q}^{1}$ to $L_{q+t}^1$ because $q=s$. 
\end{proof}

\begin{rmq}\label{3keumo1}
In the case $a>-(N+1)$, the operators $T_{a,b}^{q}$ are bounded from $L_{q}^{1}$ to $L_{q}^{1,\infty}$ if we have the following two conditions:
\begin{enumerate}
\item[i)] $a\leq b$
\item[ii)] $-1<q\leq b$.
\end{enumerate}
The case $a=b=q>-1$ is due to B\'ekoll\'e in \cite{bek} and the remaining cases is by Theorem \ref{keumo3}.
 \end{rmq}

\begin{rmq}\label{3keumo4}
In the special case $b=q$, $T^q_{a,b}$ is self adjoint and bounded from $L^p_q$ to itself for $1< p<\infty$.  Indeed, let $f\in L^p_q$ and $g\in L^{p'}_q$. Then
\begin{align*}
\langle T_{a,b}^qf,g\rangle_{L^2_q} & =  \displaystyle\int_{\mathbb{B}}\int_{\mathbb{B}}K_a(z,w)f(w)(1-|w|^2)^{b-q}d\mu_q(w)\overline{g(z)}(1-|z|^2)^qd\mu(z)\\
 & =  \displaystyle\int_{\mathbb{B}}f(w)\overline{(1-|w|^2)^{b-q}\displaystyle\int_{\mathbb{B}}K_a(w,z)g(z)(1-|z|^2)^qd\mu(z)}(1-|w|^2)^qd\mu(w)\\
 & =  \displaystyle\int_{\mathbb{B}}f(w)\overline{T_{a,b}^*g(w)d\mu_q(w)}\\
 & =  \langle f,(T_{a,b}^{q})^*g\rangle_{L^2_q},
\end{align*}
where
$$(T_{a,b}^{q})^*g(w)=(1-|w|^2)^{b-q}\displaystyle\int_{\mathbb{B}}K_a(w,z)g(z)(1-|z|^2)^qd\mu(z).$$
Observe that when $b=q$, $(T_{a,b}^{q})^*=T_{a,b}^q$ and since $T_{a,b}^q$ is bounded from $L^p_q$ to $L^p_q$ when $1< p< 2,$ then $T_{a,b}^q=\left(T_{a,b}^{q}\right)^*$ is bounded from $L^{p'}_q$ to $L^{p'}_q$ with $2< p'<\infty$.
\end{rmq}

\begin{rmq}\label{3keumo5}
By Remark \ref{3keumo1} we have that $T_{a,b}^q$ is of weak type $(1,1)$ and let $A_{1,1}$ be the operator norm.  By Theorem \ref{keumo3} we have that $T_{a,b}^q$ is of (weak) type $(2,2)$ and let $A_{2,2}$ be the operator norm. Applying Theorem \ref{2keumo8} leads us, with a better estimation of the operator norm 
$$A_{p,p}=2\left[p\left(\frac{A_{1,1}}{p-1}-\frac{A^{2}_{2,2}}{2-p}\right)\right]^{\frac{1}{p}},$$ 
to a new way to have the boundedness of $T_{a,b}^{q}$ from $L^p_q$ to $L^p_q$ when $1< p< 2$. In the special case $b=q$ we have the boundedness from $L^p_q$ to $L^p_q$ when $1< p<\infty$ because in this case $T_{a,b}^{q}$ is self adjoint.
\end{rmq}

\section{Weighted estimates: Preliminary necessary conditions and the case where $a< -N-1$ and $s+t< -N-1$ }\label{case-a-less}

In this section we will give a proof of our criterion for the weights that provide boundedness of $T_{a,b}$ when $a<-n-1$ (respectively $P_{s,t}$ when $s+t<-N-1$). We start first with some general necessaries conditions.
\subsection{Preliminary Necessary Conditions}

\begin{lem}\label{4keumo1}
For $q,Q \in \mathbb{R},$ if~$T_{a,b}$ ($a,b\in \mathbb{R}$) is well defined and continuous from $L^p(\omega d\mu_q)$ to $L^p(\omega d\mu_Q),$ then $\omega$ must be in $L^1(d\mu_Q)$. 
\end{lem}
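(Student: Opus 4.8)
The plan is to test the operator against a single, explicitly chosen, compactly supported function and to exploit a uniform lower bound on the kernel $K_a$ to convert the finiteness of $\|T_{a,b}f_0\|_{L^p(\omega d\mu_Q)}$ into the finiteness of $\int_{\mathbb{B}}\omega\,d\mu_Q$. Concretely, I would fix a ball about the origin, take $f_0$ to be its indicator, and argue that $T_{a,b}f_0$ is bounded below by a strictly positive constant \emph{uniformly} on all of $\mathbb{B}$; boundedness of $T_{a,b}$ then forces $\omega\in L^1(d\mu_Q)$.

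\textbf{Key ingredient: a uniform lower bound on the kernel.} The recipient of the lower bound is Lemma~\ref{2keumo4}(2)(b), which provides a $\rho_0<1$ with $\Re K_a(z,w)\ge \frac12$ whenever $|z|\le\rho_0$ and $w\in\mathbb{B}$. Since the coefficients $c_k(a)$ are real and $\langle z,w\rangle=\overline{\langle w,z\rangle}$, the kernel is Hermitian, $K_a(z,w)=\overline{K_a(w,z)}$, so $\Re K_a(z,w)=\Re K_a(w,z)$. Swapping the roles of the variables therefore gives
$$\Re K_a(z,w)\ge \tfrac12 \qquad\text{for all } z\in\mathbb{B} \text{ and all } w \text{ with } |w|\le \rho_0.$$
This is exactly the form needed, because in $T_{a,b}f_0(z)$ we integrate over $w$ while $z$ ranges over all of $\mathbb{B}$.

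\textbf{The computation.} Set $E=\{w\in\mathbb{B}:|w|\le\rho_0\}$ and $f_0=\chi_E$. Since $\rho_0<1$, the set $E$ is a compact subset of $\mathbb{B}$ on which $(1-|w|^2)^b$ is bounded above and below; as $\omega$ is locally integrable (Definition~\ref{keumo1}) we get $\int_E\omega\,d\mu_q<\infty$, i.e. $f_0\in L^p(\omega d\mu_q)$, and also $0<\mu_b(E)<\infty$. Using $(1-|w|^2)^{b-q}d\mu_q(w)=(1-|w|^2)^b\,d\mu(w)$ and the lower bound above, for every $z\in\mathbb{B}$
$$\Re\, T_{a,b}f_0(z)=\int_E \Re K_a(z,w)\,(1-|w|^2)^b\,d\mu(w)\ge \tfrac12\,\mu_b(E)=:c_1>0,$$
hence $|T_{a,b}f_0(z)|\ge c_1$ on all of $\mathbb{B}$. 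Feeding this into the assumed boundedness of $T_{a,b}$ yields
$$c_1^{\,p}\int_{\mathbb{B}}\omega(z)\,d\mu_Q(z)\le \int_{\mathbb{B}}|T_{a,b}f_0(z)|^p\,\omega(z)\,d\mu_Q(z)\le \|T_{a,b}\|^p\int_E\omega(z)\,d\mu_q(z)<\infty,$$
which gives $\int_{\mathbb{B}}\omega\,d\mu_Q<\infty$, that is, $\omega\in L^1(d\mu_Q)$.

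\textbf{Main obstacle.} The statement is short and the estimate is essentially routine once the kernel lower bound is in hand; the only genuinely delicate point is the \emph{uniformity in $z$} of the positivity of $T_{a,b}f_0$. Because $K_a$ is complex-valued, an arbitrary test function could suffer cancellation that destroys a lower bound; the role of localizing $f_0$ near the origin (so that $|w|\le\rho_0$) together with the real-part estimate of Lemma~\ref{2keumo4} and the Hermitian symmetry is precisely to rule this out. Everything else—local integrability of $\omega$ on $E$ and the single application of the operator norm—is standard.
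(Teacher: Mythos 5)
Your proof is correct, and it reaches the conclusion by a genuinely different mechanism than the paper, although the overall template is the same (one compactly supported test function near the origin, a lower bound for $|T_{a,b}f_0|$ uniform over all of $\mathbb{B}$, then a single application of the operator norm). The paper tests against $f(w)=(1-|w|^2)^{-b}\chi_{B(0,R)}(w)$, whose extra factor cancels the weight so that $T_{a,b}f(z)=\int_{B(0,R)}K_a(z,w)\,d\mu(w)$; using the Hermitian symmetry $K_a(z,w)=\overline{K_a(w,z)}$ and then the mean value property of the holomorphic function $z\mapsto K_a(z,w)$ over the Euclidean ball $B(0,R)$ centered at the origin, this integral is computed \emph{exactly}: it equals $\overline{K_a(0,w)}\,\mu(B(0,R))=\mu(B(0,R))$, since $K_a(0,w)=1$. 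So in the paper $T_{a,b}f$ is identically a positive constant, with no appeal to Lemma~\ref{2keumo4}. You instead keep the plain indicator $\chi_E$, $E=\{|w|\le\rho_0\}$, and obtain only an inequality, $\Re\,T_{a,b}\chi_E\ge\tfrac12\mu_b(E)$, by combining the real-part estimate of Lemma~\ref{2keumo4}(2)(b) with the same Hermitian symmetry (used to transfer the estimate from "$z$ near $0$, $w$ arbitrary" to "$w$ near $0$, $z$ arbitrary"). What the paper's route buys is an exact, self-contained identity; what yours buys is robustness — it needs neither holomorphy of $K_a(\cdot,w)$ nor the normalization $K_a(0,w)=1$, only a kernel real-part bound — and it is in fact the same device the paper itself deploys (with the roles of the variables unswapped) in the necessity half of Theorem~\ref{4keumo5}, where one integrates over $|z|\le\rho_0$. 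Both arguments share the same tacit step, which you make explicit and the paper leaves implicit: local integrability of $\omega$ guarantees that the test function belongs to $L^p(\omega\,d\mu_q)$, so that the continuity hypothesis can legitimately be applied to it.
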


\begin{proof} Let $f(w)=(1-|w|^2)^{-b}\chi_{B(0,R)}(w)$ where $B(0,R)$ is the Euclidian ball.  Then
\begin{align*}
T_{a,b}f(w) & = \displaystyle\int_{\mathbb{B}}K_a(w,z)(1-|z|^2)^{-b}\chi_{B(0,R)}(z)(1-|z|^2)^{b}d\mu(z)\\
 & = \displaystyle\int_{B(0,R)}K_a(w,z)d\mu(z)\\
 & = \overline{\displaystyle\int_{B(0,R)}K_a(z,w)d\mu(z)}\\
 & = \overline{K_a(0,w)}\mu(B(0,R))\\
 & = \mu(B(0,R)).
\end{align*}
Since $T_{a,b}$ is well defined and continuous from $L^p(\omega d\mu_q)$ to $L^p(\omega d\mu_Q),$
$$\displaystyle\int_{\mathbb{B}}|T_{a,b}f(z)|^p\omega(z)d\mu_Q(z)< \infty,$$
so that,
$$\mu^p(B(0,R))\displaystyle\int_{\mathbb{B}}\omega(z)d\mu_Q(z)< \infty,$$
then
$$\displaystyle\int_{\mathbb{B}}\omega(z)d\mu_Q(z)< \infty,$$
and $\omega\in L^1(d\mu_Q).$
\end{proof}
\begin{lem}\label{4keumo2}
For $q,Q \in \mathbb{R},$ if~$P_{s,t}$ is well defined and continuous from $L^p(\omega d\mu_q)$ to $L^p(\omega d\mu_Q),$ then $\omega$ must be in $L^1(d\mu_{Q+pt})$. 
\end{lem}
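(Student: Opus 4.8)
The plan is to mirror the proof of Lemma \ref{4keumo1}, exploiting the factorization $P_{s,t}f(z) = (1-|z|^2)^t T_{s+t,s}f(z)$ recorded in \eqref{keumoequa}. Here the parameters $a = s+t$ and $b = s$ play exactly the role of the parameters of $T_{a,b}$ in the previous lemma, so I would test the operator against the same family of functions used there, and then peel off the harmless factor $(1-|z|^2)^t$.

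Concretely, for $0 < R < 1$ I would set $f(w) = (1-|w|^2)^{-s}\chi_{B(0,R)}(w)$, where $B(0,R)$ is the Euclidean ball. Because the factor $(1-|w|^2)^{s-q}$ inside $P_{s,t}$ combines with $d\mu_q(w)$ to give $(1-|w|^2)^s\,d\mu(w)$, this choice makes the $q$-weight cancel, and the computation in Lemma \ref{4keumo1} (applied with $a = s+t$, $b=s$) applies verbatim: by the mean value property of the holomorphic kernel together with $K_{s+t}(0,w)=1$, one obtains $T_{s+t,s}f(z) = \mu(B(0,R))$, a constant. Consequently $P_{s,t}f(z) = (1-|z|^2)^t\,\mu(B(0,R))$.

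It then remains to feed this into the boundedness hypothesis. On the one hand, $B(0,R)$ stays away from $\partial\mathbb{B}$, so $(1-|w|^2)$ is bounded above and below on its closure; since $\omega$ is locally integrable, $f \in L^p(\omega d\mu_q)$. On the other hand,
$$\int_{\mathbb{B}} |P_{s,t}f(z)|^p\,\omega(z)\,d\mu_Q(z) = \mu^p(B(0,R))\int_{\mathbb{B}}\omega(z)(1-|z|^2)^{Q+pt}\,d\mu(z) = \mu^p(B(0,R))\int_{\mathbb{B}}\omega(z)\,d\mu_{Q+pt}(z).$$
Since $P_{s,t}$ is continuous from $L^p(\omega d\mu_q)$ to $L^p(\omega d\mu_Q)$, the left-hand side is finite; as $\mu(B(0,R))>0$, this forces $\int_{\mathbb{B}}\omega\,d\mu_{Q+pt} < \infty$, that is, $\omega \in L^1(d\mu_{Q+pt})$.

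I do not expect a genuine obstacle here; the only thing demanding care is the bookkeeping of exponents, namely verifying that the extra factor $(1-|z|^2)^{t}$ carried by $P_{s,t}$ is precisely what shifts the target measure from $d\mu_Q$ to $d\mu_{Q+pt}$, thereby explaining why the conclusion involves $L^1(d\mu_{Q+pt})$ rather than $L^1(d\mu_Q)$ as in Lemma \ref{4keumo1}.
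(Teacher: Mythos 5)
Your proposal is correct and is essentially the paper's own proof: the paper uses the same test function $f(w)=(1-|w|^2)^{-s}\chi_{B(0,R)}(w)$, computes $P_{s,t}f(z)=(1-|z|^2)^t\mu(B(0,R))$ via the mean value property and $K_{s+t}(0,\cdot)\equiv 1$, and concludes finiteness of $\int_{\mathbb{B}}\omega\,d\mu_{Q+pt}$ exactly as you do. Your added verification that $f\in L^p(\omega\,d\mu_q)$ (from local integrability of $\omega$ and the fact that $B(0,R)$ stays away from $\partial\mathbb{B}$) is a small point the paper leaves implicit, but otherwise the arguments coincide.
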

\begin{proof} Let $f(w)=(1-|w|^2)^{-s}\chi_{B(0,R)}(w)$ where $B(0,R)$ is the Euclidian ball.  Then
\begin{align*}
\vspace*{0.5cm}P_{s,t}f(w) & = (1-|w|^2)^t\displaystyle\int_{\mathbb{B}}K_{s+t}(w,z)(1-|z|^2)^{-s}\chi_{B(0,R)}(z)(1-|z|^2)^{s}d\mu(z)\\
 & =  (1-|w|^2)^t\displaystyle\int_{B(0,R)}K_{s+t}(w,z)d\mu(z)\\
 & =  (1-|w|^2)^t\overline{\displaystyle\int_{B(0,R)}K_{s+t}(z,w)d\mu(z)}\\
 & =  (1-|w|^2)^t\overline{K_{s+t}(0,w)}\mu(B(0,R))\\
 & =  (1-|w|^2)^t\mu(B(0,R)).
\end{align*}
Since $P_{s,t}$ is well defined and continuous from $L^p(\omega d\mu_q)$ to $L^p(\omega d\mu_Q),$
$$\displaystyle\int_{\mathbb{B}}|P_{s,t}f(z)|^p\omega(z)d\mu_Q(z)< \infty,$$
so that,
$$\mu^p(B(0,R))\displaystyle\int_{\mathbb{B}}\omega(z)d\mu_{Q+pt}(z)< \infty,$$
then
$$\displaystyle\int_{\mathbb{B}}\omega(z)d\mu_{Q+pt}(z)< \infty,$$
and $\omega\in L^1(d\mu_{Q+pt}).$
\end{proof}

\begin{lem}\label{4keumo3}
Let $\omega$ be a positive locally integrable function and  $q,Q \in \mathbb{R}$. If $T_{a,b}$ is well defined and continuous from $L^p(\omega d\mu_q)$ to $L^p(\omega d\mu_Q),$ then $\omega^{\frac{-1}{p-1}}\in L^1(d\mu_{q+p'(b-q)})$.
\end{lem}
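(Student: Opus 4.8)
The plan is to mirror the proof of Lemma~\ref{4keumo1}, the dual statement, but now testing the boundedness inequality against a function built from $\omega^{-1/(p-1)}$ so that the conclusion $\omega^{-1/(p-1)}\in L^1(d\mu_{q+p'(b-q)})$ emerges directly. Guided by exponent bookkeeping, the natural (unnormalized) test function is $h(w)=\omega(w)^{-1/(p-1)}(1-|w|^2)^{(b-q)/(p-1)}$: using $1-\frac{p}{p-1}=-\frac{1}{p-1}$ and $\frac{p}{p-1}=p'$, a short computation gives $\int_{\mathbb{B}}|h|^p\omega\,d\mu_q=\int_{\mathbb{B}}\omega^{-1/(p-1)}\,d\mu_{q+p'(b-q)}=:\mathcal{I}$, exactly the quantity to be shown finite.

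Since $\omega^{-1/(p-1)}$ need not be locally integrable a priori, I would not feed $h$ into the inequality directly but truncate. Set $E_n=\{w:\ \omega(w)^{-1/(p-1)}\le n,\ |w|\le 1-\tfrac1n\}$ and $f_n=h\,\chi_{E_n}$, so that $f_n$ is bounded with compact support, hence $\|f_n\|_{L^p(\omega d\mu_q)}^p=\int_{E_n}\omega^{-1/(p-1)}\,d\mu_{q+p'(b-q)}=:\mathcal{I}_n<\infty$ and $f_n\in L^p(\omega d\mu_q)$. The key step is a pointwise lower bound for $T_{a,b}f_n$ near the origin: because every factor in the integrand other than $K_a(z,w)$ is nonnegative, Lemma~\ref{2keumo4} furnishes a $\rho_0<1$ with $\Re K_a(z,w)\ge\frac12$ for all $|z|\le\rho_0$ and $w\in\mathbb{B}$; after checking that the total exponent of $(1-|w|^2)$ collapses to $q+p'(b-q)$, this yields $|T_{a,b}f_n(z)|\ge \Re T_{a,b}f_n(z)\ge\frac12\mathcal{I}_n$ for all $|z|\le\rho_0$.

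Integrating this lower bound over the compact ball $\{|z|\le\rho_0\}$ and invoking the boundedness hypothesis gives $(\tfrac12\mathcal{I}_n)^p\int_{|z|\le\rho_0}\omega\,d\mu_Q\le \|T_{a,b}\|^p\,\mathcal{I}_n$. Since $\omega$ is positive and locally integrable, $c_0:=\int_{|z|\le\rho_0}\omega\,d\mu_Q\in(0,\infty)$, so dividing (for $n$ large enough that $\mathcal{I}_n>0$) gives $\mathcal{I}_n\le\big(2^p\|T_{a,b}\|^p/c_0\big)^{1/(p-1)}$, a bound independent of $n$. Letting $E_n\uparrow\mathbb{B}$ and applying monotone convergence, $\mathcal{I}_n\uparrow\mathcal{I}$, whence $\mathcal{I}<\infty$, i.e. $\omega^{-1/(p-1)}\in L^1(d\mu_{q+p'(b-q)})$. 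The only genuine obstacle is the possible non-local-integrability of $\omega^{-1/(p-1)}$, which the truncation-plus-monotone-convergence device circumvents; the remainder is exponent arithmetic together with the cited lower bound on $\Re K_a$.
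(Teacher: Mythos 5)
Your proof is correct, and it takes a genuinely different route from the paper's. The paper argues by contradiction through duality: assuming $\omega^{-1/(p-1)}\notin L^1(d\mu_{q+p'(b-q)})$, equivalently $\omega^{-1}\notin L^{p'}(\omega\,d\mu_{q+p'(b-q)})$, the Riesz representation theorem (the converse of H\"older's inequality) produces a positive $h\in L^{p}(\omega\,d\mu_{q+p'(b-q)})$ with $\int_{\mathbb{B}}h\,d\mu_{q+p'(b-q)}=\infty$; then $g(z)=h(z)(1-|z|^2)^{(p'-1)(b-q)}$ lies in $L^p(\omega\,d\mu_q)$ and, since $K_a(0,w)\equiv 1$, satisfies $T_{a,b}g(0)=\int_{\mathbb{B}}h\,d\mu_{q+p'(b-q)}=\infty$, contradicting well-definedness. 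Note that this argument uses only the finiteness of $T_{a,b}g$ at the single point $z=0$ and never invokes the operator norm or any kernel estimate. You instead argue directly and quantitatively: you test the norm inequality on truncations $f_n$ of the extremal function $\omega^{-1/(p-1)}(1-|w|^2)^{(p'-1)(b-q)}$, use Lemma \ref{2keumo4} to get $\Re K_a\ge\frac12$ on the ball $|z|\le\rho_0$, and finish with monotone convergence. The truncation is the essential new ingredient: feeding in the untruncated extremal function, as the paper does in the proof of Theorem \ref{4keumo5}, is there justified precisely by the present lemma, so doing it without truncation here would be circular, and your device cleanly avoids that. Your approach costs a little more machinery (the kernel lower bound, truncation plus monotone convergence, and the full continuity hypothesis rather than mere well-definedness), but it buys an explicit uniform bound $\int_{\mathbb{B}}\omega^{-1/(p-1)}\,d\mu_{q+p'(b-q)}\le\bigl(2^p\Vert T_{a,b}\Vert^p/c_0\bigr)^{1/(p-1)}$, with $c_0=\int_{|z|\le\rho_0}\omega\,d\mu_Q$, which is exactly the kind of quantitative information behind the norm equivalence $\Vert T_{a,b}\Vert^p\simeq\cdots$ asserted in Theorem \ref{keumo6}, whereas the paper's duality argument is shorter but purely qualitative.
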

\begin{proof}
Assume that $T_{a,b}$ is well defined and continuous from $L^p(\omega d\mu_q)$ to $L^p(\omega d\mu_Q).$ We want to show that $\omega^{\frac{-1}{p-1}}\in L^1(d\mu_{q+p(p'-1)(b-q)}),$  in other words we want to show   $\omega^{-1}\in L^{p'}(\omega d\mu_{q+p(p'-1)(b-q)})$.  Assume that $\omega^{-1}$ is not in $L^{p'}(\omega d\mu_{q+p(p'-1)(b-q)})$, then by the Riesz representation theorem there exists a positive function $h$ in $L^{p}(\omega d\mu_{q+p(p'-1)(b-q)})$ such that
$$\langle h,\omega^{-1} \rangle_{\omega,q+p(p'-1)(b-q)}=\infty.$$ 
This means that\\
\begin{align*}
\infty & =  \displaystyle\int_{\mathbb{B}}h(z)d\mu_{q+p(p'-1)(b-q)}(z)\\
& =  \displaystyle\int_{\mathbb{B}}h(z)(1-|z|^2)^{p(p'-1)(b-q)}d\mu_q(z)\\
& =  \displaystyle\int_{\mathbb{B}}h(z)(1-|z|^2)^{[p(p'-1)-1](b-q)}d\mu_b(z)\\
& =  \displaystyle\int_{\mathbb{B}}h(z)(1-|z|^2)^{[p'-1](b-q)}d\mu_b(z).
\end{align*}

Since $h\in L^{p}(\omega d\mu_{q+p(p'-1)(b-q)}),$ then $g\in L^p(\omega d\mu_q)$ where $g(z)= h(z)(1-|z|^2)^{(p'-1)(b-q)},~~\forall z\in \mathbb{B}.$ So that 
$$T_{a,b}g(0)=\displaystyle\int_{\mathbb{B}}h(z)(1-|z|^2)^{[p'-1](b-q)}d\mu_b(z)=\infty,$$
contradicting the fact that $T_{a,b}$ is well defined and continuous from $L^p(\omega d\mu_q)$ to $L^p(\omega d\mu_Q).$ 
\end{proof}

\begin{lem}\label{4keumo4}
Let $\omega$ be a positive locally integrable function and  $q,Q \in \mathbb{R}$. If $P_{s,t}$ ($s,t\in\mathbb{R}$) is well defined and continuous from $L^p(\omega d\mu_q)$ to $L^p(\omega d\mu_Q),$ then $\omega^{\frac{-1}{p-1}}\in L^1(d\mu_{q+p'(s-q)})$.
\end{lem}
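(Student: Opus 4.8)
The plan is to mirror the argument of Lemma \ref{4keumo3}, using the identity \eqref{keumoequa} to transfer a statement about $P_{s,t}$ to the behaviour of $T_{s+t,s}$ at the origin. First I would reduce the conclusion to an equivalent dual statement: writing $\alpha = q+p'(s-q)$ and using $1-p' = -\tfrac{1}{p-1}$, one has $\int_{\mathbb{B}}\omega^{1-p'}\,d\mu_\alpha = \int_{\mathbb{B}}\omega^{\frac{-1}{p-1}}\,d\mu_\alpha$, so that $\omega^{\frac{-1}{p-1}}\in L^1(d\mu_{q+p'(s-q)})$ is equivalent to $\omega^{-1}\in L^{p'}(\omega\, d\mu_{q+p'(s-q)})$. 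I then argue by contradiction: assuming $\omega^{-1}\notin L^{p'}(\omega\, d\mu_{q+p'(s-q)})$, the Riesz representation theorem supplies a positive function $h\in L^p(\omega\, d\mu_{q+p'(s-q)})$ with $\langle h,\omega^{-1}\rangle_{\omega,\,q+p'(s-q)}=\infty$, which unwinds to $\int_{\mathbb{B}} h(z)\,d\mu_{q+p'(s-q)}(z)=\infty$.

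Next I would construct the offending test function exactly as before, setting $g(z)=h(z)(1-|z|^2)^{(p'-1)(s-q)}$. Since $p(p'-1)=p'$, the change of exponent gives $\int_{\mathbb{B}}|g|^p\omega\, d\mu_q = \int_{\mathbb{B}} h^p\,\omega\, d\mu_{q+p'(s-q)}<\infty$, so $g\in L^p(\omega\, d\mu_q)$. The key computation is the evaluation at the origin: by \eqref{keumoequa} and $(1-|0|^2)^t=1$ we have $P_{s,t}g(0)=T_{s+t,s}g(0)$, and since $K_{s+t}(0,w)=1$ this equals
$$
\int_{\mathbb{B}} g(w)(1-|w|^2)^{s-q}\,d\mu_q(w)=\int_{\mathbb{B}} h(w)(1-|w|^2)^{p'(s-q)}\,d\mu_q(w)=\int_{\mathbb{B}} h\,d\mu_{q+p'(s-q)}=\infty.
$$

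This contradicts the hypothesis that $P_{s,t}$ is well defined on $L^p(\omega\, d\mu_q)$, since $g\geq 0$ forces $P_{s,t}g(0)$ to be a genuine (finite) value for a well-defined operator; hence $\omega^{\frac{-1}{p-1}}\in L^1(d\mu_{q+p'(s-q)})$. I do not anticipate a substantive obstacle here, as the proof is structurally identical to Lemma \ref{4keumo3}; the only points requiring care are the bookkeeping of the exponents (checking $p(p'-1)=p'$ at each step) and the observation that the weight $(1-|z|^2)^t$ attached to $P_{s,t}$ is harmless at $z=0$, which is precisely what lets the reduction to $T_{s+t,s}(0)$ go through.
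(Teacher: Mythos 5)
Your proof is correct and is precisely the argument the paper intends: the paper's own proof of this lemma simply refers back to the proof of Lemma \ref{4keumo3} and notes the identity $P_{s,t}g(0)=T_{s+t,s}g(0)$, which is exactly the reduction you carried out, with the same duality/contradiction step and the same test function $g(z)=h(z)(1-|z|^2)^{(p'-1)(s-q)}$. Your exponent bookkeeping ($p(p'-1)=p'$) and the observation that $K_{s+t}(0,w)=1$ in all parameter ranges are both accurate.
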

\begin{proof}
For the proof, we refer to the proof of Lemma \ref{4keumo3} and notice that $P_{s,t}g(0)=T_{s+t,s}g(0).$
\end{proof}

\begin{prop}\label{4keumo4'}
In the case $s+t\leq-1$ and $Q\leq q,$ or in the case $s+t+\frac{Q-q}{p}\leq -1,$  there are no weights $\omega$ such that both conditions $\omega\in L^1(d\mu_{Q+pt})$ and $\omega^{\frac{-1}{p-1}}\in L^1(d\mu_{q+p'(s-q)})$ hold at the same time.
\end{prop}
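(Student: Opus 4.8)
The plan is to collapse both stated cases into a single exponent inequality and then derive a contradiction from H\"older's inequality. First I would observe that the two hypotheses are not genuinely distinct: if $s+t\leq -1$ and $Q\leq q$, then $Q-q\leq 0$ forces $\frac{Q-q}{p}\leq 0$, so that $s+t+\frac{Q-q}{p}\leq s+t\leq -1$. Hence in either case the single exponent $\alpha:=s+t+\frac{Q-q}{p}$ satisfies $\alpha\leq -1$, and it suffices to obtain a contradiction under the assumption that $\alpha\leq -1$ while both integrability conditions hold simultaneously.

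Arguing by contradiction, I would suppose there is a weight $\omega$ with $\omega\in L^1(d\mu_{Q+pt})$ and $\omega^{\frac{-1}{p-1}}\in L^1(d\mu_{q+p'(s-q)})$. The key step is the factorization
$$(1-|z|^2)^{\alpha}=\Bigl(\omega(z)^{1/p}(1-|z|^2)^{\frac{Q}{p}+t}\Bigr)\Bigl(\omega(z)^{-1/p}(1-|z|^2)^{s-\frac{q}{p}}\Bigr),$$
to which I apply H\"older's inequality with exponents $p$ and $p'$. Raising the first factor to the power $p$ reproduces exactly the density of the first hypothesis, since $p\bigl(\frac{Q}{p}+t\bigr)=Q+pt$. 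Raising the second factor to the power $p'$ reproduces the density of the second hypothesis: using $\frac{p'}{p}=\frac{1}{p-1}$ one has $\omega^{-p'/p}=\omega^{\frac{-1}{p-1}}$, and $p'\bigl(s-\frac{q}{p}\bigr)=p's-\frac{q}{p-1}=q+p'(s-q)$. Consequently
$$\int_{\mathbb{B}}(1-|z|^2)^{\alpha}\,d\mu(z)\leq\Bigl(\int_{\mathbb{B}}\omega\,d\mu_{Q+pt}\Bigr)^{1/p}\Bigl(\int_{\mathbb{B}}\omega^{\frac{-1}{p-1}}\,d\mu_{q+p'(s-q)}\Bigr)^{1/p'}<\infty.$$

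Finally I would note that the left-hand side is $\mu_{\alpha}(\mathbb{B})$, and since $\int_0^1(1-r^2)^{\alpha}r^{2N-1}\,dr$ diverges precisely when $\alpha\leq -1$, the hypothesis $\alpha\leq -1$ gives $\mu_{\alpha}(\mathbb{B})=+\infty$. This contradicts the finiteness just obtained, completing the proof. The argument is essentially all bookkeeping; the only point requiring care is verifying the exponent arithmetic so that the two H\"older factors reproduce exactly the densities $d\mu_{Q+pt}$ and $d\mu_{q+p'(s-q)}$ and leave the residual power $\alpha=s+t+\frac{Q-q}{p}$. There is no analytic difficulty beyond this matching of parameters and the elementary divergence of $\mu_{\alpha}(\mathbb{B})$ for $\alpha\leq -1$.
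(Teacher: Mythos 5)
Your proof is correct and follows essentially the same route as the paper: the identical factorization of $(1-|z|^2)^{s+t+\frac{Q-q}{p}}$ into $\omega^{1/p}(1-|z|^2)^{\frac{Q+pt}{p}}\cdot\omega^{-1/p}(1-|z|^2)^{s-\frac{q}{p}}$ followed by H\"older's inequality against the two hypothesized integrals, with the divergence of $\mu_{\alpha}$ for $\alpha\leq-1$ supplying the contradiction. Your only deviations are cosmetic but welcome: you observe that the first case ($s+t\leq-1$, $Q\leq q$) implies the second, so one computation suffices where the paper runs the H\"older argument twice, and you integrate over all of $\mathbb{B}$ rather than over a pseudo-ball touching the boundary.
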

\begin{proof}
Let $s+t\leq-1$, $Q\leq q$ and $B$ be a pseudo-ball that touches the boundary. Then
\begin{align*}
\infty & = \displaystyle\int_{B}d\mu_{s+t}(z)\\
 & = \displaystyle\int_{B} (1-|z|^2)^{s+t}d\mu (z)\\ 
 & =  \displaystyle\int_{B} (1-|z|^2)^{s-\frac{q}{p}+\frac{q}{p}+t}d\mu (z)\\
 & =  \displaystyle\int_{B} \omega^{\frac{1}{p}}(z) (1-|z|^2)^{\frac{q+pt}{p}}\omega^{-\frac{1}{p}}(z)(1-|z|^2)^{s-\frac{q}{p}}d\mu (z)\\
 & \leq \left( \displaystyle\int_{B} \omega(z) (1-|z|^2)^{q+pt} d\mu (z) \right) ^{\frac{1}{p}} \left(\displaystyle\int_{B} \omega^{-\frac{1}{p-1}}(z)(1-|z|^2)^{q+ p'(s-q)}d\mu (z) \right) ^{\frac{1}{p'}}\\
 & \leq \left( \displaystyle\int_{B} \omega(z) (1-|z|^2)^{Q+pt} d\mu (z) \right) ^{\frac{1}{p}} \left(\displaystyle\int_{B} \omega^{-\frac{1}{p-1}}(z)(1-|z|^2)^{q+ p'(s-q)}d\mu (z) \right) ^{\frac{1}{p'}}
\end{align*}
such that necessarily $\omega\notin L^1(d\mu_{Q+pt})$ or $\omega^{\frac{-1}{p-1}}\notin L^1(d\mu_{q+p'(s-q)})$.

Let $s+t+\frac{Q-q}{p}\leq-1$, and $B$ be a pseudo-ball that touches the boundary. Then
\begin{align*}
\infty & = \displaystyle\int_{B}d\mu_{s+t+\frac{Q-q}{p}}(z)\\
 & = \displaystyle\int_{B} (1-|z|^2)^{s+t+\frac{Q-q}{p}}d\mu (z)\\ 
 & =  \displaystyle\int_{B} (1-|z|^2)^{s-\frac{q}{p}+\frac{Q}{p}+t}d\mu (z)\\
 & =  \displaystyle\int_{B} \omega^{\frac{1}{p}}(z) (1-|z|^2)^{\frac{Q+pt}{p}}\omega^{-\frac{1}{p}}(z)(1-|z|^2)^{s-\frac{q}{p}}d\mu (z)\\
 & \leq \left( \displaystyle\int_{B} \omega(z) (1-|z|^2)^{Q+pt} d\mu (z) \right) ^{\frac{1}{p}} \left(\displaystyle\int_{B} \omega^{-\frac{1}{p-1}}(z)(1-|z|^2)^{q+ p'(s-q)}d\mu (z) \right) ^{\frac{1}{p'}}
\end{align*}
such that necessarily $\omega\notin L^1(d\mu_{Q+pt})$ or $\omega^{\frac{-1}{p-1}}\notin L^1(d\mu_{q+p'(s-q)}).$

\end{proof}

\subsection{The case where $a<-N-1$ and $s+t<-N-1$}
Here we are going to characterize the boundedness of the operators $T_{a,b},$ $P_{s,t}$ from $L^p(\omega d\mu_q)$ to $L^p(\omega d\mu_Q)$ where $\omega$ is a positive locally integrable function, where $a< -N-1$ and $s+t< -N-1$.
\begin{theo}\label{4keumo5}
In the case $a<-(N+1),$  $T_{a,b}$ is well defined and continuous from $L^p(\omega d\mu_q)$ to $L^p(\omega d\mu_Q)$  if and only if 
$$\left( \displaystyle\int_{\mathbb{B}}\omega(z)d\mu_Q(z)\right) \left( \displaystyle\int_{\mathbb{B}}(\omega(z))^{\frac{-1}{p-1}}d\mu_{q+p'(b-q)}(z)\right) ^{p-1}<\infty.$$
Moreover, 
$$\Vert T_{a,b}\Vert^p \simeq \left( \displaystyle\int_{\mathbb{B}}\omega(z)d\mu_Q(z)\right) \left( \displaystyle\int_{\mathbb{B}}(\omega(z))^{\frac{-1}{p-1}}d\mu_{q+p'(b-q)}(z)\right) ^{p-1}.
$$
\end{theo}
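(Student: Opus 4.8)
The plan is to exploit the fact that for $a<-(N+1)$ the kernel is comparable to a constant. By Lemma \ref{2keumo4} we have $|K_a(z,w)|\simeq 1$ uniformly on $\mathbb{B}\times\mathbb{B}$ (bounded above by part (1), bounded below by part (2a)). Consequently $T_{a,b}$ behaves like the rank-one operator $f\mapsto\left(\int_{\mathbb{B}}f(w)(1-|w|^2)^{b-q}d\mu_q(w)\right)\mathbf{1}$, and the quantity in the statement is exactly the $p$-th power of the norm of this model operator between the two weighted spaces. I would organize the whole argument around making this heuristic precise.

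First I would prove sufficiency together with the upper bound $\Vert T_{a,b}\Vert^p\lesssim(\cdots)$. Writing $C_{|f|}:=\int_{\mathbb{B}}|f(w)|(1-|w|^2)^{b-q}d\mu_q(w)$, the upper kernel bound gives $|T_{a,b}f(z)|\le C\,C_{|f|}$ for \emph{every} $z$, so that $\Vert T_{a,b}f\Vert_{L^p(\omega d\mu_Q)}^p\le C^p C_{|f|}^p\int_{\mathbb{B}}\omega\,d\mu_Q$. A single application of Hölder's inequality with exponents $p,p'$, factoring $|f|(1-|w|^2)^{b-q}=(|f|\omega^{1/p})((1-|w|^2)^{b-q}\omega^{-1/p})$, bounds $C_{|f|}$ by $\Vert f\Vert_{L^p(\omega d\mu_q)}(\int_{\mathbb{B}}\omega^{-1/(p-1)}d\mu_{q+p'(b-q)})^{1/p'}$, since $\omega^{-p'/p}=\omega^{-1/(p-1)}$ and $(1-|w|^2)^{p'(b-q)}d\mu_q=d\mu_{q+p'(b-q)}$. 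Combining the two displays yields the asserted upper bound; the same computation applied to $S_{a,b}$ shows $S_{a,b}$ is bounded, which in particular reproves the $T$-bound through $|T_{a,b}f|\le S_{a,b}|f|$.

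The necessity of the finiteness condition is immediate from the preliminary lemmas: if $T_{a,b}$ is bounded then Lemma \ref{4keumo1} forces $\omega\in L^1(d\mu_Q)$ and Lemma \ref{4keumo3} forces $\omega^{-1/(p-1)}\in L^1(d\mu_{q+p'(b-q)})$, so the product of the two integrals is finite. Together with the previous paragraph this already settles the equivalence ``bounded $\iff$ product finite,'' which is the genuine content of the ``if and only if.''

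The delicate point is the matching lower bound $\Vert T_{a,b}\Vert^p\gtrsim(\cdots)$. The natural test function is the extremizer $f_0(w)=\omega(w)^{-1/(p-1)}(1-|w|^2)^{(b-q)/(p-1)}$ (truncated near $\partial\mathbb{B}$ if integrability requires it), for which Hölder becomes an equality: with $V:=\int_{\mathbb{B}}\omega^{-1/(p-1)}d\mu_{q+p'(b-q)}$ one checks $C_{f_0}=V$ and $\Vert f_0\Vert_{L^p(\omega d\mu_q)}^p=V$. Here the global lower bound does the work cleanly for the positive operator: $S_{a,b}f_0(z)\ge c\int_{\mathbb{B}}f_0(w)(1-|w|^2)^{b-q}d\mu_q(w)=cV$ for \emph{all} $z\in\mathbb{B}$, whence $\Vert S_{a,b}f_0\Vert_{L^p(\omega d\mu_Q)}^p\ge c^pV^p\int_{\mathbb{B}}\omega\,d\mu_Q$, and dividing by $\Vert f_0\Vert^p=V$ recovers the full product $V^{p-1}\int_{\mathbb{B}}\omega\,d\mu_Q$. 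For $T_{a,b}$ itself the only available pointwise lower bound is the real-part estimate of Lemma \ref{2keumo4}(2b), which gives $|T_{a,b}f_0(z)|\ge\frac12V$ only on the fixed ball $\{|z|\le\rho_0\}$, hence a priori produces $\int_{\{|z|\le\rho_0\}}\omega\,d\mu_Q$ rather than $\int_{\mathbb{B}}\omega\,d\mu_Q$. I expect this to be the main obstacle: upgrading the neighborhood-of-the-origin integral to the full integral over $\mathbb{B}$, which is exactly where possible phase cancellation of the complex kernel $K_a$ could a priori shrink the norm. The resolution I would pursue is to route the lower bound through the positive operator $S_{a,b}$ (which sees all of $\mathbb{B}$ because $|K_a|\ge c$ everywhere) and transfer it to $T_{a,b}$, so that both full integrals appear simultaneously and $\Vert T_{a,b}\Vert^p\simeq\Vert S_{a,b}\Vert^p\simeq(\cdots)$.
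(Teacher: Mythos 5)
Your handling of the equivalence and of the upper bound is correct and is essentially the paper's own argument: the bound $\Vert T_{a,b}\Vert^p\lesssim\bigl(\int_{\mathbb{B}}\omega\,d\mu_Q\bigr)V^{p-1}$, with $V:=\int_{\mathbb{B}}\omega^{-1/(p-1)}d\mu_{q+p'(b-q)}$, comes from the boundedness of $K_a$ for $a<-(N+1)$ (Lemma \ref{2keumo4}) plus one H\"older, exactly as in the paper; and for necessity of finiteness the paper uses the same ingredients you cite, namely Lemma \ref{4keumo1} together with the test function $f_0(z)=\omega(z)^{-1/(p-1)}(1-|z|^2)^{(p'-1)(b-q)}$ and Lemma \ref{4keumo3} (the paper additionally runs $f_0$ through the local positivity $\Re K_a\ge\frac{1}{2}$ on $\{|z|\le\rho_0\}$, but that yields nothing beyond what Lemma \ref{4keumo3} already gives; your shortcut is legitimate and even slightly cleaner).

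The genuine gap is the one you flagged yourself: the lower bound in the ``Moreover'' claim. Your argument does prove $\Vert S_{a,b}\Vert^p\gtrsim\bigl(\int_{\mathbb{B}}\omega\,d\mu_Q\bigr)V^{p-1}$, since $|K_a|\ge c$ holds on all of $\mathbb{B}\times\mathbb{B}$; but ``transferring'' this to $T_{a,b}$ is not a legitimate step, because the only a priori comparison is $|T_{a,b}f|\le S_{a,b}|f|$, i.e.\ $\Vert T_{a,b}\Vert\le\Vert S_{a,b}\Vert$, which points in the wrong direction: a lower bound for $\Vert S_{a,b}\Vert$ says nothing about $\Vert T_{a,b}\Vert$, and ruling out phase cancellation is precisely the problem, not a device for solving it. You should know that the paper does not close this gap either: its necessity computation establishes only $\Vert T_{a,b}\Vert^p\gtrsim\bigl(\int_{|z|\le\rho_0}\omega\,d\mu_Q\bigr)V^{p-1}$ --- exactly the restricted bound you predicted --- and uses it solely to deduce finiteness, so the displayed $\simeq$ is not actually proved there. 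A correct route to the full lower bound does exist and uses more than Lemma \ref{2keumo4}(2b): writing $K_a(z,w)=k_a(\langle z,w\rangle)$, the function $k_a$ is continuous, holomorphic in $\mathbb{D}$, and zero-free on $\overline{\mathbb{D}}$, so $\arg k_a$ admits a continuous, hence uniformly continuous, branch on $\overline{\mathbb{D}}$. Choose $\delta>0$ so that $|v-v'|\le\delta$ implies the arguments differ by at most $\pi/4$, and cover $\overline{\mathbb{B}}$ by finitely many sets $U_1,\dots,U_M$ of diameter at most $\delta/2$, with $M=M(a,N)$; then on each $U_i\times U_j$ the phase of $K_a$ varies by at most $\pi/4$. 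Picking $i^*$ with $\int_{U_{i^*}}\omega\,d\mu_Q\ge\frac{1}{M}\int_{\mathbb{B}}\omega\,d\mu_Q$ and $j^*$ with $\int_{U_{j^*}}\omega^{-1/(p-1)}d\mu_{q+p'(b-q)}\ge\frac{V}{M}$, and testing on $f_0\chi_{U_{j^*}}$, one gets $|T_{a,b}(f_0\chi_{U_{j^*}})(z)|\ge\frac{c}{\sqrt{2}}\int_{U_{j^*}}f_0\,d\mu_b$ for every $z\in U_{i^*}$, whence $\Vert T_{a,b}\Vert^p\ge C(a,N,p)\bigl(\int_{\mathbb{B}}\omega\,d\mu_Q\bigr)V^{p-1}$. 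Some argument of this kind (or a proof that $\Re k_a$ is bounded below on all of $\overline{\mathbb{D}}$) is what your proposal, and the paper, are missing.
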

\begin{proof}
 Assume that
 $$\left( \displaystyle\int_{\mathbb{B}}\omega(z)d\mu_Q(z)\right) \left( \displaystyle\int_{\mathbb{B}}(\omega(z))^{\frac{-1}{p-1}}d\mu_{q+p'(b-q)}(z)\right) ^{p-1}<\infty.$$
  We have
\begin{align*}
\left( \displaystyle\int_{\mathbb{B}}|T_{a,b}f(z)|^p\omega(z)d\mu_Q(z)\right)  & \lesssim  \displaystyle\int_{\mathbb{B}}\left( \displaystyle\int_{\mathbb{B}}|f(v)|d\mu_b(v)\right) ^p\omega(z)d\mu_Q(z)\\
 & =  \left( \displaystyle\int_{\mathbb{B}}\omega(z)d\mu_Q(z)\right) \left( \displaystyle\int_{\mathbb{B}}|f(z)|d\mu_b(z)\right) ^p,
\end{align*}
where the first inequality is due to the fact that $K_a$ is bounded when $a<-(N+1)$. We also have 
\begin{align*}
\left( \displaystyle\int_{\mathbb{B}}|f(z)|d\mu_b(z)\right) ^p & =  \left( \displaystyle\int_{\mathbb{B}}|f(z)|(1-|z|^2)^{b-q}d\mu_q(z)\right) ^p\\
 & =  \left( \displaystyle\int_{\mathbb{B}}|f(z)|(\omega(z))^{\frac{-1}{p}}(\omega(z))^{\frac{1}{p}}(1-|z|^2)^{b-q}d\mu_q(z)\right) ^p\\
 & \leq  \left( \displaystyle\int_{\mathbb{B}}|f(z)|^p\omega(z)d\mu_q(z)\right) \left( \displaystyle\int_{\mathbb{B}}((\omega(z))^{\frac{-1}{p}})^{p'}|(1-|z|^2)^{p'(b-q)}d\mu_q(z)\right) ^{\frac{p}{p'}}.
\end{align*}
Then $T_{a,b}$ is well defined and continuous in $L^p(\omega d\mu_q)$ when 
$$\left( \displaystyle\int_{\mathbb{B}}\omega(z)d\mu_Q(z)\right) \left( \displaystyle\int_{\mathbb{B}}(\omega(z))^{\frac{-1}{p-1}}d\mu_{q+p'(b-q)}(z)\right) ^{p-1}<\infty.$$\\
\indent Now assume that $T_{a,b}$ is well defined and continuous from $L^p(\omega d\mu_q)$ to $L^p(\omega d\mu_Q).$ 
Let $\rho_0$ be as in Lemma \ref{2keumo4}, then for positive functions we have
\begin{align*}
\displaystyle\int_{\mathbb{B}}|T_{a,b}f(z)|^p\omega(z)d\mu_Q(z) & =  \displaystyle\int_{\mathbb{B}}|\displaystyle\int_{\mathbb{B}}K_a(z,s)f(s)d\mu_b(s)|^p\omega(z)d\mu_Q(z)\\
 & \geq  \displaystyle\int_{\mathbb{B}}|\displaystyle\int_{\mathbb{B}}\Re K_a(z,s)f(s)d\mu_b(s)|^p\omega(z)d\mu_Q(z)\\
 & \geq  \displaystyle\int_{|z|\leq \rho_0}|\displaystyle\int_{\mathbb{B}}\Re K_a(z,s)f(s)d\mu_b(s)|^p\omega(z)d\mu_Q(z)\\
 & \geq  \frac{1}{2^p}\displaystyle\int_{|z|\leq \rho_0}(\displaystyle\int_{\mathbb{B}}|f(s)|d\mu_b(s))^p\omega(z)d\mu_Q(z)\\
 & = \frac{1}{2^p}(\displaystyle\int_{\mathbb{B}}|f(s)|d\mu_b(s))^p\displaystyle\int_{|z|\leq\rho_0}\omega(z)d\mu_Q(z).
\end{align*}

By continuity of $T_{a,b}$ there exists a constant $C_{a,b,p,q,Q}>0$ such that
$$\displaystyle\int_{\mathbb{B}}|T_{a,b}f(z)|^p\omega(z)d\mu_Q(z) \leq  C_{a,b,p,q,Q} \displaystyle\int_{\mathbb{B}}|f(z)|^p\omega(z)d\mu_q(z),$$
for all $f$ in $L^p(\omega d\mu_q).$ Hence,
$$\frac{1}{2^p}\left( \displaystyle\int_{|z|\leq\rho_0}\omega(z)d\mu_Q(z)\right) \left( \displaystyle\int_{\mathbb{B}}|f(z)|d\mu_b(z)\right) ^p\leq C_{a,b,p,q,Q} \displaystyle\int_{\mathbb{B}}|f(z)|^p\omega(z)d\mu_q(z)$$
for all positive $f$ in $L^p(\omega d\mu_q).$ Let $f(z)=(\omega(z))^{\frac{-1}{p-1}}(1-|z|^2)^{(p'-1)(b-q)},~~ z\in \mathbb{B}$. We have that $f\in L^p(\omega d\mu_q)$ by Lemma \ref{4keumo3}.  Replacing $f$ in the last inequality  we obtain
$$\left( \displaystyle\int_{|z|\leq\rho_0}\omega(z)d\mu_Q(z)\right) \left( \displaystyle\int_{\mathbb{B}}(\omega(z))^{\frac{-1}{p-1}}d\mu_{q+p'(b-q)}(z)\right) ^{p-1}< 2^p C_{a,b,p,q,Q} < \infty.$$ 
Then
$$\left( \displaystyle\int_{\mathbb{B}}(\omega(z))^{\frac{-1}{p-1}}d\mu_{q+p'(b-q)}(z)\right) ^{p-1} < \infty.$$ 
Using Lemma \ref{4keumo1}, we get
$$\left( \displaystyle\int_{\mathbb{B}}\omega(z)d\mu_Q(z)\right) \left( \displaystyle\int_{\mathbb{B}}(\omega(z))^{\frac{-1}{p-1}}d\mu_{q+p'(b-q)}(z)\right) ^{p-1}<\infty.$$
\end{proof}
\begin{theo}\label{4keumo5'}
In the case $s+t<-(N+1),$ there are no weights $\omega$ such that $P_{s,t}$ is well defined and continuous from $L^p(\omega d\mu_q)$ to $L^p(\omega d\mu_Q)$ for $Q\leq q$.
\end{theo}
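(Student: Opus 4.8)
The plan is to obtain this as a direct corollary of the preliminary necessary conditions together with the incompatibility result of Proposition \ref{4keumo4'}, by a contradiction argument. First I would assume, for the sake of contradiction, that there exists a weight $\omega$ for which $P_{s,t}$ is well defined and continuous from $L^p(\omega d\mu_q)$ to $L^p(\omega d\mu_Q)$ with $Q\leq q$.

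Under this assumption, Lemma \ref{4keumo2} forces $\omega\in L^1(d\mu_{Q+pt})$, while Lemma \ref{4keumo4} forces $\omega^{\frac{-1}{p-1}}\in L^1(d\mu_{q+p'(s-q)})$. Thus any admissible weight would have to satisfy both of these integrability conditions simultaneously. The key observation is then that the hypothesis $s+t<-(N+1)$ places us squarely inside the first regime covered by Proposition \ref{4keumo4'}: since $N\geq 1$ we have $-(N+1)\leq -2<-1$, so $s+t<-(N+1)$ gives in particular $s+t\leq -1$, and together with $Q\leq q$ this is exactly the first case treated in that proposition. Proposition \ref{4keumo4'} then asserts that no weight can satisfy both $\omega\in L^1(d\mu_{Q+pt})$ and $\omega^{\frac{-1}{p-1}}\in L^1(d\mu_{q+p'(s-q)})$ at once, contradicting the conclusions drawn from Lemma \ref{4keumo2} and Lemma \ref{4keumo4}. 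Hence no such weight $\omega$ exists.

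Since every step is a direct invocation of results already established, there is no genuine analytic obstacle here; the only point requiring a moment of care is verifying that the hypothesis $s+t<-(N+1)$ does indeed land us in the first case of Proposition \ref{4keumo4'}, which is immediate from $N\geq 1$. I would also note that the boundedness of the kernel $K_{s+t}$ in the range $s+t<-(N+1)$ (established in Lemma \ref{2keumo4}) plays no role in this argument: unlike the companion statement Theorem \ref{4keumo5} for $T_{a,b}$, where that boundedness drives the sufficiency direction, here the obstruction comes purely from the mutually incompatible integrability of $\omega$ and its dual weight $\omega^{\frac{-1}{p-1}}$ against the respective measures.
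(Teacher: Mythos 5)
Your proof is correct and is essentially identical to the paper's own argument: the paper proves this theorem by citing exactly Proposition \ref{4keumo4'}, Lemma \ref{4keumo2} and Lemma \ref{4keumo4}, which is precisely your contradiction scheme. Your only addition is the (correct and worthwhile) explicit check that $s+t<-(N+1)\leq -2<-1$ places the hypotheses in the first case of Proposition \ref{4keumo4'}.
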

\begin{proof}
This is due to Proposition \ref{4keumo4'}, Lemma \ref{4keumo2} and Lemma \ref{4keumo4}.
\end{proof}
\begin{theo}\label{4keumo6}
In the case $s+t<-(N+1),$ if $Q>q,$ then  $P_{s,t}$ is well defined and continuous from $L^p(\omega d\mu_q)$ to $L^p(\omega d\mu_Q)$ if and only if
$$\left( \displaystyle\int_{\mathbb{B}}\omega(z)d\mu_{Q+pt}(z)\right) \left( \displaystyle\int_{\mathbb{B}}(\omega(z))^{\frac{-1}{p-1}}d\mu_{q+p'(s-q)}(z)\right) ^{p-1}<\infty.$$
Moreover, $$\Vert P_{s,t} \Vert^p \simeq \left( \displaystyle\int_{\mathbb{B}}\omega(z)d\mu_{Q+pt}(z)\right) \left( \displaystyle\int_{\mathbb{B}}(\omega(z))^{\frac{-1}{p-1}}d\mu_{q+p'(s-q)}(z)\right) ^{p-1}.$$
\end{theo}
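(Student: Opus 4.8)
The plan is to reduce the entire statement to the already-established Theorem \ref{4keumo5} by exploiting the pointwise factorization \eqref{keumoequa}, namely $P_{s,t}f(z)=(1-|z|^2)^tT_{s+t,s}f(z)$. Since the factor $(1-|z|^2)^t$ is precisely what converts a $\mu_Q$-integral into a $\mu_{Q+pt}$-integral once the integrand is raised to the $p$-th power, I expect boundedness of $P_{s,t}$ from $L^p(\omega d\mu_q)$ to $L^p(\omega d\mu_Q)$ to be \emph{literally equivalent} to boundedness of $T_{s+t,s}$ from $L^p(\omega d\mu_q)$ to $L^p(\omega d\mu_{Q+pt})$, with identical operator norms.

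First I would record the elementary identity obtained by substituting \eqref{keumoequa} into the target norm:
\begin{align*}
\int_{\mathbb{B}}|P_{s,t}f(z)|^p\omega(z)\,d\mu_Q(z)
&=\int_{\mathbb{B}}|T_{s+t,s}f(z)|^p(1-|z|^2)^{pt}\omega(z)(1-|z|^2)^Q\,d\mu(z)\\
&=\int_{\mathbb{B}}|T_{s+t,s}f(z)|^p\omega(z)\,d\mu_{Q+pt}(z).
\end{align*}
This shows $\Vert P_{s,t}f\Vert_{L^p(\omega d\mu_Q)}=\Vert T_{s+t,s}f\Vert_{L^p(\omega d\mu_{Q+pt})}$ for every $f$, so $P_{s,t}$ is well defined and continuous from $L^p(\omega d\mu_q)$ to $L^p(\omega d\mu_Q)$ if and only if $T_{s+t,s}$ is well defined and continuous from $L^p(\omega d\mu_q)$ to $L^p(\omega d\mu_{Q+pt})$, and the two operator norms coincide.

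Then I would invoke Theorem \ref{4keumo5} with the parameters $a=s+t$, $b=s$, and with the target exponent of that theorem replaced by $Q+pt$. The hypothesis $a<-(N+1)$ holds precisely because $s+t<-(N+1)$, and $b=s>-1$ holds by the standing assumption on $s$; note that Theorem \ref{4keumo5} places no constraint on the sign of its target exponent, so the substitution $Q\mapsto Q+pt$ is legitimate for any value of $Q$. Theorem \ref{4keumo5} then yields both the claimed characterization
$$\left( \int_{\mathbb{B}}\omega(z)\,d\mu_{Q+pt}(z)\right)\left( \int_{\mathbb{B}}(\omega(z))^{\frac{-1}{p-1}}\,d\mu_{q+p'(s-q)}(z)\right)^{p-1}<\infty$$
together with the norm equivalence $\Vert P_{s,t}\Vert^p=\Vert T_{s+t,s}\Vert^p\simeq$ the same product, which is exactly the assertion of the theorem.

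I do not anticipate a genuine obstacle here: the whole argument is a change-of-measure bookkeeping step followed by citation of Theorem \ref{4keumo5}. The only point deserving care is that the hypothesis $Q>q$ plays no role in the derivation itself — it enters only to separate this statement from Theorem \ref{4keumo5'}, where Proposition \ref{4keumo4'} shows the characterizing product is forced to be infinite whenever $Q\le q$ (using $s+t<-(N+1)\le -1$), so that no admissible weight exists. Thus $Q>q$ should be read as the regime in which the characterizing condition can actually be satisfied, rather than as an ingredient of the proof.
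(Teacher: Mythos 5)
Your proof is correct, but it is organized differently from the paper's. The paper proves Theorem \ref{4keumo6} from scratch, running the same two-step argument it used for Theorem \ref{4keumo5}: for sufficiency, boundedness of the kernel $K_{s+t}$ when $s+t<-(N+1)$ (Lemma \ref{2keumo4}) followed by H\"older's inequality; for necessity, the lower bound $\Re K_{s+t}\geq \frac{1}{2}$ on $|z|\leq\rho_0$ (Lemma \ref{2keumo4} again), the test function $f(z)=\omega(z)^{\frac{-1}{p-1}}(1-|z|^2)^{(p'-1)(s-q)}$ (admissible by Lemma \ref{4keumo4}), and Lemma \ref{4keumo2} to supply $\omega\in L^1(d\mu_{Q+pt})$ --- at each step absorbing the factor $(1-|z|^2)^{pt}$ into the measure, which is exactly what your preliminary identity does once and for all. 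Your route instead reduces the whole statement to Theorem \ref{4keumo5} through the factorization (\ref{keumoequa}), via $\Vert P_{s,t}f\Vert_{L^p(\omega d\mu_Q)}=\Vert T_{s+t,s}f\Vert_{L^p(\omega d\mu_{Q+pt})}$, and then substitutes $a=s+t$, $b=s$ and target exponent $Q+pt$. This is legitimate: the statement and proof of Theorem \ref{4keumo5} impose no restriction on the target exponent (Lemmas \ref{4keumo1} and \ref{4keumo3} are stated for all $q,Q\in\mathbb{R}$, and the sufficiency argument never needs $\mu_Q$ to be finite), so the substitution $Q\mapsto Q+pt$ is licit even when $Q+pt\leq -1$ --- a point you were right to flag explicitly. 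What your approach buys is economy and no duplicated kernel estimates; what the paper's buys is self-containedness, its two proofs in Section 4 being symmetric and readable independently. Your closing remark about $Q>q$ is also accurate and consistent with the paper: neither proof uses this hypothesis, and by Proposition \ref{4keumo4'} the characterizing product is infinite for every weight when $Q\leq q$ (since $s+t<-(N+1)\leq -1$), which is precisely the content of Theorem \ref{4keumo5'}.
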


\begin{proof}
 Assume that $$\left( \displaystyle\int_{\mathbb{B}}\omega(z)d\mu_{Q+pt}(z)\right) \left( \displaystyle\int_{\mathbb{B}}(\omega(z))^{\frac{-1}{p-1}}d\mu_{q+p'(s-q)}(z)\right) ^{p-1}<\infty.$$
 We have 
\begin{align*}
\left( \displaystyle\int_{\mathbb{B}}|P_{s,t}f(z)|^p\omega(z)d\mu_Q(z)\right)  & \lesssim \displaystyle\int_{\mathbb{B}}(1-|z|^2)^{pt}\left( \displaystyle\int_{\mathbb{B}}|f(v)|d\mu_s(v)\right) ^p\omega(z)d\mu_Q(z)\\
 & =  \displaystyle\int_{\mathbb{B}}\left( \displaystyle\int_{\mathbb{B}}|f(v)|d\mu_s(v)\right) ^p\omega(z)d\mu_{Q+pt}(z)\\
 & = \left( \displaystyle\int_{\mathbb{B}}\omega(z)d\mu_{Q+pt}(z)\right) \left( \displaystyle\int_{\mathbb{B}}|f(z)|d\mu_s(z)\right) ^p,
\end{align*}
where the first inequality is due to the fact that $K_{s+t}$ is bounded when $s+t<-(N+1)$. We also have
\begin{align*}
\left( \displaystyle\int_{\mathbb{B}}|f(z)|d\mu_s(z)\right) ^p & =  \left( \displaystyle\int_{\mathbb{B}}|f(z)|(\omega(z))^{\frac{-1}{p}}(\omega(z))^{\frac{1}{p}}(1-|z|^2)^{s-q}d\mu_q(z)\right) ^p\\
 & \leq  \left( \displaystyle\int_{\mathbb{B}}|f(z)|^p\omega(z)d\mu_q(z)\right) \left( \displaystyle\int_{\mathbb{B}}((\omega(z))^{\frac{-1}{p}})^{p'}|(1-|z|^2)^{p'(s-q)}d\mu_q(z)\right) ^{\frac{p}{p'}}.
\end{align*}
Then $P_{s,t}$ is well defined and continuous from $L^p(\omega d\mu_q)$ to $L^p(\omega d\mu_Q)$ when 
$$\left( \displaystyle\int_{\mathbb{B}}\omega(z)d\mu_{Q+pt}(z)\right) \left( \displaystyle\int_{\mathbb{B}}(\omega(z))^{\frac{-1}{p-1}}d\mu_{q+p'(s-q)}(z)\right) ^{p-1}<\infty.$$\\
\indent Now assume that $P_{s,t}$ is well defined and continuous from $L^p(\omega d\mu_q)$ to $L^p(\omega d\mu_Q)$.  Let $\rho_0$ be as in Lemma \ref{2keumo4}, then for positive functions $f$ we have
\begin{align*}
\displaystyle\int_{\mathbb{B}}|P_{s,t}f(z)|^p\omega(z)d\mu_Q(z) & =  \displaystyle\int_{\mathbb{B}}|\displaystyle\int_{\mathbb{B}}K_{s+t}(z,w)f(w)d\mu_s(w)|^p\omega(z)d\mu_{Q+pt}(z)\\
 & \geq  \displaystyle\int_{|z|\leq \rho_0}|\displaystyle\int_{\mathbb{B}}\Re K_{s+t}(z,w)f(w)d\mu_s(w)|^p\omega(z)d\mu_{Q+pt}(z)\\
 & =  \frac{1}{2^p}(\displaystyle\int_{\mathbb{B}}f(w)d\mu_s(w))^p\displaystyle\int_{|z|\leq\rho_0}\omega(z)d\mu_{Q+pt}(z).
\end{align*}

By continuity of $P_{s,t}$ there exists a constant $C_{s,t,p,q,Q}>0$ such that
$$\displaystyle\int_{\mathbb{B}}|P_{s,t}f(z)|^p\omega(z)d\mu_Q(z) \leq  C_{s,t,p,q,Q} \displaystyle\int_{\mathbb{B}}|f(z)|^p\omega(z)d\mu_q(z),$$
for all $f$ in $L^p(\omega d\mu_q).$ Hence,
$$\frac{1}{2^p}\left( \displaystyle\int_{|z|\leq\rho_0}\omega(z)d\mu_{Q+pt}(z)\right) \left( \displaystyle\int_{\mathbb{B}}|f(z)|d\mu_s(z)\right) ^p\leq C_{s,t,p,q,Q} \displaystyle\int_{\mathbb{B}}|f(z)|^p\omega(z)d\mu_q(z)$$
for all positive $f$ in $L^p(\omega d\mu_q).$ Let $f(z)=(\omega(z))^{\frac{-1}{p-1}}(1-|z|^2)^{(p'-1)(s-q)},~~\forall z\in \mathbb{B}.$ Then $f\in L^p(\omega d\mu_q)$ by Lemma \ref{4keumo4}; Replacing $f$ in the last inequality we obtain
$$\left( \displaystyle\int_{|z|\leq\rho_0}\omega(z)d\mu_{Q+pt}(z)\right) \left( \displaystyle\int_{\mathbb{B}}(\omega(z)^{\frac{-1}{p-1}}d\mu_{q+p'(s-q)}(z)\right) ^{p-1}< 2^pC_{s,t,p,q,Q} < \infty.$$ 
Then
$$\left( \displaystyle\int_{\mathbb{B}}(\omega(z))^{\frac{-1}{p-1}}d\mu_{q+p'(s-q)}(z)\right) ^{p-1} < \infty.$$ 
Using Lemma \ref{4keumo2}, we get
$$\left( \displaystyle\int_{\mathbb{B}}\omega(z)d\mu_{Q+pt}(z)\right) \left( \displaystyle\int_{\mathbb{B}}(\omega(z))^{\frac{-1}{p-1}}d\mu_{q+p'(s-q)}(z)\right) ^{p-1}<\infty.$$
\end{proof}

\section{Weighted estimates: The case where $a>-N-1$ and $s+t>-N-1$}\label{weighted-estimates-case-a-sup-n-1}

Here we are going to start the study of the boundedness of the operators $T_{a,b}$ and $P_{s,t}$  from $L^p(\omega d\mu_q)$ to $L^p(\omega d\mu_Q)$ in the case $a>-(N+1)$  and $s+t>-(N+1)$ respectively.\\
\subsection{Necessary Conditions}
To obtain our necessaries conditions, we are going to use the following lemma.
\begin{lem}\label{5keumo1}
Let $B_i$ be a pseudo-ball of radius $R$ sufficiently small touching the boundary of $\mathbb{B}.$ There is a pseudo-ball $B_j$ with the same radius touching the boundary of $\mathbb{B},$ sufficiently far from $B_i$ but such that, if $w_0$ is the center of $B_i$: $d(z,w_0)\geq C_1 d(w,w_0)$, for every $z\in B_j$ and  for every $w\in B_i,$ for $C_1$ as in Lemma \ref{3keumo2}. For  such $B_j$, for all non negative functions with support in $B_i$ we have if $a>-1-N$: 
$$|T_{a,b}f(z)|\geq C_{a,b}\frac{1}{R^{N+1+a}}\displaystyle\int_{B_i}f(w)d\mu_b(w).$$
In particular, if $a>-1$ then:
$$|T_{a,b}f(z)|\geq C_{a,b}\frac{1}{\mu_a(B_i)}\displaystyle\int_{B_i}f(w)d\mu_b(w)$$
for all $z\in B_j$~$i\neq j;i,j=1,2.$ The constant $C_{a,b}$ does not depend of $B_i,$ $B_j$ and $f.$ Then if $T_{a,b}$ is well defined, then $b$ has to be larger than $-1$.
\end{lem}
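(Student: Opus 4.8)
The plan is to reduce the lower bound to a \emph{frozen-kernel} (perturbation) estimate, after first pinning down the geometry. For the construction of $B_j$, I would take it to be a pseudo-ball of radius $R$ touching $\partial\mathbb{B}$ whose center $w_1$ satisfies $d(w_1,w_0)=MR$ for a fixed large constant $M$ to be chosen; since $R$ is small, such a ball exists by sliding along the boundary. The pseudo-triangle inequality with constant $K$ gives, for every $z\in B_j$, the two-sided bound $(\tfrac{M}{K}-1)R\le d(z,w_0)\le K(M+1)R$. After a unitary rotation carrying $w_0$ to the first axis, Lemma \ref{2keumo1}(1) yields $|1-\langle z,w_0\rangle|\ge\tfrac14 d(z,w_0)$, while the elementary splitting $1-\langle z,w_0\rangle=(1-|z||w_0|)+|z||w_0|\bigl(1-\tfrac{\langle z,w_0\rangle}{|z||w_0|}\bigr)$ together with $1-|z|,\,1-|w_0|\lesssim R$ (both balls hug the boundary) gives $|1-\langle z,w_0\rangle|\lesssim R$. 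Thus $|1-\langle z,w_0\rangle|\simeq R$, and for $M$ large enough we also have $|1-\langle z,w_0\rangle|>C_1 d(w,w_0)$ for all $w\in B_i$, which is exactly the hypothesis of Lemma \ref{3keumo2}.

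With the kernel frozen at the center $w_0$, I write $K_a(z,w)=K_a(z,w_0)+[K_a(z,w)-K_a(z,w_0)]$ and use $f\ge 0$ to get
\begin{align*}
|T_{a,b}f(z)|\ge \frac{1}{|1-\langle z,w_0\rangle|^{N+1+a}}\int_{B_i}f\,d\mu_b-\int_{B_i}|K_a(z,w)-K_a(z,w_0)|\,f\,d\mu_b.
\end{align*}
By Lemma \ref{3keumo2} and $d(w,w_0)<R$ the kernel difference is at most $C_2 R\,|1-\langle z,w_0\rangle|^{-(N+a+2)}$, so the error term is bounded by $\dfrac{C_2 R}{|1-\langle z,w_0\rangle|}$ times the frozen (main) term.

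The crux is to absorb the error into the main term. Since $|1-\langle z,w_0\rangle|\ge\tfrac14 d(z,w_0)\ge\tfrac14(\tfrac{M}{K}-1)R$, the factor $C_2R/|1-\langle z,w_0\rangle|$ is $\lesssim C_2K/(M-K)$, which I force below $\tfrac12$ by enlarging $M$. This is the main obstacle: $B_j$ must be far enough to kill the perturbation error, yet close enough that $|1-\langle z,w_0\rangle|\simeq R$ keeps the frozen term of the right size $R^{-(N+1+a)}$, and balancing the two by a single fixed choice of $M$ (depending only on $K,C_1,C_2$) is the heart of the argument. With such $M$,
\begin{align*}
|T_{a,b}f(z)|\ge \frac12\,\frac{1}{|1-\langle z,w_0\rangle|^{N+1+a}}\int_{B_i}f\,d\mu_b\ge C_{a,b}\,\frac{1}{R^{N+1+a}}\int_{B_i}f\,d\mu_b,
\end{align*}
where the last step uses $N+1+a>0$ and $|1-\langle z,w_0\rangle|\lesssim R$, and $C_{a,b}$ depends only on the fixed data. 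For $a>-1$ this refines at once: since $B_i$ touches the boundary, $\max(R,1-|w_0|)\simeq R$, so Lemma \ref{2keumo3} gives $\mu_a(B_i)\simeq R^{N+1+a}$ and $R^{-(N+1+a)}$ becomes $\mu_a(B_i)^{-1}$.

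Finally, for the necessity of $b>-1$, I would argue by contradiction using that $\Re K_a(z,w)\ge\tfrac12$ for $|z|\le\rho_0$ (Lemma \ref{2keumo4}(2)). If $b\le-1$ then $\mu_b(B_i)=\int_{B_i}(1-|w|^2)^b\,d\mu=\infty$ because $B_i$ touches $\partial\mathbb{B}$; taking the bounded test function $f=\chi_{B_i}$ gives $\Re T_{a,b}f(z)\ge\tfrac12\,\mu_b(B_i)=\infty$ for $|z|\le\rho_0$, so the defining integral of $T_{a,b}f$ diverges and the operator cannot be well defined. Hence $b>-1$ is forced.
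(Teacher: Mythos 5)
Your proof is correct and follows essentially the same route as the paper: freeze the kernel at the center $w_0$, bound the perturbation via Lemma \ref{3keumo2}, absorb it into the frozen main term by taking $B_j$ sufficiently separated from $B_i$ (the paper invokes Proposition \ref{2keumo2} for this absorption), then use $|1-\langle z,w_0\rangle|\lesssim R$ together with $N+1+a>0$, and finally Lemma \ref{2keumo3} for the case $a>-1$. Your explicit construction of $B_j$ at pseudo-distance $MR$ with the balancing of $M$, and your closing argument for the necessity of $b>-1$ via Lemma \ref{2keumo4}(2), simply make precise steps that the paper states more tersely or leaves implicit.
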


\begin{proof}
 Let $w_0$ be the center of $B_i.$ If $R$ is sufficiently small, we take $B_j$ such that  for all $z$ in $B_j$ and for all $w$ in $B_i,$ we have $d(z,w_0)\geq C_1 d(w,w_0)$ where $C_1$ is as in Lemma \ref{3keumo2}. Let $f$ be a non negative function with support in $B_i$ and let $z\in B_j$, we have
 
\begin{multline*}
T_{a,b}f(z)  = \frac{1}{(1-\langle z,w_0\rangle)^{N+1+a}}\displaystyle\int_{B_i}f(w)d\mu_b(w)\\
 +
 \displaystyle\int_{B_i}\left[\frac{1}{(1-\langle z,w\rangle)^{N+1+a}}-\frac{1}{(1-\langle z,w_0\rangle)^{N+1+a}}\right]f(w)d\mu_b(w).
 \end{multline*}
Then
\begin{multline*}
|T_{a,b}f(z)| \geq \frac{1}{|1-\langle z,w_0\rangle|^{N+1+a}}\displaystyle\int_{B_i}f(w)d\mu_b(w) \\
 -
 \displaystyle\int_{B_i}\left\vert \frac{1}{(1-\langle z,w\rangle)^{N+1+a}}-\frac{1}{(1-\langle z,w_0\rangle)^{N+1+a}}\right\vert f(w)d\mu_b(w)
 \end{multline*}
By Lemma \ref{3keumo2} and Proposition \ref{2keumo2} we have
$$\left\vert \frac{1}{(1-\langle z,w\rangle)^{N+1+a}}-\frac{1}{(1-\langle z,w_0\rangle)^{N+1+a}}\right\vert \leq \frac{1}{2}\frac{1}{|1-\langle z,w_0\rangle|^{N+a+1}},$$
so that 
$$|T_{a,b}f(z)|\geq \frac{1}{2}\frac{1}{|1-\langle z,w_0\rangle|^{N+1+a}}\displaystyle\int_{B_i}f(w)d\mu_b(w).$$
Since our pseudo-balls touch the boundary and since $d(B_i,B_j)\simeq R,$ we have 
$$|1-\langle z,w_0\rangle|\lesssim R.$$
Hence 
$$|T_{a,b}f(z)|\gtrsim \frac{1}{2}\frac{1}{R^{N+1+a}}\displaystyle\int_{B_i}f(w)d\mu_b(w).$$
By Lemma \ref{2keumo3}, if $a>-1$ and because $B_i$ touches the boundary we have $\mu_a(B_i)\simeq R^{N+1+a},$
so that
$$|T_{a,b}f(z)|\gtrsim \frac{1}{2}\frac{1}{\mu_a(B_i)}\displaystyle\int_{B_i}f(w)d\mu_b(w).$$
\end{proof}

\begin{lem}\label{5keumo2}
Let $B_i$ be a pseudo-ball of radius $R$ sufficiently small touching the boundary of $\mathbb{B}.$ There is a pseudo-ball $B_j$ with the same radius touching the boundary of $\mathbb{B},$ sufficiently far from $B_i$ but such that, if $w_0$ is the center of $B_i$: $d(z,w_0)\geq C_1 d(w,w_0)$, for every $z\in B_j$ and  for every $w\in B_i,$ for $C_1$ as in Lemma \ref{3keumo2}. For  such $B_j$, for all non negative functions with support in $B_i$;  we have when $-1-N<s+t$: 
$$|P_{s,t}f(z)|\geq C_{s,t}\frac{(1-|z|^2)^t}{R^{N+1+s+t}}\displaystyle\int_{B_i}f(w)d\mu_s(w),$$
and when $s+t>-1$ we get:
$$|P_{s,t}f(z)|\geq C_{s,t}\frac{(1-|z|^2)^t}{\mu_{s+t}(B_i)}\displaystyle\int_{B_i}f(w)d\mu_s(w)$$
for all $z\in B_j$~$i\neq j; i,j=1,2.$ The constant $C_{s,t}$ does not depend of $B_i,$ $B_j$ and $f$. Then if $P_{s,t}$ is well defined, $s$ has to be larger than $-1$.
\end{lem}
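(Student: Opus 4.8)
The plan is to reduce the entire statement to Lemma \ref{5keumo1} via the factorization \eqref{keumoequa}. Recall that $P_{s,t}f(z)=(1-|z|^2)^tT_{s+t,s}f(z)$, so that $P_{s,t}$ is nothing but the operator $T_{a,b}$ with $a=s+t$ and $b=s$, multiplied by the fixed positive scalar $(1-|z|^2)^t$ at each point $z$. The geometric data of the present lemma is word-for-word the same as in Lemma \ref{5keumo1}: the kernel $K_{s+t}$ plays exactly the role that $K_a$ did there, and the hypotheses on $B_i$, $B_j$ and the center $w_0$ are identical. I would therefore choose the pseudo-ball $B_j$ exactly as in Lemma \ref{5keumo1}, so that $d(z,w_0)\geq C_1d(w,w_0)$ holds for every $z\in B_j$ and every $w\in B_i$, with $C_1$ as in Lemma \ref{3keumo2}.

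First I would apply Lemma \ref{5keumo1} with $a=s+t>-1-N$ and $b=s$ to a non-negative $f$ supported in $B_i$ and an arbitrary $z\in B_j$, which yields
$$|T_{s+t,s}f(z)|\geq C_{s,t}\frac{1}{R^{N+1+s+t}}\displaystyle\int_{B_i}f(w)d\mu_s(w).$$
Multiplying both sides by the positive quantity $(1-|z|^2)^t$ and invoking \eqref{keumoequa}, together with the identity $(1-|z|^2)^t|T_{s+t,s}f(z)|=|P_{s,t}f(z)|$, gives the first claimed inequality at once. For the second inequality, when the stronger condition $s+t>-1$ holds, Lemma \ref{5keumo1} upgrades the denominator $R^{N+1+s+t}$ to $\mu_{s+t}(B_i)$; this is justified by Lemma \ref{2keumo3}, which gives $\mu_{s+t}(B_i)\simeq R^{N+1+s+t}$ because $B_i$ touches the boundary. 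Multiplying again by $(1-|z|^2)^t$ produces the desired bound with $\mu_{s+t}(B_i)$ in the denominator.

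Finally, the assertion that well-definedness of $P_{s,t}$ forces $s>-1$ follows in the same way: by \eqref{keumoequa} the operator $P_{s,t}$ is well defined precisely when $T_{s+t,s}$ is, and Lemma \ref{5keumo1} already records that well-definedness of $T_{a,b}$ requires $b>-1$; here $b=s$, so $s>-1$. I do not anticipate any genuine obstacle in this argument; the only point that needs verification is that the scalar factor $(1-|z|^2)^t$ passes harmlessly through every estimate, which is immediate since it is pulled outside the integral over $B_i$ and is a fixed positive number for each fixed $z\in B_j$.
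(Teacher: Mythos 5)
Your proposal is correct and follows exactly the paper's own argument: the paper proves Lemma \ref{5keumo2} in one line by invoking Lemma \ref{5keumo1} through the identity $P_{s,t}f(z)=(1-|z|^2)^t T_{s+t,s}f(z)$, which is precisely your reduction with $a=s+t$, $b=s$. You have merely spelled out the details (including the transfer of the condition $b>-1$ to $s>-1$) that the paper leaves implicit.
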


\begin{proof}
This is the consequence of Lemma \ref{5keumo1} because $P_{s,t}f(z)=(1-|z|^2)^t T_{s+t,s}f(z).$
\end{proof}

We are now ready to prove Theorem \ref{keumo9}.
\begin{theo}\label{5keumo3}
 If $T_{a,b}$ is well defined and continuous from $L^p(\omega d\mu_q)$ to $L^p(\omega d\mu_Q)$ for $Q\leq q$ then if $a>-1$ we have:\\
 $\displaystyle\sup_{pseudo-balls~B:B\cap\partial\mathbb{B}\neq\varnothing}\left( \frac{\mu_b(B)}{\mu^2_a(B)}\displaystyle\int_{B}\omega(z)d\mu_Q(z)\right) \left( \frac{\mu_b(B)}{\mu^2_a(B)}\displaystyle\int_{B}(\omega(z))^{\frac{-1}{p-1}}d\mu_{q+p'(b-q)}(z)\right) ^{p-1}< \infty $.
 
 More generally if $a>-(N+1),$ then:\\
 $\displaystyle\sup_{pseudo-balls~B:B\cap\partial\mathbb{B}\neq\varnothing}\left( \frac{\mu_b(B)}{R_B^{2(N+1+a)}}\displaystyle\int_{B}\omega(z)d\mu_Q(z)\right) \left( \frac{\mu_b(B)}{R_B^{2(N+1+a)}}\displaystyle\int_{B}(\omega(z))^{\frac{-1}{p-1}}d\mu_{q+p'(b-q)}(z)\right) ^{p-1}< \infty$\\
 where $R_B$ is the radius of $B.$
\end{theo}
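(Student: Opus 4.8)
The plan is to establish the necessary condition (the B\'ekoll\'e--Bonami type estimate $\omega\in(B_p^{a,b,q,Q})$) by testing the boundedness of $T_{a,b}$ against a carefully chosen family of functions supported on pseudo-balls touching the boundary, exactly in the spirit of B\'ekoll\'e's original argument. The key tool is Lemma \ref{5keumo1}: for any pseudo-ball $B_i=B(w_0,R)$ touching $\partial\mathbb{B}$ with $R$ small, there is a companion ball $B_j$ of the same radius, far enough that the lower bound
\[
|T_{a,b}f(z)|\geq C_{a,b}\frac{1}{R^{N+1+a}}\int_{B_i}f(w)\,d\mu_b(w),\qquad z\in B_j,
\]
holds for every nonnegative $f$ supported in $B_i$ (and with $\mu_a(B_i)$ replacing $R^{N+1+a}$ when $a>-1$, by Lemma \ref{2keumo3}).

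\textbf{Step 1.} Fix a pseudo-ball $B=B_i$ touching the boundary and its companion $B_j$. Take the test function $f(w)=(\omega(w))^{-1/(p-1)}(1-|w|^2)^{(p'-1)(b-q)}\chi_{B_i}(w)$, which is precisely the extremizer dictated by H\"older's inequality (this is the same choice used in the proof of Theorem \ref{4keumo5}, and $f\in L^p(\omega d\mu_q)$ by Lemma \ref{4keumo3}). With this $f$ one computes that $\int_{B_i}f\,d\mu_b=\int_{B_i}(\omega)^{-1/(p-1)}\,d\mu_{q+p'(b-q)}$.

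\textbf{Step 2.} Using the continuity hypothesis $\int_{\mathbb{B}}|T_{a,b}f|^p\omega\,d\mu_Q\leq C\int_{\mathbb{B}}|f|^p\omega\,d\mu_q$, I restrict the left-hand integral to $z\in B_j$ and insert the lower bound from Lemma \ref{5keumo1}. This yields
\[
\Bigl(\int_{B_j}\omega\,d\mu_Q\Bigr)\Bigl(\frac{1}{R^{N+1+a}}\int_{B_i}(\omega)^{-1/(p-1)}\,d\mu_{q+p'(b-q)}\Bigr)^p
\leq C\int_{B_i}(\omega)^{-1/(p-1)}\,d\mu_{q+p'(b-q)},
\]
where on the right I have used that $\int_{B_i}|f|^p\omega\,d\mu_q=\int_{B_i}(\omega)^{-1/(p-1)}\,d\mu_{q+p'(b-q)}$. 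Cancelling one factor of $\int_{B_i}(\omega)^{-1/(p-1)}\,d\mu_{q+p'(b-q)}$ and rearranging gives, after multiplying through by the appropriate powers of $R^{N+1+a}$ and recalling $\mu_b(B)\simeq\mu_b(B_i)\simeq R^{N+1+b}$ and $R^{2(N+1+a)}$ (resp.\ $\mu_a^2(B)$ when $a>-1$), the required product bound on the single ball $B$.

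\textbf{Step 3.} Finally I take the supremum over all such pseudo-balls. The main obstacle, and the point requiring care, is the asymmetry between $B_i$ and $B_j$: the lower bound produces $\int_{B_j}\omega\,d\mu_Q$ while the test function lives on $B_i$. I resolve this by observing that $B_i$ and $B_j$ are comparable boundary balls of the same radius (the role of $i$ and $j$ can be interchanged, running the argument for a function supported on $B_j$ tested at $B_i$), so the quantities $\int_{B_i}\omega\,d\mu_Q$ and $\int_{B_j}\omega\,d\mu_Q$ control one another up to the uniform constant $C_{a,b}$, and the geometric comparabilities of Lemma \ref{2keumo3} let me replace $B_j$ by $B_i$ throughout. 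Both the case $a>-1$ (with $\mu_a^2(B)$ in the denominator) and the general case $a>-(N+1)$ (with $R_B^{2(N+1+a)}$) follow from the two forms of the lower bound in Lemma \ref{5keumo1}; only the normalization of the kernel estimate differs between them.
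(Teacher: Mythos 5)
Your Steps 1 and 2 reproduce the paper's skeleton faithfully: the lower bound of Lemma \ref{5keumo1} on a companion ball $B_j$, the extremal test function $f=\omega^{-1/(p-1)}(1-|\cdot|^2)^{(p'-1)(b-q)}\chi_{B_i}$, and the resulting ``mixed'' inequality
\[
\Bigl(\int_{B_j}\omega\,d\mu_Q\Bigr)\frac{1}{R^{p(N+1+a)}}\Bigl(\int_{B_i}\omega^{-1/(p-1)}\,d\mu_{q+p'(b-q)}\Bigr)^{p-1}\leq C.
\]
The genuine gap is in Step 3, where you pass from $\int_{B_j}\omega\,d\mu_Q$ to $\int_{B_i}\omega\,d\mu_Q$. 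Your claim that these two quantities ``control one another up to the uniform constant $C_{a,b}$'' is false in general: $\omega$ is an arbitrary locally integrable weight and $B_i$, $B_j$ are disjoint balls, so no a priori comparability holds; Lemma \ref{2keumo3} compares only the reference measures $\mu_c(B_i)\simeq\mu_c(B_j)$, not the $\omega$-masses. The correct transfer must itself be extracted from the boundedness hypothesis, and this is what the paper does with a \emph{second} test function that your argument omits: testing with the normalized characteristic function $f=\frac{\mu_a(B_i)}{\mu_b(B_i)}\chi_{B_i}$ gives
$\int_{B_j}\omega\,d\mu_Q\leq C\,\frac{\mu_a^p(B_i)}{\mu_b^p(B_i)}\int_{B_i}\omega\,d\mu_Q$,
and here the hypothesis $Q\leq q$ is used in an essential way (to bound $\int_{B_i}|f|^p\omega\,d\mu_q$ by $\int_{B_i}|f|^p\omega\,d\mu_Q$, so that both sides involve $\omega\,d\mu_Q$); interchanging $B_i$ and $B_j$ then yields $\frac{\mu_b^p(B_i)}{\mu_a^p(B_i)}\int_{B_i}\omega\,d\mu_Q\lesssim\int_{B_j}\omega\,d\mu_Q$. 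Note that this transfer factor $\mu_b^p(B)/\mu_a^p(B)\simeq R^{p(b-a)}$ is radius-dependent, not uniform, and it is exactly what produces the peculiar normalizations $\mu_b(B)/\mu_a^2(B)$ and $\mu_b(B)/R_B^{2(N+1+a)}$ in the conclusion. Your uniform-constant version would instead lead to the condition with normalization $1/\mu_a(B)$ (resp. $1/R_B^{N+1+a}$), which differs from the theorem's condition by the unbounded factor $R^{p(b-a)}$ and is not what is being asserted.

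A telling symptom of the gap is that your argument never invokes the hypothesis $Q\leq q$ anywhere, whereas it is indispensable: without it the characteristic-function test does not close, and indeed the paper proves a different, separately stated result (Theorem \ref{5keumo3'}) in the regime $Q>q$, with a modified class of weights. To repair your proof, insert the paper's first testing step (the characteristic function, plus the $Q\leq q$ comparison of $d\mu_q$ with $d\mu_Q$) before your Step 3, track the factor $\mu_b^p/\mu_a^p$ it produces, and the desired product bound with the $\mu_b/\mu_a^2$ normalization then drops out of your mixed inequality exactly as in the paper.
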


\begin{proof}
Assume that $a>-(N+1)$ and $T_{a,b}$ is well defined and continuous from $L^p(\omega d\mu_q)$ to $L^p(\omega d\mu_Q)$ for $Q\leq q$. Then there exists a constant $C_{a,b,p,q,Q}>0$ such that
$$\displaystyle\int_{\mathbb{B}}|T_{a,b}f(z)|^p\omega(z)d\mu_Q(z) \leq  C_{a,b,p,q,Q} \displaystyle\int_{\mathbb{B}}|f(z)|^p\omega(z)d\mu_q(z).$$
Let $f$ be a positive function with support in $B_i$ (we take $B_i,B_j$ as in Lemma \ref{5keumo1}). By Lemma \ref{5keumo1}, we then have in the case $a>-1$:
$$C^p_{a,b}\displaystyle\int_{B_j}\frac{1}{\mu_a^p(B_i)}\left( \displaystyle\int_{B_i}f(w)d\mu_b(w)\right) ^p\omega(z)d\mu_Q(z)\leq  C_{a,b,p,q,Q} \displaystyle\int_{B_i}|f(z)|^p\omega(z)d\mu_q(z),$$ 
hence
\begin{align*}
\frac{1}{\mu_a^p(B_i)}\left( \displaystyle\int_{B_i}f(w)d\mu_b(w)\right) ^p \left( \displaystyle\int_{B_j}\omega(z)d\mu_Q(z)\right) & \leq  C'_{a,b,p,q,Q} \displaystyle\int_{B_i}|f(z)|^p\omega(z)d\mu_q(z)\\
& \leq  C'_{a,b,p,q,Q} \displaystyle\int_{B_i}|f(z)|^p\omega(z)d\mu_Q(z).
\end{align*}

Choosing $f=\frac{\mu_a(B_i)}{\mu_b(B_i)}\chi_{B_i}$ in the last inequality we get
$$\omega(B_j)\leq C'_{a,b,p,q,Q} \frac{\mu_a^p(B_i)}{\mu_b^p(B_i)}\omega(B_i),$$
where $\omega(B_k)=\displaystyle\int_{B_k}\omega(z)d\mu_Q(z),$  $k=i,j.$\\
As $B_i$ and $B_j$ touch the boundary of $\mathbb{B}$ and have the same radius, by Lemma \ref{2keumo3} we have
$$\frac{\mu_b^a(B_i)}{\mu_b^p(B_i)}\simeq \frac{\mu_a^p(B_j)}{\mu_b^p(B_j)}.$$
We then have 
$$\omega(B_j)\leq C''_{a,b,p,q,Q} \frac{\mu_a^p(B_j)}{\mu_b^p(B_j)}\omega(B_i).$$ 
Interchanging $B_i$ and $B_j$ (See Lemma \ref{5keumo1}) we get
$$\omega(B_i)\leq C''_{a,b,p,q,Q} \frac{\mu_a^p(B_i)}{\mu_b^p(B_i)}\omega(B_j),$$
so that
$$\frac{\mu_b^p(B_i)}{\mu_a^p(B_i)}\omega(B_i)\leq C''_{a,b,p,q,Q}\omega(B_j)$$
which together with  
$$\frac{1}{\mu_a^p(B_i)}\left( \displaystyle\int_{B_i}f(w)d\mu_b(w)\right) ^p\left( \displaystyle\int_{B_j}\omega(z)d\mu_Q(z)\right) \leq  C'_{a,b,p,q,Q} \displaystyle\int_{B_i}|f(z)|^p\omega(z)d\mu_q(z)$$
leads to
$$\frac{\mu_b^{p}(B_i)}{\mu_a^{2p}(B_i)}\left( \displaystyle\int_{B_i}f(w)d\mu_b(w)\right) ^p\left( \displaystyle\int_{B_i}\omega(z)d\mu_Q(z)\right) \leq  C'''_{a,b,p,q,Q} \displaystyle\int_{B_i}|f(z)|^p\omega(z)d\mu_q(z).$$
Then choosing $f(z)=\omega^{\frac{-1}{p-1}}(z)(1-|z|^2)^{(p'-1)(b-q)}\chi_{B_i}(z)$ ($f\in L^p(\omega d\mu_q)$ by Lemma \ref{4keumo3}) in that last inequality we obtain
$$\left( \frac{\mu_b(B_i)}{\mu^2_a(B_i)}\displaystyle\int_{B_i}\omega(z)d\mu_Q(z)\right) \left( \frac{\mu_b(B_i)}{\mu^2_a(B_i)}\displaystyle\int_{B_i}(\omega(z))^{\frac{-1}{p-1}}d\mu_{q+p'(b-q)}(z)\right) ^{p-1}\leq  C'''_{a,b,p,q,Q}$$
when $-1-N<a<-1$ it is sufficient to replace $\mu_a(B_i)$ by $R^{N+1+a}$ in the proof.
\end{proof}

\begin{theo}\label{5keumo3'}
 If $T_{a,b}$ is well defined and continuous from $L^p(\omega d\mu_q)$ to $L^p(\omega d\mu_Q)$ for $Q>q$ then if $a>-1$ we have:
 \begin{align*}
\displaystyle\sup_{B:B\cap\partial\mathbb{B}\neq\varnothing}\left( \frac{\mu_{b+\frac{Q-q}{p}}(B)}{\mu^2_a(B)}\displaystyle\int_{B}\omega(z)d\mu_Q(z)\right) \left( \frac{\mu_{b+\frac{Q-q}{p}}(B)}{\mu^2_a(B)}\displaystyle\int_{B}(\omega(z))^{\frac{-1}{p-1}}d\mu_{q+p'(b-q)}(z)\right) ^{p-1}< \infty.
\end{align*}
where the supremum is taken over the pseudoballs $B.$\\
 More generally if $a>-(N+1),$ then:
\begin{align*}
\displaystyle\sup_{B:B\cap\partial\mathbb{B}\neq\varnothing}\left( \frac{\mu_{b+\frac{Q-q}{p}}(B)}{R_B^{2(N+1+a)}}\displaystyle\int_{B}\omega(z)d\mu_Q(z)\right) \left( \frac{\mu_{b+\frac{Q-q}{p}}(B)}{R_B^{2(N+1+a)}}\displaystyle\int_{B}(\omega(z))^{\frac{-1}{p-1}}d\mu_{q+p'(b-q)}(z)\right) ^{p-1}< \infty
\end{align*}
 where the supremum is taken over the pseudoballs $B$ with radius $R_B.$
\end{theo}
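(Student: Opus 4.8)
The plan is to follow the testing scheme already used for Theorem \ref{5keumo3} (the case $Q\le q$), the single essential change being the choice of test function in the symmetry step, which must now absorb the mismatch between $d\mu_q$ and $d\mu_Q$ created by $Q>q$. So fix a pseudo-ball $B_i=B(w_0,R)$ of small radius touching $\partial\mathbb{B}$ and let $B_j$ be the companion ball provided by Lemma \ref{5keumo1}. Assuming $T_{a,b}$ continuous from $L^p(\omega d\mu_q)$ to $L^p(\omega d\mu_Q)$, for any nonnegative $f$ supported in $B_i$ the lower bound $|T_{a,b}f(z)|\gtrsim \mu_a(B_i)^{-1}\int_{B_i}f\,d\mu_b$ on $B_j$ (Lemma \ref{5keumo1}, case $a>-1$) turns the boundedness inequality into
\[\frac{1}{\mu_a^p(B_i)}\left(\int_{B_i}f\,d\mu_b\right)^p\int_{B_j}\omega\,d\mu_Q\lesssim \int_{B_i}|f|^p\omega\,d\mu_q.\]
For $-N-1<a\le -1$ one simply replaces $\mu_a(B_i)$ by $R^{N+1+a}$, again by Lemma \ref{5keumo1}; every step below is then identical and produces the $R_B^{2(N+1+a)}$ normalization.

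First I would run this inequality with the normalizing test function $f_0=(1-|z|^2)^{(Q-q)/p}\chi_{B_i}$. Its exponent is chosen precisely so that $\int_{B_i}|f_0|^p\omega\,d\mu_q=\int_{B_i}\omega\,d\mu_Q$, while $\int_{B_i}f_0\,d\mu_b=\mu_{b+\frac{Q-q}{p}}(B_i)$. Substituting yields the key symmetric estimate
\[\frac{\mu_{b+\frac{Q-q}{p}}^p(B_i)}{\mu_a^p(B_i)}\int_{B_j}\omega\,d\mu_Q\lesssim \int_{B_i}\omega\,d\mu_Q.\]
Interchanging the roles of $B_i$ and $B_j$ (legitimate by Lemma \ref{5keumo1}) gives the same inequality with $i,j$ swapped, and since both balls share the radius $R$ and touch the boundary, Lemma \ref{2keumo3} gives $\mu_{b+\frac{Q-q}{p}}(B_i)\simeq\mu_{b+\frac{Q-q}{p}}(B_j)$ and $\mu_a(B_i)\simeq\mu_a(B_j)$. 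The swapped inequality then reads $\frac{\mu_{b+(Q-q)/p}^p(B_i)}{\mu_a^p(B_i)}\int_{B_i}\omega\,d\mu_Q\lesssim\int_{B_j}\omega\,d\mu_Q$. This is the exact $Q>q$ replacement for the relation between $\omega(B_i)$ and $\omega(B_j)$ used in Theorem \ref{5keumo3}, now stated purely in terms of $d\mu_Q$; the shift $\tfrac{Q-q}{p}$ has been manufactured by $f_0$ rather than by the inequality $d\mu_q\le d\mu_Q$ (which is unavailable here).

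Next I would run the displayed inequality with the extremizing test function $f_1=\omega^{-1/(p-1)}(1-|z|^2)^{(p'-1)(b-q)}\chi_{B_i}$, which lies in $L^p(\omega d\mu_q)$ by Lemma \ref{4keumo3}. A direct exponent count, identical to Theorem \ref{5keumo3}, gives $\int_{B_i}f_1\,d\mu_b=\int_{B_i}|f_1|^p\omega\,d\mu_q=\int_{B_i}\omega^{-1/(p-1)}d\mu_{q+p'(b-q)}=:I_i$, hence
\[\frac{I_i^{\,p-1}}{\mu_a^p(B_i)}\int_{B_j}\omega\,d\mu_Q\lesssim 1.\]
Finally I would insert the lower bound $\int_{B_j}\omega\,d\mu_Q\gtrsim \frac{\mu_{b+(Q-q)/p}^p(B_i)}{\mu_a^p(B_i)}\int_{B_i}\omega\,d\mu_Q$ coming from the symmetric estimate; the two powers of $\mu_a(B_i)$ combine and the expression collapses to
\[\left(\frac{\mu_{b+\frac{Q-q}{p}}(B_i)}{\mu_a^2(B_i)}\int_{B_i}\omega\,d\mu_Q\right)\left(\frac{\mu_{b+\frac{Q-q}{p}}(B_i)}{\mu_a^2(B_i)}\int_{B_i}\omega^{\frac{-1}{p-1}}d\mu_{q+p'(b-q)}\right)^{p-1}\lesssim 1,\]
with an implied constant independent of $B_i$. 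Taking the supremum over all boundary-touching pseudo-balls gives $\omega\in(B_p^{a,b,q,Q})$, and the case $-N-1<a\le-1$ follows verbatim with $R_B^{N+1+a}$ in place of $\mu_a(B_i)$.

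The only genuinely new ingredient — and the step I expect to be the crux — is recognizing that the correct normalizer in the symmetry step is $f_0=(1-|z|^2)^{(Q-q)/p}\chi_{B_i}$: its exponent is exactly what converts the $d\mu_q$ on the right of the testing inequality into $d\mu_Q$ and simultaneously promotes $\mu_b$ to the shifted measure $\mu_{b+\frac{Q-q}{p}}$ appearing in the class $(B_p^{a,b,q,Q})$ for $Q>q$. Once the two integrals of $f_0$ and $f_1$ are computed, the argument runs parallel to the $Q\le q$ case, with the boundary-touching equivalences of Lemma \ref{2keumo3} handling all the bookkeeping of comparable radii.
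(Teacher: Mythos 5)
Your proposal is correct and is essentially the paper's own proof: the paper proves this theorem by repeating the argument of Theorem \ref{5keumo3} with the first testing function replaced by $f_0=(1-|z|^2)^{\frac{Q-q}{p}}\chi_{B_i}$, which is exactly the scheme you carry out (Lemma \ref{5keumo1} testing inequality, the symmetry step via Lemma \ref{2keumo3}, then the extremal function $\omega^{-1/(p-1)}(1-|z|^2)^{(p'-1)(b-q)}\chi_{B_i}$ justified by Lemma \ref{4keumo3}). Your exponent computations and the final combination reproducing the $(B_p^{a,b,q,Q})$ quantity are accurate, so the write-up is a faithful, fully detailed version of the paper's sketch.
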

\begin{proof}
The proof is similar to the proof of Theorem \ref{5keumo3}, except that we choose the first testing function to be  $f(z)=(1-|z|^2)^{\frac{Q-q}{p}}\chi_{B_i}.$
\end{proof}
\begin{theo}\label{5keumo3''}
In the case $-(N+1)<s+t<-1$ and $Q\leq q,$ or in the case $s+t+\frac{Q-q}{p}\leq -1,$ there are no weights $\omega$ such that $P_{s,t}$ is well defined and continuous from $L^p(\omega d\mu_q)$ to $L^p(\omega d\mu_Q)$.
\end{theo}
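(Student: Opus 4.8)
The plan is to argue by contradiction, reducing the statement to the incompatibility of two necessary conditions that were already isolated among the preliminary necessary conditions. Suppose, toward a contradiction, that there exists a weight $\omega$ for which $P_{s,t}$ is well defined and continuous from $L^p(\omega d\mu_q)$ to $L^p(\omega d\mu_Q)$.

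First I would extract the two integrability constraints that such a $\omega$ must satisfy. By Lemma \ref{4keumo2}, the boundedness of $P_{s,t}$ forces $\omega \in L^1(d\mu_{Q+pt})$, and by Lemma \ref{4keumo4} it simultaneously forces $\omega^{\frac{-1}{p-1}} \in L^1(d\mu_{q+p'(s-q)})$. These are precisely the two conditions appearing in Proposition \ref{4keumo4'}, so the hypotheses of that proposition are all that must be matched.

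Next I would check that the two regimes in the statement fall under the two regimes of Proposition \ref{4keumo4'}. If $-(N+1) < s+t < -1$ and $Q \leq q$, then in particular $s+t \leq -1$ and $Q \leq q$, which is the first regime of Proposition \ref{4keumo4'}; if instead $s+t + \frac{Q-q}{p} \leq -1$, this is verbatim the second regime. In either case Proposition \ref{4keumo4'} asserts that no weight can satisfy both $\omega \in L^1(d\mu_{Q+pt})$ and $\omega^{\frac{-1}{p-1}} \in L^1(d\mu_{q+p'(s-q)})$ at once. This directly contradicts the two conclusions drawn from Lemma \ref{4keumo2} and Lemma \ref{4keumo4}, and hence no such weight $\omega$ exists.

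There is essentially no technical obstacle here, since all the real work has been front-loaded into Proposition \ref{4keumo4'} (whose proof exhibits a boundary-touching pseudoball on which the integral $\int_B d\mu_{s+t}$, respectively $\int_B d\mu_{s+t+\frac{Q-q}{p}}$, diverges and then splits it by Hölder's inequality); the only point to verify is the inclusion of the theorem's two cases into those of that proposition, which is immediate. This mirrors exactly the proof of Theorem \ref{4keumo5'}, the analogous statement in the range $s+t < -(N+1)$, which is likewise a one-line consequence of Proposition \ref{4keumo4'} together with Lemma \ref{4keumo2} and Lemma \ref{4keumo4}.
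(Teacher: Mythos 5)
Your proposal is correct and is exactly the paper's argument: the paper proves this theorem by citing Proposition \ref{4keumo4'} together with Lemma \ref{4keumo2} and Lemma \ref{4keumo4}, precisely the contradiction you spell out, with the same observation that the case $-(N+1)<s+t<-1$, $Q\leq q$ falls under the regime $s+t\leq -1$, $Q\leq q$ of that proposition and the other case matches verbatim.
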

\begin{proof}
This is due to Proposition \ref{4keumo4'}, Lemma \ref{4keumo2} and Lemma \ref{4keumo4}.
\end{proof}

\begin{theo}\label{5keumo3'''}
In the case $-(N+1)<s+t<-1,$ with $s+t+\frac{Q-q}{p}> -1 $ if $P_{s,t}$ is well defined and continuous from $L^p(\omega d\mu_q)$ to $L^p(\omega d\mu_Q)$  then
\begin{align*}
\displaystyle\sup_{B:B\cap\partial\mathbb{B}\neq\varnothing}\left( \frac{R_B^{\frac{Q-q}{p}}}{R_B^{N+1+s+t}}\displaystyle\int_{B}\omega(z)d\mu_{Q+pt}(z)\right) \left( \frac{R_B^{\frac{Q-q}{p}}}{R_B^{N+1+s+t}}\displaystyle\int_{B}(\omega(z))^{\frac{-1}{p-1}}d\mu_{q+p'(s-q)}(z)\right) ^{p-1} <\infty
\end{align*}
where the supremum is taken over the pseudoballs $B$ with radius $R_B.$  
\end{theo}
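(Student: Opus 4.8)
The plan is to mirror the proof of Theorem~\ref{5keumo3} (and its variant Theorem~\ref{5keumo3'}), now using the lower bound of Lemma~\ref{5keumo2} in the form valid for $-1-N<s+t$, namely $|P_{s,t}f(z)|\geq C_{s,t}\frac{(1-|z|^2)^t}{R^{N+1+s+t}}\int_{B_i}f\,d\mu_s$ for $z\in B_j$. Because here $-1>s+t$, one must keep the denominator $R^{N+1+s+t}$ rather than $\mu_{s+t}(B_i)$ (the latter is infinite on boundary-touching balls). The only other point to record at the outset is that, since $P_{s,t}f(z)=(1-|z|^2)^tT_{s+t,s}f(z)$, the factor $(1-|z|^2)^{pt}$ produced by raising the lower bound to the power $p$ converts the target measure: $\int_{B_j}(1-|z|^2)^{pt}\omega\,d\mu_Q=\int_{B_j}\omega\,d\mu_{Q+pt}$. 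I will abbreviate $\omega(B):=\int_B\omega\,d\mu_{Q+pt}$ and $W(B):=\int_B\omega^{-1/(p-1)}\,d\mu_{q+p'(s-q)}$.

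First I would fix a boundary-touching pseudoball $B_i$ of small radius $R$ and the associated $B_j$ from Lemma~\ref{5keumo2}, take $f\geq 0$ supported in $B_i$, raise the lower bound to the power $p$ and integrate over $B_j$ against $\omega\,d\mu_Q$. Combined with the assumed boundedness this gives the master inequality
\[
\frac{1}{R^{p(N+1+s+t)}}\left(\int_{B_i}f\,d\mu_s\right)^p\omega(B_j)\ \lesssim\ \int_{B_i}|f|^p\omega\,d\mu_q .
\]
The next step is the symmetrization. Testing with $f=(1-|z|^2)^{t+\frac{Q-q}{p}}\chi_{B_i}$ gives $\int_{B_i}|f|^p\omega\,d\mu_q=\omega(B_i)$ exactly, while $\int_{B_i}f\,d\mu_s=\mu_{s+t+\frac{Q-q}{p}}(B_i)$; here is where the hypothesis $s+t+\frac{Q-q}{p}>-1$ is essential, for it allows Lemma~\ref{2keumo3} to give $\mu_{s+t+\frac{Q-q}{p}}(B_i)\simeq R^{N+1+s+t+\frac{Q-q}{p}}$. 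Substituting yields $R^{Q-q}\omega(B_j)\lesssim\omega(B_i)$; interchanging the roles of $B_i$ and $B_j$ (both have radius $R$ and touch $\partial\mathbb{B}$, so their $\mu_\alpha$-masses are comparable by Lemma~\ref{2keumo3}) produces the reverse bound $\omega(B_j)\gtrsim R^{Q-q}\omega(B_i)$.

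Feeding $\omega(B_j)\gtrsim R^{Q-q}\omega(B_i)$ back into the master inequality and noting $R^{Q-q}/R^{p(N+1+s+t)}=\big(R^{\frac{Q-q}{p}}/R^{N+1+s+t}\big)^p$, I would finally test with $f(z)=\omega^{-1/(p-1)}(z)(1-|z|^2)^{(p'-1)(s-q)}\chi_{B_i}(z)$, which lies in $L^p(\omega\,d\mu_q)$ by Lemma~\ref{4keumo4}. A direct computation shows that for this $f$ both $\int_{B_i}f\,d\mu_s$ and $\int_{B_i}|f|^p\omega\,d\mu_q$ equal $W(B_i)$; cancelling one factor $W(B_i)$ leaves
\[
\left(\frac{R^{\frac{Q-q}{p}}}{R^{N+1+s+t}}\right)^{p}\omega(B_i)\,W(B_i)^{p-1}\ \lesssim\ 1 ,
\]
which is exactly the $(K_p^{s,t,q,Q})$ bound after taking the supremum over boundary-touching pseudoballs. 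The main obstacle is the bookkeeping in the symmetrization step: one must be sure the powers of $R$ assemble into $R^{\frac{Q-q}{p}-(N+1+s+t)}$, and that the boundary equivalence of Lemma~\ref{2keumo3} is only applied to exponents $>-1$ --- which is guaranteed precisely by $s+t+\frac{Q-q}{p}>-1$, even though $s+t<-1$.
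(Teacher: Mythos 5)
Your proposal is correct and follows essentially the same path as the paper's own proof: the same master inequality from Lemma \ref{5keumo2} with the denominator $R^{N+1+s+t}$, the same first test function $(1-|z|^2)^{t+\frac{Q-q}{p}}\chi_{B_i}$ for the symmetrization (with Lemma \ref{2keumo3} applied to the exponent $s+t+\frac{Q-q}{p}>-1$), the same interchange of $B_i$ and $B_j$, and the same final test function $\omega^{-1/(p-1)}(1-|z|^2)^{(p'-1)(s-q)}\chi_{B_i}$ justified by Lemma \ref{4keumo4}. Your bookkeeping of the powers of $R$ and of which integrals collapse to $\omega(B_i)$ and $W(B_i)$ matches the paper's computation exactly.
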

\begin{proof}
Assume that $-1>s+t>-(N+1),$ with $s+t+\frac{Q-q}{p}> -1 $ and $P_{s,t}$ is well defined and continuous from $L^p(\omega d\mu_q)$ to $L^p(\omega d\mu_Q).$ Then there exists a constant $C_{s,t,p,q,Q}>0$ such that
$$\displaystyle\int_{\mathbb{B}}|P_{s,t}f(z)|^p\omega(z)d\mu_Q(z) \leq  C_{s,t,p,q,Q} \displaystyle\int_{\mathbb{B}}|f(z)|^p\omega(z)d\mu_q(z).$$
Let $f$ be a positive function with support in $B_i$ (we take $B_i,B_j$ of radius $R$ as in Lemma \ref{5keumo2}). By Lemma \ref{5keumo2}, we then have
$$C^p_{s,t}\displaystyle\int_{B_j}\frac{1}{R^{(N+1+s+t)p}}\left( \displaystyle\int_{B_i}f(w)d\mu_s(w)\right) ^p\omega(z)d\mu_{Q+pt}(z)\leq  C_{s,t,p,q} \displaystyle\int_{B_i}|f(z)|^p\omega(z)d\mu_q(z),$$ 
hence
$$\frac{1}{R^{(N+1+s+t)p}}\left( \displaystyle\int_{B_i}f(w)d\mu_s(w)\right) ^p\left( \displaystyle\int_{B_j}\omega(z)d\mu_{Q+pt}(z)\right) \leq  C'_{s,t,p,q} \displaystyle\int_{B_i}|f(z)|^p\omega(z)d\mu_q(z).$$
Choosing $f(z)=(1-|z|^2)^{\frac{Q-q}{p}+t}\chi_{B_i}(z)$ in the last inequality we get, because $N+s+t>-1$, 
$$R^{Q-q}\displaystyle\int_{B_j}\omega(z)d\mu_{Q+pt}(z)\leq \displaystyle\int_{B_i}\omega(z)d\mu_{Q+pt}(z). $$
Interchanging $B_i$ and $B_j$ (See Lemma \ref{5keumo2}) we get
$$R^{Q-q}\displaystyle\int_{B_i}\omega(z)d\mu_{Q+pt}(z)\lesssim \displaystyle\int_{B_j}\omega(z)d\mu_{Q+pt}(z),$$

\noindent which together with  
$$\frac{1}{R^{p(N+1+s+t)}}\left( \displaystyle\int_{B_i}f(w)d\mu_s(w)\right) ^p\left( \displaystyle\int_{B_j}\omega(z)d\mu_{Q+pt}(z)\right) \leq  C'_{s,t,p,q,Q} \displaystyle\int_{B_i}|f(z)|^p\omega(z)d\mu_q(z)$$
lead to
$$\frac{R^{Q-q}}{R^{p(N+1+s+t)}}\left( \displaystyle\int_{B_i}f(w)d\mu_s(w)\right) ^p\left( \displaystyle\int_{B_i}\omega(z)d\mu_{Q+pt}(z)\right) \leq  C'_{s,t,p,q,Q} \displaystyle\int_{B_i}|f(z)|^p\omega(z)d\mu_q(z).$$
Then choosing $f(z)=\omega^{\frac{-1}{p-1}}(z)(1-|z|^2)^{(p'-1)(s-q)}\chi_{B_i}(z)$ ($f\in L^p(\omega d\mu_q)$ by Lemma \ref{4keumo4}) in that last inequality, we obtain
$$\left( \frac{R^{\frac{Q-q}{p}}}{R^{N+1+s+t}}\displaystyle\int_{B_i}\omega(z)d\mu_{Q+pt}(z)\right) \left( \frac{R^{\frac{Q-q}{p}}}{R^{N+1+s+t}}\displaystyle\int_{B_i}(\omega(z))^{\frac{-1}{p-1}}d\mu_{q+p'(s-q)}(z)\right) ^{p-1} \leq  C'''_{s,t,p,q,Q}.$$
\end{proof}
In the same way we have
\begin{theo}\label{5keumo4'}
 If $P_{s,t}$ is well defined and continuous from $L^p(\omega d\mu_q)$ to $L^p(\omega d\mu_Q),$ for $Q\geq q$, then  if $s+t>-1$ we have:
 $$\displaystyle\sup_{pseudo-balls~B:B\cap\partial\mathbb{B}\neq\varnothing}\left( \frac{R_B^{\frac{Q-q}{p}}}{\mu_{s+t}(B)}\displaystyle\int_{B}\omega(z)d\mu_{Q+pt}(z)\right) \left( \frac{R_B^{\frac{Q-q}{p}}}{\mu_{s+t}(B)}\displaystyle\int_{B}(\omega(z))^{\frac{-1}{p-1}}d\mu_{q+p'(s-q)}(z)\right) ^{p-1}
  < \infty $$
  where $R_B$ is the radius of $B.$
\end{theo}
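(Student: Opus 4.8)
The plan is to follow the proof of Theorem \ref{5keumo3'''} verbatim in structure, the only change being that the lower bound from Lemma \ref{5keumo2} valid for $-(N+1)<s+t<-1$ is replaced by the one valid for $s+t>-1$, namely $|P_{s,t}f(z)|\geq C_{s,t}\frac{(1-|z|^2)^t}{\mu_{s+t}(B_i)}\int_{B_i}f\,d\mu_s$ for $z\in B_j$. First I would assume the boundedness, producing a constant with $\int_{\mathbb B}|P_{s,t}f|^p\omega\,d\mu_Q\leq C_{s,t,p,q,Q}\int_{\mathbb B}|f|^p\omega\,d\mu_q$, and fix a pair of far-apart boundary-touching pseudo-balls $B_i,B_j$ of radius $R$ as in Lemma \ref{5keumo2}. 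Restricting the left side to $B_j$, inserting the lower bound, and absorbing the factor $(1-|z|^2)^{pt}$ into the measure (so that $d\mu_Q$ becomes $d\mu_{Q+pt}$), I obtain, for every non-negative $f$ supported in $B_i$,
$$\frac{1}{\mu_{s+t}^p(B_i)}\left(\int_{B_i}f\,d\mu_s\right)^p\left(\int_{B_j}\omega\,d\mu_{Q+pt}\right)\leq C'_{s,t,p,q,Q}\int_{B_i}|f|^p\omega\,d\mu_q.$$

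Next I would run the two testing-function steps. Taking $f(z)=(1-|z|^2)^{\frac{Q-q}{p}+t}\chi_{B_i}(z)$ turns the left integral into $\mu_{s+t+\frac{Q-q}{p}}(B_i)\simeq R^{N+1+s+t+\frac{Q-q}{p}}$ and the right one into $\int_{B_i}\omega\,d\mu_{Q+pt}$; combining with $\mu_{s+t}(B_i)\simeq R^{N+1+s+t}$ (both volume estimates coming from Lemma \ref{2keumo3}, applicable precisely because $s+t>-1$ and, since $Q\geq q$, also $s+t+\frac{Q-q}{p}\geq s+t>-1$) collapses the powers of $R$ and yields $R^{Q-q}\int_{B_j}\omega\,d\mu_{Q+pt}\lesssim\int_{B_i}\omega\,d\mu_{Q+pt}$. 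Interchanging the roles of $B_i$ and $B_j$ gives the reverse inequality, and feeding it back into the displayed estimate lets me replace the integral over $B_j$ by one over $B_i$, producing
$$\frac{R^{Q-q}}{\mu_{s+t}^p(B_i)}\left(\int_{B_i}f\,d\mu_s\right)^p\left(\int_{B_i}\omega\,d\mu_{Q+pt}\right)\leq C''_{s,t,p,q,Q}\int_{B_i}|f|^p\omega\,d\mu_q.$$

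Finally I would insert the second testing function $f(z)=\omega(z)^{-\frac{1}{p-1}}(1-|z|^2)^{(p'-1)(s-q)}\chi_{B_i}(z)$, which lies in $L^p(\omega\,d\mu_q)$ by Lemma \ref{4keumo4}. A short exponent computation (using $(p'-1)(s-q)+s=q+p'(s-q)$ and $p(p'-1)=p'$) shows that both $\int_{B_i}f\,d\mu_s$ and $\int_{B_i}|f|^p\omega\,d\mu_q$ equal $\int_{B_i}\omega^{-\frac{1}{p-1}}\,d\mu_{q+p'(s-q)}$; cancelling one factor of this quantity and distributing the power as $R^{Q-q}=R^{\frac{Q-q}{p}}R^{\frac{(Q-q)(p-1)}{p}}$ gives exactly
$$\left(\frac{R^{\frac{Q-q}{p}}}{\mu_{s+t}(B_i)}\int_{B_i}\omega\,d\mu_{Q+pt}\right)\left(\frac{R^{\frac{Q-q}{p}}}{\mu_{s+t}(B_i)}\int_{B_i}\omega^{-\frac{1}{p-1}}\,d\mu_{q+p'(s-q)}\right)^{p-1}\leq C'''_{s,t,p,q,Q},$$
uniformly in $B_i$, which is the claimed supremum bound. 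The only point requiring care — and the analogue of the main step in Theorem \ref{5keumo3'''} — is tracking the powers of $R$ through Lemma \ref{2keumo3}; the hypotheses $s+t>-1$ and $Q\geq q$ are exactly what guarantee that each measure of a boundary-touching pseudo-ball appearing above is finite and comparable to the relevant power of $R$, so no genuine new difficulty arises beyond this bookkeeping.
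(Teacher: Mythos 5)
Your proposal is correct and is essentially identical to the paper's own proof: the paper disposes of this theorem by saying it is similar to the proof of Theorem \ref{5keumo3'''} with first testing function $f(z)=(1-|z|^2)^{\frac{Q-q}{p}+t}\chi_{B_i}$, which is exactly what you carried out, substituting the $s+t>-1$ lower bound $|P_{s,t}f(z)|\geq C_{s,t}\frac{(1-|z|^2)^t}{\mu_{s+t}(B_i)}\int_{B_i}f\,d\mu_s$ from Lemma \ref{5keumo2} for the $R^{N+1+s+t}$ version. Your exponent bookkeeping (via Lemma \ref{2keumo3}, valid since $s+t+\frac{Q-q}{p}\geq s+t>-1$) and the second testing function $f(z)=\omega(z)^{-\frac{1}{p-1}}(1-|z|^2)^{(p'-1)(s-q)}\chi_{B_i}(z)$ match the paper's argument step for step.
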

\begin{proof}
The proof is similar to the proof of Theorem \ref{5keumo3'''}, we choose the first testing function to be $f(z)=(1-|z|^2)^{\frac{Q-q}{p}+t}\chi_{B_i}.$
\end{proof}
\begin{theo}\label{5keumo4}
In the case  $s+t>-1$ there are no weights $\omega$ such that $P_{s,t}$ is well defined and continuous from $L^p(\omega d\mu_q)$ to $L^p(\omega d\mu_Q)$ for $Q< q.$

\end{theo}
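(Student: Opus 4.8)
The plan is to reduce the problem to the testing machinery already developed on pseudo-balls touching the boundary, and to exploit that $Q<q$ makes a negative power of the radius blow up. Since $s+t>-1$ throughout, I would first split according to the sign of $s+t+\frac{Q-q}{p}+1$. If $s+t+\frac{Q-q}{p}\le -1$, the conclusion is already contained in Theorem \ref{5keumo3''}: by Lemma \ref{4keumo2} and Lemma \ref{4keumo4} boundedness forces both $\omega\in L^1(d\mu_{Q+pt})$ and $\omega^{-1/(p-1)}\in L^1(d\mu_{q+p'(s-q)})$, and Proposition \ref{4keumo4'} shows these two integrability conditions are incompatible once $s+t+\frac{Q-q}{p}\le -1$. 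So it remains to treat the genuinely new range $s+t+\frac{Q-q}{p}> -1$.

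For that range I would argue by contradiction. Assume $P_{s,t}$ is bounded from $L^p(\omega d\mu_q)$ to $L^p(\omega d\mu_Q)$, fix a small radius $R$, and take a pair of pseudo-balls $B_i,B_j$ touching $\partial\mathbb{B}$ as furnished by Lemma \ref{5keumo2}. Combining the boundedness inequality with the pointwise lower bound $|P_{s,t}f(z)|\ge C_{s,t}(1-|z|^2)^t\mu_{s+t}(B_i)^{-1}\int_{B_i}f\,d\mu_s$ valid for $z\in B_j$, and absorbing $(1-|z|^2)^{pt}d\mu_Q=d\mu_{Q+pt}$ on the left, I would test against $f=(1-|z|^2)^{\frac{Q-q}{p}+t}\chi_{B_i}$. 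A direct computation gives $\int_{B_i}f\,d\mu_s=\mu_{s+t+\frac{Q-q}{p}}(B_i)$ and $\int_{B_i}|f|^p\omega\,d\mu_q=\int_{B_i}\omega\,d\mu_{Q+pt}=:\omega(B_i)$; the latter is finite by Lemma \ref{4keumo2}, which simultaneously certifies $f\in L^p(\omega d\mu_q)$. Because both $s+t>-1$ and $s+t+\frac{Q-q}{p}>-1$, Lemma \ref{2keumo3} applies to both shifted exponents and yields $\mu_{s+t}(B_i)\simeq R^{N+1+s+t}$ and $\mu_{s+t+\frac{Q-q}{p}}(B_i)\simeq R^{N+1+s+t+\frac{Q-q}{p}}$. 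After cancellation the inequality collapses to $R^{Q-q}\,\omega(B_j)\lesssim \omega(B_i)$.

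Interchanging the roles of $B_i$ and $B_j$, which Lemma \ref{5keumo2} permits, gives the companion estimate $R^{Q-q}\,\omega(B_i)\lesssim \omega(B_j)$; multiplying the two produces $R^{2(Q-q)}\,\omega(B_i)\omega(B_j)\lesssim \omega(B_i)\omega(B_j)$ with a constant independent of $R$. Since $\omega$ is a genuine (positive a.e.) weight, $\omega(B_i),\omega(B_j)>0$, and they are finite by Lemma \ref{4keumo2}, so they cancel and leave $R^{2(Q-q)}\lesssim 1$. As $Q<q$ the exponent $2(Q-q)$ is strictly negative, so letting $R\to 0^+$ makes the left-hand side blow up, the desired contradiction; hence no weight can make $P_{s,t}$ bounded. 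The main obstacle is the bookkeeping in this second case rather than any deep estimate: one must verify that each invocation of Lemma \ref{2keumo3} is legitimate (both $s+t$ and $s+t+\frac{Q-q}{p}$ must exceed $-1$ for the boundary equivalences $\mu(\cdot)\simeq R^{N+1+(\cdot)}$), that the test function indeed lies in $L^p(\omega d\mu_q)$, and that the positivity and finiteness of $\omega(B_i),\omega(B_j)$ are what convert the symmetric estimates into a clean contradiction.
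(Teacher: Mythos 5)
Your proof is correct, but it follows a genuinely different route from the paper's. The paper makes no case distinction on $s+t+\frac{Q-q}{p}$: in its Step 1 it tests the boundedness inequality coming from Lemma \ref{5keumo2} (with the usual interchange of $B_i$ and $B_j$) first with $f=(1-|z|^2)^{t}\chi_{B_i}$, using $Q<q$ only through $d\mu_{q+pt}\leq d\mu_{Q+pt}$, and then with the dual test function $f=\omega^{-1/(p-1)}(1-|z|^2)^{(p'-1)(s-q)}\chi_{B_i}$, arriving at the ball-uniform condition $\left(\frac{1}{\mu_{s+t}(B)}\int_{B}\omega\, d\mu_{Q+pt}\right)\left(\frac{1}{\mu_{s+t}(B)}\int_{B}\omega^{-1/(p-1)}d\mu_{q+p'(s-q)}\right)^{p-1}\leq C$; in its Step 2 a pure H\"older computation (no operator involved) shows that $(2R_B)^{Q-q}$ is dominated by that same product, so the supremum over balls is infinite when $Q<q$, a contradiction. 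You instead use the single test function $(1-|z|^2)^{\frac{Q-q}{p}+t}\chi_{B_i}$ (the one the paper reserves for Theorems \ref{5keumo3'''} and \ref{5keumo4'}), extract $R^{Q-q}\omega(B_j)\lesssim\omega(B_i)$ together with its symmetric companion, and let $R\to 0^+$; this avoids the second test function and the H\"older step entirely, but costs you the preliminary case split, since the identification $\mu_{s+t+\frac{Q-q}{p}}(B_i)\simeq R^{N+1+s+t+\frac{Q-q}{p}}$ from Lemma \ref{2keumo3} requires $s+t+\frac{Q-q}{p}>-1$; you correctly dispatch the complementary range via Theorem \ref{5keumo3''} (that is, Proposition \ref{4keumo4'} combined with Lemmas \ref{4keumo2} and \ref{4keumo4}). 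One small point to tighten: positivity of $\omega(B_i)$ and $\omega(B_j)$ is not automatic from the definition of a weight (only non-negativity and local integrability are assumed), and your cancellation $R^{2(Q-q)}\lesssim 1$ needs it; it does follow from Lemma \ref{4keumo4}, which you already cite, because $\omega^{-1/(p-1)}\in L^{1}(d\mu_{q+p'(s-q)})$ forces $\omega>0$ almost everywhere, hence $\omega(B)>0$ for every pseudo-ball. With that one sentence added, your argument is complete and sound.
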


\begin{proof}
We are going to proceed in two steps.

\textit{Step 1:} Show that in the case  $s+t>-1$,  if $P_{s,t}$ is well defined and continuous from $L^p(\omega d\mu_q)$ to $L^p(\omega d\mu_Q),$ for $Q< q$, then   we have:
 $$\displaystyle\sup_{pseudo-balls~B:B\cap\partial\mathbb{B}\neq\varnothing}\left( \frac{1}{\mu_{s+t}(B)}\displaystyle\int_{B}\omega(z)d\mu_{Q+pt}(z)\right) \left( \frac{1}{\mu_{s+t}(B)}\displaystyle\int_{B}(\omega(z))^{\frac{-1}{p-1}}d\mu_{q+p'(s-q)}(z)\right) ^{p-1}
  < \infty .$$

Assume that $s+t>-1$ and $P_{s,t}$ is well defined and continuous from $L^p(\omega d\mu_q)$ to $L^p(\omega d\mu_Q)$ for $Q< q.$ There exists a constant $C_{s,t,p,q,Q}>0$ such that

$$\displaystyle\int_{\mathbb{B}}|P_{s,t}f(z)|^p\omega(z)d\mu_Q(z) \leq  C_{s,t,p,q,Q} \displaystyle\int_{\mathbb{B}}|f(z)|^p\omega(z)d\mu_q(z), \quad f\in L^p(\omega d\mu_q).$$

Let $f$ be a positive function with support in $B_i$ (we take $B_i,B_j$ as in Lemma \ref{5keumo2}). By Lemma \ref{5keumo2}, we then have

\begin{align*}
\frac{1}{\mu_{s+t}^p(B_i)}\left( \displaystyle\int_{B_i}f(w)d\mu_s(w)\right) ^p\left( \displaystyle\int_{B_j}\omega(z)d\mu_{Q+pt}(z)\right) & \leq  C'_{s,t,p,q,Q} \displaystyle\int_{B_i}|f(z)|^p\omega(z)d\mu_q(z)\\
& \leq  C'_{s,t,p,q,Q} \displaystyle\int_{B_i}|f(z)|^p\omega(z)d\mu_Q(z).
\end{align*}

Choosing $f(z)=(1-|z|^2)^t\chi_{B_i}(z)$ in the last inequality we get
$$\displaystyle\int_{B_j}\omega(z)d\mu_{Q+pt}(z)\leq \displaystyle\int_{B_i}\omega(z)d\mu_{Q+pt}(z).$$

Interchanging $B_i$ and $B_j$ (see Lemma \ref{5keumo2}) we get
$$\displaystyle\int_{B_i}\omega(z)d\mu_{Q+pt}(z)\leq \displaystyle\int_{B_j}\omega(z)d\mu_{Q+pt}(z),$$

which together with  
$$\frac{1}{\mu_{s+t}^p(B_i)}\left( \displaystyle\int_{B_i}f(w)d\mu_s(w)\right) ^p\left( \displaystyle\int_{B_j}\omega(z)d\mu_{Q+pt}(z)\right) \leq  C'_{s,t,p,q,Q} \displaystyle\int_{B_i}|f(z)|^p\omega(z)d\mu_q(z)$$
lead to
$$\frac{1}{\mu_{s+t}^p(B_i)}\left( \displaystyle\int_{B_i}f(w)d\mu_s(w)\right) ^p\left( \displaystyle\int_{B_i}\omega(z)d\mu_{Q+pt}(z)\right) \leq  C'_{s,t,p,q,Q} \displaystyle\int_{B_i}|f(z)|^p\omega(z)d\mu_q(z).$$
Then choosing $f(z)=(\omega^{\frac{-1}{p-1}}(z))(1-|z|^2)^{(p'-1)(s-q)}\chi_{B_i}(z)$ ($f\in L^p(\omega d\mu_q)$ by Lemma \ref{4keumo4}) in that last inequality we obtain
$$\left( \frac{1}{\mu_{s+t}(B)}\displaystyle\int_{B}\omega(z)d\mu_{Q+pt}(z)\right) \left( \frac{1}{\mu_{s+t}(B)}\displaystyle\int_{B}(\omega(z))^{\frac{-1}{p-1}}d\mu_{q+p'(s-q)}(z)\right)^{p-1} \leq  C'''_{s,t,p,q,Q}.$$

\textit{Step 2:} Show that
$$ \displaystyle\sup_{pseudo-balls~B:B\cap\partial\mathbb{B}\neq\varnothing}\left( \frac{1}{\mu_{s+t}(B)}\displaystyle\int_{B}\omega(z)d\mu_{Q+pt}(z)\right) \left( \frac{1}{\mu_{s+t}(B)}\displaystyle\int_{B}(\omega(z))^{\frac{-1}{p-1}}d\mu_{q+p'(s-q)}(z)\right) ^{p-1}= \infty.$$

Let
$$II=\left( \frac{1}{\mu_{s+t}(B)}\displaystyle\int_{B}\omega(z)d\mu_{Q+pt}(z)\right) \left( \frac{1}{\mu_{s+t}(B)}\displaystyle\int_{B}(\omega(z))^{\frac{-1}{p-1}}d\mu_{q+p'(s-q)}(z)\right) ^{p-1}.$$
Let $B$ be a pseudo-ball that touches the boundary and $R_{B}$ its radius. By H\"older's inequality we have
\begin{align*}
\mu_{s+t}(B) & = \displaystyle\int_{B}d\mu_{s+t}(z)\\
 & = \displaystyle\int_{B} (1-|z|^2)^{s+t}d\mu (z)\\ 
 & =  \displaystyle\int_{B} (1-|z|^2)^{s-\frac{q}{p}+\frac{q}{p}+t}d\mu (z)\\
 & =  \displaystyle\int_{B} \omega^{\frac{1}{p}}(z) (1-|z|^2)^{\frac{q+pt}{p}}\omega^{-\frac{1}{p}}(z)(1-|z|^2)^{s-\frac{q}{q}}d\mu (z)\\
 & \leq \left( \displaystyle\int_{B} \omega(z) (1-|z|^2)^{q+pt} d\mu (z) \right) ^{\frac{1}{p}} \left( \displaystyle\int_{B} \omega^{-\frac{1}{p-1}}(z)(1-|z|^2)^{q+ p'(s-q)}d\mu (z) \right) ^{\frac{1}{p'}}.
\end{align*}
Note that for $z\in B $ we have $1-|z|< 2R_B.$ Then
\begin{align*}
\mu^p_{s+t}(B) & \leq \left( \displaystyle\int_{B} \omega(z) (1-|z|^2)^{q+pt} d\mu (z) \right)  \left( \displaystyle\int_{B}\omega^{-\frac{1}{p-1}}(z)(1-|z|^2)^{q+ p'(s-q)}d\mu (z) \right)^{p-1}\\
& = \left( \displaystyle\int_{B} \omega(z) (1-|z|^2)^{q-Q+Q+pt} d\mu (z) \right)  \left(\displaystyle\int_{B}\omega^{-\frac{1}{p-1}}(z)(1-|z|^2)^{q+ p'(s-q)}d\mu (z) \right) ^{p-1}\\
& \leq (2R_B)^{q-Q} \left( \displaystyle\int_{B} \omega(z) (1-|z|^2)^{Q+pt} d\mu (z) \right)  \left(\displaystyle\int_{B} \omega^{-\frac{1}{p-1}}(z)(1-|z|^2)^{q+ p'(s-q)}d\mu (z) \right)^{p-1}.
\end{align*}
Hence
\begin{align*}
(2R_B)^{Q-q} & \leq \frac{1}{\mu^p_{s+t}(B)}\left( \displaystyle\int_{B} \omega(z) (1-|z|^2)^{Q+pt} d\mu (z) \right)  \left(\displaystyle\int_{B} \omega^{-\frac{1}{p-1}}(z)(1-|z|^2)^{q+ p'(s-q)}d\mu (z) \right) ^{p-1}\\
& = II.
\end{align*}
Taking the $\sup$ over smaller radii, we get $\sup II=\infty.$
\end{proof}

\subsection{The Associated Maximal and Fractional Maximal Function and the Good $\lambda$ Inequality}
We introduce for $b>-1$ and $s>-1$ the following maximal functions.  
If $a>-1$ we set:
\begin{equation}\label{5keumoequa1}
m_{a,b}f(z)=\displaystyle\sup_{\zeta\in\mathbb{B},R>1-|\zeta|:z\in B(\zeta,R)}\frac{1}{\mu_a(B(\zeta,R))}\displaystyle\int_{B(\zeta,R)}|f(w)|d\mu_b(w),
\end{equation}
more generally if $a>-1-N$ we set:\\
\begin{equation}\label{5keumoequa2}
m'_{a,b}f(z)=\displaystyle\sup_{\zeta\in\mathbb{B},R>1-|\zeta|:z\in B(\zeta,R)}\frac{1}{R^{N+1+a}}\displaystyle\int_{B(\zeta,R)}|f(w)|d\mu_b(w).
\end{equation}
If $a>-1$ we set:
\begin{equation}\label{5keumoequa3}
M_{a,b}f(z)=\displaystyle\sup_{B:z\in B}\frac{1}{\mu_a(B)}\displaystyle\int_{B}|f(w)|d\mu_b(w),
\end{equation}
and more generally if $a>-1-N$ we set:
\begin{equation}\label{5keumoequa4}
M'_{a,b}f(z)=\displaystyle\sup_{B:z\in B}\frac{1}{R^{N+1+a}}\displaystyle\int_{B}|f(w)|d\mu_b(w).
\end{equation}
If $s+t>-1$ we set:
\begin{equation}\label{5keumoequa5}
O_{s,t}f(z)=(1-|z|^2)^t\displaystyle\sup_{\zeta\in\mathbb{B},R>1-|\zeta|:z\in B(\zeta,R)}\frac{1}{\mu_{s+t}(B(\zeta,R))}\displaystyle\int_{B(\zeta,R)}|f(w)|d\mu_s(w),
\end{equation}
more generally if $s+t>-1-N$ we set:
\begin{equation}\label{5keumoequa6}
O'_{s,t}f(z)=(1-|z|^2)^t\displaystyle\sup_{\zeta\in\mathbb{B},R>1-|\zeta|:z\in B(\zeta,R)}\frac{1}{R^{N+1+s+t}}\displaystyle\int_{B(\zeta,R)}|f(w)|d\mu_s(w).
\end{equation}
Let finally define the following fractional maximal function
\begin{equation}\label{5keumoequa7}
M_{\gamma}f(z)=\displaystyle\sup_{B:z\in B}\frac{1}{\mu^{1-\gamma}_b(B)}\displaystyle\int_{B}|f(w)|d\mu_b(w),~~\gamma\in (0,1).
\end{equation}
Note that for $a<b$ we get by Lemma \ref{2keumo3} $M_{a,b}\sim M_{\gamma}$ with $\gamma=1-\frac{N+1+a}{N+1+b}.$

For all $k\in (0,1)$, we define the operator of regularisation $R_k^b$
\begin{equation}\label{reg1}
R^b_kf(z)= \frac{1}{\mu_b(B_k(z))}\displaystyle\int_{B_k(z)}f(\zeta)d\mu_b(\zeta),
\end{equation}
where $B_k(z)=\lbrace w\in \mathbb{B}: d(z,w)<k(1-|z|)\rbrace$.

We will need the following lemmas to show Theorem \ref{keumo13}. See \cite{bek} for the proofs of the first two lemmas.
\begin{lem}\label{5keumo5}
Let $k\in (0,\frac{1}{2}).$ If $z'\in B_k(z),$ then $z\in B_{k'}(z'),$ where $k'=\frac{k}{1-k}.$
\end{lem}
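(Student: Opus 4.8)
The plan is to exploit the fact that the only obstruction to the desired inclusion is the mismatch between the normalizing factors $1-|z|$ and $1-|z'|$ appearing in the definitions of $B_k(z)$ and $B_{k'}(z')$. First I would record the symmetry of the pseudo-distance: since $\langle z,z'\rangle = \overline{\langle z',z\rangle}$, the quantity $\left|1-\frac{\langle z,z'\rangle}{|z||z'|}\right|$ is unchanged under conjugation, and the term $||z|-|z'||$ is manifestly symmetric, so $d(z,z')=d(z',z)$. Hence $z\in B_{k'}(z')$ is equivalent to the single inequality $d(z,z')<k'(1-|z'|)$, and the whole statement reduces to comparing $1-|z|$ with $1-|z'|$ under the hypothesis $d(z,z')<k(1-|z|)$.

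Next I would extract the radial part of the hypothesis. Since $||z|-|z'||$ is one of the two nonnegative summands in $d(z,z')$, the assumption $z'\in B_k(z)$ gives immediately
$$\bigl||z|-|z'|\bigr|\leq d(z,z')<k\,(1-|z|).$$
In particular $|z'|-|z|<k(1-|z|)$, and therefore
$$1-|z'|>1-|z|-k(1-|z|)=(1-k)(1-|z|).$$
Rearranging yields the key comparison $1-|z|<\dfrac{1}{1-k}\,(1-|z'|)$, valid because $1-k>0$.

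Finally I would chain the two estimates: using the hypothesis and then this comparison,
$$d(z,z')<k\,(1-|z|)<\frac{k}{1-k}\,(1-|z'|)=k'\,(1-|z'|),$$
which by the symmetry noted above is exactly $z\in B_{k'}(z')$. I do not expect any genuine obstacle here, as the argument is a short direct computation; the only point that requires a word of care is the role of the restriction $k\in(0,\tfrac12)$, which is not needed for the inequality itself but guarantees $k'=\frac{k}{1-k}\in(0,1)$, so that $B_{k'}(z')$ remains a bona fide regularisation ball of the same type used in \eqref{reg1} rather than a degenerate one.
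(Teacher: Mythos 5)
Your proof is correct. The paper gives no proof of this lemma itself (it defers to \cite{bek}); your argument --- symmetry of $d$, the radial estimate $1-|z'|>(1-k)(1-|z|)$ extracted from the term $||z|-|z'||$, and then chaining $d(z,z')<k(1-|z|)<\frac{k}{1-k}(1-|z'|)$ --- is the standard direct computation, and your closing remark correctly identifies that $k<\tfrac12$ serves only to keep $k'=\frac{k}{1-k}<1$.
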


\begin{lem}\label{5keumo6}
If $B:=B(x,R)$ touches the boundary then if we take $B'=B(x,K(1+2k_1)R),$ then $\forall w\in B, B_{k_1}(w)\subset B'.$ 
\end{lem}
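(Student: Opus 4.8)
The plan is to reduce the containment to a single application of the quasi-triangle inequality for $d$, the only preparatory work being a uniform bound on the Euclidean quantity $1-|w|$ coming from the hypothesis that $B$ touches the boundary.

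First I would fix an arbitrary $w\in B=B(x,R)$ and an arbitrary $\zeta\in B_{k_1}(w)$; the goal is to show $d(x,\zeta)<K(1+2k_1)R$, which is precisely $\zeta\in B'$. By the definitions of the two balls we have $d(x,w)<R$ and $d(w,\zeta)<k_1(1-|w|)$, so the quasi-triangle inequality gives
\[ d(x,\zeta)\leq K\bigl(d(x,w)+d(w,\zeta)\bigr)<K\bigl(R+k_1(1-|w|)\bigr). \]
Thus the whole statement comes down to controlling $1-|w|$ by a fixed multiple of $R$.

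The key step, and the only place the hypothesis $B\cap\partial\mathbb{B}\neq\varnothing$ enters, is the bound $1-|w|<2R$ valid for every $w\in B$. Since $B$ touches the boundary I would pick $\xi\in\partial\mathbb{B}$ with $d(x,\xi)<R$; because $|\xi|=1$ and the second term in the definition of $d$ is nonnegative, $d(x,\xi)\geq\bigl||x|-|\xi|\bigr|=1-|x|$, whence $1-|x|<R$. For $w\in B$ the same observation gives $\bigl||x|-|w|\bigr|\leq d(x,w)<R$, so
\[ 1-|w|=(1-|x|)+(|x|-|w|)\leq(1-|x|)+\bigl||x|-|w|\bigr|<R+R=2R. \]
Inserting $1-|w|<2R$ into the previous display yields $d(x,\zeta)<K(R+2k_1R)=K(1+2k_1)R$, i.e. $\zeta\in B(x,K(1+2k_1)R)=B'$. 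As $w\in B$ and $\zeta\in B_{k_1}(w)$ were arbitrary, this proves $B_{k_1}(w)\subset B'$ for all $w\in B$. I do not expect any genuine obstacle here: the entire argument is the quasi-triangle inequality together with the elementary uniform estimate $1-|w|<2R$, which is exactly what the boundary-touching hypothesis buys us.
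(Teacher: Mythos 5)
Your proof is correct, and it is essentially the expected argument: the paper itself gives no proof of this lemma (it defers to B\'ekoll\'e \cite{bek}), and the standard proof there is exactly your two ingredients — the quasi-triangle inequality with constant $K$, plus the observation that the boundary-touching hypothesis forces $1-|x|\leq R$ and hence $1-|w|<2R$ for all $w\in B$.
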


\begin{lem}\label{5keumo7}
For all $k\in (0,1)$, there is a constant $C_k$ such that for all positive locally integrable function $f$ we have if $a>-1$:
$$m_{a,b}f\leq C_k m_{a,b}(R_k^bf),$$
and more generally if $a>-1-N$:
$$m'_{a,b}f\leq C_k m'_{a,b}(R_k^bf).$$
\end{lem}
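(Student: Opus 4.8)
The plan is to reduce the claim to a single local comparison of masses and then carry it out using the geometric lemmas just proved. Fix the point $z$ and an admissible pseudo-ball $B=B(\zeta,R)$ with $R>1-|\zeta|$ and $z\in B$; since $f\geq 0$, it suffices to bound $\frac{1}{\mu_a(B)}\int_B f\,d\mu_b$ by $C_k\, m_{a,b}(R^b_kf)(z)$ for one fixed admissible ball containing $z$, and then take the supremum over $B$. Set $k''=\frac{k}{1+k}\in(0,\tfrac12)$ and $B'=B\big(\zeta,K(1+2k'')R\big)$; note $z\in B\subset B'$ and $B'$ is again admissible. I would first establish the mass comparison
\begin{equation*}
\int_{B'}R^b_kf(\zeta')\,d\mu_b(\zeta')\;\geq\;c_k\int_B f(w)\,d\mu_b(w).
\end{equation*}
Because all quantities are nonnegative, Fubini gives
\begin{equation*}
\int_{B'}R^b_kf\,d\mu_b=\int_{\mathbb B}f(w)\left(\int_{\{\zeta'\in B':\,w\in B_k(\zeta')\}}\frac{d\mu_b(\zeta')}{\mu_b(B_k(\zeta'))}\right)d\mu_b(w),
\end{equation*}
so the comparison reduces to bounding the inner integral below by $c_k$ uniformly for $w\in B$.

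The heart of the argument, and the main obstacle, is this uniform lower bound on the inner integral. I would argue as follows. By Lemma \ref{5keumo5} applied with parameter $k''$ (legitimate since $k''<\tfrac12$), whenever $\zeta'\in B_{k''}(w)$ one has $w\in B_{(k'')'}(\zeta')$ with $(k'')'=\frac{k''}{1-k''}=k$, i.e. $w\in B_k(\zeta')$; hence $B_{k''}(w)\subset\{\zeta':w\in B_k(\zeta')\}$. Moreover, since $w\in B$ and $B$ touches the boundary, Lemma \ref{5keumo6} (with $k_1=k''$) gives $B_{k''}(w)\subset B'$. Thus the domain of the inner integral contains $B_{k''}(w)$. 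For $\zeta'\in B_{k''}(w)$ the triangle-type estimate $\big||\zeta'|-|w|\big|\leq d(\zeta',w)<k''(1-|w|)$ yields $(1-k'')(1-|w|)<1-|\zeta'|<(1+k'')(1-|w|)$, so $1-|\zeta'|\simeq_k 1-|w|$. Since $B_k(\zeta')$ and $B_k(w)$ are interior balls (radius $<1-|\cdot|$), Lemma \ref{2keumo3} gives $\mu_b(B_k(\zeta'))\simeq k^{N+1}(1-|\zeta'|)^{N+1+b}\simeq_k \mu_b(B_k(w))$. Combining these,
\begin{equation*}
\int_{\{\zeta'\in B':\,w\in B_k(\zeta')\}}\frac{d\mu_b(\zeta')}{\mu_b(B_k(\zeta'))}\;\geq\;\int_{B_{k''}(w)}\frac{d\mu_b(\zeta')}{\mu_b(B_k(\zeta'))}\;\gtrsim_k\;\frac{\mu_b(B_{k''}(w))}{\mu_b(B_k(w))}\;\simeq_k\;1,
\end{equation*}
which is the desired uniform bound $c_k$; tracking the constants shows $c_k\simeq (1+k)^{-(N+1)}$.

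It remains to normalize and conclude. The balls $B$ and $B'$ share the center $\zeta$, both touch the boundary, and have comparable radii, so for $a>-1$ Lemma \ref{2keumo3} gives $\mu_a(B')\simeq_k\mu_a(B)$. Therefore
\begin{equation*}
\frac{1}{\mu_a(B)}\int_B f\,d\mu_b\;\leq\;\frac{1}{c_k\,\mu_a(B)}\int_{B'}R^b_kf\,d\mu_b\;=\;\frac{\mu_a(B')}{c_k\,\mu_a(B)}\cdot\frac{1}{\mu_a(B')}\int_{B'}R^b_kf\,d\mu_b\;\leq\;C_k\, m_{a,b}(R^b_kf)(z),
\end{equation*}
where the last step uses that $B'$ is admissible and contains $z$. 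Taking the supremum over all admissible $B\ni z$ yields $m_{a,b}f\leq C_k\,m_{a,b}(R^b_kf)$. For the case $a>-1-N$ the argument is verbatim the same: the mass comparison is unchanged (it involves only $\mu_b$-averaging and the geometry of the pseudo-balls), and in the final normalization one replaces $\mu_a(B)$ and $\mu_a(B')$ by $R_B^{N+1+a}$ and $R_{B'}^{N+1+a}=\big(K(1+2k'')\big)^{N+1+a}R_B^{N+1+a}$, whose ratio is a constant depending only on $k$, giving $m'_{a,b}f\leq C_k\,m'_{a,b}(R^b_kf)$.
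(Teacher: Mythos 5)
Your proof is correct and follows essentially the same route as the paper: the same auxiliary parameter $k''=k/(1+k)$ (the paper's $k_1$), the same dilated ball $B'=B(\zeta,K(1+2k'')R)$ via Lemma \ref{5keumo6}, the membership symmetry from Lemma \ref{5keumo5}, the comparability of $\mu_b$-masses of the small interior balls, Fubini, and finally homogeneity of $\mu_a$ (or comparable radii for $m'_{a,b}$). The only difference is cosmetic: you apply Fubini first and bound the resulting inner integral from below over $B_{k''}(w)$, whereas the paper first replaces $\chi_{B_k(w)}(\zeta)$ by $\chi_{B_{k_1}(\zeta)}(w)$ inside the double integral and then applies Fubini.
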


\begin{proof}
We have to show that for all $z$ and all pseudo-balls $B$ containing $z$ which touch the boundary of $\mathbb{B}$, there is a pseudo-ball $z\in B'$ which touches the boundary of $\mathbb{B}$ so that 
$$\frac{1}{\mu_a(B)}\displaystyle\int_{B}f(w)d\mu_b(w)\leq C_k \frac{1}{\mu_a(B')}\displaystyle\int_{B'}\left[\frac{1}{\mu_b(B_k(w))}\displaystyle\int_{B_k(w)}f(\zeta)d\mu_b(\zeta)\right]d\mu_b(w).$$
By Lemma \ref{5keumo5}, $\chi_{B_k(w)}(\zeta)\geq \chi_{B_{k_1}(\zeta)}(w),$ where $k_1=\frac{k}{k+1}$. If $B=B(x,R)~(R>1-|x|)$ by Lemma \ref{5keumo6} if $B'=B(x,K(1+2k_1)R),$ then $\forall w\in B, B_{k_1}(w)\subset B'.$ Note that
\begin{equation}\label{pm}
\mu_b(B_{k_1}(\zeta))\simeq \mu_b(B_k(w))~~\rm{when}~~  w\in B_{k_1}(\zeta).
\end{equation}

 Hence
\begin{align*}
\displaystyle\int_{B'}R_k^bf(w)d\mu_b(w) & =  \displaystyle\int_{B'}\left[\frac{1}{\mu_b(B_k(w))}\displaystyle\int_{B_k(w)}f(\zeta)d\mu_b(\zeta)\right]d\mu_b(w)\\
 & =  \displaystyle\int_{B'}\left[\frac{1}{\mu_b(B_k(w))}\displaystyle\int_{B}f(\zeta)\chi_{B_k(w)}(\zeta)d\mu_b(\zeta)\right]d\mu_b(w)\\
 & \geq  \displaystyle\int_{B'}\left[\frac{1}{\mu_b(B_k(w))}\displaystyle\int_{B}f(\zeta)\chi_{B_{k_1}(\zeta)}(w)d\mu_b(\zeta)\right]d\mu_b(w)\\
 & =  \displaystyle\int_{B}\left[\displaystyle\int_{B'}\frac{1}{\mu_b(B_k(w))}\chi_{B_{k_1}(\zeta)}(w)d\mu_b(w)\right]f(\zeta)d\mu_b(\zeta).
\end{align*}
Using (\ref{pm}), we get
\begin{align*}
\displaystyle\int_{B'}R_k^bf(w)d\mu_b(w) & \gtrsim  \displaystyle\int_{B}\frac{1}{\mu_b(B_{k_1}(\zeta))}\left[\displaystyle\int_{B'}\chi_{B_{k_1}(\zeta)}(w)d\mu_b(w)\right]f(\zeta)d\mu_b(\zeta)\\ 
 & =  \displaystyle\int_{B}f(\zeta)d\mu_b(\zeta).
\end{align*}
Since $\mu_a$ is a homogeneous measure we have 
$$\frac{1}{\mu_a(B)}\displaystyle\int_{B}f(w)d\mu_b(w)\lesssim \frac{1}{\mu_a(B')}\displaystyle\int_{B'}R_k^bf(w)d\mu_b(w).$$
For $m^\prime_{a,b}$ it is sufficient to observe that $B$ and $B'$ have equivalent radii.
\end{proof}

 The following lemma appears as a corollary of the preceding one by observing that 
 $$O_{s,t}f(z):= (1-|z|^2)^tm_{s+t,s}f(z)$$
 and
 $$O'_{s,t}f(z):= (1-|z|^2)^t m'_{s+t,s}f(z).$$
\begin{lem}\label{5keumo8}
For all $k\in (0,1)$, there is a constant $C_k$ such that for all positive locally integrable functions $f$ we have if $s+t>-1$:
$$O_{s,t}f\leq C_k O_{s,t}(R_k^sf),$$
and more generally if $s+t>-1-N$:
$$O'_{s,t}f\leq C_k O'_{s,t}(R_k^sf).$$
\end{lem}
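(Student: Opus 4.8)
The plan is to deduce this directly from Lemma \ref{5keumo7} by the substitution $a = s+t$ and $b = s$, using the two identities $O_{s,t}f(z) = (1-|z|^2)^t m_{s+t,s}f(z)$ and $O'_{s,t}f(z) = (1-|z|^2)^t m'_{s+t,s}f(z)$ recorded just before the statement. The whole point is that the regularization operator $R_k^s$ acts on $f$ inside the averaging, while the factor $(1-|z|^2)^t$ sits outside and is identical in $O_{s,t}f$ and $O_{s,t}(R_k^s f)$, so the scalar maximal inequality of Lemma \ref{5keumo7} upgrades to the weighted one for free.

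First I would treat the case $s+t>-1$. Setting $a=s+t$ satisfies the hypothesis $a>-1$ of the first part of Lemma \ref{5keumo7}, and with $b=s$ that lemma produces a constant $C_k$ with the pointwise bound $m_{s+t,s}f \leq C_k\, m_{s+t,s}(R_k^s f)$ on $\mathbb{B}$. Multiplying both sides by the nonnegative factor $(1-|z|^2)^t$, which by the displayed identities is precisely the factor converting $m_{s+t,s}$ into $O_{s,t}$, preserves the inequality and yields $O_{s,t}f \leq C_k\, O_{s,t}(R_k^s f)$. Second I would run the identical argument with $m'$ in place of $m$: when $s+t>-1-N$ the hypothesis $a>-1-N$ of the second part of Lemma \ref{5keumo7} holds with $a=s+t$, $b=s$, giving $m'_{s+t,s}f \leq C_k\, m'_{s+t,s}(R_k^s f)$, and multiplying through by $(1-|z|^2)^t$ gives $O'_{s,t}f \leq C_k\, O'_{s,t}(R_k^s f)$.

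There is essentially no obstacle here; the statement is a genuine corollary. The only point worth checking is that the weight $(1-|z|^2)^t$ is the same on both sides of the inequality and is nonnegative, so it can be pulled through the inequality without altering the constant $C_k$ obtained from Lemma \ref{5keumo7}. In particular, the constant $C_k$ in the conclusion may be taken to be exactly the one furnished by the preceding lemma for the parameters $a=s+t$, $b=s$.
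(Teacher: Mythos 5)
Your proposal is correct and is exactly the paper's argument: the paper states Lemma \ref{5keumo8} as an immediate corollary of Lemma \ref{5keumo7} via the identities $O_{s,t}f(z)=(1-|z|^2)^t m_{s+t,s}f(z)$ and $O'_{s,t}f(z)=(1-|z|^2)^t m'_{s+t,s}f(z)$, which is precisely your substitution $a=s+t$, $b=s$ followed by multiplication by the positive factor $(1-|z|^2)^t$. Nothing is missing; the constant is indeed inherited unchanged from Lemma \ref{5keumo7}.
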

One can find the following lemma in \cite{bek} but for $b=Q.$
\begin{lem}\label{5keumo9}
For all $k\in (0,\frac{1}{2}),$ there are two constants $C$ and $k'<1$ depending only on $k,b,Q,N$ such that for all $f,g\in L^1(d\mu_b),f\geq 0,g\geq 0$:
$$\displaystyle\int_{\mathbb{B}}f(z)[R_k^bg(z)]d\mu_Q(z)\leq C \displaystyle\int_{\mathbb{B}}g(z)[R_{k'}^{b,Q}f(z)]d\mu_b(z),$$
where 
\begin{equation}\label{reg2}
R_{k'}^{b,Q}f(z)=\frac{1}{\mu_b(B_{k'}(z))}\displaystyle\int_{B_{k'}(z)}f(\zeta)d\mu_Q(\zeta).
\end{equation}

\end{lem}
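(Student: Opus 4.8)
The plan is to read the claimed inequality as an adjoint-type estimate for the regularization operator, and to obtain it by applying Tonelli's theorem to the double integral on the left and then transferring the averaging from the variable $z$ to the variable $\zeta$. First I would expand the left-hand side using the definition (\ref{reg1}) and write it as the iterated integral
$$\int_{\mathbb{B}}\int_{\mathbb{B}}\frac{f(z)}{\mu_b(B_k(z))}\,g(\zeta)\,\chi_{B_k(z)}(\zeta)\,d\mu_b(\zeta)\,d\mu_Q(z).$$
Since $f,g\geq 0$, the integrand is nonnegative, so Tonelli's theorem allows me to interchange the order of integration and pull $g(\zeta)\,d\mu_b(\zeta)$ to the outside. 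The problem is thereby reduced to bounding, for each fixed $\zeta$, the inner integral $\int_{\mathbb{B}}\frac{f(z)}{\mu_b(B_k(z))}\,\chi_{B_k(z)}(\zeta)\,d\mu_Q(z)$ in which $z$ is now the integration variable.

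Next I would convert the constraint $\zeta\in B_k(z)$ into a constraint on $z$. By Lemma \ref{5keumo5}, applied with $z'=\zeta$, whenever $\zeta\in B_k(z)$ one has $z\in B_{k'}(\zeta)$ with $k'=\frac{k}{1-k}$; the hypothesis $k<\frac12$ is exactly what guarantees $k'<1$. Consequently $\chi_{B_k(z)}(\zeta)\leq \chi_{B_{k'}(\zeta)}(z)$, and the inner integral is dominated by $\int_{B_{k'}(\zeta)}\frac{f(z)}{\mu_b(B_k(z))}\,d\mu_Q(z)$.

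The step I expect to be the crux is the comparison of the two denominators. For any $z\in B_{k'}(\zeta)$ the estimate $\big||z|-|\zeta|\big|\leq d(z,\zeta)<k'(1-|\zeta|)$ forces $1-|z|\simeq 1-|\zeta|$, with constants depending only on $k'$. Since $B_k(z)$ has radius $k(1-|z|)$ and $B_{k'}(\zeta)$ has radius $k'(1-|\zeta|)$, both radii are comparable to $1-|z|\simeq 1-|\zeta|$, and since $b>-1$ Lemma \ref{2keumo3} yields $\mu_b(B_k(z))\simeq (1-|z|)^{N+1+b}\simeq (1-|\zeta|)^{N+1+b}\simeq \mu_b(B_{k'}(\zeta))$, the implied constants depending only on $k,b,N$. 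Hence $\frac{1}{\mu_b(B_k(z))}\leq \frac{C}{\mu_b(B_{k'}(\zeta))}$ uniformly on $\{z\in B_{k'}(\zeta)\}$, so the inner integral is at most $C\,\frac{1}{\mu_b(B_{k'}(\zeta))}\int_{B_{k'}(\zeta)}f(z)\,d\mu_Q(z)=C\,R_{k'}^{b,Q}f(\zeta)$ by (\ref{reg2}). Reinserting this bound into the outer integral over $\zeta$ gives
$$\int_{\mathbb{B}}f(z)\,[R_k^bg(z)]\,d\mu_Q(z)\leq C\int_{\mathbb{B}}g(\zeta)\,[R_{k'}^{b,Q}f(\zeta)]\,d\mu_b(\zeta),$$
which is precisely the asserted inequality. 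The only genuine work is the uniform two-sided control of $\mu_b(B_k(z))$ against $\mu_b(B_{k'}(\zeta))$ on the set $\{z\in B_{k'}(\zeta)\}$; everything else is bookkeeping with Tonelli's theorem and the indicator inequality supplied by Lemma \ref{5keumo5}.
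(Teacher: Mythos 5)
Your proof is correct and follows essentially the same route as the paper's: both rest on Lemma \ref{5keumo5} to dominate $\chi_{B_k(z)}(\zeta)$ by $\chi_{B_{k'}(\zeta)}(z)$ with $k'=\frac{k}{1-k}$, on the comparability $\mu_b(B_k(z))\simeq\mu_b(B_{k'}(\zeta))$ (which the paper invokes as (\ref{pm}) and you derive directly from Lemma \ref{2keumo3} using $1-|z|\simeq 1-|\zeta|$), and on Fubini--Tonelli to transfer the averaging from $z$ to $\zeta$. The only difference is the order of the steps (you interchange the integrals first and then estimate the integrand, while the paper estimates pointwise first and then applies Fubini), which is immaterial.
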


\begin{proof}
 By Lemma \ref{5keumo5}, $\chi_{B_k(z)}(w)\leq \chi_{B_{k'}(w)}(z),$ where $k'=\frac{k}{1-k}.$ Because of (\ref{pm})  there is  a constant $C$ such that
$$\frac{1}{\mu_b(B_k(z))}\chi_{B_k(z)}(w)\leq \frac{C}{\mu_b(B_{k'}(w))}\chi_{B_{k'}(w)}(z).$$
We want to form the quantity $f(z)[R_k^bg(z)]$ on the left while controlling it on the right in order to use Fubini's theorem to bring out the quantity $g(z)[R_{k'}^{b,Q}f(z)].$ Then, for $w\in B_k(z) $ 
$$\frac{1}{\mu_b(B_k(z))}\chi_{B_k(z)}(w)g(w)  \leq  \frac{C}{\mu_b(B_{k'}(w))}\chi_{B_{k'}(w)}(z)g(w).$$
We form $R_k^bg(z)$ on the left
$$\displaystyle\int_{B_k(z)}\frac{1}{\mu_b(B_k(z))}\chi_{B_k(z)}(w)g(w)d\mu_b(w)  \leq \displaystyle\int_{B_k(z)}\frac{C}{\mu_b(B_{k'}(w))}\chi_{B_{k'}(w)}(z)g(w)d\mu_b(w),$$
by a multiplication by $f(z)$ we have
$$f(z)R_k^bg(z)  \leq  C f(z)\displaystyle\int_{B_k(z)}\frac{1}{\mu_b(B_{k'}(w))}\chi_{B_{k'}(w)}(z)g(w)d\mu_b(w).$$
After integration, we obtain
$$\displaystyle\int_{\mathbb{B}}f(z)R_k^bg(z)d\mu_Q(z)  \leq  C\displaystyle\int_{\mathbb{B}} \left[ \displaystyle\int_{B_k(z)}\frac{1}{\mu_b(B_{k'}(w))}\chi_{B_{k'}(w)}(z)g(w)f(z)d\mu_b(w)\right]d\mu_Q(z).$$

Recall that $(z\in \mathbb{B}~\textnormal{and}~w\in B_k(z))\Longrightarrow (z\in B_{k'}(w)~\textnormal{and}~w\in \mathbb{B}),$
hence using Fubini's theorem
$$\displaystyle\int_{\mathbb{B}}f(z)R_k^bg(z)d\mu_Q(z) \leq  C\displaystyle\int_{\mathbb{B}}g(w)\left[ \displaystyle\int_{B_{k'}(w)}\frac{1}{\mu_b(B_{k'}(w))}\chi_{B_{k'}(w)}(z)f(z)d\mu_Q(z)\right]d\mu_b(w)$$
hence
$$\displaystyle\int_{\mathbb{B}}f(z)R_k^bg(z)d\mu_Q(z) \leq  C\displaystyle\int_{\mathbb{B}}g(w)\left[\frac{1}{\mu_b(B_{k'}(w))} \displaystyle\int_{B_{k'}(w)}f(z)d\mu_Q(z)\right]d\mu_b(w)$$
then
$$\displaystyle\int_{\mathbb{B}}f(z)[R_k^b g(z)]d\mu_Q(z)\leq C \displaystyle\int_{\mathbb{B}}g(z)[R_{k'}^{b,Q}f(z)]d\mu_b(z).$$
\end{proof}

In Lemma \ref{5keumo9} replacing $b$ by $s$ and $Q$ by $\beta$ we have the following result.
\begin{lem}\label{5keumo10}
For all $k\in (0,\frac{1}{2}),$ there are two constants $C$ and $k'<1$ depending only on $k,s,\beta,N$ such that for all $f,g\in L^1(d\mu_s),f\geq 0,g\geq 0$:
$$\displaystyle\int_{\mathbb{B}}f(z)[R_k^sg(z)]d\mu_\beta(z)\leq C \displaystyle\int_{\mathbb{B}}g(z)[R_{k'}^{s,\beta}f(z)]d\mu_s(z),$$
where 
$$R_{k'}^{s,\beta}f(z)=\frac{1}{\mu_s(B_{k'}(z))}\displaystyle\int_{B_{k'}(z)}f(\zeta)d\mu_\beta(\zeta).$$
\end{lem}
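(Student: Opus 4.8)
The plan is to observe that Lemma \ref{5keumo10} is nothing more than Lemma \ref{5keumo9} with the parameters relabeled, so that the entire argument transports verbatim. Concretely, I would first inspect the proof of Lemma \ref{5keumo9} and verify that at no point does it exploit a specific feature of the particular values $b$ and $Q$: the only structural ingredients are (i) the defining formulas for the regularizations $R_k^b$ and $R_{k'}^{b,Q}$, (ii) the symmetry relation $\chi_{B_k(z)}(w)\leq \chi_{B_{k'}(w)}(z)$ with $k'=\frac{k}{1-k}$ coming from Lemma \ref{5keumo5}, (iii) the volume comparison (\ref{pm}), and (iv) Fubini's theorem.

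Next I would note that the only hypothesis needed for (\ref{pm}) is that the base measure appearing in the regularization operators be homogeneous; by Lemma \ref{2keumo3} this holds for $\mu_s$ precisely because $s>-1$, which is part of the standing assumptions. The outer exponent $Q$ (respectively $\beta$) enters the argument only through the measure $d\mu_Q$ (respectively $d\mu_\beta$) against which one integrates, and never through any growth or positivity restriction, so replacing $Q$ by an arbitrary real $\beta$ leaves every step of the chain of inequalities intact.

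Therefore the proof proceeds by running the identical sequence of steps with $b$ replaced by $s$ and $Q$ replaced by $\beta$: starting from the pointwise bound $\frac{1}{\mu_s(B_k(z))}\chi_{B_k(z)}(w)\leq \frac{C}{\mu_s(B_{k'}(w))}\chi_{B_{k'}(w)}(z)$, multiplying by $g(w)$ and by $f(z)$, integrating against $d\mu_\beta(z)$, and applying Fubini to reassemble $R_{k'}^{s,\beta}f$, one obtains exactly
$$\displaystyle\int_{\mathbb{B}}f(z)[R_k^s g(z)]d\mu_\beta(z)\leq C\displaystyle\int_{\mathbb{B}}g(z)[R_{k'}^{s,\beta}f(z)]d\mu_s(z),$$
with the same constant $k'=\frac{k}{1-k}$ and a constant $C$ now depending on $k,s,\beta,N$. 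I do not anticipate any genuine obstacle here; the sole point requiring a moment's care is confirming that the homogeneity of $\mu_s$ (hence the validity of (\ref{pm}) and of the passage between $\mu_s(B)$-averages) survives the substitution, which it does thanks to $s>-1$.
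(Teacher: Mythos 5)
Your proposal is correct and is exactly the paper's own approach: the paper offers no separate argument for Lemma \ref{5keumo10}, stating only that it follows from Lemma \ref{5keumo9} by replacing $b$ with $s$ and $Q$ with $\beta$. Your additional verification that the substitution is legitimate --- that the proof of Lemma \ref{5keumo9} uses only the symmetry relation of Lemma \ref{5keumo5}, the volume comparison (\ref{pm}) (valid for $\mu_s$ since $s>-1$), and Fubini's theorem, with the outer exponent entering only through the integration measure --- is precisely the check the paper leaves implicit.
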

The following result will be used in the proof of Theorem \ref{keumo13}.
\begin{lem}\label{5keumo11}
Let $k\in (0,1).$ There are two constants $c,C$ depending only on $a,b,N,k$ such that for all positive locally integrable functions g if $a>-1$:
$$c\, m_{a,b}g\leq  R_k^b(m_{a,b}g)\leq C m_{a,b}g,$$
and more generally if $a>-1-N$:
$$c\, m'_{a,b}g\leq  R_k^b(m'_{a,b}g)\leq C m'_{a,b}g.$$
\end{lem}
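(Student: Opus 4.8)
The plan is to reduce both inequalities to a single oscillation estimate: I would show that $m_{a,b}g$ varies by at most a bounded multiplicative factor across each regularising pseudoball $B_k(z)$. Concretely, I would aim to produce constants $0 < c_0 \le C_0$, depending only on $a,b,N,k$, with
\[
c_0\, m_{a,b}g(z) \le m_{a,b}g(\zeta) \le C_0\, m_{a,b}g(z) \qquad \text{for all } \zeta \in B_k(z).
\]
Once this is in hand the conclusion is immediate, because $R_k^b$ is nothing but the $\mu_b$-average over $B_k(z)$: integrating the chain above against $d\mu_b / \mu_b(B_k(z))$ gives
\[
c_0\, m_{a,b}g(z) \le R_k^b(m_{a,b}g)(z) \le C_0\, m_{a,b}g(z),
\]
which is the lemma with $c = c_0$ and $C = C_0$.

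To establish the upper half of the oscillation estimate I would fix $\zeta \in B_k(z)$ and take an arbitrary admissible pseudoball $B = B(\eta,R)$ in the supremum defining $m_{a,b}g(z)$, so $z \in B$ and $R > 1 - |\eta|$. Since $d(z,\eta) < R$ already forces $\big||z|-|\eta|\big| < R$, I get $1 - |z| < (1-|\eta|) + R < 2R$, and hence $d(z,\zeta) < k(1-|z|) < 2kR$. The quasi-triangle inequality then yields $d(\zeta,\eta) \le K\big(d(\zeta,z)+d(z,\eta)\big) < K(1+2k)R$, so $\zeta$ lies in the dilated ball $\widetilde B := B(\eta, AR)$ with $A := K(1+2k)$. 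This $\widetilde B$ is itself admissible for $m_{a,b}g(\zeta)$, it contains $B$, and---crucially---since both balls touch the boundary, Lemma \ref{2keumo3} gives $\mu_a(\widetilde B) \simeq (AR)^{N+1+a} \simeq R^{N+1+a} \simeq \mu_a(B)$. Therefore
\[
\frac{1}{\mu_a(B)}\int_B g\, d\mu_b \lesssim \frac{1}{\mu_a(\widetilde B)}\int_{\widetilde B} g\, d\mu_b \le m_{a,b}g(\zeta),
\]
and taking the supremum over $B$ would give $m_{a,b}g(z) \lesssim m_{a,b}g(\zeta)$. For the reverse half I would invoke Lemma \ref{5keumo5}, by which $\zeta \in B_k(z)$ implies $z \in B_{k'}(\zeta)$ with $k' = k/(1-k)$, and then rerun the identical argument with the roles of $z$ and $\zeta$ swapped.

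The maximal function $m'_{a,b}$ should be handled the same way, with the normalising factor $\mu_a(B)$ replaced throughout by $R^{N+1+a}$; here the dilation from $R$ to $AR$ only inflates the factor by $A^{N+1+a}$, so the argument goes through verbatim for all $a > -1-N$ without invoking any measure estimate. I expect the geometric transfer step to be the main obstacle: the delicate point is checking that an admissible, boundary-touching pseudoball around $z$ can be enlarged to swallow $\zeta$ while keeping its $\mu_a$-measure comparable. The key ingredients there are the elementary bound $1-|z| < 2R$ valid for any boundary-touching ball containing $z$, the quasi-triangle inequality with constant $K$, and the measure equivalence of Lemma \ref{2keumo3}; granting these, the remaining averaging is routine.
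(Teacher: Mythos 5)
Your proof is correct and follows essentially the same route as the paper's: both reduce the lemma to the pointwise oscillation estimate $c_0\, m_{a,b}g(z)\leq m_{a,b}g(\zeta)\leq C_0\, m_{a,b}g(z)$ for $\zeta\in B_k(z)$, and both prove it by dilating an admissible boundary-touching pseudoball $B(\eta,R)$ to one of radius $K(1+2k)R$ containing $\zeta$ (the paper gets this dilation from Lemma \ref{5keumo6}, you derive it directly from the quasi-triangle inequality) and then using the measure comparability of Lemma \ref{2keumo3}, with the radius-only normalization handling $m'_{a,b}$. The sole cosmetic wrinkle is that you cite Lemma \ref{5keumo5}, stated for $k\in(0,\tfrac{1}{2})$, while the present lemma allows $k\in(0,1)$; this is harmless since the containment $z\in B_{k'}(\zeta)$ with $k'=k/(1-k)$ holds, by the same one-line argument, for every $k\in(0,1)$.
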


\begin{proof} It is sufficient to show that there are two constants $0<c<C$ such that $\forall w\in B_k(z)$
$$c\, m_{a,b}g(z)\leq m_{a,b}g(w)\leq C m_{a,b}g(z).$$
We are going to show the two inequalities. More precisely we are going to show that there are two constants $0<c<C$ such that, for each pseudo-ball $B$ containing $z$ and touching the boundary, there is a pseudo-ball $B'$ containing $w$ and touching the boundary so that 
$$\frac{c}{\mu_a(B)}\displaystyle\int_{B}|g(\zeta)|d\mu_b(\zeta)\leq \frac{1}{\mu_a(B')}\displaystyle\int_{B'}|g(\zeta)|d\mu_b(\zeta)$$
and show for each pseudo-ball $B$ containing $z$ touching the boundary, there is a pseudo-ball $B'$ containing $w$ touching the boundary so that 
$$\frac{1}{\mu_a(B)}\displaystyle\int_{B}|g(\zeta)|d\mu_b(\zeta)\leq \frac{C}{\mu_a(B')}\displaystyle\int_{B'}|g(\zeta)|d\mu_b(\zeta).$$ 
In each case, by Lemma \ref{5keumo6}, it is sufficient if $B=B(x,R)$ to take $B'=B(x,K(1+2kK)R).$
For the result with $m'_{a,b}$ it is sufficient to notice that $B$ and $B'$ have equivalent radii.
\end{proof}

In the same way as Lemma \ref{5keumo11}, since  $O_{s,t}f(z):= (1-|z|^2)^tm_{s+t,s}f(z)$ and $O'_{s,t}f(z):= (1-|z|^2)^t m'_{s+t,s}f(z)$, we get:
\begin{lem}\label{5keumo12}
Let $k\in (0,1).$ There are two constants $c,C$ depending only on $s,t,N,k$ such that for all locally integrable function g if $s+t>-1$:
$$c\, O_{s,t}g\leq  R_k^b(O_{s,t}g)\leq C O_{s,t}g,$$
and more generally if $s+t>-1-N$:
$$c\,O'_{s,t}g\leq  R_k^b(O'_{s,t}g)\leq C O'_{s,t}g.$$
\end{lem}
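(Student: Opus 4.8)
The plan is to reduce the two-sided estimate to a uniform \emph{pointwise} comparison of $O_{s,t}g$ across the pseudo-ball $B_k(z)$, exactly as in the proof of Lemma \ref{5keumo11}, and then to average. Indeed $R_k^b(O_{s,t}g)(z)$ is nothing but the $\mu_b$-average of $O_{s,t}g$ over $B_k(z)$, so once I know that $O_{s,t}g(w)$ is comparable to $O_{s,t}g(z)$, with constants independent of $z$, for every $w\in B_k(z)$, both inequalities $c\,O_{s,t}g(z)\leq R_k^b(O_{s,t}g)(z)\leq C\,O_{s,t}g(z)$ follow at once by integrating this comparison against $d\mu_b$ and dividing by $\mu_b(B_k(z))$.

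First I would exploit the factorizations $O_{s,t}g(z)=(1-|z|^2)^t m_{s+t,s}g(z)$ and $O'_{s,t}g(z)=(1-|z|^2)^t m'_{s+t,s}g(z)$. Applying Lemma \ref{5keumo11} with $a=s+t$ and $b=s$ (valid because $s+t>-1$, respectively $s+t>-1-N$ for the primed statement) supplies the pointwise comparison $m_{s+t,s}g(w)\simeq m_{s+t,s}g(z)$ (resp. for $m'_{s+t,s}$) for all $w\in B_k(z)$, with constants depending only on $s,t,N,k$. It then remains to handle the scalar factor $(1-|\cdot|^2)^t$. For $w\in B_k(z)$ one has $||z|-|w||\leq d(z,w)<k(1-|z|)$, whence
$$(1-k)(1-|z|)<1-|w|<(1+k)(1-|z|),$$
and since $k<1$ this yields $(1-|w|^2)^t\simeq(1-|z|^2)^t$ with constants depending only on $t$ and $k$. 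Multiplying the two comparisons gives $O_{s,t}g(w)\simeq O_{s,t}g(z)$ (and likewise $O'_{s,t}g(w)\simeq O'_{s,t}g(z)$) uniformly for $w\in B_k(z)$, which is the pointwise statement needed for the averaging step above.

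I do not expect a genuine obstacle here, as the substantive work—that $m_{s+t,s}g$ oscillates by only bounded factors over $B_k(z)$—is inherited from Lemma \ref{5keumo11}. The only point demanding care is that the comparison constants for the weight factor $(1-|\cdot|^2)^t$ be uniform in $z$ and depend solely on $k$ and $t$; this is exactly what the displayed inequality above guarantees, precisely because $B_k(z)$ shrinks proportionally to $1-|z|$. For the primed version one argues identically, using the $m'_{s+t,s}$ case of Lemma \ref{5keumo11} and observing, as there, that the averaging over $B_k(z)$ is unaffected since the comparison is pointwise.
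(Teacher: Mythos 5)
Your proposal is correct and follows essentially the same route as the paper, which proves this lemma by exactly the factorization $O_{s,t}g=(1-|\cdot|^2)^t m_{s+t,s}g$ (resp.\ $O'_{s,t}g=(1-|\cdot|^2)^t m'_{s+t,s}g$) together with the argument of Lemma \ref{5keumo11}. The only small imprecision is that the pointwise comparison $m_{s+t,s}g(w)\simeq m_{s+t,s}g(z)$ for $w\in B_k(z)$ is what the \emph{proof} of Lemma \ref{5keumo11} establishes rather than its statement; since you invoke that proof explicitly and supply the missing uniform comparison of the factor $(1-|\cdot|^2)^t$ over $B_k(z)$, your argument is complete and coincides with the paper's.
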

Now we give a useful characterization of elements in $(B_p^{a,b,q,Q})$ see Definition \ref{keumo8}. 
\begin{lem}\label{5keumo13}
For $a>-1,$ $\omega \in (B_p^{a,b,q,Q})$ ($b>-1$) if and only if there is a constant $C_{a,b,p,q,Q}>0$ such that 
$$\left( \frac{\mu_{b+\frac{Q-q}{p}}(B)}{\mu^2_a(B)}\displaystyle\int_{B}f(z)d\mu_b(z)\right) ^p\omega(B)\leq C_{a,b,p,q,Q} \displaystyle\int_{B}f^p(z)\omega(z)d\mu_q(z).$$
More generally if $a>-1-N,$ $\omega \in (B_p^{a,b,q,Q})$ ($b>-1$) if and only if there is a constant $C_{a,b,p,q,Q}>0$ such that  

$$\left( \frac{\mu_{b+\frac{Q-q}{p}}(B)}{R^{2(N+1+a)}}\displaystyle\int_{B}f(z)d\mu_b(z)\right) ^p\omega(B)\leq C_{a,b,p,q,Q} \displaystyle\int_{B}f^p(z)\omega(z)d\mu_q(z), $$
for all positive $f\in L^p(\omega d\mu_q) $ and all pseudo balls $B$ of radius R such that $B\cap\partial\mathbb{B}\neq\varnothing.$ Here $\omega(B)=\displaystyle\int_{B}\omega(z)d\mu_Q(z).$
\end{lem}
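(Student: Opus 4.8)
The plan is to prove this as a weighted testing (averaging) characterization of the B\'ekoll\'e--Bonami class, parallel to the classical equivalence between the $A_p$ condition and its averaging form. Throughout, write $A(B):=\mu_{b+\frac{Q-q}{p}}(B)/\mu_a^2(B)$ in the case $a>-1$ (and $A(B):=\mu_{b+\frac{Q-q}{p}}(B)/R^{2(N+1+a)}$ in the case $a>-1-N$), and set $\sigma(B):=\int_B(\omega(z))^{-1/(p-1)}\,d\mu_{q+p'(b-q)}(z)$. With this notation the quantity defining membership in $(B_p^{a,b,q,Q})$ is exactly $\big(A(B)\,\omega(B)\big)\big(A(B)\,\sigma(B)\big)^{p-1}=A(B)^p\,\omega(B)\,\sigma(B)^{p-1}$, so the lemma reduces to showing that the averaging inequality (for all admissible $f$ and all boundary pseudo-balls $B$) is equivalent to $A(B)^p\,\omega(B)\,\sigma(B)^{p-1}\le C$. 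The two implications will come from H\"older's inequality and from its sharpness.

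For the direction $\omega\in(B_p^{a,b,q,Q})\Rightarrow$ averaging inequality, I would factor the integrand of $\int_B f\,d\mu_b$ as
\[
f\,(1-|z|^2)^b=\big[f\,\omega^{1/p}(1-|z|^2)^{q/p}\big]\cdot\big[\omega^{-1/p}(1-|z|^2)^{b-q/p}\big]
\]
and apply H\"older with exponents $p$ and $p'$. The first factor contributes $\big(\int_B f^p\omega\,d\mu_q\big)^{1/p}$ and the second contributes $\sigma(B)^{1/p'}$, the only computation being the exponent identity $\big(b-\frac{q}{p}\big)p'=q+p'(b-q)$ together with $-p'/p=-1/(p-1)$. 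Raising to the $p$-th power, multiplying by $A(B)^p\,\omega(B)$ and regrouping produces exactly $A(B)^p\,\omega(B)\,\sigma(B)^{p-1}\int_B f^p\omega\,d\mu_q$, whose prefactor is the $(B_p^{a,b,q,Q})$ quantity and is therefore bounded by $C_{a,b,p,q,Q}$. This yields the averaging inequality with constant $C_{a,b,p,q,Q}$.

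For the converse I would test the averaging inequality on the extremal function $f(z)=(\omega(z))^{-1/(p-1)}(1-|z|^2)^{(p'-1)(b-q)}\chi_B(z)$, the same test function used in Theorem \ref{5keumo3} and justified by Lemma \ref{4keumo3}. The heart of the matter is the bookkeeping identities $p(p'-1)=p'$ and $(p'-1)(b-q)+b=q+p'(b-q)$, which show that both $\int_B f\,d\mu_b$ and $\int_B f^p\omega\,d\mu_q$ collapse to the single quantity $\sigma(B)$. Substituting into the averaging inequality gives $\big(A(B)\,\sigma(B)\big)^p\omega(B)\le C\,\sigma(B)$, and dividing by $\sigma(B)$ produces $A(B)^p\,\sigma(B)^{p-1}\omega(B)\le C$, i.e.\ $\omega\in(B_p^{a,b,q,Q})$.

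The one point requiring care — and the main obstacle — is the degenerate situation where $\sigma(B)=\infty$ (or $\sigma(B)=0$), in which the final division is not literally valid and the explicit test function need not lie in $L^p(\omega\,d\mu_q)$. I would circumvent this by reading the converse through duality: the averaging inequality is equivalent to
\[
A(B)^p\,\omega(B)\,\sup_{f}\frac{\left(\int_B f\,d\mu_b\right)^p}{\int_B f^p\omega\,d\mu_q}\le C,
\]
and by the sharpness of H\"older the supremum equals $\sigma(B)^{p-1}$ for every $B$; this identifies the averaging inequality with $A(B)^p\,\omega(B)\,\sigma(B)^{p-1}\le C$ exactly, handling $\sigma(B)\in\{0,\infty\}$ automatically (alternatively, one truncates $\omega^{-1/(p-1)}$ at height $n$ and passes to the limit $n\to\infty$). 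Finally, the two displayed forms of the lemma are treated simultaneously, since $A(B)$ enters identically on both sides of every inequality, and for boundary pseudo-balls Lemma \ref{2keumo3} gives $\mu_a(B)\simeq R^{N+1+a}$ when $a>-1$, so the normalizations $\mu_a^2(B)$ and $R^{2(N+1+a)}$ agree up to a fixed constant throughout the overlapping range $a>-1$.
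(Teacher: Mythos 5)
Your proof is correct and follows essentially the same route as the paper's: H\"older's inequality with the factorization $f(1-|z|^2)^{b-q}=\bigl[f\omega^{1/p}\bigr]\bigl[\omega^{-1/p}(1-|z|^2)^{b-q}\bigr]$ for the forward implication, and testing on $f=\omega^{-1/(p-1)}(1-|z|^2)^{(p'-1)(b-q)}\chi_B$ for the converse, with the case $a>-1-N$ handled by replacing $\mu_a(B)$ by $R^{N+1+a}$. Your extra care about the degenerate case $\sigma(B)=\infty$ (via sharpness of H\"older or truncation) is a sound refinement of a point the paper passes over silently.
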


\begin{proof}
Assume $\omega \in (B_p^{a,b,q,Q}).$ Let  $0\leq f\in L^p(\omega d\mu_q) $ and $B$ be a pseudo ball such that $B\cap\partial\mathbb{B}\neq\varnothing.$ Then
\begin{align*}
\left( \displaystyle\int_{B}f(z)d\mu_b(z)\right) ^p & =  \left( \displaystyle\int_{\mathbb{B}}f(z)(1-|z|^2)^{b-q}d\mu_q(z)\right) ^p\\
 & =  \left( \displaystyle\int_{B}f(z)(\omega(z))^{\frac{-1}{p}}(\omega(z))^{\frac{1}{p}}(1-|z|^2)^{b-q}d\mu_q(z)\right) ^p\\
 & \leq  \left( \displaystyle\int_{B}f^p(z)\omega(z)d\mu_q(z)\right) \left( \displaystyle\int_{B}((\omega(z))^{\frac{-1}{p}})^{p'}(1-|z|^2)^{p'(b-q)}d\mu_q(z)\right) ^{\frac{p}{p'}}\\
 & =  \left( \displaystyle\int_{B}f^p(z)\omega(z)d\mu_q(z)\right) \left( \displaystyle\int_{B}\omega^{\frac{-1}{p-1}}(z)d\mu_{q+p'(b-q)}(z)\right) ^{p-1}.
\end{align*}
Hence
\begin{multline*}
\left( \frac{\mu_{b+\frac{Q-q}{p}}(B)}{\mu^2_a(B)}\displaystyle\int_{B}f(z)d\mu_b(z)\right) ^p\omega(B)\leq \\
\left( \displaystyle\int_{B}f^p(z)\omega(z)d\mu_q(z)\right) \left[ \frac{\mu_{b+\frac{Q-q}{p}}(B)}{\mu^2_a(B)}\omega(B)\right] \left( \frac{\mu_{b+\frac{Q-q}{p}}(B)}{\mu^2_a(B)}\displaystyle\int_{B}\omega^{\frac{-1}{p-1}}(z)d\mu_{q+p'(b-q)}(z)\right) ^{p-1}
\end{multline*}
and because $\omega \in B_p^{a,b,q,Q},$ there is a constant $C_{a,b,p,q,Q}>0$ such that
$$\left( \frac{\mu_{b+\frac{Q-q}{p}}(B)}{\mu^2_a(B)}\displaystyle\int_{B}f(z)d\mu_b(z)\right) ^p\omega(B)\leq C_{a,b,p,q,Q} \displaystyle\int_{B}f^p(z)\omega(z)d\mu_q(z).$$
For the general case it is sufficient to replace $\mu_a(B)$ with $R^{N+1+a}.$

\noindent
If we assume that there is a constant $C_{a,b,p,q,Q}>0$ such that  
$$\left( \frac{\mu_{b+\frac{Q-q}{p}}(B)}{R^{2(N+1+a)}}\displaystyle\int_{B}f(z)d\mu_b(z)\right) ^p\omega(B)\leq C_{a,b,p,q,Q} \displaystyle\int_{B}f^p(z)\omega(z)d\mu_q(z), $$
for all positive $f\in L^p(\omega d\mu_q) $ and all pseudo balls $B$ of radius R such that $B\cap\partial\mathbb{B}\neq\varnothing,$ it is sufficient to take $f(z)= (1-|z|^2)^{(p'-1)(b-q)}\omega^{\frac{-1}{p-1}}(z)\chi_{B}(z)$ to get $\omega \in (B_p^{a,b,q,Q}).$ 
\end{proof}

\begin{rmq}\label{5keumo14}
The result remains true even if $B$ almost touches the edge.
\end{rmq}
In the same way, for $(D_p^{s,t,q,Q})$ (see Definition \ref{keumo11'}), we have the following lemma.
\begin{lem}\label{5keumo15}
For $Q\geq q$ and $s+t>-1,$ $\omega \in (D_p^{s,t,q,Q})$ ($s>-1$) if and only if then there is a constant $C_{s,t,p,q,Q}>0$ such that
$$\left( \frac{1}{\mu_{s+t+\frac{Q-q}{p}}(B)}\displaystyle\int_{B}f(z)d\mu_s(z)\right) ^p\displaystyle\int_{B}\omega(z)d\mu_{Q+pt}(z)\leq C_{s,t,p,q,Q} \displaystyle\int_{B}f^p(z)\omega(z)d\mu_q(z),$$
for all positive $f\in L^p(\omega d\mu_q) $ and all pseudo balls $B$ such that $B\cap\partial\mathbb{B}\neq\varnothing.$\\

For $s+t+\frac{Q-q}{p}>-1$ and $-1>s+t>-1-N,$ $\omega \in (D_p^{s,t,q,Q})$ ($s>-1$) if and only if then there is a constant $C_{s,t,p,q,Q}>0$ such that
$$\left( \frac{1}{\mu_{s+t+\frac{Q-q}{p}}(B)}\displaystyle\int_{B}f(z)d\mu_s(z)\right) ^p\displaystyle\int_{B}\omega(z)d\mu_{Q+pt}(z)\leq C_{s,t,p,q,Q} \displaystyle\int_{B}f^p(z)\omega(z)d\mu_q(z), $$
for all positive $f\in L^p(\omega d\mu_q) $ and all pseudo balls $B$ such that $B\cap\partial\mathbb{B}\neq\varnothing.$
\end{lem}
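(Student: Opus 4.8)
The plan is to follow verbatim the scheme of Lemma \ref{5keumo13}, replacing the role of $b$ by $s$, the weight measure $d\mu_Q$ by $d\mu_{Q+pt}$, and the normalising quantity $\mu_{b+(Q-q)/p}(B)/\mu_a^2(B)$ by $1/\mu_{s+t+(Q-q)/p}(B)$; the single engine of the argument is H\"older's inequality with conjugate exponents $p$ and $p'$. First I would observe that the two regimes in the statement can be handled simultaneously: in both of them $s+t+\frac{Q-q}{p}>-1$, so for a pseudo-ball $B$ of radius $R$ touching $\partial\mathbb{B}$ Lemma \ref{2keumo3} gives $\mu_{s+t+(Q-q)/p}(B)\simeq R^{N+1+s+t+(Q-q)/p}$, and the denominator $\mu_{s+t+(Q-q)/p}(B)$ used below is interchangeable with the $R$-power appearing in Definition \ref{keumo11'}. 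Hence no case distinction is needed.

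For the forward implication, assume $\omega\in(D_p^{s,t,q,Q})$ and let $0\le f\in L^p(\omega d\mu_q)$. I would write
$$\int_B f\,d\mu_s=\int_B f\,\omega^{-1/p}\,\omega^{1/p}(1-|z|^2)^{s-q}\,d\mu_q,$$
apply H\"older with exponents $p,p'$, using $(\omega^{-1/p})^{p'}=\omega^{-1/(p-1)}$ and $p/p'=p-1$, and record $q+p'(s-q)$ as the resulting subscript, so that
$$\Big(\int_B f\,d\mu_s\Big)^p\le\Big(\int_B f^p\omega\,d\mu_q\Big)\Big(\int_B\omega^{-1/(p-1)}d\mu_{q+p'(s-q)}\Big)^{p-1}.$$
Multiplying both sides by $\mu_{s+t+(Q-q)/p}(B)^{-p}\int_B\omega\,d\mu_{Q+pt}$ and splitting $\mu^{-p}=\mu^{-1}\cdot\mu^{-(p-1)}$, the extra factor is exactly the $(D_p^{s,t,q,Q})(\omega)$ functional evaluated at $B$, which is bounded by hypothesis; this yields the claimed testing inequality with $C_{s,t,p,q,Q}=D_p^{s,t,q,Q}(\omega)$.

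For the converse, assuming the testing inequality, I would insert the extremal function $f(z)=(1-|z|^2)^{(p'-1)(s-q)}\omega^{-1/(p-1)}(z)\chi_B(z)$, which lies in $L^p(\omega d\mu_q)$ by Lemma \ref{4keumo4}. The arithmetic of the exponents (both $(p'-1)(s-q)+s$ and $p(p'-1)(s-q)+q$ reduce to $\tfrac{ps-q}{p-1}=q+p'(s-q)$) reveals the pleasant fact that
$$\int_B f\,d\mu_s=\int_B f^p\omega\,d\mu_q=\int_B\omega^{-1/(p-1)}d\mu_{q+p'(s-q)}=:I_B.$$
Substituting into the testing inequality and cancelling one factor of $I_B$ gives $I_B^{\,p-1}\,\mu_{s+t+(Q-q)/p}(B)^{-p}\int_B\omega\,d\mu_{Q+pt}\le C$, which, after splitting the power of $\mu$ as before, is precisely the $(D_p^{s,t,q,Q})$ quantity at $B$; taking the supremum over boundary-touching $B$ gives $\omega\in(D_p^{s,t,q,Q})$.

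There is no genuine obstacle here, since the statement is the exact $(D_p)$-analogue of Lemma \ref{5keumo13}. The only point requiring care is the exponent bookkeeping, and in particular noticing that the chosen test function makes $\int_B f\,d\mu_s$ coincide with $\int_B f^p\omega\,d\mu_q$; this coincidence is what lets the single factor $I_B$ cancel cleanly and reconstitute the self-paired $(D_p)$ expression. One should also verify finiteness (and positivity) of $I_B$, again via Lemma \ref{4keumo4}, so that the cancellation is legitimate; finally, the remark that $\mu_{s+t+(Q-q)/p}(B)\simeq R^{N+1+s+t+(Q-q)/p}$ on boundary-touching balls guarantees that the argument delivers both stated regimes at once.
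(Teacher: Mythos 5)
Your proof is correct and is essentially the paper's own argument: the paper proves Lemma \ref{5keumo13} in exactly this way (H\"older's inequality for sufficiency, the test function $(1-|z|^2)^{(p'-1)(\cdot-q)}\omega^{-1/(p-1)}\chi_B$ for necessity) and then states that Lemma \ref{5keumo15} follows ``in the same way,'' which is precisely your substitution $b\mapsto s$, $d\mu_Q\mapsto d\mu_{Q+pt}$, $\mu_{b+\frac{Q-q}{p}}(B)/\mu_a^2(B)\mapsto 1/\mu_{s+t+\frac{Q-q}{p}}(B)$, together with the observation that $\mu_{s+t+\frac{Q-q}{p}}(B)\simeq R_B^{N+1+s+t+\frac{Q-q}{p}}$ on boundary-touching balls unifies the two regimes. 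One small caveat: the finiteness of $I_B$ in the converse does not follow from Lemma \ref{4keumo4} (its hypothesis, boundedness of $P_{s,t}$, is not in force here) but rather from applying the testing inequality to truncations $\min(f,n)\chi_B$ and using $f^{p-1}\omega\,d\mu_q=d\mu_s$ to pass to the limit; the paper glosses over the same point.
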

\begin{rmq}\label{5keumo16}
The result remains true even if $B$ almost touches the edge.
\end{rmq}

\begin{cor}\label{5keumo17}
For $C_1>1$, if $\omega \in D_p^{s,t,q,Q}$ then there exists a constant $C_2>0$ such that for any pseudo-ball $B:= B(y,r)$  which touches or almost touches the edge,  we have:
$$\displaystyle\int_{B(y,C_1r)}\omega(\zeta)d\mu_{Q+pt}(\zeta)\leq C_2\displaystyle\int_{B(y,r)}\omega(\zeta)d\mu_{Q+pt}(\zeta).$$
\end{cor}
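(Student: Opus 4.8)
The plan is to obtain the doubling inequality directly from the characterization of $(D_p^{s,t,q,Q})$ recorded in Lemma \ref{5keumo15}, applied to the enlarged concentric ball $\widetilde{B}:=B(y,C_1 r)$, by inserting a well-chosen test function. Since $\widetilde{B}$ touches or almost touches the boundary together with $B:=B(y,r)$, the characterization of Lemma \ref{5keumo15} (extended to almost-touching balls by Remark \ref{5keumo16}) is available for $\widetilde{B}$; we may also assume $\int_B \omega\, d\mu_{Q+pt}<\infty$, for otherwise there is nothing to prove.

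First I would choose the test function $f(z)=(1-|z|^2)^{t+\frac{Q-q}{p}}\chi_{B}(z)$, which lies in $L^p(\omega d\mu_q)$ under the above finiteness assumption. The point of this choice is that it simultaneously transmutes the two integrals appearing in Lemma \ref{5keumo15}: one computes
$$\int_{\widetilde{B}}f(z)\,d\mu_s(z)=\mu_{s+t+\frac{Q-q}{p}}(B)\qquad\text{and}\qquad \int_{\widetilde{B}}f^p(z)\,\omega(z)\,d\mu_q(z)=\int_{B}\omega(z)\,d\mu_{Q+pt}(z).$$
Substituting these into the inequality of Lemma \ref{5keumo15} written for $\widetilde{B}$ yields
$$\left(\frac{\mu_{s+t+\frac{Q-q}{p}}(B)}{\mu_{s+t+\frac{Q-q}{p}}(\widetilde{B})}\right)^{p}\int_{\widetilde{B}}\omega(z)\,d\mu_{Q+pt}(z)\leq C_{s,t,p,q,Q}\int_{B}\omega(z)\,d\mu_{Q+pt}(z).$$

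Next I would estimate the ratio of the normalizing measures. In both admissible regimes of Definition \ref{keumo11'} one has $s+t+\frac{Q-q}{p}>-1$, so Lemma \ref{2keumo3} applies to the boundary-touching concentric balls $B$ and $\widetilde{B}$ and gives $\mu_{s+t+\frac{Q-q}{p}}(B)\simeq r^{N+1+s+t+\frac{Q-q}{p}}$ as well as $\mu_{s+t+\frac{Q-q}{p}}(\widetilde{B})\simeq (C_1 r)^{N+1+s+t+\frac{Q-q}{p}}$. Hence the ratio above is comparable to $C_1^{-(N+1+s+t+\frac{Q-q}{p})}$, a constant depending only on $C_1,N,s,t,q,Q$. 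Rearranging the displayed inequality then produces
$$\int_{B(y,C_1r)}\omega(z)\,d\mu_{Q+pt}(z)\leq C_2\int_{B(y,r)}\omega(z)\,d\mu_{Q+pt}(z)$$
with $C_2\simeq C_{s,t,p,q,Q}\,C_1^{p(N+1+s+t+\frac{Q-q}{p})}$, which is the assertion.

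The computations of the two integrals of $f$ and $f^p\omega$ are immediate from the definitions of the measures $\mu_\alpha$; the only genuine points requiring care are the selection of the exponent $t+\frac{Q-q}{p}$ in the test function, chosen precisely so that the right-hand side of Lemma \ref{5keumo15} collapses to $\int_B\omega\,d\mu_{Q+pt}$ while the normalization on the left becomes $\mu_{s+t+\frac{Q-q}{p}}$, and the verification that $s+t+\frac{Q-q}{p}>-1$ in each case so that Lemma \ref{2keumo3} furnishes the clean power-of-radius comparison. I expect this bookkeeping, rather than any analytic difficulty, to be the main thing to get right.
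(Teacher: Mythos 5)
Your proposal is correct and coincides with the proof the paper intends: the corollary is stated as an immediate consequence of Lemma \ref{5keumo15} together with Remark \ref{5keumo16}, and the paper itself (in the proof of Lemma \ref{5keumo19}) points to exactly your argument, namely applying Lemma \ref{5keumo15} on the enlarged ball with the test function $f(\zeta)=(1-|\zeta|^2)^{t+\frac{Q-q}{p}}\chi_{B}(\zeta)$. Your bookkeeping — the two integral identities, the verification that $s+t+\frac{Q-q}{p}>-1$ in both regimes, and the radius comparison via Lemma \ref{2keumo3} — is accurate.
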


\begin{prop}\label{5keumo18}
Let $X$ be an homogeneous space. Let $w$ be a weight in $X$.  For $a\leq b,$ assume that there exists a constant $C_2>0$ such that
  \begin{equation}\label{eq522}
\left( \int_B[M_{\gamma}(\chi_Bu^{1-p'})(x)]^rv(x)d\nu(x)\right) ^{\frac{1}{r}}\leq C_2\left( \int_B u^{1-p'}(x)d\nu(x)\right) ^{\frac{1}{p}}
\end{equation}
 for any pseudo-ball $B\subset X,$ where $v(z)=R^{b,Q}_{k'}\omega(z)$, 
$u(z)=R^{b,Q}_{k'}\omega(z)(1-|z|^2)^{2p(b-a)+Q-q}$, $d\nu=d\mu_b$ $p=r$ and $\gamma=1-\frac{N+1+a}{N+1+b}.$ If  $\omega \in (B_p^{a,b,q,Q}),$ there is a constant $C_{a,b,p,q,Q}>0$ such that $\forall f\in L^p(\omega d\mu_q),$
$$\displaystyle\int_{\mathbb{B}}\left( m_{a,b}f(z)\right)^p\omega(z)d\mu_Q(z)\leq \displaystyle\int_{\mathbb{B}}|f(z)|^p\omega(z)d\mu_q(z).$$
\end{prop}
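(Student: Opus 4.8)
The plan is to derive the weighted bound for $m_{a,b}$ from the two-weight inequality for the fractional maximal operator $M_\gamma$ (Theorem \ref{2keumo9}), transporting everything through the regularization operator $R_k^b$. The hypothesis \eqref{eq522} is meant to be exactly the testing condition (ii) of Theorem \ref{2keumo9} for the pair $(u,v)$ with base measure $d\mu_b$; the assumption $\omega\in(B_p^{a,b,q,Q})$ will be used only at the very end, to return from the regularized weight to $\omega\,d\mu_q$.

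First I would regularize the maximal function against the weight. Since $p\geq 1$, Jensen's inequality for the averaging operator $R_k^b$ gives $(R_k^b h)^p\leq R_k^b(h^p)$, while the lower bound in Lemma \ref{5keumo11} gives $m_{a,b}f\leq c^{-1}R_k^b(m_{a,b}f)$; combining these,
$$(m_{a,b}f)^p\leq c^{-p}R_k^b\big((m_{a,b}f)^p\big).$$
Then I apply the duality relation of Lemma \ref{5keumo9}, taking its ``$f$'' to be $\omega$ and its ``$g$'' to be $(m_{a,b}f)^p$, which moves the regularization onto the weight and produces exactly $v=R_{k'}^{b,Q}\omega$:
$$\int_{\mathbb{B}}(m_{a,b}f)^p\,\omega\,d\mu_Q\leq c^{-p}\int_{\mathbb{B}}\omega\,R_k^b\big((m_{a,b}f)^p\big)\,d\mu_Q\leq C\int_{\mathbb{B}}(m_{a,b}f)^p\,R_{k'}^{b,Q}\omega\,d\mu_b=C\int_{\mathbb{B}}(m_{a,b}f)^p\,v\,d\mu_b.$$

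Next I pass from $m_{a,b}$ to the fractional maximal operator: since $m_{a,b}f$ is a supremum over a subfamily of balls, $m_{a,b}f\leq M_{a,b}f$, and because $a\leq b$ the remark following \eqref{5keumoequa7} gives $M_{a,b}f\simeq M_\gamma f$ with $\gamma=1-\frac{N+1+a}{N+1+b}\in[0,1)$ and base measure $d\mu_b$. Thus the last integral is $\lesssim\int_{\mathbb{B}}(M_\gamma f)^p\,v\,d\mu_b$. At this point \eqref{eq522} is precisely condition (ii) of Theorem \ref{2keumo9} for $(u,v)$, $d\nu=d\mu_b$, $r=p$ and this $\gamma$, so that theorem yields
$$\int_{\mathbb{B}}(M_\gamma f)^p\,v\,d\mu_b\leq C_1^p\int_{\mathbb{B}}|f|^p\,u\,d\mu_b.$$

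It remains to prove $\int_{\mathbb{B}}|f|^p\,u\,d\mu_b\lesssim\int_{\mathbb{B}}|f|^p\,\omega\,d\mu_q$, and this is the step I expect to be the main obstacle, since a pointwise comparison $u\,d\mu_b\lesssim\omega\,d\mu_q$ is false (the factor $R_{k'}^{b,Q}\omega$ averages $\omega$ and cannot be dominated by $\omega$ at a point). Here I would unwind $u=R_{k'}^{b,Q}\omega\,(1-|z|^2)^{2p(b-a)+Q-q}$ by transferring the regularization off $\omega$ through the Fubini mechanism behind Lemmas \ref{5keumo9}--\ref{5keumo10}, using \eqref{pm} and the fact that $1-|z|^2\simeq 1-|\zeta|^2$ on $B_{k'}(z)$; the surviving averages of $\omega$ over the small balls $B_{k'}(z)$ are then absorbed, in the integrated sense, by invoking $\omega\in(B_p^{a,b,q,Q})$ through its testing characterization in Lemma \ref{5keumo13}. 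The delicate part is the bookkeeping of the exponent $2p(b-a)+Q-q$: it is chosen precisely so that, once the duality has been applied and the powers of $1-|z|^2$ recombine, the remaining measure is $d\mu_q$ and the $(B_p^{a,b,q,Q})$ quantity emerges as the absorbing constant. This forces both the exact form of $u$ and the value $\gamma=1-\frac{N+1+a}{N+1+b}$, and completes the chain with $C_{a,b,p,q,Q}$ the product of the constants accumulated along the way.
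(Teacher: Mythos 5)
Your opening moves match the paper's: Jensen plus Lemma \ref{5keumo11} to regularize the maximal function, Lemma \ref{5keumo9} to move the regularization onto $\omega$, the comparison $m_{a,b}\leq M_{a,b}\simeq M_\gamma$, and Theorem \ref{2keumo9} activated by hypothesis \eqref{eq522}. But you apply Theorem \ref{2keumo9} to $f$ itself, which leaves you needing $\int_{\mathbb{B}}|f|^p u\,d\mu_b\lesssim\int_{\mathbb{B}}|f|^p\omega\,d\mu_q$ for \emph{every} $f\in L^p(\omega\,d\mu_q)$. This step is not merely ``the main obstacle''; it is impossible. An inequality $\int g\,d\nu_1\leq C\int g\,d\nu_2$ valid for all nonnegative $g$ (take $g=|f|^p=\chi_E$ with $E$ compactly contained in $\mathbb{B}$) forces $\nu_1\leq C\nu_2$ as measures, i.e. exactly the pointwise domination $u(z)(1-|z|^2)^b\leq C\,\omega(z)(1-|z|^2)^q$ a.e. --- the very inequality you correctly identify as false, since $u$ contains the average $R_{k'}^{b,Q}\omega$, which cannot be dominated by the pointwise value $\omega(z)$. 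No Fubini rearrangement or ``integrated'' invocation of Lemma \ref{5keumo13} can circumvent this, because the quantifier ``for all $f$'' is precisely what makes the integrated and pointwise statements equivalent.

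The missing idea is the paper's very first step, Lemma \ref{5keumo7}: before anything else, replace $f$ by its regularization via $m_{a,b}f\leq C_k\,m_{a,b}(R_k^bf)$, and carry $R_k^bf$ (not $f$) through the entire chain. After Theorem \ref{2keumo9} the quantity to control is then $\int_{\mathbb{B}}(R^b_kf)^p u\,d\mu_b$, whose integrand contains $(R^b_kf(z))^p\,R^{b,Q}_{k'}\omega(z)$, a product of two averages over the comparable balls $B_k(z)\subset B_{k'}(z)$. That is exactly the shape of the left-hand side in the testing characterization of $(B_p^{a,b,q,Q})$ (Lemma \ref{5keumo13}, applied to $B_{k'}(z)$ suitably dilated so that it touches the boundary, cf. Remark \ref{5keumo14}), which bounds it pointwise in $z$ by a constant times $\frac{\mu_a^{2p}(B_{k'}(z))}{\mu_b^{p+1}(B_{k'}(z))\,\mu^p_{b+\frac{Q-q}{p}}(B_{k'}(z))}\int_{B_{k'}(z)}f^p\omega\,d\mu_q$; Lemma \ref{2keumo3} converts the ratio of ball measures into a power of $1-|z|^2$, and Fubini together with the choice $c=2p(b-a)+(Q-q)$ in $u=R^{b,Q}_{k'}\omega\,(1-|z|^2)^c$ returns $\int_{\mathbb{B}} f^p\omega\,d\mu_q$. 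So your exponent bookkeeping intuition is right, but the argument only closes because the factor multiplying $R^{b,Q}_{k'}\omega$ is an average of $f$ rather than $f$ itself; without the initial regularization of $f$ the proof cannot be completed.
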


\begin{proof}
Let set
$$III=\displaystyle\int_{\mathbb{B}}\left( m_{a,b}f(z)\right) ^p\omega(z)d\mu_Q(z).$$
 Using in this order Lemma \ref{5keumo7}, Lemma \ref{5keumo11}, H\"older's inequality and Lemma \ref{5keumo9} we have, 
\begin{align*}
 III & \leq  C_k^p \displaystyle\int_{\mathbb{B}}(m_{a,b}R^b_kf(z))^p\omega(z)d\mu_Q(z)\\
 & \leq  C_k^p A^p\displaystyle\int_{\mathbb{B}}\left[ R^b_k(m_{a,b}R^b_kf(z))\right] ^p\omega(z)d\mu_Q(z)\\
 & \leq  C_k^p A^p\displaystyle\int_{\mathbb{B}}R^b_k\left[ (m_{a,b}R^b_kf(z))^p\right] \omega(z)d\mu_Q(z)\\
 & \leq C_k^pA^pC\displaystyle\int_{\mathbb{B}}(m_{a,b}R^b_kf(z))^pR^{b,Q}_{k'}\omega(z)d\mu_b(z).
\end{align*} 
For $a\leq b,$ assume that there exists a constant $C_2>0$ such that
  $$\left( \int_B[M_{\gamma}(\chi_Bu^{1-p'})(x)]^rv(x)d\nu(x)\right) ^{\frac{1}{r}}\leq C_2\left( \int_Bu^{1-p'}(x)d\nu(x)\right) ^{\frac{1}{p}}$$
 for any ball $B\subset X,$ where $v(z)=R^{b,Q}_{k'}\omega(z)$, 
$u(z)=R^{b,Q}_{k'}\omega(z)(1-|z|^2)^{c}$, $d\nu=d\mu_b$ $p=r$ and $\gamma=1-\frac{N+1+a}{N+1+b},$ with  $c$ is to be determined. Then we have
\begin{align*}
\displaystyle\int_{\mathbb{B}}(m_{a,b}f(z))^p\omega(z)d\mu_Q(z) & \leq  C_k^p A^pC\displaystyle\int_{\mathbb{B}}(m_{a,b}R^b_kf(z))^pR^{b,Q}_{k'}\omega(z)d\mu_b(z)\\
& \leq  C_k^p A^pC\displaystyle\int_{\mathbb{B}}(M_{a,b}R^b_kf(z))^pR^{b,Q}_{k'}\omega(z)d\mu_b(z)\\
& \lesssim  C_k^p A^pC\displaystyle\int_{\mathbb{B}}(M_{\gamma}R^b_kf(z))^pR^{b,Q}_{k'}\omega(z)d\mu_b(z)\\
& \leq  C_k^p A^pC'\displaystyle\int_{\mathbb{B}}(R^b_kf(z))^p(1-|z|^2)^{c }R^{b,Q}_{k'}\omega(z)d\mu_b(z),
\end{align*}
where for the last inequality we used Theorem \ref{2keumo9}.  Now let us control $(R^b_kf(z))^pR^{b,Q}_{k'}\omega(z).$ We have
\begin{align*}
(R^b_kf(z))^pR^{b,Q}_{k'}\omega(z) & =  \left( \frac{1}{\mu_b(B_{k}(z))}\displaystyle\int_{B_{k}(z)}f(\zeta)d\mu_b(\zeta)\right) ^p\left( \frac{1}{\mu_b(B_{k'}(z))}\displaystyle\int_{B_{k'}(z)}\omega(\zeta)d\mu_Q(\zeta)\right) \\
& \leq   \left( \frac{1}{\mu_b(B_{k}(z))}\displaystyle\int_{B_{k'}(z)}f(\zeta)d\mu_b(\zeta)\right) ^p\left( \frac{1}{\mu_b(B_{k'}(z))}\displaystyle\int_{B_{k'}(z)}\omega(\zeta)d\mu_Q(\zeta)\right) \\
& \lesssim    \left( \frac{1}{\mu_b(B_{k'}(z))}\displaystyle\int_{B_{k'}(z)}f(\zeta)d\mu_b(\zeta)\right) ^p\left( \frac{1}{\mu_b(B_{k'}(z))}\displaystyle\int_{B_{k'}(z)}\omega(\zeta)d\mu_Q(\zeta)\right),
\end{align*}
where the second inequality is because $k'> k,$ the third one is because $\mu_b(B_{k'}(z))\backsimeq \mu_b(B_{k}(z)).$ Then
\begin{multline*}
(R^b_kf(z))^pR^{b,Q}_{k'}\omega(z) \lesssim  \\
\frac{\mu_a^{2p}(B_{k'}(z))}{\mu_b^{p+1}(B_{k'}(z))\mu_{b+\frac{Q-q}{p}}^{p}(B_{k'}(z))}\left( \frac{\mu_{b+\frac{Q-q}{p}}(B_{k'}(z))}{\mu^2_a(B_{k'}(z))}\displaystyle\int_{B_{k'}(z)}f(\zeta)d\mu_b(\zeta)\right) ^p \left( \displaystyle\int_{B_{k'}(z)}\omega(\zeta)d\mu_Q(\zeta)\right),
\end{multline*}
so that 
$$(R^b_kf(z))^pR^{b,Q}_{k'}\omega(z) \lesssim  C_{a,b,p,q,Q}  \frac{\mu_a^{2p}(B_{k'}(z))}{\mu_b^{p+1}(B_{k'}(z))\mu_{b+\frac{Q-q}{p}}^{p}(B_{k'}(z))}
\displaystyle\int_{B_{k'}(z)}f^p(\zeta)\omega(\zeta)d\mu_q(\zeta),$$ 
 because of Lemma \ref{5keumo13} since it is possible to dilate  the pseudo-balls $B_{k}$ so that they touch the boundary and the fact that the measures $ d\mu_q$ and $d\mu_{q+p'(b-a)}$  are homogeneous. By Lemma \ref{2keumo3} we have
$$\frac{\mu_a^{2p}(B_{k'}(z))}{\mu_b^{p+1}(B_{k'}(z))\mu_{b+\frac{Q-q}{p}}^{p}(B_{k'}(z))}\backsimeq (1-|z|^2)^{2pa-2pb-b-(Q-q)-(N+1)}.$$ 
Recall that we already have
$$\displaystyle\int_{\mathbb{B}}(m_{a,b}f(z))^p\omega(z)d\mu_Q(z)  \leq  C_k^p A^pC'\displaystyle\int_{\mathbb{B}}(1-|z|^2)^{c+(b-a)}(R^b_kf(z))^pR^{b,Q}_{k'}\omega(z)d\mu_a(z).$$
Let us set
$$IV=\displaystyle\int_{\mathbb{B}}(1-|z|^2)^{c+(b-a)}(R^b_kf(z))^pR^{b,Q}_{k'}\omega(z)d\mu_a(z).$$
Hence, using the previous control of $(R^b_kf(z))^pR^{b,Q}_{k'}\omega(z)$ and Fubini's theorem, we have
\begin{align*}
IV & \lesssim  \displaystyle\int_{\mathbb{B}}\left( \displaystyle\int_{B_{k'}(z)}f^p(\zeta)\omega(\zeta)d\mu_q(\zeta)\right) (1-|z|^2)^{c+(b-a)+2pa-2pb-b-Q+q-1-N}d\mu_{a}(z)\\
 & \lesssim  \displaystyle\int_{\mathbb{B}}\left( \displaystyle\int_{\mathbb{B}}\chi_{B_{k'}(z)}(\zeta)(1-|z|^2)^{c+(1-2p)(b-a)-b-Q+q-1-N}d\mu_{a}(z)\right) f^p(\zeta)\omega(\zeta)d\mu_q(\zeta)\\
 & \lesssim  \displaystyle\int_{\mathbb{B}}\left( \displaystyle\int_{\mathbb{B}}\chi_{B_{k''}(\zeta)}(z)(1-|z|^2)^{c+(1-2p)(b-a)-b-Q+q-1-N}d\mu_{a}(z)\right) f^p(\zeta)\omega(\zeta)d\mu_q(\zeta)\\
 & \lesssim  \displaystyle\int_{\mathbb{B}}\left( \displaystyle\int_{\mathbb{B}}\chi_{B_{k''}(\zeta)}(z)(1-|z|^2)^{c-2p(b-a)-Q+q-1-N}d\mu(z)\right) f^p(\zeta)\omega(\zeta)d\mu_q(\zeta)\\
 & \lesssim  \displaystyle\int_{\mathbb{B}}(1-|\zeta|^2)^{c-2p(b-a)-(Q-q)}f^p(\zeta)\omega(\zeta)d\mu_q(\zeta).
\end{align*}
The proof is complete  if we take
$$c=2p(b-a)+(Q-q).$$ 
\end{proof}

\begin{rmq}
The result in Proposition \ref{5keumo18} says that if we assume that the Sawyer type condition (\ref{eq522}) holds, then the necessary condition $w\in (B^{a,b,q,Q}_p)$ for the boundedness  of $T_{a,b}$ from $L^p(wd\mu_q)$ to $L^p(wd\mu_Q)$ is also sufficient by the good lambda inequality in Theorem \ref{5keumo26}. Since we do not know if $w\in (B^{a,b,q,Q}_p)$  implies the Sawyer type condition, we will provide in the sequel a testable sufficient condition for the boundedness of  $T_{a,b}$ in this situation.
\end{rmq}
\begin{lem}\label{5keumo19}
For $s+t>-1,$ or for $-N-1<s+t<-1$ with $s+t+\frac{Q-q}{p}>-1,$ if $\omega \in D_p^{s,t,q,Q},$ we have  $\sigma (z)= R^{s,Q+pt}_{k'}\omega(z)(1-|z|^2)^{-t-\frac{Q-q}{p}}\in( A_{p,s+t+\frac{Q-q}{p}}).$
\end{lem}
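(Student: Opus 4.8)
The plan is to verify directly the two--factor Muckenhoupt condition of Definition \ref{2keumo10} for $\sigma$ relative to the exponent $\alpha:=s+t+\frac{Q-q}{p}$. In both regimes $\alpha>-1$, so $(\mathbb{B},d,\mu_\alpha)$ is a homogeneous space by Lemma \ref{2keumo3}. The whole argument rests on one algebraic observation: since $\alpha-t-\frac{Q-q}{p}=s$, one has $\sigma(z)\,d\mu_\alpha(z)=R^{s,Q+pt}_{k'}\omega(z)\,d\mu_s(z)$, while a short computation with $p'=p/(p-1)$ gives $\sigma^{-1/(p-1)}(z)\,d\mu_\alpha(z)=\big[R^{s,Q+pt}_{k'}\omega(z)\big]^{-1/(p-1)}(1-|z|^2)^{\,s+p'(t+\frac{Q-q}{p})}\,d\mu(z)$. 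Thus checking $(A_p,\alpha)$ amounts to estimating $\int_B R^{s,Q+pt}_{k'}\omega\,d\mu_s$ and $\int_B \big[R^{s,Q+pt}_{k'}\omega\big]^{-1/(p-1)}(1-|z|^2)^{\,s+p'(t+\frac{Q-q}{p})}\,d\mu$ over pseudo-balls $B$, and comparing their product with $D_p^{s,t,q,Q}(\omega)$.

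For a pseudo-ball $B$ touching (or almost touching) the boundary I would first treat the numerator factor. Writing out $R^{s,Q+pt}_{k'}\omega$, exchanging the order of integration by Fubini, and using Lemma \ref{5keumo5} together with $\mu_s(B_{k'}(z))\simeq\mu_s(B_{k'}(\zeta))$ from (\ref{pm}) to see that the resulting kernel $\int_B \mu_s(B_{k'}(z))^{-1}\chi_{B_{k'}(z)}(\zeta)\,d\mu_s(z)$ is bounded and supported in a fixed dilate $B^{\ast}$ of $B$, I obtain $\int_B R^{s,Q+pt}_{k'}\omega\,d\mu_s\lesssim \int_{B^{\ast}}\omega\,d\mu_{Q+pt}$.

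The delicate factor is the one carrying $\big[R^{s,Q+pt}_{k'}\omega\big]^{-1/(p-1)}$, since the regularization sits inside a negative power. Here I would apply Jensen's inequality for the convex map $x\mapsto x^{-1/(p-1)}$ against the probability measure $d\mu_{Q+pt}|_{B_{k'}(z)}/\mu_{Q+pt}(B_{k'}(z))$: writing $R^{s,Q+pt}_{k'}\omega=\lambda\cdot(\text{probability average of }\omega)$ with $\lambda(z)=\mu_{Q+pt}(B_{k'}(z))/\mu_s(B_{k'}(z))$, this yields $\big[R^{s,Q+pt}_{k'}\omega(z)\big]^{-1/(p-1)}\le \lambda(z)^{-1/(p-1)}\,\mu_{Q+pt}(B_{k'}(z))^{-1}\int_{B_{k'}(z)}\omega^{-1/(p-1)}\,d\mu_{Q+pt}$. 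Because $B_{k'}(z)$ is always an interior ball, Lemma \ref{2keumo3} applies for every exponent and gives $\lambda(z)\simeq(1-|z|^2)^{Q+pt-s}$. Substituting $\lambda^{-1/(p-1)}$ and collecting all powers of $(1-|z|^2)$, the exponents collapse exactly to $q+p'(s-q)$ — this is the one computation I would carry out in detail, the cancellation coming from $pt/(p-1)=p't$. A second Fubini, identical in spirit to the first, then produces $\int_B \sigma^{-1/(p-1)}\,d\mu_\alpha\lesssim \int_{B^{\ast}}\omega^{-1/(p-1)}\,d\mu_{q+p'(s-q)}$.

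Finally I would multiply the two estimates, use $\mu_\alpha(B)\simeq\mu_\alpha(B^{\ast})$ and Lemma \ref{2keumo3} to replace $\mu_\alpha(B^{\ast})$ by $\mu_{s+t+\frac{Q-q}{p}}(B^{\ast})$ (respectively by $R_{B^{\ast}}^{N+1+s+t+\frac{Q-q}{p}}$ in the range $-N-1<s+t<-1$), and recognize the product as the defining $D_p^{s,t,q,Q}$ expression of Definition \ref{keumo11'} evaluated on $B^{\ast}$; since $B^{\ast}$ still touches the boundary, it is $\le D_p^{s,t,q,Q}(\omega)<\infty$ by hypothesis. For pseudo-balls $B$ that stay away from the boundary the condition is automatic: the averaging balls $B_{k'}(z)$ of nearby points are comparable by (\ref{pm}), so $R^{s,Q+pt}_{k'}\omega$ and hence $\sigma$ vary slowly and are comparable to a constant on such $B$, making the $(A_p,\alpha)$ quotient uniformly bounded. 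The main obstacle is precisely the negative-power factor: getting past $\big[R^{s,Q+pt}_{k'}\omega\big]^{-1/(p-1)}$ via Jensen and then verifying that the accumulated $(1-|z|^2)$-exponent is exactly $q+p'(s-q)$, which is what makes $(D_p^{s,t,q,Q})$ the right hypothesis.
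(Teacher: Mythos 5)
Your treatment of the boundary-touching balls is correct, and it is in fact cleaner than the paper's own argument. The paper bounds the factor $\int_B\sigma^{-1/(p-1)}\,d\mu_{\alpha}$ (the quantity $VIII$ there, with $\alpha=s+t+\frac{Q-q}{p}$) by two applications of H\"older's inequality followed by a Fubini step that requires an extra doubling-type comparison, namely $\int_{B_{k''}(x)}\omega\,d\mu_{Q+pt}\lesssim\int_{B_{k'}(z)}\omega\,d\mu_{Q+pt}$ for $x\in B_{k'}(z)$, which it justifies as a variant of Corollary \ref{5keumo17}. Your Jensen inequality for the convex map $x\mapsto x^{-1/(p-1)}$ against the normalized measure $d\mu_{Q+pt}|_{B_{k'}(z)}$ replaces all of this in one stroke: it leaves no factor $\bigl(\int_{B_{k'}(z)}\omega\,d\mu_{Q+pt}\bigr)^{\pm1}$ to be cancelled afterwards, and your exponent bookkeeping is exact, since
\begin{equation*}
s+p'\Bigl(t+\frac{Q-q}{p}\Bigr)-(p'-1)(Q+pt-s)-(N+1+Q+pt)+(Q+pt)+(N+1)=q+p'(s-q).
\end{equation*}
Combined with the Fubini/support control via Lemmas \ref{5keumo5} and \ref{5keumo6}, the comparison $\mu_\alpha(B)\simeq\mu_\alpha(B^{\ast})$ for boundary-touching $B$, and the identification of the resulting product with the $D_p^{s,t,q,Q}$ expression on the dilate $B^{\ast}$, this gives the $(A_p,\alpha)$ bound on such balls with constant $\lesssim D_p^{s,t,q,Q}(\omega)$, exactly as in the paper's second case.

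The gap is in your interior case, which you declare ``automatic''. It is not, and your justification by (\ref{pm}) does not work: (\ref{pm}) compares the $\mu_b$-measures of the regularization balls, i.e.\ the normalizing denominators, and says nothing about the masses $\int_{B_{k'}(\cdot)}\omega\,d\mu_{Q+pt}$. For two nearby points $z,y$ of an interior ball $B(y,r)$ with $r\ll1-|y|$, the balls $B_{k'}(z)$ and $B_{k'}(y)$ are overlapping but non-nested balls of radius $\simeq1-|y|$; for a general weight, $\omega\,d\mu_{Q+pt}$ could concentrate on $B_{k'}(z)\setminus B_{k'}(y)$ and make the ratio $R_{k'}^{s,Q+pt}\omega(z)/R_{k'}^{s,Q+pt}\omega(y)$ arbitrarily large, so $\sigma$ need not be comparable to a constant on $B$ for weights outside the hypothesis class. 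The near-constancy of $\sigma$ on interior balls is true, but only because $\omega\in(D_p^{s,t,q,Q})$ forces $\omega\,d\mu_{Q+pt}$ to be doubling on balls whose radius is comparable to their distance to the boundary (each $B_{k'}(z)$ is such a ball, i.e.\ it ``almost touches the edge''): this is Corollary \ref{5keumo17}, and invoking it is precisely how the paper handles its first case. Note also that you cannot instead run your Fubini argument on interior balls: there $\bigcup_{z\in B}B_{k'}(z)$ is only contained in a ball of radius $\simeq1-|y|\gg r$, so after Fubini the normalization $\mu_\alpha(B)^{-1}$ no longer matches the dilated ball and no bound results. Replace ``automatic by (\ref{pm})'' with an appeal to Corollary \ref{5keumo17}, and your proof is complete.
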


\begin{proof}
We have $\sigma (z)= R^{s,Q+pt}_{k'}\omega(z)(1-|z|^2)^{-t-\frac{Q-q}{p}}\in( A_{p,s+t+\frac{Q-q}{p}})$ if 
$$\left( \frac{1}{\mu_{s+t+\frac{Q-q}{p}}(B)}\displaystyle\int_{B}\sigma(z)d\mu_{s+t+\frac{Q-q}{p}}(z)\right) \left( \frac{1}{\mu_{s+t+\frac{Q-q}{p}}(B)}\displaystyle\int_{B}\sigma^{\frac{-1}{p-1}}(z)d\mu_{s+t+\frac{Q-q}{p}}(z)\right) ^{p-1}\leq C_p(\omega).$$
Note that $\sigma (z)\simeq R_{k'}^{s+t+\frac{Q-q}{p},Q+pt}\omega(z)$ since $d\mu_b(B_{k^\prime}(z))\simeq (1-|z|^2)^{n+1+b}$.   We consider two cases.\\

 \textit{First case}: $B:=B(y,r)$ with $r\ll 1-|y|.$\\
 
  In this case, because of Corollary \ref{5keumo17}, there are two constants 0<c<C such that 
$$cR_{k'}^{s+t+\frac{Q-q}{p},Q+pt}\omega(y)<R_{k'}^{s+t+\frac{Q-q}{p},Q+pt}\omega(x)<CR_{k'}^{s+t+\frac{Q-q}{p},Q+pt}\omega(y)$$
  for all $x\in B.$  Then setting
$$V=\left( \frac{1}{\mu_{s+t+\frac{Q-q}{p}}(B)}\displaystyle\int_{B}\sigma(z)d\mu_{s+t+\frac{Q-q}{p}}(z)\right) \left( \frac{1}{\mu_{s+t+\frac{Q-q}{p}}(B)}\displaystyle\int_{B}\sigma^{\frac{-1}{p-1}}(z)d\mu_{s+t+\frac{Q-q}{p}}(z)\right) ^{p-1},$$
we have
\begin{align*}
 V & \simeq   \frac{\mu_{s+t+\frac{Q-q}{p}}(B)}{\mu_{s+t+\frac{Q-q}{p}}(B)}\left( \frac{\mu_{s+t+\frac{Q-q}{p}}(B)}{\mu_{s+t+\frac{Q-q}{p}}(B)}\right) ^{p-1} = 1.
\end{align*}

 \textit{Second case}: $B:=B(y,r)$ touches the edge.\\
 
 Recall that our measures are homogeneous, and recall that if $z\in B~\textnormal{and}~x\in B_{k'}(z),$ then $z\in B_{k''}(x)~\textnormal{and}~x\in B':=B(y,2k'Kr+Kr).$ Let
$$VI=\displaystyle\int_B\left( \frac{1}{\mu_s(B_{k'}(z))}\displaystyle\int_{B_{k'}(z)}\omega(x)d\mu_{Q+pt}(x)\right) (1-|z|^2)^{-t-\frac{Q-q}{p}}d\mu_{s+t+\frac{Q-q}{p}}(z).$$
Then by Fubini's theorem we have,
\begin{align*}
VI &  =  \displaystyle\int_B\left( \frac{1}{\mu_s(B_{k'}(z))}\displaystyle\int_{B_{k'}(z)}\omega(x)d\mu_{Q+pt}(x)\right) d\mu_{s}(z)\\
 & =  \displaystyle\int_B\displaystyle\int_{B_{k'}(z)}\frac{1}{\mu_s(B_{k'}(z))}\omega(x)d\mu_{s}(z)d\mu_{Q+pt}(x)\\
 & =  \displaystyle\int_{\mathbb{B}}\displaystyle\int_{\mathbb{B}}\frac{1}{\mu_s(B_{k'}(z))}\chi_{B}(z)\chi_{B_{k'}(z)}(x)\omega(x)d\mu_{s}(z)d\mu_{Q+pt}(x)\\
 & \lesssim  \displaystyle\int_{\mathbb{B}}\displaystyle\int_{\mathbb{B}}\frac{1}{\mu_s(B_{k''}(x))}\chi_{B'}(x)\chi_{B_{k''}(x)}(z)\omega(x)d\mu_{s}(z)d\mu_{Q+pt}(x)\\
 & \lesssim  \displaystyle\int_{B'}\omega(x)d\mu_{Q+pt}(x).
\end{align*}
So we have 
\begin{equation}\label{eq51}
VI=\int_B R_{k'}^{s,Q+pt}\omega(z)(1-|z|^2)^{-t-\frac{Q-q}{p}}d\mu_{s+t+\frac{Q-p}{p}}(z)\lesssim \displaystyle\int_{B'}\omega(x)d\mu_{Q+pt}(x).
\end{equation}
Let us now control $(R_{k'}^{s,Q+pt}\omega(z)(1-|z|^2)^{-t-\frac{Q-q}{p}})^{1-p'}.$ We have
\begin{align*}
(R_{k'}^{s,Q+pt}\omega(z)(1-|z|^2)^{-t-\frac{Q-q}{p}})^{1-p'} & =  \left( \frac{1}{\mu_s(B_{k'}(z))}(1-|z|^2)^{-t-\frac{Q-q}{p}}\displaystyle\int_{B_{k'}(z)}\omega(x)d\mu_{Q+pt}(x)\right) ^{1-p'}\\
& \simeq  \left[ \left( \displaystyle\int_{B_{k'}(z)}\omega(x)d\mu_{Q+pt}(x)\right) ^{-1}\left( \displaystyle\int_{B_{k'}(z)}d\mu_{s+t+\frac{Q-q}{p}}(x)\right) \right] ^{p'-1}.
\end{align*}
Setting 
$$VII=\left[ \left( \displaystyle\int_{B_{k'}(z)}\omega(x)d\mu_{Q+pt}(x)\right) ^{-1}\left( \displaystyle\int_{B_{k'}(z)}d\mu_{s+t+\frac{Q-q}{p}}(x)\right) \right] ^{p'-1},$$
we have, by H\"older's inequality 
\begin{align*}
VII & =  \left[ \left( \displaystyle\int_{B_{k'}(z)}\omega(x)d\mu_{Q+pt}(x)\right) ^{-1}\left( \displaystyle\int_{B_{k'}(z)}\omega^{\frac{1}{p}}(x)\omega^{\frac{-1}{p}}(x)(1-|x|^2)^{s-\frac{q}{p}+\frac{Q+pt}{p}} d\mu(x)\right) \right] ^{p'-1}\\
& \leq  \left[ \left( \displaystyle\int_{B_{k'}(z)}\omega(x)d\mu_{Q+pt}(x)\right) ^{\frac{1}{p}-1}\left( \displaystyle\int_{B_{k'}(z)}\omega^{\frac{-p'}{p}}(x) d\mu_{q+p'(s-q)}(x)\right) ^{\frac{1}{p'}}\right] ^{p'-1}\\
& \leq  \left[ \left( \displaystyle\int_{B_{k'}(z)}\omega(x)d\mu_{Q+pt}(x)\right) ^{-1}\left( \displaystyle\int_{B_{k'}(z)}\omega^{\frac{-p'}{p}}(x) d\mu_{q+p'(s-q)}(x)\right) \right] ^{\frac{1}{p}}.
\end{align*}
 
 Let us set
$$VIII=\displaystyle\int_{B}\left( R^{s,Q+pt}_{k'}\omega(z)(1-|z|^2)^{-t-\frac{Q-q}{p}}\right) ^{\frac{-1}{p-1}}d\mu_{s+t+\frac{Q-q}{p}}(z),$$
then we have
\begin{multline*}
VIII   \leq  \displaystyle\int_{B}\left[\left( \displaystyle\int_{B_{k'}(z)}\omega(x)d\mu_{Q+pt}(x)\right)^{-1}\left( \displaystyle\int_{B_{k'}(z)}\omega^{\frac{-p'}{p}}(x) d\mu_{q+p'(s-q)}(x)\right) \omega(z)(1-|z|^2)^{Q+pt}\right] ^{\frac{1}{p}}\\ \omega^{\frac{-1}{p}}(z)(1-|z|^2)^{s-\frac{q}{p}}d\mu(z)
\end{multline*}

so that by H\"older's inequality and Fubini's theorem, we have
\begin{multline*}
VIII \lesssim   \left( \displaystyle\int_{B}\omega^{\frac{-p'}{p}}(z) d\mu_{q+p'(s-q)}(z)\right) ^{\frac{1}{p'}}\\
 \left( \displaystyle\int_{B'} \left[ \left(\displaystyle\int_{B_{k"}(x)}\omega(\zeta)d\mu_{Q+pt}(\zeta)\right)  ^{-1} \left(  \displaystyle\int_{B_{k"}(x)}\omega(z)d\mu_{Q+pt}(z)\right)  \right] \omega^{\frac{-p'}{p}}(x) d\mu_{q+p'(s-q)}(x) \right) ^{\frac{1}{p}}.
\end{multline*}
Finally,  we obtain 
 \begin{equation}\label{eq52}
VIII \lesssim  \displaystyle\int_{B'}\omega^{\frac{-p'}{p}}(z) d\mu_{q+p'(s-q)}(z).
\end{equation}
Where on the last but one inequality we used Fubini's theorem (as in the control of VI) and the fact that for $x\in B_{k'}(z)$ we have $z\in B_{k"}(x)$ and
$$\displaystyle\int_{B_{k"}(x)}\omega(\zeta)d\mu_{Q+pt}(\zeta)\lesssim \displaystyle\int_{B_{k'}(z)}\omega(\zeta)d\mu_{Q+pt}(\zeta).$$
This is a variant of Corollary \ref{5keumo17} or simply the application of Lemma \ref{5keumo15} for $f(\zeta)=(1-|\zeta|^2)^{t+\frac{Q-q}{p}} 1_{B_{k'}(z)}(\zeta)$ with $B:=B(z,2Kk'(1+k")(1-|z|))\supseteqq B_{k"}(x)$ (Lemma \ref{5keumo6}) and the fact that our measures are homogeneous.  Since $\omega \in D_p^{s,t,q,Q},$ we use (\ref{eq51}) and (\ref{eq52}) to conclude that $\sigma\in (A_{p,s+t+\frac{Q-q}{p}})$. 
\end{proof} 

\begin{theo}\label{5keumo20}
 In the case both $Q\geq q$ and $s+t>-1$ hold, and in the case both $s+t+\frac{Q-q}{p}>-1$ and $-1> s+t>-N-1$ hold,  if $\omega \in D_p^{s,t,q,Q},$ there is a constant $C_{s,t,p,q,Q}>0$ such that $\forall f\in L^p(\omega d\mu_q),$
$$\displaystyle\int_{\mathbb{B}}(O_{s,t}f(z))^p\omega(z)d\mu_Q(z)\leq C_{s,t,p,q,Q} \displaystyle\int_{\mathbb{B}}|f(z)|^p\omega(z)d\mu_q(z).$$
\end{theo}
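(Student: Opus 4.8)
The plan is to reduce, via the identities $O_{s,t}f(z)=(1-|z|^2)^t m_{s+t,s}f(z)$ (resp. $O'_{s,t}f(z)=(1-|z|^2)^tm'_{s+t,s}f(z)$ in the range $-1>s+t>-N-1$), the asserted bound to the weighted inequality
$$\int_{\mathbb{B}}(m_{s+t,s}f(z))^p\,\omega(z)\,d\mu_{Q+pt}(z)\leq C\int_{\mathbb{B}}|f(z)|^p\omega(z)\,d\mu_q(z),$$
and then to follow the scheme of Proposition \ref{5keumo18}, replacing the Sawyer-type hypothesis used there by the single-weight information furnished by Lemma \ref{5keumo19}. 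First I would insert the regularization operator $R_k^s$: by Lemma \ref{5keumo7} and Lemma \ref{5keumo11} (their $m'$ versions when $s+t<-1$) one has $m_{s+t,s}f\lesssim m_{s+t,s}(R_k^sf)$ and $m_{s+t,s}(R_k^sf)\lesssim R_k^s\big(m_{s+t,s}(R_k^sf)\big)$. Applying Jensen's inequality to push $R_k^s$ inside the $p$-th power, and then Lemma \ref{5keumo9} (with its $Q$ replaced by $Q+pt$) to transfer $R_k^s$ onto the weight, the left-hand side is dominated by
$$\int_{\mathbb{B}}\big(m_{s+t,s}(R_k^sf)(z)\big)^p\,R^{s,Q+pt}_{k'}\omega(z)\,d\mu_s(z)=\int_{\mathbb{B}}\big(m_{s+t,s}(R_k^sf)(z)\big)^p\,\sigma(z)\,d\mu_\alpha(z),$$
where $\alpha:=s+t+\tfrac{Q-q}{p}$ and $\sigma(z):=R^{s,Q+pt}_{k'}\omega(z)(1-|z|^2)^{-t-\frac{Q-q}{p}}$, the equality holding because $\alpha-t-\tfrac{Q-q}{p}=s$.

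The crux is a pointwise comparison of $m_{s+t,s}$ with the genuine Hardy--Littlewood operator $M_\alpha g(z)=\sup_{B\ni z}\mu_\alpha(B)^{-1}\int_B|g|\,d\mu_\alpha$. Setting $h:=(R_k^sf)(1-|\cdot|^2)^{s-\alpha}$, for any pseudoball $B$ of radius $R$ touching the boundary one has $\int_B R_k^sf\,d\mu_s=\int_B h\,d\mu_\alpha$ and, by Lemma \ref{2keumo3}, $\mu_\alpha(B)/\mu_{s+t}(B)\simeq R^{(Q-q)/p}$ (using $\alpha>-1$ together with $s+t>-1$ in the first range; in the second range the $R^{N+1+s+t}$ normalization built into $m'_{s+t,s}$ is used directly, giving the same exponent). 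Since the radii are bounded and $Q\geq q$, the factor $R^{(Q-q)/p}$ is bounded above, whence $m_{s+t,s}(R_k^sf)\lesssim M_\alpha h$ pointwise. I expect this to be the main obstacle: it is precisely where the hypothesis $Q\geq q$ is indispensable, and it is what lets one replace the fractional maximal operator of Proposition \ref{5keumo18} by an ordinary one, thereby reducing to genuine $A_p$ theory. By Lemma \ref{5keumo19}, $\sigma\in(A_{p,\alpha})$, so Theorem \ref{2keumo13} yields
$$\int_{\mathbb{B}}\big(m_{s+t,s}(R_k^sf)\big)^p\sigma\,d\mu_\alpha\lesssim\int_{\mathbb{B}}(M_\alpha h)^p\sigma\,d\mu_\alpha\lesssim\int_{\mathbb{B}}|h|^p\sigma\,d\mu_\alpha.$$

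It then remains to unwind $h$ and $\sigma$ and return to $f$ and $\omega$. A routine computation of the exponents of $(1-|z|^2)$ turns the last integral into $\int_{\mathbb{B}}(R_k^sf)^p\,R^{s,Q+pt}_{k'}\omega\,(1-|z|^2)^{s-pt-Q+q}\,d\mu(z)$, so the final task is to dominate $(R_k^sf)^pR^{s,Q+pt}_{k'}\omega$ pointwise. Here I would argue as at the end of Proposition \ref{5keumo18}: replace $B_k$ by the comparable ball $B_{k'}$, factor out $\mu_\alpha(B_{k'}(z))^p/\mu_s(B_{k'}(z))^{p+1}$, and invoke the characterization Lemma \ref{5keumo15} of the class $(D_p^{s,t,q,Q})$ — legitimate after dilating $B_{k'}(z)$ to a boundary ball and using that the measures are homogeneous (Lemma \ref{2keumo3}, Corollary \ref{5keumo17}) — to obtain
$$(R_k^sf)^pR^{s,Q+pt}_{k'}\omega(z)\lesssim (1-|z|^2)^{-(N+1)-s+pt+Q-q}\int_{B_{k'}(z)}f^p\omega\,d\mu_q.$$
Multiplying by $(1-|z|^2)^{s-pt-Q+q}$ collapses the power of $(1-|z|^2)$ to exactly $-(N+1)$; Fubini's theorem together with Lemma \ref{5keumo5} (so that $\zeta\in B_{k'}(z)\Leftrightarrow z\in B_{k''}(\zeta)$) and the homogeneity estimate $\mu(B_{k''}(\zeta))\simeq(1-|\zeta|^2)^{N+1}$ then give $\int_{\mathbb{B}}\chi_{B_{k'}(z)}(\zeta)(1-|z|^2)^{-(N+1)}\,d\mu(z)\simeq 1$, and hence the majorant $\int_{\mathbb{B}}|f|^p\omega\,d\mu_q$, completing the proof.
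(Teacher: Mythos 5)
Your proposal is correct and follows essentially the same route as the paper's own proof: regularize with $R_k^s$ (Lemmas \ref{5keumo7} and \ref{5keumo11}, or their $O_{s,t}$ reformulations), push $R_k^s$ onto the weight via Lemma \ref{5keumo10}, invoke Lemma \ref{5keumo19} to get $\sigma\in(A_{p,s+t+\frac{Q-q}{p}})$ and apply the weighted Hardy--Littlewood bound (Theorem \ref{2keumo13}), then finish with the $(D_p^{s,t,q,Q})$ characterization (Lemma \ref{5keumo15}) and the Fubini/homogeneity argument, with all exponents matching the paper's. Your only deviation is cosmetic: you dominate $m_{s+t,s}(R_k^sf)$ by $M_\alpha h$ in one pointwise step using Lemma \ref{2keumo3} on boundary-touching balls and the boundedness of $R^{(Q-q)/p}$, where the paper reaches the same operator through a chain of rewritings $M_{s+t,s}\to M_{s+t,s+t}\to M_{s+t+\frac{Q-q}{p},s+t+\frac{Q-q}{p}}$ resting on the same observation $\frac{Q-q}{p}\geq 0$ (and you are in fact slightly more careful than the paper in flagging that the second parameter range requires the $m'_{s+t,s}$ normalization).
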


\begin{proof}
Using in this order Lemma \ref{5keumo8}, Lemma \ref{5keumo12}, H\"older's inequality and Lemma \ref{5keumo10} we have
\begin{align*}
\displaystyle\int_{\mathbb{B}}(O_{s,t}f(z))^p\omega(z)d\mu_Q(z)& =  \displaystyle\int_{\mathbb{B}}(m_{s+t,s}f(z))^p\omega(z)d\mu_{Q+pt}(z)\\
 & \leq  C_k^p \displaystyle\int_{\mathbb{B}}(m_{s+t,s}R^s_kf(z))^p\omega(z)d\mu_{Q+pt}(z)\\
 & \leq  C_k^p A^p\displaystyle\int_{\mathbb{B}}[R^s_k(m_{s+t,s}R^s_kf(z))]^p\omega(z)d\mu_{Q+pt}(z)\\
 & \leq  C_k^p A^p\displaystyle\int_{\mathbb{B}}R^s_k[(m_{s+t,s}R^s_kf(z))^p]\omega(z)d\mu_{Q+pt}(z)\\
 & \leq  C_k^p A^pC\displaystyle\int_{\mathbb{B}}(m_{s+t,s}R^s_kf(z))^pR^{s,Q+pt}_{k'}\omega(z)d\mu_s(z).
\end{align*} 
By Lemma \ref{5keumo19} $R^{s,Q+pt}_{k'}\omega(z)(1-|z|^2)^{-t-\frac{Q-q}{p}}\in A_{p,s+t+\frac{Q-q}{p}}.$ Using natural domination between our maximal operator defined by equation \ref{5keumoequa1} and equation \ref{5keumoequa3}, and using Theorem \ref{2keumo13}, we have
\begin{align*}
\displaystyle\int_{\mathbb{B}}(O_{s,t}f(z))^p\omega(z)d\mu_Q(z) & \leq  C_k^p A^pC\displaystyle\int_{\mathbb{B}}(m_{s+t,s}R^s_kf(z))^pR^{s,Q+pt}_{k'}\omega(z)d\mu_s(z)\\
& \leq  C_k^p A^pC\displaystyle\int_{\mathbb{B}}(M_{s+t,s}R^s_kf(z))^pR^{s,Q+pt}_{k'}\omega(z)d\mu_s(z)\\ & =  C_k^p A^pC\displaystyle\int_{\mathbb{B}}(M_{s+t,s+t}[(1-|z|^2)^{-t}R^s_kf(z)])^pR^{s,Q+pt}_{k'}\omega(z)d\mu_s(z).
\end{align*}
Because in each case we have $\frac{Q-q}{p}\geq 0,$ we have that 
\begin{align}\label{eq54}
\nonumber
\displaystyle\int_{\mathbb{B}}(O_{s,t}f(z))^p\omega(z)d\mu_Q(z)& \lesssim  \displaystyle\int_{\mathbb{B}}(M_{s+t+\frac{Q-q}{p},s+t}[(1-|z|^2)^{-t}R^s_kf(z)])^pR^{s,Q+pt}_{k'}\omega(z)d\mu_s(z)\\
\nonumber & \lesssim  \displaystyle\int_{\mathbb{B}}(M_{s+t+\frac{Q-q}{p},s+t+\frac{Q-q}{p}}[(1-|z|^2)^{-t-\frac{Q-q}{p}}R^s_kf(z)])^pR^{s,Q+pt}_{k'}\omega(z)d\mu_s(z)\\
& \lesssim \displaystyle\int_{\mathbb{B}}(1-|z|^2)^{-pt-(Q-q)}(R^s_kf(z))^pR^{s,Q+pt}_{k'}\omega(z)d\mu_s(z).
\end{align}

Now let us control $IX=(R^s_kf(z))^pR^{s,Q+pt}_{k'}\omega(z).$ We have
\begin{align*}
 IX & =   \left( \frac{1}{\mu_s(B_{k}(z))}\displaystyle\int_{B_{k}(z)}f(\zeta)d\mu_s(\zeta)\right) ^p\left( \frac{1}{\mu_s(B_{k'}(z))}\displaystyle\int_{B_{k'}(z)}\omega(\zeta)d\mu_{Q+pt}(\zeta)\right) \\
& \lesssim    \left( \frac{1}{\mu_s(B_{k'}(z))}\displaystyle\int_{B_{k'}(z)}f(\zeta)d\mu_s(\zeta)\right) ^p\left( \frac{1}{\mu_s(B_{k'}(z))}\displaystyle\int_{B_{k'}(z)}\omega(\zeta)d\mu_{Q+pt}(\zeta)\right) \\
& \lesssim  \frac{\mu_{s+t+\frac{Q-q}{p}}^{p}(B_{k'}(z))}{\mu_s^{p+1}(B_{k'}(z))}\left( \frac{1}{\mu_{s+t+\frac{Q-q}{p}}(B_{k'}(z))}\displaystyle\int_{B_{k'}(z)}f(\zeta)d\mu_s(\zeta)\right) ^p\left( \displaystyle\int_{B_{k'}(z)}\omega(\zeta)d\mu_{Q+pt}(\zeta)\right) \\
& \lesssim  C_{s,t,p,q,Q} \frac{\mu_{s+t+\frac{Q-q}{p}}^{p}(B_{k'}(z))}{\mu_s^{p+1}(B_{k'}(z))}\displaystyle\int_{B_{k'}(z)}f^p(\zeta)\omega(\zeta)d\mu_q(\zeta)\\
& \lesssim  C_{s,t,p,q}(1-|z|^2)^{pt+(Q-q)-s-N-1}\displaystyle\int_{B_{k'}(z)}f^p(\zeta)\omega(\zeta)d\mu_q(\zeta),
\end{align*}
where for the last but one inequality we used Lemma \ref{5keumo15}. Hence using (\ref{eq54})  and Fubini's theorem we have 
\begin{align*}
\displaystyle\int_{\mathbb{B}}(O_{s,t}f(z))^p\omega(z)d\mu_Q(z) & \leq  \displaystyle\int_{\mathbb{B}}(1-|z|^2)^{-pt-(Q-q)}(R^s_kf(z))^pR^{s,Q+pt}_{k'}\omega(z)d\mu_s(z)\\
 & \lesssim  \displaystyle\int_{\mathbb{B}}\left( \displaystyle\int_{B_{k'}(z)}f^p(\zeta)\omega(\zeta)d\mu_q(\zeta)\right) (1-|z|^2)^{-1-N}d\mu(z)\\
 & \lesssim  \displaystyle\int_{\mathbb{B}}\left( \displaystyle\int_{\mathbb{B}}\chi_{B_{k''}(\zeta)}(z)(1-|z|^2)^{-1-N}d\mu(z)\right) f^p(\zeta)\omega(\zeta)d\mu_q(\zeta)\\
 & \lesssim  \displaystyle\int_{\mathbb{B}}f^p(\zeta)\omega(\zeta)d\mu_q(\zeta).
\end{align*}
Here we used Lemma \ref{5keumo5} with $k^{\prime\prime}:=(k^\prime)^\prime.$
The proof is complete.
\end{proof}

\section{Good lambda inequality and sufficient conditions}\label{good-lambda-inequality}
In this section we will establish the good lambda inequality that allow us to provide sufficient conditions for the boundedness of our operators. We first need some preliminary results.
 
\begin{prop}\label{5keumo24}
Let $\beta>0$ and $s+t>-1-N$ there is a constant $A>0$ such that for all $\xi\in\mathbb{D}$\\ $ R> 1-|z_0|$ and a positive locally integrable function $f$ and for all $z\in B(z_0,R)$ if $s+t>-1$:
\[R^{\beta}\int_{d(z_0,\xi)\geq R}\frac{f(\xi)}{d(z_0,\xi)^{N+1+s+t+\beta}}\mathrm{d}\mu_{s}(\xi)\leq A m_{s+t,s}f(z).\]
More generally if $-1-N<s+t$ then:
\[R^{\beta}\int_{d(z_0,\xi)\geq R}\frac{f(\xi)}{d(z_0,\xi)^{N+1+s+t+\beta}}\mathrm{d}\mu_{s}(\xi)\leq A m'_{s+t,s}f(z).\]
\end{prop}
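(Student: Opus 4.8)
The plan is to bound the tail integral by cutting the domain $\{\xi : d(z_0,\xi)\geq R\}$ into dyadic annuli and estimating each piece by a single $\mu_s$-average of $f$ over a ball centred at $z_0$, which is precisely the quantity controlled by the maximal function at $z$. Concretely, for $j\geq 0$ I would set $A_j=\{\xi\in\mathbb{B}: 2^jR\leq d(z_0,\xi)<2^{j+1}R\}$, so that $\{d(z_0,\xi)\geq R\}=\bigcup_{j\geq 0}A_j$ (the union is in fact finite since $d$ is bounded on $\mathbb{B}$, but that plays no role).

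First I would use that on $A_j$ one has $d(z_0,\xi)\geq 2^jR$, hence $d(z_0,\xi)^{-(N+1+s+t+\beta)}\leq (2^jR)^{-(N+1+s+t+\beta)}$; combined with $A_j\subset B(z_0,2^{j+1}R)$ (immediate from the definition of $A_j$), this yields
\[
\int_{A_j}\frac{f(\xi)}{d(z_0,\xi)^{N+1+s+t+\beta}}\,d\mu_s(\xi)\leq (2^jR)^{-(N+1+s+t+\beta)}\int_{B(z_0,2^{j+1}R)}f(\xi)\,d\mu_s(\xi).
\]
The key point is then that $B(z_0,2^{j+1}R)$ is admissible in the supremum defining $m'_{s+t,s}f(z)$: indeed $z\in B(z_0,R)\subset B(z_0,2^{j+1}R)$ and its radius $2^{j+1}R\geq R>1-|z_0|$ meets the radius constraint, so by the definition of $m'_{s+t,s}$,
\[
\int_{B(z_0,2^{j+1}R)}f(\xi)\,d\mu_s(\xi)\leq (2^{j+1}R)^{N+1+s+t}\,m'_{s+t,s}f(z).
\]
Collecting the powers of $R$ and of $2^j$, the $j$-th term is bounded by $2^{N+1+s+t}\,2^{-j\beta}R^{-\beta}\,m'_{s+t,s}f(z)$.

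Summing over $j\geq 0$, the geometric series $\sum_{j\geq 0}2^{-j\beta}$ converges exactly because $\beta>0$, giving
\[
R^{\beta}\int_{d(z_0,\xi)\geq R}\frac{f(\xi)}{d(z_0,\xi)^{N+1+s+t+\beta}}\,d\mu_s(\xi)\leq \frac{2^{N+1+s+t}}{1-2^{-\beta}}\,m'_{s+t,s}f(z),
\]
which is the second (more general) inequality with $A=2^{N+1+s+t}/(1-2^{-\beta})$. For the first inequality, valid when $s+t>-1$, I would simply replace the normalization $R^{N+1+s+t}$ by $\mu_{s+t}(B(z_0,2^{j+1}R))$: since each such ball has radius exceeding $1-|z_0|$ it touches the boundary, so Lemma \ref{2keumo3} gives $\mu_{s+t}(B(z_0,2^{j+1}R))\simeq (2^{j+1}R)^{N+1+s+t}$, whence $m_{s+t,s}f(z)\simeq m'_{s+t,s}f(z)$ on these balls and the identical computation applies.

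The argument is essentially routine once the annular decomposition is in place; the only points deserving care are the verification that the dilated balls genuinely satisfy the admissibility constraints (that $z$ lies in them and that their radius exceeds $1-|z_0|$), so that they may legitimately enter the supremum, and the use of Lemma \ref{2keumo3} to interchange the two normalizations $R^{N+1+s+t}$ and $\mu_{s+t}(B)$ near the boundary. I do not expect a genuine obstacle: convergence of the geometric series, which is where the hypothesis $\beta>0$ is used, is automatic.
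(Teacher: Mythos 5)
Your proof is correct and follows essentially the same route as the paper's: the same dyadic annular decomposition $\{2^jR\leq d(z_0,\xi)<2^{j+1}R\}$, the same pointwise kernel bound on each annulus, the same verification that the dilated balls $B(z_0,2^{j+1}R)$ are admissible in the maximal function (they contain $z$ and have radius exceeding $1-|z_0|$), the appeal to Lemma \ref{2keumo3} for boundary-touching balls, and the convergent geometric series from $\beta>0$. The only (cosmetic) difference is the order: you prove the $m'_{s+t,s}$ estimate directly from the definition and then deduce the $m_{s+t,s}$ case via the equivalence of normalizations, whereas the paper writes out the $m_{s+t,s}$ case, using the upper bound $\mu_{s+t}(B(z_0,2^{k+1}R))\leq a(2^{k+1}R)^{N+1+s+t}$ at the corresponding step.
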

\begin{proof}
Recall that if $s+t>-1$, by Lemma \ref{2keumo3}, there is a constant $a>0$ such that for all $k\in\mathbb{N}$, we have $\mu_{s+t}(B(z_0,2^{k+1}R))\leq a(2^{k+1}R)^{N+1+s+t}$, so that setting 
$$X=R^{\beta}\int_{d(z_0,\xi)\geq R}\frac{f(\xi)}{d(z_0,\xi)^{N+1+s+t+\beta}}\mathrm{d}\mu_{s}(\xi)$$
we have
\begin{eqnarray*}
X &\leq &\sum_{k=0}^{+\infty}\frac{1}{2^{k(N+1+s+t+\beta)}R^{N+1+s+t}}\int_{d(z_0,\xi)<2^{k+1}R}f(\xi)\mathrm{d}\mu_{s}(\xi)\\
&\leq &a2^{N+1+s+t}\sum_{k=0}^{+\infty}2^{-k\beta}\frac{1}{\mu_{s+t}(B(z_0,2^{k+1}R))}\int_{B(z_0,2^{n+1}R)}f(\xi)\mathrm{d}\mu_{s}(\xi)\\
&\leq& a2^{N+1+s+t} m_{s+t,s}f(z)\sum_{n=0}^{+\infty}2^{-k\beta}
=\frac{a2^{N+1+s+t}}{1-2^{-\beta}}m_{s+t,s}f(z).
\end{eqnarray*}
We can take $\displaystyle A=\frac{a2^{N+1+s+t}}{1-2^{-\beta}}$ to conclude.
\end{proof}

\begin{prop}\label{5keumo25}
Let $\omega \in (D_{p}^{s,t,q,Q})$, we set again $\sigma (z) =
R^{s,Q+pt}_{k}\omega (z)(1-|z|^2)^{-t-\frac{Q-q}{p}}$ with $k\in (0,1/2)$. Set $B=B(z',r)$ with $1-|z'|<cr$ and $L=\left\lbrace z\in B:
1-|z| <C'_{0}\gamma^{\frac{1}{N+1+s+t}}r\right\rbrace $ where
$C'_{0}>0$, $0<\gamma<1$, $r>0$ and $c>0$ are constants. Then if we set $L'=\left\lbrace z\in \bar{B}: 1-|z|
<2C'_{0}\gamma^{\frac{1}{N+1+s+t}}r\right\rbrace $ and
$\bar{B}=B(z',ar)$ with $a=K(C'_{0}+1)$,
there are two constants $C_{1}$ et $C_{2}>0$ independent of
$\gamma$ such that if $s+t+\frac{Q-q}{p}>-1$ then:
\begin{equation}\label{5keumoequa8}
 \omega d\mu_{Q+pt}(L) \leq C_{1} \sigma d\mu_{s+t+\frac{Q-q}{p}} (L')
\quad\mbox{ and }\quad \mu_{s+t+\frac{Q-q}{p}}(L') \leq C_{2}\gamma^{\frac{s+t+\frac{Q-q}{p}+1}{N+1+s+t}}\mu_{s+t+\frac{Q-q}{p}}(\bar{B}).
\end{equation}
\end{prop}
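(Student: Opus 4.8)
The plan is to prove the two inequalities separately: the first by a Fubini/duality manipulation of the regularising average $R^{s,Q+pt}_{k}$, and the second by a direct pseudoball volume computation. The key preliminary observation for the first inequality is that the two exponents collapse, since $(1-|z|^2)^{-t-\frac{Q-q}{p}}(1-|z|^2)^{s+t+\frac{Q-q}{p}}=(1-|z|^2)^s$. Hence
\[
\sigma\, d\mu_{s+t+\frac{Q-q}{p}}(L')=\int_{L'} R^{s,Q+pt}_{k}\omega(z)\, d\mu_s(z)=\int_{L'}\frac{1}{\mu_s(B_k(z))}\left(\int_{B_k(z)}\omega(\zeta)\,d\mu_{Q+pt}(\zeta)\right)d\mu_s(z).
\]
Applying Fubini I would rewrite this as $\int_{\mathbb B}\omega(\zeta)\,\Phi(\zeta)\,d\mu_{Q+pt}(\zeta)$ with $\Phi(\zeta)=\int_{L'}\frac{\chi_{B_k(z)}(\zeta)}{\mu_s(B_k(z))}\,d\mu_s(z)$, so the whole inequality reduces to the uniform lower bound $\Phi(\zeta)\geq c_k>0$ for every $\zeta\in L$. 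Once that is available, restricting the $\zeta$-integral to the subset $L$ (legitimate as $\omega\geq 0$) yields $\omega\,d\mu_{Q+pt}(L)\leq c_k^{-1}\,\sigma\,d\mu_{s+t+\frac{Q-q}{p}}(L')$, i.e. the first bound with $C_1=c_k^{-1}$.

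To establish $\Phi(\zeta)\gtrsim 1$, I would fix $\zeta\in L$ and work with the support set $S_\zeta=\{z:\zeta\in B_k(z)\}$. Two facts are needed. First, $S_\zeta\subseteq L'$: if $\zeta\in B_k(z)$ then $z\in B_{k'}(\zeta)$ with $k'=\frac{k}{1-k}<1$ by Lemma \ref{5keumo5}, so $1-|z|\leq(1+k')(1-|\zeta|)<2C_0'\gamma^{\frac{1}{N+1+s+t}}r$ using $k'<1$ and $1-|\zeta|<C_0'\gamma^{\frac{1}{N+1+s+t}}r$; and the quasi-triangle inequality with constant $K$, together with $d(\zeta,z')<r$ and $d(z,\zeta)<k'(1-|\zeta|)<C_0'r$, gives $d(z,z')<K(C_0'+1)r=ar$, so $z\in\bar B$ and hence $z\in L'$. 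Second, $\int_{S_\zeta}\frac{d\mu_s(z)}{\mu_s(B_k(z))}\gtrsim 1$: on $S_\zeta$ one has $1-|z|\simeq 1-|\zeta|$ and every $z\in B_{k/2}(\zeta)$ lies in $S_\zeta$, so by Lemma \ref{2keumo3} applied to balls away from the boundary, $\mu_s(B_k(z))\simeq\mu_s(B_k(\zeta))\simeq\mu_s(B_{k/2}(\zeta))$, whence the integral is $\gtrsim \mu_s(B_{k/2}(\zeta))/\mu_s(B_k(\zeta))\simeq 1$. Combining the two facts gives $\Phi(\zeta)\geq c_k$.

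For the second inequality I would use the radial--angular decomposition from the proof of Lemma \ref{2keumo3}. Setting $\alpha=s+t+\frac{Q-q}{p}>-1$, the angular cap of $\bar B=B(z',ar)$ has surface measure $\simeq (ar)^N$, while $L'$ confines the radius to the boundary layer $1-|z|<2C_0'\gamma^{\frac{1}{N+1+s+t}}r$. Integrating $(1-|z|^2)^\alpha$ over this layer gives $\mu_\alpha(L')\simeq (ar)^N\big(2C_0'\gamma^{\frac{1}{N+1+s+t}}r\big)^{\alpha+1}$, and since $\bar B$ touches the boundary, $\mu_\alpha(\bar B)\simeq (ar)^{N+1+\alpha}$. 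Taking the ratio and absorbing $a$ and $C_0'$ into $C_2$ produces $\mu_\alpha(L')\leq C_2\,\gamma^{\frac{\alpha+1}{N+1+s+t}}\mu_\alpha(\bar B)$, which is exactly the claimed bound because $\alpha+1=s+t+\frac{Q-q}{p}+1$.

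The delicate step is the first inequality, and specifically the containment $S_\zeta\subseteq L'$: it is precisely here that the choices $a=K(C_0'+1)$ and the factor $2$ in the definition of $L'$ are consumed, and it is what forces the standing restriction $k<\tfrac12$ (so that $k'<1$). The rest is bookkeeping with the homogeneous measure estimate of Lemma \ref{2keumo3}.
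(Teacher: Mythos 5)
Your proof is correct and follows essentially the same route as the paper's: both inequalities rest on the identical ingredients --- Tonelli/Fubini, the ball-flipping Lemma \ref{5keumo5}, the quasi-triangle inequality producing the containments in $\bar{B}$ and $L'$ (consuming $a=K(C_0'+1)$ and the factor $2$), and the volume estimate of Lemma \ref{2keumo3} --- with the only difference being direction: you expand $\sigma\, d\mu_{s+t+\frac{Q-q}{p}}(L')$ and bound its kernel $\Phi$ below on $L$ via the sub-ball $B_{k/2}(\zeta)\subseteq S_\zeta\subseteq L'$, whereas the paper expands $\omega\, d\mu_{Q+pt}(L)$ by inserting an average over $B_{k_1}(z)$ with $k_1=\frac{k}{1+k}$ and bounds it above by a pointwise comparison of characteristic functions. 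The radial--angular computation for the second inequality is the same as the paper's, so there are no gaps.
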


\begin{proof}
Let $k_1 = \frac{k}{1+k}$ then for $z\in L$ and
$\xi\in B_{k_{1}}(z)$ we have $z\in B=B(z',r)$, 
 $1-|z|<C'_{0}\gamma^{\frac{1}{N+1+s+t}}r$ and  $z\in B_{k}(\xi)$ because $k_{1}'=k$. Then
$$d(z',\xi) \leq K(d(z',z)+d(z,\xi)) <K[r+k_{1}(1-|z|)]<K[r+k_{1}C'_{0}\gamma^{\frac{1}{N+1+s+t}}r]<(C'_{0}+1)rK$$ 
because $0<k_{1},  \gamma<1$. Then $\xi\in\overline{B}=B(z^\prime,ar)$ with $a=K(C^\prime_{0}+1)$. Moreover, $$1-|\xi|<1-|z|+d(z,\xi)<(k_{1}+1)(1-|z|)<2C'_{0}\gamma^{\frac{1}{N+1+s+t}}r$$ because $0<k_{1}<1$, so that $\xi\in L'=\left\lbrace z\in \overline{B}: 1-|z| <2C'_{0}\gamma^{\frac{1}{N+1+s+t}}r\right\rbrace $. Then we have $$\chi_{L}(z)\chi_{B_{k_{1}}(z)}(\xi)\leq  \chi_{L'}(\xi)\chi_{B_{k}(\xi)}(z).$$ 
Remember that 
$$\mu_{s+t+\frac{Q-q}{p}}(B_{k}(\xi))\simeq\mu_{s+t+\frac{Q-q}{p}}(B_{k_{1}}(z)).$$
 Hence,
\begin{eqnarray*}
\omega d\mu_{Q+pt}(L) & = & \int_{L}\omega (z) d\mu_{Q+pt}(z)\\
& = & \int_{L}\left( \frac{\omega(z)}{\mu_{s}(B_{k_{1}}(z))}\int_{B_{k_{1}}(z)}\mathrm{d}\mu_{s}(\xi)\right)d\mu_{Q+pt}(z)\\
 &=& \int_{\mathbb{B}}\left( \int_{\mathbb{B}} \frac{\chi_{L}(z)\chi_{B_{k_{1}}(z)}(\xi)\omega(z)}{\mu_{s}(B_{k_{1}}(z))}\mathrm{d}\mu_{Q+pt}(z)\right)\mathrm{d}\mu_{s}(\xi)\\
 &\lesssim & \int_{\mathbb{B}}\left( \int_{\mathbb{B}} \frac{\chi_{L'}(\xi)\chi_{B_{k}(\xi)}(z)\omega(z)}{\mu_{s}(B_{k}(\xi))}\mathrm{d}\mu_{Q+pt}(z)\right)\mathrm{d}\mu_{s}(\xi)\\
 &=& \int_{\mathbb{B}}\left(\frac{\chi_{L'}(\xi)}{\mu_{s}(B_{k}(\xi))}\int_{B_{k}(\xi)} \omega(z)\mathrm{d}\mu_{Q+pt}(z)\right)(1-|\xi|^2)^{-t-\frac{Q-q}{p}}\mathrm{d}\mu_{s+t+\frac{Q-q}{p}}(\xi)\\
 &=& \int_{L^\prime}\left( R_k^{s,Q+pt}w(\xi)\right)(1-|\xi|^2)^{-t-\frac{Q-q}{p}}\mathrm{d}\mu_{s+t+\frac{Q-q}{p}}(\xi)\\
 &=& \sigma d\mu_{s+t+\frac{Q-q}{p}} (L^\prime).
\end{eqnarray*}
Let us show the second inequality. Let $z\in \overline{B}$, then, $d(z',z)=||z'|-|z|| + \left|1-\frac{\langle z',z \rangle}{|z'||z|}\right|<a r.$ Then $||z'|-|z||<ar$ and  $\left|1-\frac{\langle z',z \rangle}{|z'||z|}\right|<a r$.  Moreover, as $1-|z'|<cr$ then: $1-|z|=1-|z'|+|z'|-|z|<c r +
ar$. Then for $z\in L'$. Setting $\beta_1 = 2
C'_{0}\gamma^{\frac{1}{N+1+s+t}}r$ and $\beta_2= (c +a)r$, we get
$1-|z|<\beta_1$, $1-|z|<\beta_2$ and
$\left|1-\frac{\langle z',z \rangle}{|z'||z|}\right|<a r$. Then,
$L'\subset \lbrace z\in\mathbb{B}: 1-|z| <\min(\beta_1,
\beta_2),\;\left|1-\frac{\langle z',z \rangle}{|z'||z|}\right|<a r\rbrace$. In spherical coordinates we have, for $s+t+\frac{Q-q}{p}>-1$
\begin{eqnarray*}
  \mu_{s+t+\frac{Q-q}{p}}(L') & \lesssim & (s+t+\frac{Q-q}{p} +1)\int\limits_{1-\rho<\min(\beta_1,\beta_2)}(1-\rho^2)^{s+t+\frac{Q-q}{p}}\rho\mathrm{d}\rho
  \int\limits_{|1-\frac{\langle z',z \rangle}{|z'||z|}|<a r}\mathrm{d}\sigma(\xi)  \\
   & \leq & (s+t+\frac{Q-q}{p}+1)\int\limits_{1-\min(\beta_1,\beta_2)<\rho<1}(1-\rho)^{s+t+\frac{Q-q}{p}} \mathrm{d}\rho
   \int\limits_{|1-\frac{\langle z',z \rangle}{|z'||z|}|<a r} \mathrm{d}\sigma(\xi) \\
   &\lesssim& r^{N}[-(1-\rho)^{s+t+\frac{Q-q}{p}+1}]_{1-\min(\beta_1,\beta_2)}^1= r^{N}(\min(\beta_1,\beta_2))^{s+t+\frac{Q-q}{p}+1} \\
   &\leq& r^{N} \beta_1^{s+t+\frac{Q-q}{p}+1}
   =r^{N}(2C'_{0}\gamma^{\frac{1}{N+1+s+t}}r)^{s+t+\frac{Q-q}{p}+1}\backsimeq
   r^{N+s+t+\frac{Q-q}{p}+1}\gamma^{\frac{s+t+\frac{Q-q}{p}+ 1}{N+1+s+t}}
\end{eqnarray*}
then,
\begin{equation}\label{5keumoequa10}
 \mu_{s+t+\frac{Q-q}{p}}(L')\lesssim r^{N+1+s+t+\frac{Q-q}{p}} \gamma^{\frac{s+t+\frac{Q-q}{p}+ 1}{N+1+s+t}}.
\end{equation}
 In other hand, as $1-|z'|<cr$, we have by Lemma \ref{2keumo3} 
 $$
 \mu_{s+t+\frac{Q-q}{p}}(\bar{B})\simeq
(ar)^{N+1}(\max(1-|z'|,ar))^{s+t+\frac{Q-q}{p}}\lesssim  r^{N+1+s+t+\frac{Q-q}{p}}.$$
\end{proof}

The following result is used to show that $S_{s+t,s}f\in L^p(\omega d\mu_{Q+pt})$ when $m'_{s+t,s}f\in L^p(\omega d\mu_{Q+pt}).$
\begin{theo}[Good lambda inequality]\label{5keumo26} 
Let $\omega \in (D_p^{s,t,q,Q})$ $(1<p<+\infty)$ in the case both $s+t+\frac{Q-q}{p}> -1 $ and $-1>s+t>-N-1$  hold,  or  both $s+t>-1$ and $Q\geq q$  hold. There are two positive constants $C$ and $\beta$
such that for all $\gamma$ sufficiently small, $\lambda>0$ and
for all positive locally integrable functions $f$, we have
\begin{multline}\label{eqkeydistributional-equality}
\omega d\mu_{Q+pt}(\lbrace z\in \mathbb{B}: S_{s+t,s}f(z)>2\lambda, m'_{s+t,s}f(z)\leq \gamma \lambda\rbrace)\leq \\
 CD_p^{s,t,q,Q}(\omega)\gamma^\beta \omega d\mu_{Q+pt}(\lbrace z\in\mathbb{B}: S_{s+t,s}f(z)>\lambda\rbrace).
\end{multline}
\end{theo}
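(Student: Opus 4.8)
The plan is to run a Calder\'on--Zygmund/Whitney argument on the level set of $S_{s+t,s}f$, in the style of B\'ekoll\'e, with the geometric heart supplied by Proposition \ref{5keumo25}. Write $\alpha=s+t+\frac{Q-q}{p}$ and set
$$\Omega=\{z\in\mathbb{B}:S_{s+t,s}f(z)>\lambda\},\qquad E=\{z\in\mathbb{B}:S_{s+t,s}f(z)>2\lambda,\ m'_{s+t,s}f(z)\le\gamma\lambda\}.$$
Since $f\ge0$ and $|K_{s+t}(z,\cdot)|$ is continuous on $\mathbb{B}$, Fatou's lemma makes $S_{s+t,s}f$ lower semicontinuous, so $\Omega$ is open; as $(\mathbb{B},d,\mu_{s+t})$ is a space of homogeneous type (Lemma \ref{2keumo3}), I would decompose $\Omega$ into a Whitney family of pseudoballs $\{B_i=B(z_i',r_i)\}$ of bounded overlap, each with $1-|z_i'|\lesssim r_i$ and with a companion point $\hat z_i\notin\Omega$ at distance $\lesssim r_i$. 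It then suffices to prove, for a single such $B=B(z',r)$ (with dilate $\bar B=B(z',ar)$, $a=K(C_0'+1)$ as in Proposition \ref{5keumo25}), the localized estimate $\omega d\mu_{Q+pt}(E\cap B)\le C\,D_p^{s,t,q,Q}(\omega)\,\gamma^{\beta}\,\omega d\mu_{Q+pt}(B)$, and then sum, using the doubling of $\omega d\mu_{Q+pt}$ (Corollary \ref{5keumo17}) and bounded overlap to recover $\omega d\mu_{Q+pt}(\Omega)$ on the right.

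The key step is to show that the bad set is trapped in a thin boundary collar, namely $E\cap B\subset L:=\{z\in B:1-|z|<C_0'\gamma^{1/(N+1+s+t)}r\}$. Split $f=f\chi_{\bar B}+f\chi_{\mathbb{B}\setminus\bar B}=:f_1+f_2$. For $z\in E\cap B$ I would first control the far part by comparing $S_{s+t,s}f_2(z)$ with $S_{s+t,s}f_2(\hat z)\le S_{s+t,s}f(\hat z)\le\lambda$, bounding the difference through the kernel oscillation estimate Lemma \ref{3keumo2} (applied in the first slot, using $|K_{s+t}(z,w)|=|K_{s+t}(w,z)|$) and then through $m'_{s+t,s}f(z)\le\gamma\lambda$ via the tail estimate Proposition \ref{5keumo24} (applied with exponent $1$); this yields $S_{s+t,s}f_2(z)\le(1+C\gamma)\lambda$, whence $S_{s+t,s}f_1(z)>\lambda/2$ for $\gamma$ small. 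Since $s+t>-(N+1)$ gives $|K_{s+t}(z,w)|\lesssim|1-\langle z,w\rangle|^{-(N+1+s+t)}\lesssim(1-|z|)^{-(N+1+s+t)}$, and since $\bar B$ is admissible in the supremum defining $m'_{s+t,s}f(z)$ so that $\int_{\bar B}f\,d\mu_s\le(ar)^{N+1+s+t}\gamma\lambda$, I obtain
$$\tfrac{\lambda}{2}<S_{s+t,s}f_1(z)\lesssim(1-|z|)^{-(N+1+s+t)}\int_{\bar B}f\,d\mu_s\lesssim\Big(\tfrac{ar}{1-|z|}\Big)^{N+1+s+t}\gamma\lambda,$$
which forces $1-|z|\lesssim\gamma^{1/(N+1+s+t)}r$, i.e. $z\in L$.

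Finally I would convert this collar information into a measure estimate. By Proposition \ref{5keumo25}, $\omega d\mu_{Q+pt}(E\cap B)\le\omega d\mu_{Q+pt}(L)\le C_1\,\sigma d\mu_{\alpha}(L')$ and $\mu_{\alpha}(L')\le C_2\gamma^{(\alpha+1)/(N+1+s+t)}\mu_{\alpha}(\bar B)$, where $\sigma=R^{s,Q+pt}_{k'}\omega\,(1-|z|^2)^{-t-(Q-q)/p}$. By Lemma \ref{5keumo19}, $\sigma\in(A_{p,\alpha})\subset(A_{\infty,\alpha})$ with constant governed by $D_p^{s,t,q,Q}(\omega)$, so the $A_\infty$ comparison Lemma \ref{2keumo12} gives $\sigma d\mu_\alpha(L')\le A\big(\mu_\alpha(L')/\mu_\alpha(\bar B)\big)^{\beta_0}\sigma d\mu_\alpha(\bar B)\lesssim\gamma^{\beta}\sigma d\mu_\alpha(\bar B)$ with $\beta=\beta_0(\alpha+1)/(N+1+s+t)>0$, positive precisely because $\alpha>-1$ and $s+t>-(N+1)$, which covers both hypothesis regimes. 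Unwinding $\sigma d\mu_\alpha=R^{s,Q+pt}_{k'}\omega\,d\mu_s$ and using the Fubini/regularization computation behind \eqref{eq51} yields $\sigma d\mu_\alpha(\bar B)\lesssim\omega d\mu_{Q+pt}(\bar B')\lesssim\omega d\mu_{Q+pt}(B)$ by doubling (Corollary \ref{5keumo17}); summing over $i$ then completes the argument. I expect the main obstacle to be the far-part control: making the comparison of $S_{s+t,s}f_2$ at $z$ and at $\hat z$ rigorous requires carefully verifying the hypotheses of Lemma \ref{3keumo2} and Proposition \ref{5keumo24} (which center to use for the maximal function, and that $\gamma$ may be chosen uniformly small independently of $\lambda$, $f$ and the ball), after which the collar inclusion and the $A_\infty$ machinery are comparatively routine.
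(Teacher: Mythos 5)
Your proposal follows the paper's own strategy step for step --- Whitney decomposition of $E_\lambda=\{S_{s+t,s}f>\lambda\}$, near/far splitting of $f$, far part controlled by the good point together with Lemma \ref{3keumo2} and Proposition \ref{5keumo24}, collar trapping of the bad set, then Proposition \ref{5keumo25}, Lemma \ref{5keumo19}, Lemma \ref{2keumo12} and Corollary \ref{5keumo17} for the weighted estimate --- but it rests on a claim that is false in general: that the Whitney balls $B_i=B(z_i',r_i)$ satisfy $1-|z_i'|\lesssim r_i$. Whitney radii are comparable to the distance from $z_i'$ to the complement of $E_\lambda$ \emph{inside} $\mathbb{B}$, not to the distance to $\partial\mathbb{B}$, so $E_\lambda$ may well be tiled by balls with $r_i\ll 1-|z_i'|$. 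This is not a cosmetic point, because each of your three main steps needs the ball to touch the boundary at its own scale: (a) Proposition \ref{5keumo24} is stated only for $R>1-|z_0|$, so your far-part tail bound needs $ar>1-|z'|$; (b) the inequality $\int_{\bar B}f\,d\mu_s\le(ar)^{N+1+s+t}\gamma\lambda$ needs $\bar B$ to be admissible in the supremum defining $m'_{s+t,s}$, i.e.\ again $ar>1-|z'|$ --- for a non-admissible ball the hypothesis $m'_{s+t,s}f\le\gamma\lambda$ gives no information at scale $r$; (c) Proposition \ref{5keumo25} explicitly assumes $1-|z'|<cr$. For a Whitney ball that is deep relative to its own radius, all three steps fail as written.

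The paper avoids this by not splitting around $z'$ at all: it takes $\widetilde B=B(z_0,R)$ centered at the good point $z_0$ with $R=\max(1-|z_0|,C_0r)$, which is admissible by construction, and then argues by cases. If $C_0r\le 1-|z_0|$ (precisely the case your proposal cannot handle), one shows $S_{s+t,s}f_1\le C''\gamma\lambda$ on all of $B$, since $|1-\langle z,\xi\rangle|\gtrsim 1-|z_0|$ for $z\in B$ and $\xi\in\widetilde B$, and $\widetilde B$ is admissible at scale $1-|z_0|$; hence the bad set in $B$ is empty once $\gamma$ is small, and there is nothing to estimate. Only when $1-|z_0|<C_0r$ --- so that $B$ genuinely touches the boundary and the hypothesis of Proposition \ref{5keumo25} holds --- does one run the collar and $A_\infty$ argument you describe. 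Inserting this dichotomy (and re-centering the splitting ball at $z_0$) turns your proposal into the paper's proof; without it, the boundary-touching property of the Whitney balls is an unproved, and untrue, assumption rather than a detail to be checked.
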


\begin{proof}
Let $\lambda>0$, $0<\gamma<1$ and $f$ a positive locally integrable function. Let  $E_\lambda=\{z\in \mathbb{B}: S_{s+t,s}f(z)>\lambda\}.$ By the Whitney decomposition Lemma (see \cite{cf}), there are a positive integer $J$, $\delta>1$ and a sequence of pseudo-balls $\{B_j\}_{j=1}^{\infty}$, with $ B_j=B(z_j,r_j),$ such that:
\begin{itemize}
  \item[$\bullet$] $E_\lambda=\displaystyle\bigcup\limits_{j=1}^{\infty} B_j;$ 
  \item[$\bullet$] Every point of $E_\lambda$ is at most in $J$ balls $B_j;$
  \item[$\bullet$] The balls $B'_j=B(z_j,\delta r_j)$ touch the complement of $E_\lambda$ in $\mathbb{B}$.
\end{itemize}
To obtain (\ref{eqkeydistributional-equality}), it is then sufficient to show that
\begin{equation}\label{5keumoequa11}
\omega\mu_{Q+pt}(\{z\in B: S_{s+t,s}f(z)>2\lambda, m'_{s+t,s}f(z)\leq \gamma
\lambda\})\leq
CD_p^{s,t,q,Q}(\omega)\gamma^\beta \omega\mu_{Q+pt}(B),
\end{equation}
where $B=B(z',r)$ is a ball in the Whitney decomposition of  $E_\lambda$. From the third property of the Whitney decomposition, there is  $z_0\in B'=B(z',\delta r)$  such that $S_{s+t,s}f(z_0)\leq \lambda$. 
Without loss of generallity, assume that there is $\xi_0\in B$ such that $m'_{s+t,s} f(\xi_0)\leq \gamma\lambda$. Let $\widetilde{B}=B(z_0,R)$ with   $R=\max(1-|z_0|,C_0r)$ where we choose  $C_0\geq \max ( c_1K(1+\delta),\delta)$ where $c_1$ is the constant $C_1$ in Lemma \ref{3keumo2}.

We set  $f_1=1_{\widetilde{B}}f$ and $f_2=1_{\mathbb{B}\backslash\widetilde{B}}f$,  then $f=f_1+f_2$ and by Lemma \ref{3keumo2} and Proposition \ref{5keumo24}, we have
\begin{multline*}
 S_{s+t,s}f_2(z) \leq \int\limits_{\mathbb{B}\backslash \widetilde{B}} \left|\frac{f(\xi)}{|1-\langle z_0,\xi \rangle|^{N+1+s+t}}\right|d\mu_s(\xi) + \\ \int\limits_{\mathbb{B}\backslash \widetilde{B}} \left|\frac{1}{|1-\langle z,\xi \rangle|^{N+1+s+t}}-\frac{1}{|1-\langle z_0,\xi \rangle|^{N+1+s+t}}\right| f(\xi)d\mu_s(\xi),
\end{multline*}
so that we finally have
\begin{eqnarray*}
 S_{s+t,s}f_2(z) 
   &\leq & S_{s+t,s} f(z_0)+A'm^\prime_{s+t,s} f(\xi_0)\leq \lambda + A^\prime\gamma \lambda.
\end{eqnarray*}
Therefore, to prove (\ref{5keumoequa11}), it will be enough to show that
\begin{equation}\label{5keumoequa12}
\omega d\mu_{Q+pt}(\{z\in B: S_{s+t,s} f_1(z)>b\lambda\})\leq
CD_p^{s,t,q,Q}(\omega)\gamma^\beta \omega d\mu_{Q+pt}(B).
\end{equation}
We are going to discuss according to the values of the radius
 $R=\max(1-|z_0|,C_0r)$ of
$\widetilde{B}=B(z_0,R)$. Let $E'_\lambda=\{S_{s+t,s}f_1\geq b\lambda\}\cap B$.\\

 \textit{First case:} $C_0r \leq 1-|z_0|.$\\
 
  Then, $\widetilde{B}=B(z_0,1-|z_0|).$ Therefore for all $z\in B$ and $\xi\in \widetilde{B}$, $|1- \langle z,\xi\rangle|\geq 1-|z|>C'(1-|z_0|)$, so that for all $z\in B$,

$$ S_{s+t,s} f_1(z) =\int \limits_{\widetilde{B}} \frac{f(\xi)d\mu_s(\xi)}{|1- \langle z,\xi\rangle|^{N+1+s+t}}\leq \frac{1}{(C'(1-|z_0|))^{N+1+s+t}} \int \limits_{\widetilde{B}} f(\xi)d\mu_s(\xi).
$$
Then
$$ S_{s+t,s} f_1(z)< C" m'_{s+t,s} f(\xi_0)\leq C"\gamma\lambda.$$
Hence, if we take $0<\gamma<\gamma_0=\min(\frac{1}{A'},\frac{b}{C"})$ then it remains only to prove the following case.

  \textit{Second case:} $1-|z_0|<C_0r$.
  
  Then $\widetilde{B}=B(z_0,C_0r)$ and 
   $E'_\lambda \subseteq L$ for $L$ defined in Proposition \ref{5keumo25}. In fact, if $z\in E'_\lambda,$ then $z\in B$ and 
 \begin{align*}
 b\lambda & \leq S_{s+t,s} f_1(z) =\int \limits_{\widetilde{B}} \frac{f(\xi)d\mu_s(\xi)}{|1- \langle z,\xi\rangle|^{N+1+s+t}} \leq \frac{1}{(1-|z|)^{N+1+s+t}} \int \limits_{\widetilde{B}} f(\xi)d\mu_s(\xi)\\
 & \leq \frac{(C_0 r)^{N+1+s+t}}{(1-|z|)^{N+1+s+t}} m'_{s+t,s}f(\xi_0)\\
 & \leq \frac{(C_0 r)^{N+1+s+t}}{(1-|z|)^{N+1+s+t}} \gamma \lambda.
\end{align*}     
For  $\sigma(z)= R_{k'}^{s,Q+pt}\omega (z)(1-|z|^2)^{-t-\frac{Q-q}{p}}$,  with $k'\in(0, \frac{1}{2})$. By Lemma \ref{5keumo19} we have
   $\sigma\in(A_{p,s+t+\frac{Q-q}{p}})$ so that 
$\sigma\in(A_{\infty,s+t+\frac{Q-q}{p}})$ because
   $(A_{p,s+t+\frac{Q-q}{p}})\subseteq (A_{\infty,s+t+\frac{Q-q}{p}})$.

 Given the fact that $L'$ is a measurable subset of $\bar{B}=B(z', ar)$, we have by Proposition \ref{5keumo25} and Lemma \ref{2keumo12}
 \begin{align*}
 \omega d\mu_{Q+pt}(L) & \leq C \sigma d\mu_{s+t+\frac{Q-q}{p}}(L')\\
 & \leq
C\left(\frac{\mu_{s+t+\frac{Q-q}{p}}(L')}{\mu_{s+t+\frac{Q-q}{p}}(\bar{B})}\right)^{\beta_0}\sigma d\mu_{s+t+\frac{Q-q}{p}}(\bar{B})\\
 & \leq
C\gamma^{\frac{s+t+\frac{Q-q}{p}+1}{N+1+s+t}\beta_0}\sigma d\mu_{s+t+\frac{Q-q}{p}}(\bar{B}).
 \end{align*}

As $E'_\lambda$ is a subset of $L=\{z\in\bar{B}: 1-|z|<C'_0\gamma^{\frac{1}{N+1+s+t}}r\}$, it follows that for $\beta=\frac{s+t+\frac{Q-q}{p}+1}{N+1+s+t}\beta_0$ we have:
\begin{equation}\label{5keumoequa13}
\omega d\mu_{Q+pt}(E'_\lambda)\leq
C\gamma^{\beta}\sigma d\mu_{s+t+\frac{Q-q}{p}}(\bar{B}).
\end{equation}
One shows by Fubini's theorem that
$\sigma d\mu_{s+t+\frac{Q-q}{p}}(\bar{B})\leq
C\omega d\mu_{Q+pt}\overset{\simeq}{B})$ with 
$\overset{\simeq}{B}=B(z', (2k+1)arK). $ 
And by Corollary \ref{5keumo17} we get 
$\omega d\mu_{Q+pt}(\overset{\simeq}{B})\leq
CD_p^{s,t,q,Q}(\omega)\omega d\mu_{Q+pt}(B)$.
Then
$$\omega d\mu_{Q+pt}(E'_\lambda)=
\omega d\mu_{Q+pt}(\{z\in B: S_{s+t,s} f_1(z)>b\lambda\})\leq
CD_p^{s,t,q,Q}(\omega)\gamma^\beta\omega d\mu_{Q+pt}(B).
$$ 
This ends the proof.
\end{proof}

The following results appear as  consequence of Theorem \ref{5keumo26} and Lemma \ref{lem777}.
\begin{theo}\label{6keumo4}
For $Q\geq q$ and $s+t>-1,$ if $\omega  \in {D}_p^{s,t,q,Q}$ there is a constant $C_{s,t,q,Q}>0$ such that
$$\displaystyle\int_{\mathbb{B}}(S_{s+t,s}f(z))^p d\mu_{Q+pt}(z) \leq C_{s,t,q,Q} \displaystyle\int_{\mathbb{B}}(m_{s+t,s}f(z))^p d\mu_{Q+pt}(z).$$ 
\end{theo}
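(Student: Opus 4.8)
The plan is to obtain Theorem \ref{6keumo4} as a direct combination of the good lambda inequality of Theorem \ref{5keumo26} with the purely measure-theoretic Lemma \ref{lem777}, applied to the measure $\omega\,d\mu_{Q+pt}$. First I would dispose of the discrepancy between the two maximal operators: the statement involves $m_{s+t,s}$, while Theorem \ref{5keumo26} is phrased with $m'_{s+t,s}$. In the regime $s+t>-1$ these are pointwise comparable, because the suprema defining them run over pseudoballs $B(\zeta,R)$ with $R>1-|\zeta|$, which touch the boundary, and Lemma \ref{2keumo3} then gives $\mu_{s+t}(B(\zeta,R))\simeq R^{N+1+s+t}$; hence $m_{s+t,s}f\simeq m'_{s+t,s}f$ with constants depending only on $s+t$ and $N$. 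It therefore suffices to prove the estimate with $m'_{s+t,s}$ on the right, which is exactly the operator that appears in the good lambda inequality.

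Next I would recast the good lambda inequality into the hypothesis format of Lemma \ref{lem777}. Put $\mu=\omega\,d\mu_{Q+pt}$, $F=S_{s+t,s}f$ and $G=m'_{s+t,s}f$, and substitute $\lambda=\tau/2$ in Theorem \ref{5keumo26}, which turns it into
\[
\mu(\{F>\tau,\ G\leq (\gamma/2)\tau\})\leq CD_p^{s,t,q,Q}(\omega)\,\gamma^{\beta}\,\mu(\{F> \tau/2\}),\qquad \tau>0.
\]
This is precisely the assumption of Lemma \ref{lem777} with $c=\gamma/2$, $b=1/2$, and $a=CD_p^{s,t,q,Q}(\omega)\gamma^{\beta}$. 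The admissibility requirement $a<b^p$ reads $CD_p^{s,t,q,Q}(\omega)\gamma^{\beta}<2^{-p}$; since $\beta>0$ and $D_p^{s,t,q,Q}(\omega)$ is a fixed finite quantity, I would simply fix $\gamma$ small enough for this to hold. Lemma \ref{lem777} then yields
\[
\int_{\mathbb{B}}(S_{s+t,s}f)^p\,\omega\,d\mu_{Q+pt}\leq \frac{(\gamma/2)^{-p}}{1-CD_p^{s,t,q,Q}(\omega)\gamma^{\beta}2^{p}}\int_{\mathbb{B}}(m'_{s+t,s}f)^p\,\omega\,d\mu_{Q+pt},
\]
and combining with $m'_{s+t,s}f\simeq m_{s+t,s}f$ gives the claimed inequality; the displayed unweighted form follows by taking $\omega\equiv 1$, which is admissible since constants lie in $(D_p^{s,t,q,Q})$ by Remark \ref{keumo11''}.

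The step that requires genuine care is the hidden a priori finiteness built into Lemma \ref{lem777}: its conclusion comes from writing $\|F\|_p^p=p\int_0^\infty \tau^{p-1}\mu(\{F>\tau\})\,d\tau$, splitting $\{F>\tau\}\subseteq\{F>\tau,\,G\leq c\tau\}\cup\{G>c\tau\}$, and absorbing the resulting term $ab^{-p}\|F\|_p^p$ into the left-hand side, which is legitimate only when $\|F\|_p<\infty$. I expect this to be the main obstacle. The remedy is the standard truncation argument: one first applies the estimate to $S_{s+t,s}(f\mathbf{1}_{B(0,\rho)})$ (or to a truncated kernel), for which $\|F\|_p$ is manifestly finite, and then lets $\rho\uparrow 1$, using monotone convergence together with the fact that the constant is independent of the truncation. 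This is exactly the role announced in the remark preceding Theorem \ref{5keumo26}, namely that the good lambda inequality is what upgrades $m'_{s+t,s}f\in L^p(\omega\,d\mu_{Q+pt})$ to $S_{s+t,s}f\in L^p(\omega\,d\mu_{Q+pt})$.
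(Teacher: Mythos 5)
Your proposal is correct and takes essentially the same route as the paper: the paper obtains this theorem precisely as "a consequence of Theorem \ref{5keumo26} and Lemma \ref{lem777}", which is exactly your combination of the good lambda inequality with the abstract distributional lemma applied to the measure $\omega\,d\mu_{Q+pt}$. The points you fill in --- the pointwise equivalence $m_{s+t,s}f\simeq m'_{s+t,s}f$ for $s+t>-1$ via Lemma \ref{2keumo3}, fixing $\gamma$ so that $CD_p^{s,t,q,Q}(\omega)\gamma^{\beta}<2^{-p}$, and the truncation/monotone-convergence argument that secures the a priori finiteness needed for the absorption in Lemma \ref{lem777} --- are exactly the details the paper leaves implicit, and your handling of them is sound.
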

\begin{theo}\label{6keumo5}
For $s+t+\frac{Q-q}{p}>-1$ and $-N-1<s+t<-1,$ if $\omega  \in {D}_p^{s,t,q,Q}$ there is a constant $C_{s,t,q,Q}>0$ such that
$$\displaystyle\int_{\mathbb{B}}(S_{s+t,s}f(z))^p d\mu_{Q+pt}(z) \leq C_{s,t,q,Q} \displaystyle\int_{\mathbb{B}}(m'_{s+t,s}f(z))^p d\mu_{Q+pt}(z).$$ 
\end{theo}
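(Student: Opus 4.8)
The plan is to deduce this $L^p$ estimate directly from the good lambda inequality of Theorem \ref{5keumo26} by feeding it into the abstract distributional-to-norm passage recorded in Lemma \ref{lem777}. The hypotheses $s+t+\frac{Q-q}{p}>-1$ and $-N-1<s+t<-1$ are exactly the first of the two regimes in which Theorem \ref{5keumo26} is available, so for every $\lambda>0$ and all sufficiently small $\gamma$ we have, with constants $C,\beta>0$ independent of $f$ and $\lambda$,
$$\omega d\mu_{Q+pt}(\{S_{s+t,s}f>2\lambda,\ m'_{s+t,s}f\leq\gamma\lambda\})\leq CD_p^{s,t,q,Q}(\omega)\,\gamma^{\beta}\,\omega d\mu_{Q+pt}(\{S_{s+t,s}f>\lambda\}).$$

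First I would apply Lemma \ref{lem777} on the measure space $(\mathbb{B},\omega\,d\mu_{Q+pt})$, taking $F:=S_{s+t,s}f$ as the large function and $G:=m'_{s+t,s}f$ as the controlling function. Using the generic level $\tau$ of Lemma \ref{lem777} and setting $\lambda=\tau/2$, the inequality above becomes precisely the hypothesis $\omega d\mu_{Q+pt}(\{F>\tau,\ G\leq c\tau\})\leq a\,\omega d\mu_{Q+pt}(\{F>b\tau\})$, with the identifications $b=\tfrac12$, $c=\tfrac{\gamma}{2}$, and $a=CD_p^{s,t,q,Q}(\omega)\gamma^{\beta}$. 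The only structural requirement is $a<b^{p}$, i.e. $CD_p^{s,t,q,Q}(\omega)\gamma^{\beta}<2^{-p}$; since $\beta>0$ and $D_p^{s,t,q,Q}(\omega)<\infty$ (because $\omega\in(D_p^{s,t,q,Q})$), this is secured by fixing $\gamma$ small enough, a choice already compatible with the smallness demanded by Theorem \ref{5keumo26}. Lemma \ref{lem777} then delivers
$$\int_{\mathbb{B}}(S_{s+t,s}f)^p\,\omega d\mu_{Q+pt}\leq\frac{c^{-p}}{1-ab^{-p}}\int_{\mathbb{B}}(m'_{s+t,s}f)^p\,\omega d\mu_{Q+pt},$$
and the constant $\tfrac{(2/\gamma)^p}{1-2^pCD_p^{s,t,q,Q}(\omega)\gamma^{\beta}}$ depends only on $s,t,p,q,Q$ and $D_p^{s,t,q,Q}(\omega)$, which is the asserted bound.

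The one step that needs genuine care, and which I expect to be the main obstacle, is the a priori finiteness that makes the passage from the level-set inequality to the norm inequality legitimate: Lemma \ref{lem777} is only informative once $\|F\|_{L^p(\omega d\mu_{Q+pt})}<\infty$, otherwise its conclusion degenerates to $\infty\leq\infty$. I would resolve this by the standard truncation. Replace $F$ by $F_M:=\min(S_{s+t,s}f,M)$ and note that the good lambda inequality persists for $F_M$ with the same constants: for $2\lambda<M$ the level sets $\{F_M>2\lambda\}$ and $\{F_M>\lambda\}$ coincide with those of $S_{s+t,s}f$ while $m'_{s+t,s}f$ is untouched, and for $2\lambda\geq M$ the left-hand set is empty so the estimate is trivial. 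Applying Lemma \ref{lem777} to $F_M$ gives a bound uniform in $M$; assuming $\|m'_{s+t,s}f\|_{L^p(\omega d\mu_{Q+pt})}<\infty$ (there being nothing to prove otherwise), monotone convergence as $M\to\infty$ both forces $\|F\|_{L^p}<\infty$ and yields the claimed inequality. Everything beyond this truncation is the purely mechanical matching of the parameters of the two lemmas.
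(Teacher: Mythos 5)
Your proposal is correct and is essentially the paper's own proof: the paper derives this theorem exactly as you do, by combining the good lambda inequality of Theorem \ref{5keumo26} with Lemma \ref{lem777} on the measure space $(\mathbb{B},\omega\,d\mu_{Q+pt})$, and your parameter matching ($b=\frac{1}{2}$, $c=\frac{\gamma}{2}$, $a=CD_p^{s,t,q,Q}(\omega)\gamma^{\beta}$, with $\gamma$ fixed so small that $a<b^{p}$) is precisely the intended one. The only detail left implicit in your truncation step is that $\min(S_{s+t,s}f,M)\in L^p(\omega\,d\mu_{Q+pt})$ requires $\omega\,d\mu_{Q+pt}(\mathbb{B})<\infty$; this does follow from the $(D_p^{s,t,q,Q})$ condition applied to a single pseudoball large enough to contain $\mathbb{B}$ (the pseudo-distance being bounded and the weight nontrivial), so no genuine gap remains.
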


\section{Final remark and open question}\label{final-remark}
This part is simply a direct application of the two preceding sections and Remark \ref{keumo11''}.  Therefore for $1<p<+\infty$ we have the two following corollaries.
\begin{cor}\label{5keumo27} Let $\omega$ be a weight on $\mathbb{B}$. Then for $s+t>-1$ and $q=Q,$ the following assertions are equivalent:
\begin{enumerate} 
  \item $P_{s,t}$ is well defined and continuous from $L^p(\omega d\mu_q)$ to $L^p(\omega d\mu_q)$;
  \item $T_{s+t,s}$ is well defined and continuous from $L^p(\omega d\mu_q)$ to $L^p(\omega d\mu_{q+pt})$;
  \item $S_{s+t,s}$ is well defined and continuous from $L^p(\omega d\mu_q)$ to $L^p(\omega d\mu_{q+pt})$;
  \item $\omega \in (K_p^{s,t,q,q})$.
\end{enumerate}
\end{cor}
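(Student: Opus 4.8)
The plan is to prove the four statements equivalent by establishing the cycle $(4)\Rightarrow(3)\Rightarrow(2)\Leftrightarrow(1)\Rightarrow(4)$, exploiting that everything analytically substantial has already been packaged into the theorems of Sections \ref{weighted-estimates-case-a-sup-n-1} and \ref{good-lambda-inequality}. The very first move is to record that, since $q=Q$, Remark \ref{keumo11''} collapses the two weight classes, $(K_p^{s,t,q,q})=(D_p^{s,t,q,q})$; hence condition (4) may be read throughout as $\omega\in(D_p^{s,t,q,q})$, which is exactly the class appearing in the sufficiency results.

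First I would dispatch the two cheap links. The equivalence $(1)\Leftrightarrow(2)$ is just a reweighting via the identity \eqref{keumoequa}: from $P_{s,t}f(z)=(1-|z|^2)^tT_{s+t,s}f(z)$ one gets
\[
\int_{\mathbb{B}}|P_{s,t}f|^p\,\omega\,d\mu_q=\int_{\mathbb{B}}|T_{s+t,s}f|^p\,(1-|z|^2)^{pt}\,\omega\,d\mu_q=\int_{\mathbb{B}}|T_{s+t,s}f|^p\,\omega\,d\mu_{q+pt},
\]
so the two operators are simultaneously well defined and bounded, with equal norms. For $(3)\Rightarrow(2)$ I would only note the pointwise domination $|T_{s+t,s}f(z)|\le S_{s+t,s}|f|(z)$, which transfers any bound for $S_{s+t,s}$ to $T_{s+t,s}$.

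The necessity $(1)\Rightarrow(4)$ I would obtain directly from Theorem \ref{5keumo4'}: taking $Q=q$ (so $Q\ge q$) and $s+t>-1$, boundedness of $P_{s,t}$ on $L^p(\omega d\mu_q)$ forces $\omega\in(K_p^{s,t,q,q})$. The real content sits in the remaining arrow $(4)\Rightarrow(3)$, and this is the step I expect to be the main obstacle --- though here the obstacle has been absorbed into the good lambda inequality. Assuming $\omega\in(D_p^{s,t,q,q})$, I would chain Theorem \ref{6keumo4}, which controls $S_{s+t,s}$ by the maximal operator,
\[
\int_{\mathbb{B}}(S_{s+t,s}f)^p\,\omega\,d\mu_{q+pt}\le C\int_{\mathbb{B}}(m_{s+t,s}f)^p\,\omega\,d\mu_{q+pt},
\]
with Theorem \ref{5keumo20}, which (since $O_{s,t}f=(1-|z|^2)^tm_{s+t,s}f$) gives $\int_{\mathbb{B}}(m_{s+t,s}f)^p\,\omega\,d\mu_{q+pt}\le C'\int_{\mathbb{B}}|f|^p\,\omega\,d\mu_q$. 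Composing the two yields boundedness of $S_{s+t,s}\colon L^p(\omega d\mu_q)\to L^p(\omega d\mu_{q+pt})$, i.e.\ (3); well-definedness is then guaranteed by Lemmas \ref{4keumo2} and \ref{4keumo4}. This closes the cycle. The only care required beyond citation is to verify that the hypotheses ``$Q\ge q$, $s+t>-1$'' of Theorems \ref{5keumo4'}, \ref{5keumo20} and \ref{6keumo4} are all satisfied by the specialization $Q=q$, and that the necessary-condition theorem is phrased for $P_{s,t}$ while I enter it through the $(1)\Leftrightarrow(2)$ bridge.
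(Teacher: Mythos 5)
Your proposal is correct and follows essentially the same route as the paper, whose own ``proof'' is just the one-line observation that the corollary is a direct application of the preceding two sections together with Remark \ref{keumo11''}: you make explicit exactly the intended chain, namely $(K_p^{s,t,q,q})=(D_p^{s,t,q,q})$ by that remark, necessity from Theorem \ref{5keumo4'}, sufficiency by composing Theorem \ref{6keumo4} (read, as clearly intended, with the weight $\omega$ in both integrals) with Theorem \ref{5keumo20}, and the trivial links via identity \eqref{keumoequa} and pointwise domination. The only cosmetic point is that well-definedness in $(4)\Rightarrow(3)$ follows directly from the resulting norm inequality (finiteness of $S_{s+t,s}f$ almost everywhere) rather than from Lemmas \ref{4keumo2} and \ref{4keumo4}, which run in the opposite direction.
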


\begin{cor}\label{5keumo28}
Let $\omega$ be a weight on $\mathbb{B}$. In the case both $s+t+\frac{Q-q}{p}> -1 $ and $-1>s+t>-N-1$ are hold,  and in the case both $s+t>-1$ and $Q\geq q$ are hold, if $\omega  \in {D}_p^{s,t,q,Q}$ then $P_{s,t}$ is well defined and continuous from $L^p(\omega d\mu_q)$ to $L^p(\omega d\mu_Q),$ so that $S_{s+t,s}$ is well defined and continuous from $L^p(\omega d\mu_q)$ to $L^p(\omega d\mu_{Q+pt}).$
\end{cor}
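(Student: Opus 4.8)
The plan is to obtain the boundedness of $P_{s,t}$ by splitting the target estimate into three inequalities, each of which is already available in the paper, and then composing them; the assertion about $S_{s+t,s}$ will come out of the same chain since $|T_{s+t,s}f|\le S_{s+t,s}f$ pointwise.

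First I would reduce everything to $S_{s+t,s}$ acting against the shifted measure $d\mu_{Q+pt}$. Using the identity $P_{s,t}f(z)=(1-|z|^2)^tT_{s+t,s}f(z)$ of \eqref{keumoequa} together with the pointwise domination $|T_{s+t,s}f(z)|\le S_{s+t,s}f(z)$ coming from the definitions of $T_{a,b}$ and $S_{a,b}$, the factor $(1-|z|^2)^{pt}$ is absorbed into the weight exponent and
\[
\int_{\mathbb{B}}|P_{s,t}f(z)|^p\omega(z)\,d\mu_Q(z)
=\int_{\mathbb{B}}|T_{s+t,s}f(z)|^p\omega(z)\,d\mu_{Q+pt}(z)
\le \int_{\mathbb{B}}(S_{s+t,s}f(z))^p\omega(z)\,d\mu_{Q+pt}(z).
\]
Hence it suffices to dominate the right-hand side by $C\int_{\mathbb{B}}|f|^p\omega\,d\mu_q$, and this single bound yields simultaneously the continuity of $P_{s,t}$ from $L^p(\omega d\mu_q)$ to $L^p(\omega d\mu_Q)$ and of $S_{s+t,s}$ from $L^p(\omega d\mu_q)$ to $L^p(\omega d\mu_{Q+pt})$.

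Next I would pass from $S_{s+t,s}$ to the associated maximal operator. In the regime $s+t>-1$, $Q\ge q$ this is exactly Theorem \ref{6keumo4}, while in the regime $-N-1<s+t<-1$, $s+t+\frac{Q-q}{p}>-1$ it is Theorem \ref{6keumo5}; both are derived by feeding the good $\lambda$-inequality (Theorem \ref{5keumo26}) into Lemma \ref{lem777}, so they hold precisely when $\omega\in (D_p^{s,t,q,Q})$. This step delivers
\[
\int_{\mathbb{B}}(S_{s+t,s}f)^p\omega\,d\mu_{Q+pt}
\le C\int_{\mathbb{B}}(m_{s+t,s}f)^p\omega\,d\mu_{Q+pt}
\qquad(\text{resp. with }m'_{s+t,s}\text{ in the second regime}).
\]
Finally, by the very definitions $O_{s,t}f=(1-|z|^2)^t m_{s+t,s}f$ and $O'_{s,t}f=(1-|z|^2)^t m'_{s+t,s}f$, one has the identification $\int(m_{s+t,s}f)^p\omega\,d\mu_{Q+pt}=\int(O_{s,t}f)^p\omega\,d\mu_Q$ (and similarly for the primed versions), and Theorem \ref{5keumo20} gives, under $\omega\in (D_p^{s,t,q,Q})$, the bound $\int(O_{s,t}f)^p\omega\,d\mu_Q\le C\int|f|^p\omega\,d\mu_q$ in both regimes. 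Composing the three displayed inequalities closes the estimate.

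Since every individual ingredient is already established, the corollary is essentially an assembly of the chain and a matching of weight exponents in the two stated cases. The only genuine point to watch — more a verification than an obstacle — is the a priori finiteness required to legitimately invoke Lemma \ref{lem777} in Theorems \ref{6keumo4}–\ref{6keumo5}: one needs to know beforehand that $S_{s+t,s}f\in L^p(\omega\,d\mu_{Q+pt})$ (for instance via a truncation or standard limiting argument). As noted in the remark preceding Theorem \ref{5keumo26}, this finiteness is exactly supplied once $m'_{s+t,s}f\in L^p(\omega\,d\mu_{Q+pt})$, which in turn follows from the $O_{s,t}$-bound of Theorem \ref{5keumo20}; so the whole argument is self-contained within the results already proved.
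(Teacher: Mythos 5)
Your proposal is correct and follows essentially the same route the paper intends: Section 7 is explicitly an assembly of the identity \eqref{keumoequa} with the pointwise bound $|T_{s+t,s}f|\le S_{s+t,s}f$, the good-lambda consequences (Theorems \ref{6keumo4}--\ref{6keumo5}), and the maximal-function estimate of Theorem \ref{5keumo20}, exactly as you chain them. Your added remark on the a priori finiteness needed to invoke Lemma \ref{lem777}, and your implicit use of the primed operators $m'_{s+t,s}$, $O'_{s,t}$ in the regime $-N-1<s+t<-1$, are in fact slightly more careful than the paper's own wording.
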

In Theorem \ref{5keumo3'''} and in Theorem \ref{5keumo4'} we show that being in $(K_p^{s,t,q,Q})$ is a necessary condition for the continuity of $P_{s,t}$  from $L^p(\omega d\mu_q)$ to $L^p(\omega d\mu_Q),$ while in Corollary \ref{5keumo28}, we have that being in ${D}_p^{s,t,q,Q}$ is a sufficient one. When $Q=q,$ we find out that $(K_p^{s,t,q,q})={D}_p^{s,t,q,q},$ so that we have a necessary and sufficient condition. But when $Q> q$ we have ${D}_p^{s,t,q,Q} \subseteq (K_p^{s,t,q,Q}).$ It will be interesting in further work  to get in that case a necessary and sufficient condition too.  

\section*{acknowledgements}
The second author would like to acknowledge the support of the GRAID program of IMU/CDC. He would also like to thank the International Centre for Theoretical  Physics (ICTP),~~~~Trieste (Italy) for partially supporting his visit to the centre where he has progressed in this work.

B. D. Wick's research partially supported in part by NSF grant NSF-DMS-1800057 as well as ARC DP190100970.

\vspace{0,5cm}
\end{document}